\DeclareFontFamily{OT1}{pzc}{}
\DeclareFontShape{OT1}{pzc}{m}{it}{<-> s * [1.10] pzcmi7t}{}
\DeclareMathAlphabet{\mathpzc}{OT1}{pzc}{m}{it}
\crefname{cor}{Corollary}{Corollaries}
\crefname{defin}{Definition}{Definitions}
\crefname{eg}{Example}{Examples}
\crefname{lem}{Lemma}{Lemmas}
\crefname{prop}{Proposition}{Propositions}
\crefname{rem}{Remark}{Remarks}
\crefname{theo}{Theorem}{Theorems}
\crefname{equation}{}{}
\crefname{enumi}{}{}
\renewcommand{\descriptionlabel}[1]{\hspace{\labelsep}(#1)}
\let\orgdescriptionlabel\descriptionlabel
\renewcommand*{\descriptionlabel}[1]{%
  \let\orglabel\label
  \let\label\@gobble
  \phantomsection
  \edef\@currentlabel{#1}%
  \let\label\orglabel
  \orgdescriptionlabel{#1}%
}
\newcommand\C{\mathbb{C}}
\newcommand\N{\mathbb{N}}
\newcommand\Q{\mathbb{Q}}
\newcommand\Z{\mathbb{Z}}
\newcommand\kk{\Bbbk}
\newcommand\one{\mathbbm{1}}
\newcommand\bB{\mathbf{B}}
\newcommand\bx{\mathbf{x}}
\newcommand\by{\mathbf{y}}
\newcommand\bz{\mathbf{z}}
\newcommand\bZ{\mathbf{Z}}
\newcommand\cC{\mathcal{C}}
\newcommand\cD{\mathcal{D}}
\newcommand\cI{\mathcal{I}}
\newcommand\cM{\mathcal{M}}
\newcommand\cP{\mathcal{P}}
\newcommand\fg{\mathfrak{g}}
\newcommand\fS{\mathfrak{S}}            
\newcommand\op{\mathrm{op}}
\newcommand{\pmd}{\textup{-pmod}}
\newcommand\Laurent[1]{
    (\!(#1)\!)
}
\newcommand{\xrightarrowdbl}[2][]{%
  \xrightarrow[#1]{#2}\mathrel{\mkern-14mu}\rightarrow
}
\newcommand\cEnd{\mathpzc{End}}         
\newcommand\cH{\mathpzc{H}}             
\newcommand\Heis{\mathpzc{Heis}}        
\newcommand\FOT{\mathpzc{FOT}}          
\newcommand\OS{\mathpzc{OS}}            
\newcommand\cU{\mathpzc{U}}             
\newcommand\AH{\mathrm{AH}}             
\newcommand\BS{\mathrm{BS}}             
\newcommand\BSc{\widetilde{\BS}}        
\newcommand\EH{\mathrm{EH}}             
\newcommand\EHc{\widetilde{\EH}}        
\newcommand\fEH{\mathfrak{EH}}          
\newcommand\fEHc{\widetilde{\fEH}}      
\newcommand\rH{\mathrm{H}}              
\newcommand\rHeis{\mathrm{Heis}}        
\newcommand\Sk{\mathrm{Sk}}             
\newcommand\qint[1]{\{#1\}}
\DeclareMathOperator{\Add}{Add}
\DeclareMathOperator{\End}{End}
\DeclareMathOperator{\GL}{GL}
\DeclareMathOperator{\Hom}{Hom}
\DeclareMathOperator{\id}{id}
\DeclareMathOperator{\Kar}{Kar}
\DeclareMathOperator{\Ob}{Ob}
\DeclareMathOperator{\Span}{span}
\DeclareMathOperator{\Sym}{Sym}
\DeclareMathOperator{\Tr}{Tr}           
\tikzset{anchorbase/.style={>=To,baseline={([yshift=-0.5ex]current bounding box.center)}}}
\tikzset{ 
    centerzero/.style={>=To,baseline={([yshift=-0.5ex](#1))}},
    centerzero/.default={0,0}
}
\tikzset{wipe/.style={white,line width=4pt}}
\newcommand{\dotlabel}[1]{$\scriptstyle{#1}$}
\newcommand{\braidup}{to[out=up,in=down]}
\newcommand{\braiddown}{to[out=down,in=up]}
\newcommand{\singdot}[1]{\filldraw[fill=white,draw=black] (#1) circle (0.06)}
\newcommand{\multdot}[3]{
    \filldraw[fill=white, draw=black] (#1) circle (0.06) node[anchor=#2] {\dotlabel{#3}}
}
\newcommand{\bubrightblank}[1]{
    \draw[->] (#1)+(0.2,0) arc(360:0:0.2)
}
\newcommand{\bubleftblank}[1]{
    \draw[->] (#1)+(-0.2,0) arc(-180:180:0.2)
}
\newcommand{\bubright}[2]{
    \bubrightblank{#1};
    \filldraw[fill=white, draw=black] (#1)+(0,-0.2) circle (0.06) node[anchor=north] {\dotlabel{#2}}
}
\newcommand{\bubrightside}[2]{
    \bubrightblank{#1};
    \filldraw[fill=white, draw=black] (#1)+(-0.2,0) circle (0.06) node[anchor=east] {\dotlabel{#2}}
}
\newcommand{\bubleft}[2]{
    \bubleftblank{#1};
    \filldraw[fill=white, draw=black] (#1)+(0,-0.2) circle (0.06) node[anchor=north] {\dotlabel{#2}}
}
\newcommand{\bubleftside}[2]{
    \bubleftblank{#1};
    \filldraw[fill=white, draw=black] (#1)+(0.2,0) circle (0.06) node[anchor=west] {\dotlabel{#2}}
}
\newcommand{\plusright}[2]{
    \bubright{#1}{#2};
    \node at (#1) {\dotlabel{+}}
}
\newcommand{\plusrightside}[2]{
    \bubrightside{#1}{#2};
    \node at (#1) {\dotlabel{+}}
}
\newcommand{\plusrightgen}[1]{
    \bubrightblank{#1};
    \node at (#1) {\dotlabel{+}};
    \draw (#1)+(0.2,0) node[anchor=west] {$(u)$}
}
\newcommand{\plusleft}[2]{
    \bubleft{#1}{#2};
    \node at (#1) {\dotlabel{+}}
}
\newcommand{\plusleftside}[2]{
    \bubleftside{#1}{#2};
    \node at (#1) {\dotlabel{+}}
}
\newcommand{\plusleftgen}[1]{
    \bubleftblank{#1};
    \node at (#1) {\dotlabel{+}};
    \draw (#1)+(0.2,0) node[anchor=west] {$(u)$}
}
\newcommand{\minusright}[2]{
    \bubright{#1}{#2};
    \node at (#1) {\dotlabel{-}}
}
\newcommand{\minusrightside}[2]{
    \bubrightside{#1}{#2};
    \node at (#1) {\dotlabel{-}}
}
\newcommand{\minusrightgen}[1]{
    \bubrightblank{#1};
    \node at (#1) {\dotlabel{-}};
    \draw (#1)+(0.2,0) node[anchor=west] {$(u)$}
}
\newcommand{\minusleft}[2]{
    \bubleft{#1}{#2};
    \node at (#1) {\dotlabel{-}}
}
\newcommand{\minusleftside}[2]{
    \bubleftside{#1}{#2};
    \node at (#1) {\dotlabel{-}}
}
\newcommand{\minusleftgen}[1]{
    \bubleftblank{#1};
    \node at (#1) {\dotlabel{-}};
    \draw (#1)+(0.2,0) node[anchor=west] {$(u)$}
}
\newcommand{\pmright}[2]{
    \bubright{#1}{#2};
    \node at (#1) {\dotlabel{\pm}};
}
\newcommand{\pmleft}[2]{
    \bubleft{#1}{#2};
    \node at (#1) {\dotlabel{\pm}};
}
\newcommand{\pmrightgen}[1]{
    \bubrightblank{#1};
    \node at (#1) {\dotlabel{\pm}};
    \draw (#1)+(0.2,0) node[anchor=west] {$(u)$}
}
\newcommand{\pmleftgen}[1]{
    \bubleftblank{#1};
    \node at (#1) {\dotlabel{\pm}};
    \draw (#1)+(0.2,0) node[anchor=west] {$(u)$}
}
\newcommand{\bubsym}[2]{
    \draw (#1)+(0.2,0) arc(360:0:0.2);
    \node at (#1) {\dotlabel{#2}}
}
\newcommand{\identify}[4]{
    \draw[red,dashed] (#1,#2) -- (#1,#4);
    \draw[red,dashed] (#3,#2) -- (#3,#4)
}
\newcommand{\posupcross}{
    \begin{tikzpicture}[centerzero]
        \draw[->] (0.2,-0.2) -- (-0.2,0.2);
        \draw[wipe] (-0.2,-0.2) -- (0.2,0.2);
        \draw[->] (-0.2,-0.2) -- (0.2,0.2);
    \end{tikzpicture}
}
\newcommand{\negupcross}{
    \begin{tikzpicture}[centerzero]
        \draw[->] (-0.2,-0.2) -- (0.2,0.2);
        \draw[wipe] (0.2,-0.2) -- (-0.2,0.2);
        \draw[->] (0.2,-0.2) -- (-0.2,0.2);
    \end{tikzpicture}
}
\newcommand{\posrightcross}{
    \begin{tikzpicture}[centerzero]
        \draw[->] (-0.2,-0.2) -- (0.2,0.2);
        \draw[wipe] (0.2,-0.2) -- (-0.2,0.2);
        \draw[<-] (0.2,-0.2) -- (-0.2,0.2);
    \end{tikzpicture}
}
\newcommand{\negrightcross}{
    \begin{tikzpicture}[centerzero]
        \draw[<-] (0.2,-0.2) -- (-0.2,0.2);
        \draw[wipe] (-0.2,-0.2) -- (0.2,0.2);
        \draw[->] (-0.2,-0.2) -- (0.2,0.2);
    \end{tikzpicture}
}
\newcommand{\posdowncross}{
    \begin{tikzpicture}[centerzero]
        \draw[<-] (0.2,-0.2) -- (-0.2,0.2);
        \draw[wipe] (-0.2,-0.2) -- (0.2,0.2);
        \draw[<-] (-0.2,-0.2) -- (0.2,0.2);
    \end{tikzpicture}
}
\newcommand{\negdowncross}{
    \begin{tikzpicture}[centerzero]
        \draw[<-] (-0.2,-0.2) -- (0.2,0.2);
        \draw[wipe] (0.2,-0.2) -- (-0.2,0.2);
        \draw[<-] (0.2,-0.2) -- (-0.2,0.2);
    \end{tikzpicture}
}
\newcommand{\posleftcross}{
    \begin{tikzpicture}[centerzero]
        \draw[<-] (-0.2,-0.2) -- (0.2,0.2);
        \draw[wipe] (0.2,-0.2) -- (-0.2,0.2);
        \draw[->] (0.2,-0.2) -- (-0.2,0.2);
    \end{tikzpicture}
}
\newcommand{\negleftcross}{
    \begin{tikzpicture}[centerzero]
        \draw[->] (0.2,-0.2) -- (-0.2,0.2);
        \draw[wipe] (-0.2,-0.2) -- (0.2,0.2);
        \draw[<-] (-0.2,-0.2) -- (0.2,0.2);
    \end{tikzpicture}
}
\newcommand{\upstrand}{
    \begin{tikzpicture}[centerzero]
        \draw[->] (0,-0.2) -- (0,0.2);
    \end{tikzpicture}
}
\newcommand{\downstrand}{
    \begin{tikzpicture}[centerzero]
        \draw[<-] (0,-0.2) -- (0,0.2);
    \end{tikzpicture}
}
\newcommand{\updot}{
    \begin{tikzpicture}[centerzero]
        \draw[->] (0,-0.2) -- (0,0.2);
        \singdot{0,0};
    \end{tikzpicture}
}
\newcommand{\downdot}{
    \begin{tikzpicture}[centerzero]
        \draw[->] (0,0.2) -- (0,-0.2);
        \singdot{0,0};
    \end{tikzpicture}
}
\newcommand{\multupdot}[2][west]{ 
    \begin{tikzpicture}[centerzero]
        \draw[->] (0,-0.2) -- (0,0.2);
        \multdot{0,0}{#1}{#2};
    \end{tikzpicture}
}
\newcommand{\multdowndot}[2][west]{ 
    \begin{tikzpicture}[centerzero]
        \draw[<-] (0,-0.2) -- (0,0.2);
        \multdot{0,0}{#1}{#2};
    \end{tikzpicture}
}
\newcommand{\rightcup}{
    \begin{tikzpicture}[anchorbase]
        \draw[->] (-0.15,0.15) -- (-0.15,0) arc(180:360:0.15) -- (0.15,0.15);
    \end{tikzpicture}
}
\newcommand{\leftcup}{
    \begin{tikzpicture}[anchorbase]
        \draw[<-] (-0.15,0.15) -- (-0.15,0) arc(180:360:0.15) -- (0.15,0.15);
    \end{tikzpicture}
}
\newcommand{\rightcap}{
    \begin{tikzpicture}[anchorbase]
        \draw[->] (-0.15,-0.15) -- (-0.15,0) arc(180:0:0.15) -- (0.15,-0.15);
    \end{tikzpicture}
}
\newcommand{\leftcap}{
    \begin{tikzpicture}[anchorbase]
        \draw[<-] (-0.15,-0.15) -- (-0.15,0) arc(180:0:0.15) -- (0.15,-0.15);
    \end{tikzpicture}
}
\newcommand{\rightbubside}[1]{
    \begin{tikzpicture}[centerzero]
        \bubrightside{0,0}{#1};
    \end{tikzpicture}
}
\newcommand{\rightplus}[1]{
    \begin{tikzpicture}[centerzero]
        \plusright{0,0}{#1};
    \end{tikzpicture}
}
\newcommand{\rightplusside}[1]{
    \begin{tikzpicture}[centerzero]
        \plusrightside{0,0}{#1};
    \end{tikzpicture}
}
\newcommand{\rightplusgen}{
    \begin{tikzpicture}[centerzero]
        \plusrightgen{0,0};
    \end{tikzpicture}
}
\newcommand{\rightplusblank}{
    \begin{tikzpicture}[centerzero]
        \draw[->] (0.2,0) arc(360:0:0.2);
        \node at (0,0) {\dotlabel{+}};
    \end{tikzpicture}
}
\newcommand{\rightminus}[1]{
    \begin{tikzpicture}[centerzero]
        \minusright{0,0}{#1};
    \end{tikzpicture}
}
\newcommand{\rightminusside}[1]{
    \begin{tikzpicture}[centerzero]
        \minusrightside{0,0}{#1};
    \end{tikzpicture}
}
\newcommand{\rightminusgen}{
    \begin{tikzpicture}[centerzero]
        \minusrightgen{0,0};
    \end{tikzpicture}
}
\newcommand{\rightminusblank}{
    \begin{tikzpicture}[centerzero]
        \draw[->] (0.2,0) arc(360:0:0.2);
        \node at (0,0) {\dotlabel{-}};
    \end{tikzpicture}
}
\newcommand{\leftbub}[1]{
    \begin{tikzpicture}[centerzero]
        \bubleft{0,0}{#1};
    \end{tikzpicture}
}
\newcommand{\leftbubside}[1]{
    \begin{tikzpicture}[centerzero]
        \bubleftside{0,0}{#1};
    \end{tikzpicture}
}
\newcommand{\leftplus}[1]{
    \begin{tikzpicture}[centerzero]
        \plusleft{0,0}{#1};
    \end{tikzpicture}
}
\newcommand{\leftplusside}[1]{
    \begin{tikzpicture}[centerzero]
        \plusleftside{0,0}{#1};
    \end{tikzpicture}
}
\newcommand{\leftplusgen}{
    \begin{tikzpicture}[centerzero]
        \plusleftgen{0,0};
    \end{tikzpicture}
}
\newcommand{\leftplusblank}{
    \begin{tikzpicture}[centerzero]
        \bubleftblank{0,0};
        \node at (0,0) {\dotlabel{+}};
    \end{tikzpicture}
}
\newcommand{\leftminus}[1]{
    \begin{tikzpicture}[centerzero]
        \minusleft{0,0}{#1};
    \end{tikzpicture}
}
\newcommand{\leftminusside}[1]{
    \begin{tikzpicture}[centerzero]
        \minusleftside{0,0}{#1};
    \end{tikzpicture}
}
\newcommand{\leftminusgen}{
    \begin{tikzpicture}[centerzero]
        \minusleftgen{0,0};
    \end{tikzpicture}
}
\newcommand{\leftminusblank}{
    \begin{tikzpicture}[centerzero]
        \bubleftblank{0,0};
        \node at (0,0) {\dotlabel{-}};
    \end{tikzpicture}
}
\newcommand{\rightpm}[1]{
    \begin{tikzpicture}[centerzero]
        \pmright{0,0}{#1};
    \end{tikzpicture}
}
\newcommand{\leftpm}[1]{
    \begin{tikzpicture}[centerzero]
        \pmleft{0,0}{#1};
    \end{tikzpicture}
}
\newcommand{\symbub}[1]{
    \begin{tikzpicture}[centerzero]
        \bubsym{0,0}{#1};
    \end{tikzpicture}
}
\newtheorem{theo}{Theorem}[section]
\newtheorem{prop}[theo]{Proposition}
\newtheorem{lem}[theo]{Lemma}
\newtheorem{cor}[theo]{Corollary}
\theoremstyle{definition}
\newtheorem{defin}[theo]{Definition}
\newtheorem{rem}[theo]{Remark}
\numberwithin{equation}{section}
  \newcommand{\acomments}[1]{
    \ \\
    {\color{red}
      \textbf{AS:} #1
    }
    \ \\
    }
  \newcommand{\ycomments}[1]{
    \ \\
    {\color{red}
      \textbf{YM:} #1
    }
    \ \\
    }
  \newcommand{\acomments}[1]{}
  \newcommand{\ycomments}[1]{}
  \newcommand{\details}[1]{
      \ \\
      {\color{OliveGreen}
        \textbf{Details:} #1
      }
      \\
  }
  \newcommand{\details}[1]{}
\begin{document}

\title{Categorification of the elliptic Hall algebra}

\author{Youssef Mousaaid}
\address[Y.M.]{
  Department of Mathematics and Statistics \\
  University of Ottawa \\
  Ottawa, ON K1N 6N5, Canada
}
\email{ymous016@uottawa.ca}

\author{Alistair Savage}
\address[A.S.]{
  Department of Mathematics and Statistics \\
  University of Ottawa \\
  Ottawa, ON K1N 6N5, Canada
}
\urladdr{\href{https://alistairsavage.ca}{alistairsavage.ca}, \textrm{\textit{ORCiD}:} \href{https://orcid.org/0000-0002-2859-0239}{orcid.org/0000-0002-2859-0239}}
\email{alistair.savage@uottawa.ca}

\begin{abstract}
    We show that the central charge $k$ reduction of the universal central extension of the elliptic Hall algebra is isomorphic to the trace, or zeroth Hochschild homology, of the quantum Heisenberg category of central charge $k$.  As an application, we construct large families of representations of the universal extension of the elliptic Hall algebra.
\end{abstract}

\subjclass[2010]{Primary 18D10; Secondary 17B65, 20C08, 18R10}

\keywords{Categorification, elliptic Hall algebra, Heisenberg category, skein theory, Hecke algebras}

\ifboolexpr{togl{comments} or togl{details}}{%
  {\color{magenta}DETAILS OR COMMENTS ON}
}{%
}

\maketitle
\thispagestyle{empty}

\tableofcontents

\section{Introduction}

The \emph{elliptic Hall algebra} associated to a smooth elliptic curve $X$ over a finite field is the Drinfeld double of the Hall algebra of the category of coherent sheaves over $X$.  In \cite{BS12}, Burban and Schiffmann gave an explicit realization of a \emph{generic} elliptic Hall algebra $\EH$, depending on two formal parameters $\sigma, \bar{\sigma}$, which specializes to the elliptic Hall algebra for \emph{any} $X$.  The importance of the algebra $\EH$ is underlined by the fact that versions of it (more precisely, its ``positive half'' or central extensions) have appeared in many different contexts under different names: a \emph{generalized quantum affine algebra} \cite{DI97}, a \emph{$(q,\gamma)$-analogue of the $W_{1+\infty}$ algebra} \cite{Mik07}, the \emph{shuffle algebra} \cite{FT11,Neg14}, the \emph{spherical $\mathfrak{gl}_\infty$ double affine Hecke algebra} \cite{SV11,FFJMM11}, and the \emph{quantum continuous $\mathfrak{gl}_\infty$} \cite{FFJMM11}.  It is also intimately related to the equivariant $K$-theory of the Hilbert scheme of points on $\mathbb{A}^2$ \cite{SV13,FFJMM11,FT11,Neg15}.  In this paper we show that the elliptic Hall algebra is categorified by the \emph{quantum Heisenberg category} defined in \cite{BSW-qheis}.  We then use this categorification to construct large families of representations of central extensions of the elliptic Hall algebra.

Let us explain our results in more detail.  We first show that the elliptic Hall algebra has a universal central extension $\EHc$ by a two-dimensional center (\cref{uce}).  Then, to any \emph{central charge} $k \in \Z$, one can define a natural central reduction $\EH_k$.  In fact, \emph{every} central reduction of $\EHc$ is isomorphic to $\EH_k$ for some $k$ (\cref{violet}).

To this same central charge, one can associate a quantum Heisenberg category $\Heis_k$ as in \cite{BSW-qheis}.  This is a strict $\kk$-linear pivotal monoidal category modelled on the affine Hecke algebras of type $A$.  When $k \ne 0$, it acts naturally on the category of modules for cyclotomic Hecke algebras of level $|k|$.  When $k = -1$, it extends an earlier $q$-deformed Heisenberg category introduced in \cite{LS13}.  On the other hand, when $k = 0$, it is the framed HOMFLYPT skein category over the annulus and it acts naturally on the category of modules for $U_q(\mathfrak{gl}_n)$.

The \emph{trace}, or \emph{zeroth Hochschild homology}, of a small $\kk$-linear category is the $\kk$-module
\[
    \Tr(\cC) := \left( \bigoplus_{X \in \cC} \End_\cC(X) \right) / \Span_\kk\{f \circ g - g \circ f\},
\]
where $f$ and $g$ run through all pairs of morphisms $f \colon X \to Y$ and $g \colon Y \to X$ in $\cC$.  The trace can be thought of as a categorical analogue of the cocenter of an algebra.  If $\cC$ is monoidal, then $\Tr(\cC)$ is naturally an associative $\kk$-algebra.  The main result of the current paper (\cref{mainthm}) is that there is an isomorphism of algebras
\begin{equation} \label{gem}
    \EH_k \xrightarrow{\cong} \Tr(\Heis_k).
\end{equation}
This isomorphism is given explicitly, by specifying the images of the elements of a natural basis for $\EH_k$.

When $k=0$, $\Tr(\Heis_0)$ is isomorphic to the skein algebra of the torus.  This skein algebra was identified with $\EH = \EH_0$ by Morton and Samuelson \cite{MS17}.  On the other hand, when $k=-1$, the $q$-deformed Heisenberg category of \cite{LS13} was identified with the positive half of $\EH_{-1}$ by Cautis, Lauda, Licata, Samuelson, and Sussan \cite{CLLSS18}.  This corresponds to the fact that the $q$-deformed Heisenberg category can be viewed as ``half'' of the quantum Heisenberg category $\Heis_{-1}$.  (See \cref{chess}.)  In some sense, $\Tr(\Heis_k)$ can be thought of as a deformation of the skein algebra of the torus, depending on the central charge $k$, that breaks the symmetry between the two directions.  This central charge deformation allows us to categorify arbitrary central reductions $\EH_k$.

The split Grothendieck ring $K_0(\Heis_k)$ of the quantum Heisenberg category is conjecturally isomorphic to the central charge $k$ reduction $\rHeis_k$ of the universal enveloping algebra of the infinite rank Heisenberg Lie algebra.  (The corresponding statement for the \emph{degenerate} Heisenberg category has been proved; see \cite[Th.~1.1]{BSW-K0}.)  The Chern character map gives a homomorphism $K_0(\Heis_k) \to \Tr(\Heis_k)$.  Assuming the aforementioned conjecture, this corresponds to a natural inclusion $\rHeis_k \hookrightarrow \EH_k$.  (See \cref{Heisenberg}.)

One immediate application of our categorification of $\EH_k$ is that we obtain a large number of representations of this algebra.  The first family of representations arises from the fact that the trace of a linear pivotal category acts naturally on its center, which is the endomorphism algebra of the unit object.  For the quantum Heisenberg category, the center is isomorphic to $\Sym \otimes \Sym$, where $\Sym$ is the algebra of symmetric functions.  Thus, we obtain a natural family of actions of $\EH_k$ on $\Sym \otimes \Sym$ depending on a parameter $t$ in the ground ring.  This generalizes an action of $\EH = \EH_0$ on $\Sym \otimes \Sym$ described in \cite[\S4]{MS17}, corresponding to the action of the skein algebra of the torus acting on the skein of the annulus.

The second family of representations emerges from the natural action of $\Heis_k$ on the category of modules for cyclotomic Hecke algebras.  Passing to traces, this yields an action of $\EH_k$ on the cocenters of cyclotomic Hecke algebras.  We expect these actions to be related to the geometry of moduli spaces of framed torsion-free sheaves on $\mathbb{P}^2$, extending work of Schiffmann and Vasserot \cite{SV13}.  (See \cref{glow}.)

We also expect that the results of the current paper can be generalized by incorporating a Frobenius superalgebra.  More precisely, to every Frobenius superalgebra $A$ and central charge $k \in \Z$, there is a \emph{quantum Frobenius Heisenberg category}, introduced in \cite{BSW-qFrobHeis}.  The trace of this category should be isomorphic to a Frobenius algebra generalization of $\EH_k$.  In the degenerate setting, the trace of the Frobenius Heisenberg category was related to a Frobenius algebra generalization of the $W$-algebra $W_{1+\infty}$ in \cite{RS20}.

\subsection*{Acknowledgements}

This research was supported by Discovery Grant RGPIN-2017-03854 from the Natural Sciences and Engineering Research Council of Canada.  The authors would like to thank J.~Brundan, D.~Ciubotaru, A.~Licata, P.~Samuelson, and E.~Vasserot for helpful conversations.

\section{Central extensions of the elliptic Hall algebra\label{sec:EHA}}

In this section we introduce our main algebra of interest, which is a specialization of a central extension of the elliptic Hall algebra of Burban and Schiffmann \cite{BS12}.  We give here a direct description of this algebra, and explain the connection to the algebra of Burban and Schiffmann (which is not needed for the results of the current paper) in \cref{sec:BS}.

In this section, unless otherwise specified, we work over an arbitrary integral domain $\kk$, and we fix $q \in \kk^\times$ such that
\begin{equation} \label{trevor}
    \qint{d} := q^d - q^{-d} \in \kk^\times \text{ for all } d \ne 0.
\end{equation}
Thus the most generic choice is $\kk = \Z[q^{\pm 1}, \{d\}^{-1} : d \ge 1]$.  All algebras and tensor products are over $\kk$ unless otherwise indicated.

\subsection{Universal central extension}

Let
\begin{gather*}
    \bZ := \Z^2, \quad
    \bZ^* := \bZ \setminus \{(0,0)\},
    \\
    \bZ^+ := \{(r,n) \in \bZ : n > 0 \text{ or } n=0,\, r>0\},\quad
    \bZ^- := - \bZ^+.
\end{gather*}
Let $\fEH$ be the Lie algebra over $\kk$ with basis $w_\bx$, $\bx \in \bZ^*$, and Lie bracket given by
\begin{equation} \label{sloth}
    [w_\bx, w_\by] = \qint{d} w_{\bx + \by},
    \quad \text{where } d = \det \begin{pmatrix} \bx & \by \end{pmatrix}.
\end{equation}
Here $\begin{pmatrix} \bx & \by \end{pmatrix}$ denotes the $2 \times 2$ matrix with columns $\bx$ and $\by$.  We will write $w_{r,n}$ for $w_{(r,n)}$, and we adopt the convention that $w_{0,0}=0$.  It is a straightforward computation to verify that \cref{sloth} satisfies the axioms of a Lie bracket.  It is also not hard to see that $\fEH$ is perfect, that is, $[\fEH,\fEH] = \fEH$.  Thus, $\fEH$ has a universal central extension, which we now describe.

Let $\bZ_\kk := \kk \otimes_\Z \bZ \cong \kk^2$.  It is straightforward to check that the $\kk$-bilinear map
\[
    \fEH \times \fEH \to \bZ_\kk,\quad
    (w_\bx,w_\by) \mapsto \delta_{\bx,-\by} \bx,
\]
is a 2-cocycle, where we view $\bZ_\kk$ as a trivial $\fEH$-module.  Let $\fEHc$ be the corresponding central extension.  Thus, $\fEHc = \fEH \oplus \bZ_\kk$ as $\kk$-modules, with Lie bracket given by the fact that the elements of $\bZ_\kk$ are central and
\begin{equation} \label{phoenix}
    [w_\bx, w_\by] = \qint{d} w_{\bx + \by} + \delta_{\bx,-\by} \bx,
    \quad \text{where } d = \det \begin{pmatrix} \bx & \by \end{pmatrix}.
\end{equation}

\begin{prop} \label{uce}
    If $q$ is not integral over the canonical image of $\Z$ in $\kk$, then the Lie algebra $\fEHc$ is the universal central extension of $\fEH$.
\end{prop}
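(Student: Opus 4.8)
The plan is to recognize \cref{uce} as a second--cohomology computation and to exploit the $\bZ$-grading on $\fEH$. Recall the classical fact that a perfect Lie algebra admits a universal central extension, unique up to isomorphism, whose kernel is canonically $H_2(\fEH;\kk)$, and that a central extension of a perfect Lie algebra is universal exactly when its total space is again perfect and the defining $2$-cocycle identifies the kernel with $H_2(\fEH;\kk)$. Since $\fEH$ is perfect, so that $H_1(\fEH;\kk)=\fEH/[\fEH,\fEH]=0$, it suffices to verify two things: (i) $\fEHc$ is perfect; and (ii) the space of $\kk$-valued $2$-cocycles on $\fEH$ modulo coboundaries, i.e.\ $H^2(\fEH;\kk)$, is free of rank two, with basis the two components of the cocycle $\psi(w_\bx,w_\by)=\delta_{\bx,-\by}\,\bx$ underlying \eqref{phoenix}.

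For (i), note that in $\fEHc$ we have $[w_\bx,w_{-\bx}]=\qint{0}\,w_{(0,0)}+\bx=\bx$ because $\qint{0}=0$; as $\bx$ ranges over $\bZ^*$ these brackets span the center $\bZ_\kk\cong\kk^2$, and together with perfectness of $\fEH$ this forces $[\fEHc,\fEHc]=\fEHc$. For (ii) I would use that $\fEH=\bigoplus_{\bx\in\bZ^*}\kk\,w_\bx$ is graded by $\bZ=\Z^2$ with $\deg w_\bx=\bx$ and that \eqref{sloth} is homogeneous, so the Chevalley--Eilenberg complex splits as a direct sum over degrees $\ba\in\bZ$ and $H^2$ is computed one degree at a time. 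Writing a homogeneous $2$-cocycle of degree $\ba$ as $h(\bu):=c(w_\bu,w_{\ba-\bu})$, the cocycle identity reads
\[
    \qint{\det\begin{pmatrix}\bx&\by\end{pmatrix}}\,h(\bx+\by)
  + \qint{\det\begin{pmatrix}\by&\bz\end{pmatrix}}\,h(\by+\bz)
  + \qint{\det\begin{pmatrix}\bz&\bx\end{pmatrix}}\,h(\bz+\bx)=0
\]
for all $\bx+\by+\bz=\ba$, while a coboundary has the form $h(\bu)=\qint{\det\begin{pmatrix}\ba&\bu\end{pmatrix}}\,v$. In degree $\ba=(0,0)$ the three determinants all equal $d=\det\begin{pmatrix}\bx&\by\end{pmatrix}$, so by antisymmetry the identity collapses to $\qint{d}\bigl(h(\bx)+h(\by)+h(\bz)\bigr)=0$; since $\qint{d}\in\kk^\times$ for $d\ne 0$ by \eqref{trevor}, this forces $h$ to be additive on linearly independent pairs, hence (after a short lattice bootstrap across collinear vectors) the restriction of a $\Z$-linear map $\bZ\to\kk$. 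As there are no degree-$(0,0)$ coboundaries, $H^2(\fEH;\kk)_{(0,0)}\cong\Hom_\Z(\bZ,\kk)\cong\kk^2$, and the two coordinate projections are precisely the classes of the two components of $\psi$.

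The crux, and the step I expect to be the main obstacle, is to show $H^2(\fEH;\kk)_\ba=0$ for every $\ba\ne(0,0)$. Here I would first invoke the $\mathrm{SL}_2(\Z)$-symmetry of \eqref{sloth} --- the bracket sees the indices only through the $\mathrm{SL}_2(\Z)$-invariant determinant, so $\mathrm{SL}_2(\Z)$ acts by automorphisms and identifies $H^2(\fEH;\kk)_\ba$ with $H^2(\fEH;\kk)_{g\ba}$ --- in order to reduce to a representative $\ba$ on a coordinate axis, and then solve the displayed functional equation recursively, normalizing by coboundaries as I go. This is exactly where the hypothesis that $q$ is not integral over the canonical image of $\Z$ is essential: it guarantees that the monomials $q^m$ ($m\in\Z$) are linearly independent over that image, so the various factors $\qint{\det(\cdots)}$ entering the recursion cannot satisfy spurious $\kk$-linear relations, and one can separate powers of $q$ to conclude that every such $h$ is a coboundary. (Dropping the genericity assumption one expects additional cohomology in nonzero degrees, so the hypothesis is genuinely needed, not merely technical.) Granting this vanishing, $H^2(\fEH;\kk)$ is free of rank two with basis the components of $\psi$, which together with (i) identifies $\fEHc$ as the universal central extension of $\fEH$.
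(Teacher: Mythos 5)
Your overall strategy --- reduce to a computation of second cohomology using the $\bZ$-grading, handle degree $(0,0)$ by the additivity argument, and kill all nonzero degrees --- is a legitimate reformulation of what the paper does (its Lemma~\ref{milk} is exactly your degree-$(0,0)$ analysis, and its Lemmas~\ref{vanish} and~\ref{fairy} are exactly the statement that a cocycle is a coboundary in nonzero degrees). But your proposal has a genuine gap at precisely the point you flag as ``the crux'': you never actually prove that $H^2(\fEH;\kk)_\ba=0$ for $\ba\ne(0,0)$. You describe a plan (use the $\mathrm{SL}_2(\Z)$-symmetry to move $\ba$ to an axis, then ``solve the functional equation recursively, normalizing by coboundaries'' and ``separate powers of $q$''), but this is the entire content of the hard half of the proposition, and it is where the genericity hypothesis on $q$ must actually be deployed. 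In the paper this step is carried out concretely: one first shows $\vartheta(w_{r,0},w_{s,0})=0=\vartheta(w_{0,m},w_{0,n})$ for $r\ne -s$, $m\ne -n$, by combining three instances of the Jacobi identity to obtain $(\qint{s}^2-\qint{r}^2)\,\vartheta(w_{r,0},w_{s,0})=0$ and then invoking non-integrality of $q$ to conclude $\qint{s}^2\ne\qint{r}^2$; one then defines an explicit linear map $\zeta_2$ (formula \cref{dino}) and verifies case by case that $\vartheta(w_\bx,w_\by)=\zeta_2([w_\bx,w_\by])$ whenever $\bx+\by\ne 0$. None of this is routine, and your sketch does not substitute for it; in particular it is not clear that your proposed recursion closes up without the specific cancellation $(\qint{s}^2-\qint{r}^2)$ appearing.

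Two secondary issues. First, the paper works over an arbitrary integral domain $\kk$, and its proof verifies the universal property directly (constructing, for an arbitrary central extension $\widehat{\fEH}\to\fEH$ with kernel $\mathfrak{Z}$, a unique lift $\fEHc\to\widehat{\fEH}$). Your route through $H^2(\fEH;\kk)$ only controls $\Hom_\kk(H_2(\fEH;\kk),\kk)$; over a non-field $\kk$ this does not by itself identify the kernel $H_2(\fEH;\kk)$ of the universal central extension, nor does it directly handle central kernels $\mathfrak{Z}$ that are not free. If you want to keep the cohomological framing you should either work with $\mathfrak{Z}$-valued cocycles throughout (which is in effect what the paper's $\vartheta$ is) or justify the base-change. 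Second, your perfectness check for $\fEHc$ is fine, but note that perfectness is used in the paper only for the \emph{uniqueness} of the lift; the existence argument is where all the work lies.
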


Since \cref{uce} is not directly used elsewhere in the paper, and its verification is somewhat lengthy, we have relegated the proof to \cref{sec:uce}.  We do not know if the genericity assumption on $q$ is necessary.

\begin{cor}
    The second cohomology module $\mathrm{H}^2(\fEH;\kk)$ has rank two, with basis given by the classes of the two cocycles $\fEH \times \fEH \to \kk$ defined by
    \[
        (w_\bx,w_\by) \mapsto \delta_{\bx,-\by}r,
        \qquad \text{and} \qquad
        (w_\bx,w_\by) \mapsto \delta_{\bx,-\by}n,
    \]
    for $\bx = (r,n) \in \bZ^*$, $\by \in \bZ^*$.
\end{cor}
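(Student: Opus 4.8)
The plan is to deduce the Corollary directly from \cref{uce} together with the standard theory of universal central extensions of perfect Lie algebras. Recall that for a perfect Lie algebra $\mathfrak{g}$ over $\kk$, the kernel of its universal central extension is canonically the second homology $\rH_2(\mathfrak{g};\kk)$ with trivial coefficients. Since $\fEH$ is perfect, \cref{uce} identifies this universal central extension with $\fEHc = \fEH \oplus \bZ_\kk$, whose central kernel is $\bZ_\kk \cong \kk^2$. Thus the first step is to record that $\rH_2(\fEH;\kk) \cong \bZ_\kk$ is free of rank two, and that the associated defining cocycle is $\omega(w_\bx,w_\by) = \delta_{\bx,-\by}\bx$, read off from \cref{phoenix}.

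Next I would pass from homology to cohomology. Because $\fEH$ is free as a $\kk$-module on $\{w_\bx\}$, its Chevalley--Eilenberg complex consists of free $\kk$-modules, and the universal coefficient theorem supplies a natural exact sequence
\[
    0 \to \operatorname{Ext}^1_\kk(\rH_1(\fEH;\kk),\kk) \to \rH^2(\fEH;\kk) \to \Hom_\kk(\rH_2(\fEH;\kk),\kk) \to 0.
\]
Perfectness gives $\rH_1(\fEH;\kk) = \fEH/[\fEH,\fEH] = 0$, so the $\operatorname{Ext}$ term vanishes and the evaluation map $\rH^2(\fEH;\kk) \xrightarrow{\cong} \Hom_\kk(\rH_2(\fEH;\kk),\kk) \cong \Hom_\kk(\bZ_\kk,\kk)$ is an isomorphism. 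As $\bZ_\kk \cong \kk^2$ is free of rank two, so is its dual, which yields the rank claim.

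The route I would actually prefer, since it sidesteps invoking the universal coefficient theorem over a general integral domain, is a direct appeal to the universal property. Here one uses that $\rH^2(\fEH;\kk)$ classifies central extensions of $\fEH$ by the trivial module $\kk$, and sends a $\kk$-linear map $f \colon \bZ_\kk \to \kk$ to the pushout of $\fEHc$ along $f$, equivalently to the class of the cocycle $f \circ \omega$. This defines a $\kk$-linear map $\Hom_\kk(\bZ_\kk,\kk) \to \rH^2(\fEH;\kk)$. Surjectivity is the existence half of universality: every central extension of $\fEH$ by $\kk$ receives a morphism from $\fEHc$ and is therefore a pushout of it. Injectivity is the uniqueness half: if the pushout along $f$ splits, then comparing the canonical map $\fEHc \to \mathfrak{e}_f$ with the map induced by a splitting and invoking uniqueness forces $f = 0$. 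This gives the same isomorphism $\rH^2(\fEH;\kk) \cong \Hom_\kk(\bZ_\kk,\kk)$.

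Finally I would read off the basis. The standard basis of $\Hom_\kk(\bZ_\kk,\kk)$ consists of the two coordinate projections $\pi_1,\pi_2 \colon \bZ_\kk \to \kk$, with $\pi_1(r,n)=r$ and $\pi_2(r,n)=n$; composing each with the universal cocycle $\omega$ produces exactly the two cocycles $(w_\bx,w_\by)\mapsto\delta_{\bx,-\by}r$ and $(w_\bx,w_\by)\mapsto\delta_{\bx,-\by}n$ of the statement, whose classes therefore form a basis. The main obstacle I anticipate is keeping the duality step clean over a non-field integral domain $\kk$: the direct pushout argument avoids the flatness hypotheses hidden in the universal coefficient theorem, and the freeness of $\bZ_\kk$ ensures the resulting module is genuinely free of rank two rather than merely of generic rank two.
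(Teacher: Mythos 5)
Your proof is correct and follows the route the paper intends: the corollary is stated there without proof as an immediate consequence of \cref{uce}, via the standard identification, for a perfect Lie algebra, of $\rH^2(\fEH;\kk)$ with $\Hom_\kk(\bZ_\kk,\kk)$, where $\bZ_\kk$ is the kernel of the universal central extension and the isomorphism sends $f$ to the class of $f\circ\omega$ for $\omega(w_\bx,w_\by)=\delta_{\bx,-\by}\bx$. Your preference for the direct pushout/uniqueness argument over the universal coefficient theorem is well judged, since the latter needs hypotheses on $\kk$ beyond ``integral domain''; note also that the injectivity step can be shortcut concretely: if $f\circ\omega=d\mu$, then evaluating at $(w_\bx,w_{-\bx})$, where $[w_\bx,w_{-\bx}]=0$ in $\fEH$ but $\omega(w_\bx,w_{-\bx})=\bx$, gives $f(\bx)=0$ for all $\bx$ immediately.
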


\begin{rem} \label{Heisenberg}
    The Lie algebra $\fEHc$ contains a copy of the infinite rank Heisenberg algebra for every rank one sublattice of $\bZ$.  More precisely, for $\bx \in \bZ^*$, we have
    \begin{equation}
        [w_{i\bx}, w_{j\bx}] = i \delta_{i,-j} \bx,\quad i,j \in \Z \setminus \{0\},
    \end{equation}
    and so $\Span_\kk \{w_{i\bx}, \bx : i \in \Z \setminus \{0\}\}$ is an infinite rank Heisenberg algebra with central element $\bx$.
\end{rem}

\subsection{Central reductions\label{subsec:reduct}}

Let $\EHc$ be the universal enveloping algebra of $\fEHc$.  For a $\Z$-linear map $\lambda \colon \bZ \to \Z$, define the corresponding central reduction
\begin{equation} \label{pendulum}
    \EH_{\lambda} = \EHc/ \langle \bx - \lambda(\bx) : \bx \in \bZ \rangle.
\end{equation}
For $k \in \Z$, let
\[
    \lambda_k \colon \bZ \to \Z,\quad (r,n) \mapsto kn,
\]
and define $\EH_k := \EH_{\lambda_k}$.  Thus, $\EH_k$ is the associative $\kk$-algebra generated by $w_\bx$, $\bx \in \bZ^*$, and relations
\begin{equation}
    [w_\bx, w_\by] = \{d\} w_{\bx+\by} + kn \delta_{\bx,-\by},\quad
    \text{where } d = \det \begin{pmatrix} \bx & \by \end{pmatrix},\ \bx = (r,n).
\end{equation}
We will denote the image of $w_\bx$ in $\EH_k$ again by $w_\bx$.

The integral general linear group $\GL_2(\Z)$ acts on $\EHc$ by $\kk$-algebra automorphisms via
\begin{equation} \label{rotate}
    \EHc \xrightarrow{\cong} \EHc,\qquad
    w_\bx \mapsto \det(\gamma) w_{\gamma \bx},\quad
    \bx \mapsto \gamma \bx,\qquad
    \bx \in \bZ,\ \gamma \in \GL_2(\Z).
\end{equation}
For a $\Z$-linear map $\lambda \colon \bZ \to \Z$, this induces isomorphisms
\begin{equation} \label{icy}
    \EH_\lambda \xrightarrow{\cong} \EH_{\lambda \gamma^{-1}},\qquad
    \gamma \in \GL_2(\Z).
\end{equation}
The importance of the $\EH_k$ is given by the following result, which says that every central reduction is isomorphic to some $\EH_k$.

\begin{prop} \label{violet}
    For every $\Z$-linear map $\lambda \colon \bZ \to \Z$, there exists $k \in \Z$ such that $\EH_\lambda \cong \EH_k$ as algebras.
\end{prop}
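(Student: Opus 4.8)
The plan is to reduce the statement to the elementary fact that $\GL_2(\Z)$ acts transitively, via column operations, on integer row vectors of a fixed content. A $\Z$-linear map $\lambda \colon \bZ \to \Z$ is determined by the pair $(a,b) := (\lambda(1,0), \lambda(0,1)) \in \Z^2$, and I would regard $\lambda$ as the row vector $(a,b)$ acting on column vectors by matrix multiplication; in this language $\lambda_k$ is the row vector $(0,k)$. By \eqref{icy}, for every $\gamma \in \GL_2(\Z)$ we have $\EH_\lambda \cong \EH_{\lambda\gamma^{-1}}$, and $\lambda\gamma^{-1}$ corresponds to the row vector $(a,b)\gamma^{-1}$. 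Hence it suffices to produce a single $\mu \in \GL_2(\Z)$ for which $(a,b)\mu = (0,k)$ for some $k \in \Z$; setting $\gamma := \mu^{-1} \in \GL_2(\Z)$ then gives $\lambda\gamma^{-1} = \lambda_k$ and therefore $\EH_\lambda \cong \EH_k$.

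To construct such a $\mu$, I would first dispose of the degenerate case: if $(a,b) = (0,0)$ then $\lambda = \lambda_0$ and we may take $k = 0$. Otherwise, set $d := \gcd(a,b) > 0$, write $a = da'$ and $b = db'$ with $\gcd(a',b') = 1$, and choose (via the extended Euclidean algorithm) integers $s,t$ with $a's + b't = 1$. Then the matrix
\[
    \mu = \begin{pmatrix} -b' & s \\ a' & t \end{pmatrix}
\]
satisfies $(a,b)\mu = (-ab' + ba',\, as + bt) = (0,d)$ and $\det \mu = -b't - sa' = -(a's + b't) = -1$, so $\mu \in \GL_2(\Z)$ and $\lambda\mu = \lambda_d$. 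Taking $\gamma = \mu^{-1}$ and $k = d$ then yields $\EH_\lambda \cong \EH_d$, completing the argument.

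I do not expect any serious obstacle here: the entire content is the observation above that a row vector can be reduced to $(0,\gcd)$ by a unimodular column operation, which is just Bézout's identity packaged into a matrix. The only points demanding a little care are verifying that the central-reduction isomorphism \eqref{icy} is indeed what makes the orbit computation suffice (already recorded in the excerpt, since $\Phi_\gamma$ carries the defining relation $\bx - \lambda(\bx)$ to $\gamma\bx - \lambda(\bx)$), and treating the degenerate case $(a,b) = (0,0)$ separately. One could additionally normalize to $k = d \ge 0$, but since the proposition only asserts existence of some $k \in \Z$, this is unnecessary.
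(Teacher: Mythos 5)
Your proposal is correct and takes essentially the same approach as the paper: both invoke the $\GL_2(\Z)$ action of \cref{icy} and use B\'ezout's identity to move the row vector $(a,b)$ representing $\lambda$ to $(0,\gcd(a,b))$, which represents $\lambda_{\gcd(a,b)}$. The only differences are cosmetic --- you construct $\mu$ with $\lambda\mu=\lambda_d$ and then set $\gamma=\mu^{-1}$, whereas the paper writes down $\gamma$ directly --- plus your explicit treatment of the degenerate case $(a,b)=(0,0)$, which the paper's formula (dividing by $k=\gcd(a,b)$) tacitly excludes.
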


\begin{proof}
    Let $\lambda \colon \bZ \to \Z$ be a $\Z$-linear map.  Thus, there exist $a,b \in \Z$ such that $\lambda(r,n) = ar + bn$.  Let $k = \gcd(a,b)$, and choose $c,d \in \Z$ such that $ac+bd=k$.  Define
    \[
        \gamma =
        \begin{pmatrix}
            d & -c \\
            a/k & b/k
        \end{pmatrix}
        \in \GL_2(\Z),\qquad
        \text{so that} \quad
        \gamma^{-1} =
        \begin{pmatrix}
            b/k & c \\
            -a/k & d
        \end{pmatrix}
        .
    \]
    Then we have $\lambda \gamma^{-1} = \lambda_k$, and the result follows from \cref{icy}.
\end{proof}

\begin{rem}
    Note that $\EH_0 \cong U(\fEH)$.  Furthermore, by \cite[Th.~2, Th.~3]{MS17}, $\EH_0$ is isomorphic to the elliptic Hall algebra of \cite{BS12}, specialized at $\bar{\sigma} = q^2 = \sigma^{-1}$.
\end{rem}

\begin{rem} \label{tensor}
    Being the universal enveloping algebra of a Lie algebra, $\EHc$ has a natural Hopf algebra structure.  For $\Z$-linear maps $\lambda_1, \lambda_2 \colon \bZ \to \Z$, the coproduct on $\EHc$ induces an algebra homomorphism $E_{\lambda_1 + \lambda_2} \to E_{\lambda_1} \otimes E_{\lambda_2}$.  In particular, if $M$ is an $\EH_k$-module and $N$ is an $\EH_l$-module, then $M \otimes N$ is naturally an $\EH_{k+l}$-module.
\end{rem}

Let $\EH^\pm$ be the subalgebra of $\EHc$ generated by $w_\bx$, $\bx \in \bZ^\pm$.  Note that, for any $k \in \Z$, $\EH^\pm$ is also isomorphic to the subalgebra of $\EH_k$ generated by the $w_\bx$, $\bx \in \bZ^\pm$.  It follows from the PBW theorem that multiplication induces a linear isomorphism
\begin{equation} \label{lantern}
    \EH^+ \otimes \EH^- \xrightarrow{\cong} \EH_k.
\end{equation}
We have automorphisms
\begin{align} \label{psi}
    \psi &\colon \EHc \xrightarrow{\cong} \EHc,&
    w_\bx &\mapsto w_{-\bx},&
    \bx &\mapsto -\bx,
    \\ \label{omega}
    \omega &\colon \EHc \xrightarrow{\cong} \EHc,&
    w_{r,n} &\mapsto (-1)^{n+1} w_{r,-n},&
    (r,n) &\mapsto (r,-n),
\end{align}
and $\psi(\EH^\pm) = \EH^\mp$.  (Note that $\psi$ is the automorphism \cref{rotate} for $\gamma$ equal to negative the identity matrix.)  For $k \in \Z$, $\psi$ and $\omega$ induce algebra isomorphisms
\begin{align} \label{psik}
    \psi_k &\colon \EH_k \xrightarrow{\cong} \EH_{-k},&
    w_\bx &\mapsto w_{-\bx},
    \\ \label{omegak}
    \omega_k &\colon \EH_k \xrightarrow{\cong} \EH_{-k},&
    w_{r,n} &\mapsto (-1)^{n+1} w_{r,-n}.
\end{align}

\subsection{Biangular presentations}

\begin{lem} \label{bulls}
    The subalgebra $\EH^\pm$ is generated by the elements $w_{r, \pm 1}$, $r \in \Z$, and $w_{\pm r,0}$, $r \ge 1$.
\end{lem}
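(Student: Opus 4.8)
The plan is to reduce the claim to a statement about Lie-algebra generation. Since $\bZ^+$ is a sub-semigroup of $\bZ$ containing no pair $\{\bx,-\bx\}$, the cocycle term $\delta_{\bx,-\by}\bx$ in \cref{phoenix} never contributes between generators of $\EH^+$, so the $\kk$-span $\fEH^+ := \Span_\kk\{w_\bx : \bx \in \bZ^+\}$ is a Lie subalgebra on which the bracket is simply $[w_\bx,w_\by] = \qint{d}\, w_{\bx+\by}$, and $\EH^+ = U(\fEH^+)$. Because a Lie-generating set of $\fEH^+$ is automatically an algebra-generating set of $U(\fEH^+)$, it suffices to prove that $\fEH^+$ is generated as a Lie algebra by the elements $w_{r,1}$ ($r\in\Z$) and $w_{r,0}$ ($r\ge 1$). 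By the symmetry discussed below I will treat only $\EH^+$, deducing the case of $\EH^-$ from the automorphism $\psi$ of \cref{psi}.

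The heart is an induction on $n$ proving that every $w_{r,n}$ with $n\ge 1$ lies in the Lie subalgebra $L$ generated by the proposed elements. The base case $n=1$ is immediate. For the inductive step, to reach a given $w_{t,N+1}$ I split $(t,N+1)=(r,1)+(t-r,N)$ and bracket an $n=1$ generator against a row-$N$ element: a direct computation gives $d=\det\begin{pmatrix} r & t-r \\ 1 & N\end{pmatrix}=r(N+1)-t$, whence
\[
    [w_{r,1}, w_{t-r,N}] = \qint{r(N+1)-t}\, w_{t,N+1}.
\]
Choosing any integer $r$ with $r(N+1)\ne t$ — at most one integer is forbidden — the scalar $\qint{r(N+1)-t}$ is a unit by \cref{trevor}, so I may solve for $w_{t,N+1}$, which lies in $L$ by the inductive hypothesis that $w_{t-r,N}\in L$. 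This shows $L$ contains every $w_\bx$ with $\bx\in\bZ^+$ of positive second coordinate; the remaining elements $w_{r,0}$, $r\ge 1$, are generators by assumption. Hence $L=\fEH^+$, as desired.

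For $\EH^-$ I would invoke symmetry. The automorphism $\psi$ of \cref{psi} restricts to an isomorphism $\EH^+\xrightarrow{\cong}\EH^-$ (as recorded just after \cref{omega}), sending $w_{r,1}\mapsto w_{-r,-1}$ and $w_{r,0}\mapsto w_{-r,0}$; as $r$ ranges over $\Z$, resp.\ over $r\ge 1$, these images run over precisely the proposed generators $w_{r,-1}$ ($r\in\Z$) and $w_{-r,0}$ ($r\ge 1$) of $\EH^-$, so the claim transports verbatim. \textbf{The step requiring the most care} is the choice of splitting that guarantees a nonzero determinant: it is exactly the invertibility hypothesis \cref{trevor} that permits dividing by $\qint{d}$, and the only genuine point to check is the elementary combinatorial fact that the single equation $r(N+1)=t$ can always be avoided by an integer $r$. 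Everything else is a routine bracket computation, so I do not anticipate a real obstacle.
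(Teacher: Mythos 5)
Your proof is correct and follows essentially the same route as the paper: an induction on the second coordinate, producing each $w_{t,n+1}$ as a unit multiple of a bracket of a row-$1$ generator against a row-$n$ element (using \cref{trevor} to invert $\qint{d}$), with the case of $\EH^-$ deduced via the involution $\psi$. The only difference is cosmetic: the paper fixes two explicit splittings, namely $[w_{r,n},w_{0,1}]$ for $r\ne 0$ and $[w_{1,n},w_{-1,1}]$ for $r=0$, where you choose a generic $r$ avoiding the single forbidden value with $r(N+1)=t$, and you route the argument through the Lie subalgebra $\fEH^+$ rather than working directly in the associative subalgebra.
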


\begin{proof}
    It suffices to consider $\EH^+$, since the result for $\EH^-$ then follows by applying the involution $\psi$ from \cref{psi}.  Let $\EH'$ be the subalgebra of $\EHc$ generated by the elements $w_{r,1}$, $r \in \Z$, and $w_{r,0}$, $r \ge 1$.  We show by induction on $n \ge 1$ that $w_{r,n} \in \EH'$ for all $r \in \Z$, from which the lemma follows.  The base case $n=1$ holds by definition.  Let $n \ge 1$, and assume that $w_{r,n} \in \EH'$ for all $r \in \Z$.  If $r \ne 0$, then $w_{r,n+1} = \{r\}^{-1} [w_{r,n}, w_{0,1}] \in \EH'$.  Otherwise, if $r=0$, we have $w_{0,n+1} = \{n+1\}^{-1}[w_{1,n},w_{-1,1}] \in \EH'$.
\end{proof}

\begin{lem} \label{twizzler}
    Suppose $\fg$ is a Lie algebra with $\kk$-module decomposition $\fg = \fg_1 \oplus \fg_2$, where $\fg_1,\fg_2$ are Lie subalgebras of $\fg$.  Furthermore, suppose that $S_i$ is a set of generators of $\fg_i$ for $i=1,2$, and that $[x,y] \in S_1 \cup S_2$ for all $x \in S_1$, $y \in S_2$.  Then
    \[
        U(\fg) \cong \big( U(\fg_1) \star U(\fg_2) \big)/ \langle xy-yx-[x,y] : x \in S_1,\ y \in S_2 \rangle,
    \]
    where $\star$ denotes the free product of associative algebras.
\end{lem}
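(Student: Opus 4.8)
The plan is to exhibit the obvious surjection from the quotient algebra onto $U(\fg)$ and to prove it is an isomorphism by comparing both sides with $U(\fg_1)\otimes U(\fg_2)$ via the PBW theorem. Write $A := \big(U(\fg_1)\star U(\fg_2)\big)/\langle xy-yx-[x,y] : x\in S_1,\ y\in S_2\rangle$, and for $p$ lying in $U(\fg_1)$ or $U(\fg_2)$ let $\bar p$ denote its image in $A$. The inclusions $\fg_i\hookrightarrow\fg$ induce algebra maps $U(\fg_i)\to U(\fg)$, which by the universal property of the free product assemble into an algebra map $U(\fg_1)\star U(\fg_2)\to U(\fg)$. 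Since for $x\in\fg_1$, $y\in\fg_2$ one has $xy-yx=[x,y]$ in $U(\fg)$, and the bracket computed in the subalgebra $\fg_i$ agrees with the bracket in $\fg$, the defining relations are sent to $0$; hence this map descends to an algebra homomorphism $\Phi\colon A\to U(\fg)$.

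Next I introduce two comparison maps. The multiplication map $m\colon U(\fg_1)\otimes U(\fg_2)\to U(\fg)$, $p\otimes q\mapsto pq$, is a $\kk$-module isomorphism: this is the PBW theorem applied to the $\kk$-module decomposition $\fg=\fg_1\oplus\fg_2$ into subalgebras (ordering a basis of $\fg_1$ before one of $\fg_2$). Define a $\kk$-linear map $\mu\colon U(\fg_1)\otimes U(\fg_2)\to A$ by $p\otimes q\mapsto \bar p\,\bar q$. Because $\Phi(\bar p)$ is, by construction, just the image of $p$ in $U(\fg)$, one checks directly that $\Phi\circ\mu=m$. The whole lemma now reduces to the single claim that \emph{$\mu$ is surjective}: granting this, $m$ injective forces $\mu$ injective, so $\mu$ is bijective, and then $\Phi=m\circ\mu^{-1}$ is a bijective algebra homomorphism, hence the desired isomorphism (with inverse automatically an algebra map).

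It remains to prove surjectivity of $\mu$, which is the heart of the argument. Since $\fg_i$ is generated as a Lie algebra by $S_i$, the algebra $U(\fg_i)$ is generated by $S_i$, so $A$ is generated as a $\kk$-algebra by $\{\bar g : g\in S_1\cup S_2\}$ and is therefore spanned by words $\bar g_1\cdots \bar g_k$ with $g_j\in S_1\cup S_2$. I show every such word lies in $\im\mu$ by induction on the pair $(k,\nu)$, ordered lexicographically, where $\nu$ is the number of \emph{inversions}, i.e. of pairs $i<j$ with $g_i\in S_2$ and $g_j\in S_1$. If $\nu=0$ the word has all $S_1$-letters preceding all $S_2$-letters, so it equals $\bar p\,\bar q=\mu(p\otimes q)$ for suitable $p\in U(\fg_1)$, $q\in U(\fg_2)$. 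If $\nu>0$, choose an adjacent inverted pair $\bar y\,\bar x$ (so $y\in S_2$, $x\in S_1$) and rewrite it using the defining relation as $\bar y\,\bar x=\bar x\,\bar y-\overline{[x,y]}$. The word with $\bar x\,\bar y$ substituted has the same length $k$ but exactly one fewer inversion, while the word with $\overline{[x,y]}$ substituted has length $k-1$, since the hypothesis $[x,y]\in S_1\cup S_2$ makes $\overline{[x,y]}$ a single generator. Both are strictly smaller in the chosen order, so by induction both lie in $\im\mu$, and hence so does the original word.

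The main obstacle is precisely the termination of this straightening procedure, and it is here that the closure hypothesis $[x,y]\in S_1\cup S_2$ is indispensable: it guarantees that commuting an $\fg_2$-generator past an $\fg_1$-generator produces a correction term that is again a single letter, so that the rewriting strictly decreases word length (up to same-length words with fewer inversions). Were $[x,y]$ allowed to be an arbitrary element of $\fg$, the correction could be a longer expression and the induction would break down. I also note that the only input from PBW is the identification of $m$ as an isomorphism, which requires $\fg$ to be free as a $\kk$-module; this holds in all intended applications.
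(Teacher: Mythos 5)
Your proposal is correct and follows essentially the same route as the paper: both compare the quotient algebra with $U(\fg_1)\otimes U(\fg_2)$ via the multiplication map (an isomorphism by PBW) and the map $p\otimes q\mapsto \bar p\,\bar q$, with surjectivity of the latter established by straightening words using the relations $xy-yx=[x,y]$ and the closure hypothesis $[x,y]\in S_1\cup S_2$. Your lexicographic induction on (word length, number of inversions) just makes explicit the termination argument that the paper summarizes as ``modulo shorter words.''
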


\begin{proof}
    Let $I$ be the ideal $\langle xy-yx-[x,y] : x \in S_1,\ y \in S_2 \rangle$.  Consider the sequence
    \[
        U(\fg_1) \otimes U(\fg_2)
        \xrightarrow{f} \big( U(\fg_1) \star U(\fg_2) \big)/ I
        \xrightarrow{g} U(\fg),
    \]
    where $f$ is the $\kk$-linear map given by multiplication, and $g$ is the algebra homomorphism arising from the universal property of the free product and the fact that the generators of $I$ are zero in $U(\fg)$.  The assumption $\fg = \fg_1 \oplus \fg_2$ implies that $g$ is surjective.  Now, elements of $U(\fg_1) \star U(\fg_2)$ can be written as linear combinations of words in $S_1 \cup S_2$.  In the quotient $\big( U(\fg_1) \star U(\fg_2) \big)/ I$, one can use the generators of $I$ to move elements of $U(\fg_1)$ to the left of elements of $U(\fg_2)$ modulo shorter words.  This implies that the map $f$ is surjective.  Finally, the composition $gf$ is a linear isomorphism by the PBW theorem.  It follows that $g$ is surjective, and hence an isomorphism.
\end{proof}

\begin{prop}
    The algebra $\EHc$ is isomorphic to the free product of the algebras $\EH^+ \otimes U(\bZ_\kk)$ and $\EH^-$ modulo the following relations:
    \begin{align}
        [w_{s,-1}, w_{1,1}] &= \{s+1\} w_{s+1,0} - \delta_{s,-1} (1,1),& s \in \Z, \label{cross1} \\
        [w_{s, \pm 1}, w_{\mp r,0}] &= \{r\} w_{s \mp r, \pm 1},& r \ge 1,\ s \in \Z, \label{cross2} \\
        [w_{r,0}, w_{-s,0}] &= \delta_{r,s} (r,0),& r,s \ge 1, \label{cross3} \\
        [\bx, w_{r,-1}] = [\bx,w_{-s,0}] &= 0,& r \in \Z,\ s \ge 1,\ \bx \in \bZ. \label{cross4}
    \end{align}
\end{prop}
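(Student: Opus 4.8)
The plan is to obtain this presentation as an instance of \cref{twizzler}, followed by a reduction step showing that the economical relation family \eqref{cross1}--\eqref{cross4} already generates the full ideal of cross relations. First I would introduce the Lie subalgebras $\fEH^\pm := \Span_\kk\{w_\bx : \bx \in \bZ^\pm\}$ of $\fEH$ and set $\fg_1 := \fEH^+ \oplus \bZ_\kk$ and $\fg_2 := \fEH^-$. Since $\bZ^+$ is closed under addition and contains no pair $\bx, -\bx$, the bracket formula \eqref{phoenix} shows $[\fEH^+,\fEH^+] \subseteq \fEH^+$; as $\bZ_\kk$ is central, $\fg_1$ is a Lie subalgebra, and likewise $\fg_2$. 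Then $\fEHc = \fg_1 \oplus \fg_2$ as $\kk$-modules, and by the PBW theorem $U(\fg_1) \cong U(\fEH^+) \otimes U(\bZ_\kk) \cong \EH^+ \otimes U(\bZ_\kk)$ while $U(\fg_2) \cong \EH^-$, so the two factors of the asserted free product are exactly $U(\fg_1)$ and $U(\fg_2)$.

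Next I would apply \cref{twizzler}, taking $S_1 = \{w_{s,1} : s \in \Z\} \cup \{w_{r,0} : r \ge 1\}$ together with a $\kk$-basis of $\bZ_\kk$, and $S_2 = \{w_{s,-1} : s \in \Z\} \cup \{w_{-r,0} : r \ge 1\}$; these generate $\fg_1$ and $\fg_2$ by \cref{bulls}. Using \eqref{phoenix}, each bracket $[x,y]$ with $x \in S_1$, $y \in S_2$ is a scalar multiple of a single $w_\bz$ plus a possible central term, hence lies in the $\kk$-span of $S_1 \cup S_2$, which is what the reduction ``modulo shorter words'' in the proof of \cref{twizzler} requires. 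A direct computation from \eqref{phoenix} identifies these brackets: one finds $[w_{s,1},w_{t,-1}] = \{-s-t\} w_{s+t,0} + \delta_{s+t,0}(s,1)$, whereas the brackets involving a level-$0$ generator and those involving a central element are precisely \eqref{cross2}, \eqref{cross3}, and \eqref{cross4}. Thus \cref{twizzler} presents $\EHc$ as the stated free product modulo the ideal $J$ generated by \eqref{cross2}--\eqref{cross4} together with the whole family $[w_{s,1},w_{t,-1}] - \{-s-t\}w_{s+t,0} - \delta_{s+t,0}(s,1)$, ranging over all $s,t \in \Z$.

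It then remains to show that $J$ equals the ideal $J'$ generated by \eqref{cross1}--\eqref{cross4}, in which only the single level-$1$ generator $w_{1,1}$ appears. The inclusion $J' \subseteq J$ is immediate, since \eqref{cross1} is the $s=1$ member of the level-$\pm 1$ family. For $J \subseteq J'$ I must derive the relation $[w_{s,1},w_{t,-1}] = \{-s-t\}w_{s+t,0} + \delta_{s+t,0}(s,1)$ for all $s \ne 1$ modulo $J'$, working inside the free product where the internal relations of $\EH^\pm$ hold. For $s \ge 2$ I would substitute the internal identity $w_{s,1} = \{s-1\}^{-1}[w_{s-1,0}, w_{1,1}]$ (valid by \eqref{phoenix} and \eqref{trevor}), expand $[w_{s,1},w_{t,-1}]$ with the Jacobi identity, and reduce every resulting term using \eqref{cross1} and the level-$0$ relations \eqref{cross2}--\eqref{cross4}; for $s \le 0$ I would instead write $w_{s,1} = \{1-s\}^{-1}[w_{1,1}, w_{s-1,0}]$ via \eqref{cross2} (now $w_{s-1,0} \in \EH^-$) and argue in the same way. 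I expect the main obstacle to be the bookkeeping of the central correction terms in this Jacobi reduction --- in particular verifying that the central contributions assemble as $(s-1,0)+(1,1) = (s,1)$ exactly when $s+t=0$ --- although this is a purely computational check rather than a conceptual one.
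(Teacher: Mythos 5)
Your proposal is correct and takes essentially the same route as the paper: the identical decomposition $\fg_1 = \fEH^+ \oplus \bZ_\kk$, $\fg_2 = \fEH^-$ fed into \cref{twizzler} via \cref{bulls}, followed by a Jacobi-identity argument showing that the full family of level-$(\pm 1)$ cross relations is generated by the single family \cref{cross1} together with \cref{cross2,cross3,cross4}. The only (harmless) difference is in that last reduction: where the paper inducts on the second index in unit steps by bracketing with $w_{\pm 1,0}$, you bracket once with $w_{s-1,0}$ for each $s$, and your central-term bookkeeping $(s-1,0)+(1,1)=(s,1)$ at $s+t=0$ is correct.
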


\begin{proof}
    Let $\fg = \fEH$, $\fg_1 = \Span_\kk \{w_\bx, \by : \bx \in \bZ^+,\ \by \in \bZ\}$, $\fg_2 = \Span_\kk \{w_\bx : \bx \in \bZ^-\}$, $S_1 = \{\bx, w_{r,1}, w_{s,0} : \bx \in \bZ,\ r \in \Z,\ s \ge 1\}$, $S_2 = \{w_{r,-1}, w_{-s,0} : r \in \Z,\ s \ge 1\}$.  Then it follows from \cref{bulls,twizzler} that $\EHc$ is isomorphic to the free product of the algebras $\EH^+ \otimes U(\bZ_\kk)$ and $\EH^-$ modulo the relations \cref{cross2,cross3,cross4} and
    \begin{equation} \label{cross0}
        [w_{s,-1}, w_{r,1}] = \{r+s\} w_{r+s,0} + \delta_{r,-s} (s,-1),\quad r,s \in \Z,
    \end{equation}
    which specializes to \cref{cross1} when $r=1$.

    It remains to show that the relations \cref{cross1,cross2,cross3,cross4} imply \cref{cross0}.  We first prove that they imply the $r \ge 1$ cases of \cref{cross0} by induction on $r$.  Fix $r \ge 1$ and suppose that \cref{cross0} holds for all $s \in \Z$.  Then, for $s \in \Z$, we have
    \begin{align*}
        \{1\} [w_{s,-1}, w_{r+1,1}]
        \ \ &\overset{\mathclap{\cref{cross2}}}{=}\ \ [w_{s,-1}, [ w_{1,0}, w_{r,1}]] \\
        &= [[w_{s,-1}, w_{1,0}],w_{r,1}] + [w_{1,0}, [w_{s,-1}, w_{r,1}]] \\
        &\overset{\mathclap{\cref{cross2}}}{=}\ \ \{1\} [w_{s+1,-1}, w_{r,1}] + \{r+s\}[w_{1,0}, w_{r+s,0}] +\delta_{r,-s} [w_{1,0}, (s,-1)] \\
        &\overset{\mathclap{\cref{cross3}}}{\underset{\mathclap{\cref{cross4}}}{=}}\ \ \{1\} \{r+s+1\} w_{r+s+1,0} + \{1\} \delta_{r+1,-s} (s+1,-1) - \{1\} \delta_{r+1,-s}(1,0) \\
        &= \{1\} \{r+s+1\} w_{r+s+1,0} + \{1\} \delta_{r+1,-s} (s,-1),
    \end{align*}
    where we used the Jacobi identity in the second equality and the induction hypothesis in the third and fourth equalities.  Dividing both sides by $\{1\}$, this completes the proof of the induction step.

    Finally, we prove that \cref{cross1,cross2,cross3,cross4} imply \cref{cross0} for $r \le 1$ by induction on $r$.  Fix $r \le 1$ and suppose that \cref{cross0} holds for all $s \in \Z$.  Then, for $s \in \Z$, we have
    \begin{align*}
        \{1\} [w_{s,-1}, w_{r-1,1}]
        \ \ &\overset{\mathclap{\cref{cross2}}}{=}\ \ [w_{s,-1}, [w_{r,1}, w_{-1,0}]] \\
        &= [[w_{s,-1}, w_{r,1}], w_{-1,0}] + [w_{r,1}, [w_{s,-1}, w_{-1,0}]] \\
        &\overset{\mathclap{\cref{cross2}}}{=}\ \ \{r+s\} [w_{r+s,0}, w_{-1,0}] + \delta_{r,-s}[(s,-1),w_{-1,0}] - \{1\} [w_{r,1}, w_{s-1,-1}] \\
        &\overset{\mathclap{\cref{cross3}}}{\underset{\mathclap{\cref{cross4}}}{=}}\ \ \{1\} \delta_{r-1,-s} (1,0) + \{1\}\{r+s-1\} w_{r+s-1,0} - \{1\} \delta_{r-1,-s} (r,1)\\
        &= \{1\} \{r+s-1\} w_{r+s-1,0} + \{1\} \delta_{r-1,-s} (s,-1),
    \end{align*}
    where we used the Jacobi identity in the second equality, the induction hypothesis in the third and fourth equalities, and the relation $[w_{s,-1},w_{-1,0}] = -\{1\} w_{s-1,-1}$ in $\EH^-$ in the third equality.
\end{proof}

\begin{cor} \label{crossbow}
    For $k \in \Z$, the algebra $\EH_k$ is isomorphic to the free product of the algebras $\EH^+$ and $\EH^-$ modulo the relations
    \begin{align} \label{bolt1}
        [w_{s,-1},w_{1,1}] &= \{s+1\} w_{s+1,0} - \delta_{s,-1} k,& s \in \Z,
        \\ \label{bolt2}
        [w_{s,\pm 1}, w_{\mp r,0}] &= \{r\} w_{s \mp r, \pm 1},& r \ge 1,\ s \in \Z,
        \\ \label{bolt3}
        [w_{r,0}, w_{-s,0}] &= 0,& r,s \ge 1.
    \end{align}
\end{cor}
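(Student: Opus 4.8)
The plan is to deduce this from the preceding proposition by applying the central reduction defined in \cref{pendulum}. That proposition exhibits $\EHc$ as the free product of $\EH^+ \otimes U(\bZ_\kk)$ and $\EH^-$ modulo \cref{cross1,cross2,cross3,cross4}, while by definition $\EH_k = \EHc / \langle \bx - \lambda_k(\bx) : \bx \in \bZ \rangle$ with $\lambda_k(r,n) = kn$. Since quotienting a presented algebra by a further two-sided ideal merely adjoins the corresponding relations, $\EH_k$ is the quotient of $(\EH^+ \otimes U(\bZ_\kk)) \star \EH^-$ by \cref{cross1,cross2,cross3,cross4} together with the central relations $\bx = \lambda_k(\bx)$, $\bx \in \bZ$.

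The heart of the argument is to track the collapse of the tensor factor $U(\bZ_\kk)$. Since $\bZ_\kk$ is central and abelian, $U(\bZ_\kk) \cong \Sym(\bZ_\kk)$ is a polynomial algebra on $\bZ_\kk \cong \kk^2$; imposing $\bx = \lambda_k(\bx)$ for $\bx$ ranging over a $\Z$-basis of $\bZ$ identifies both polynomial generators with scalars, so that $U(\bZ_\kk) / \langle \bx - \lambda_k(\bx) \rangle \cong \kk$. Consequently the first factor of the free product becomes $\EH^+ \otimes_\kk \kk \cong \EH^+$, and the presentation reduces to $\EH^+ \star \EH^-$ modulo the images of \cref{cross1,cross2,cross3,cross4} under the substitution $\bx \mapsto \lambda_k(\bx)$.

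It then remains to evaluate these substitutions on each relation. Using $\lambda_k(1,1) = k$ turns \cref{cross1} into \cref{bolt1}; relation \cref{cross2} involves no central elements and survives unchanged as \cref{bolt2}; using $\lambda_k(r,0) = 0$ makes the right-hand side $\delta_{r,s}(r,0)$ of \cref{cross3} vanish, yielding \cref{bolt3}; and \cref{cross4} becomes vacuous, since its left-hand sides are now brackets with scalars. Assembling the surviving relations gives exactly the claimed presentation of $\EH_k$.

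I expect the only delicate point to be the bookkeeping in the first two paragraphs: one must verify carefully that quotienting the free-product presentation by the central ideal really does collapse $U(\bZ_\kk)$ to $\kk$ and substitute scalar values into the relations, rather than producing unexpected cross-terms through the free product. Once this is pinned down, matching the surviving relations with \cref{bolt1,bolt2,bolt3} is immediate.
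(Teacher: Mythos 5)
Your proposal is correct and is exactly the argument the paper intends: the corollary is stated without proof precisely because it follows from the preceding proposition by imposing the central reduction $\bx = \lambda_k(\bx)$ of \cref{pendulum}, which collapses $U(\bZ_\kk)$ to $\kk$ and substitutes $\lambda_k(1,1)=k$ and $\lambda_k(r,0)=0$ into \cref{cross1,cross3}, rendering \cref{cross4} vacuous. Your bookkeeping about quotients of free products by ideals generated in one factor is sound, so there is nothing to add.
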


\begin{lem} \label{slime}
    For $k \in \Z$, $\EH_k$ is generated, as an algebra, by $w_{r,\pm 1}$, $r \in \Z$.
\end{lem}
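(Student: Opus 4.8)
The plan is to show that the subalgebra $A \subseteq \EH_k$ generated by the elements $w_{r,\pm 1}$, $r \in \Z$, is in fact all of $\EH_k$. First I would assemble a small generating set for $\EH_k$ from the results already available. By \cref{lantern}, multiplication gives an isomorphism $\EH^+ \otimes \EH^- \xrightarrow{\cong} \EH_k$, so $\EH_k$ is generated as an algebra by $\EH^+$ and $\EH^-$ together. Feeding in \cref{bulls}, which states that $\EH^\pm$ is generated by $w_{r,\pm 1}$ ($r \in \Z$) together with $w_{\pm r,0}$ ($r \ge 1$), we conclude that $\EH_k$ is generated by the union $\{w_{r,\pm 1} : r \in \Z\} \cup \{w_{\pm r,0} : r \ge 1\}$. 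Since $A$ already contains the first set, it suffices to prove that $w_{m,0} \in A$ for every $m \ne 0$.

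To do this I would invoke the defining commutator relation of $\EH_k$, namely $[w_\bx, w_\by] = \{d\}\, w_{\bx + \by} + kn\,\delta_{\bx,-\by}$ with $d = \det \begin{pmatrix} \bx & \by \end{pmatrix}$ and $\bx = (r,n)$, applied to the pair $\bx = (m,-1)$, $\by = (0,1)$. Here $d = m$, $\bx + \by = (m,0)$, and the Kronecker term vanishes because $-\by = (0,-1) \neq (m,-1)$ whenever $m \ne 0$. This yields
\[
    [w_{m,-1}, w_{0,1}] = \{m\}\, w_{m,0}, \qquad m \ne 0 .
\]
Because $\{m\} \in \kk^\times$ for $m \ne 0$ by \cref{trevor}, I may divide to get $w_{m,0} = \{m\}^{-1}[w_{m,-1}, w_{0,1}]$. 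Both $w_{m,-1}$ and $w_{0,1}$ are of the form $w_{r,\pm 1}$ and hence lie in $A$, and $A$ is closed under the commutator bracket; therefore $w_{m,0} \in A$ for all $m \ne 0$, which finishes the argument.

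There is no serious obstacle here: the whole content is choosing a single pair of generators whose bracket produces $w_{m,0}$ with an invertible scalar and no surviving central term. The two points to verify are exactly that the $\delta_{\bx,-\by}$ contribution dies — which is guaranteed by taking second coordinates $-1$ and $+1$ with distinct first coordinates in the range $m \ne 0$ — and that $\{m\}$ is a unit, which is the standing hypothesis \cref{trevor}. A pleasant feature is that this one identity handles $w_{r,0}$ and $w_{-r,0}$ ($r \ge 1$) simultaneously and works uniformly in $k$, so no separate case analysis or appeal to the involution $\psi_k$ is required.
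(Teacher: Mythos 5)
Your proof is correct and follows essentially the same route as the paper: reduce via \cref{bulls} to showing $w_{m,0}$ lies in the subalgebra generated by the $w_{r,\pm1}$, then realize $w_{m,0}$ as $\{m\}^{-1}$ times a commutator of two such elements (the paper uses $[w_{0,-1},w_{r,1}]=\{r\}w_{r,0}$ where you use $[w_{m,-1},w_{0,1}]=\{m\}w_{m,0}$, an immaterial difference). Both the vanishing of the central term and the invertibility of $\{m\}$ are checked correctly.
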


\begin{proof}
    Fix $k \in \Z$.  Let $A$ denote the subalgebra of $\EH_k$ generated by $w_{r, \pm 1}$, $r \in \Z$.  By \cref{bulls}, it suffices to show that $w_{r,0} \in A$ for all nonzero $r \in \Z$.  But this follows easily from the fact that, for $r \in \Z$, $r \ne 0$, we have $[w_{0,-1},w_{r,1}] = \{r\} w_{r,0}$.
\end{proof}

\begin{cor}
    The isomorphism $\omega_k$ from \cref{omegak} is the unique isomorphism $\EH_k \xrightarrow{\cong} \EH_{-k}$ such that $w_{r,\pm 1} \mapsto w_{r,\mp 1}$ for $r \in \Z$.
\end{cor}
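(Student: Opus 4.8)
The plan is to separate the corollary into two independent assertions: that the map $\omega_k$ of \cref{omegak} genuinely sends $w_{r,\pm 1} \mapsto w_{r,\mp 1}$, and that this normalization on the generators pins down the isomorphism uniquely. The first is a one-line sign check, and the second is an immediate consequence of the generation statement \cref{slime}.

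First I would verify the explicit action on generators. By definition $\omega_k(w_{r,n}) = (-1)^{n+1} w_{r,-n}$. Since the sign $(-1)^{n+1}$ equals $1$ exactly when $n$ is odd, in particular when $n = \pm 1$, we get $\omega_k(w_{r,1}) = (-1)^{2} w_{r,-1} = w_{r,-1}$ and $\omega_k(w_{r,-1}) = (-1)^{0} w_{r,1} = w_{r,1}$. Thus $\omega_k(w_{r,\pm 1}) = w_{r,\mp 1}$ for all $r \in \Z$, so $\omega_k$ is an isomorphism $\EH_k \xrightarrow{\cong} \EH_{-k}$ of the required type, settling existence.

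For uniqueness, the essential input is \cref{slime}, which states that $\EH_k$ is generated as an algebra by the elements $w_{r,\pm 1}$, $r \in \Z$. Any algebra homomorphism is determined by its restriction to a generating set; hence any homomorphism $\EH_k \to \EH_{-k}$ satisfying $w_{r,\pm 1} \mapsto w_{r,\mp 1}$ agrees with $\omega_k$ on all of these generators, and therefore on all of $\EH_k$. In particular at most one such isomorphism exists, and it must be $\omega_k$.

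There is essentially no obstacle to overcome here: granting \cref{slime}, uniqueness is automatic, and the only point requiring care is the sign bookkeeping in the definition of $\omega_k$ — namely confirming that the factor $(-1)^{n+1}$ is trivial for $n = \pm 1$, so that the prescribed normalization on $w_{r,\pm 1}$ is genuinely the one realized by $\omega_k$.
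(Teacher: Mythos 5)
Your proof is correct and is exactly the argument the paper intends: the corollary is stated without a separate proof precisely because it follows immediately from the definition of $\omega_k$ in \cref{omegak} (where the sign $(-1)^{n+1}$ is trivial for $n=\pm 1$) together with the generation statement of \cref{slime}. Nothing is missing.
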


\section{Trace of a category\label{sec:trace}}

In this section we collect some important facts about traces of categories.  We refer the reader to \cite{BGHL14} for a more thorough treatment.  Throughout this section $\kk$ denotes an arbitrary commutative ring.

Recall that the \emph{trace} or \emph{zeroth Hochschild homology} of a small $\kk$-linear category $\cC$ is the $\kk$-module
\begin{equation}
    \Tr(\cC) := \left( \bigoplus_{X \in \cC} \End_\cC(X) \right)/\Span_\kk \{f \circ g - g \circ f\},
\end{equation}
where $f$ and $g$ run through all pairs of morphisms $f \colon X \to Y$ and $g \colon Y \to X$ in $\cC$.  We let $[f] \in \Tr(\cC)$ denote the class of an endomorphism $f \in \End_\cC(X)$.  For $f,g \in \bigoplus_{X \in \cC} \End_\cC(X)$, we define
\begin{equation} \label{equivdef}
    f \equiv g \iff [f] = [g].
\end{equation}
Thus, for example, we have
\begin{equation} \label{dizzy}
    fg \equiv gf \quad \text{for all } f \colon X \to Y,\ g \colon Y \to X.
\end{equation}

If $\cC$ is a $\kk$-linear \emph{monoidal} category, then $\Tr(\cC)$ is an associative $\kk$-algebra with multiplication given by
\begin{equation}
    [f] [g] := [f \otimes g].
\end{equation}
A $\kk$-linear functor $F \colon \cC \to \cD$ induces a linear map on traces:
\begin{equation} \label{crayon}
    \Tr(F) \colon \Tr(\cC) \to \Tr(\cD),\quad [f] \mapsto [F(f)],\quad f \text{ an endomorphism in } \cC.
\end{equation}
If $\cC,\cD$ are monoidal categories and $F$ is a monoidal functor, then \cref{crayon} is a homomorphism of associative $\kk$-algebras.

If $\cC_i$, $i \in I$, are $\kk$-linear categories, then we have a canonical isomorphism
\[
    \Tr \left( \bigsqcup_{i \in I} \cC_i \right) \cong \bigoplus_{i \in I} \Tr(\cC_i).
\]
If $\cC_1, \cC_2$ are $\kk$-linear subcategories of a $\kk$-linear category $\cC$, then the tensor product $\otimes \colon \cC_1 \times \cC_2 \to \cC$ induces a linear map
\[
    \Tr(\cC_1) \otimes \Tr(\cC_2) \to \Tr(\cC).
\]

For a $\kk$-linear category $\cC$, let $\Add(\cC)$ denote its additive envelope.  If $\cC$ is monoidal, then $\Add(\cC)$ inherits a natural monoidal structure.

\begin{lem}[{\cite[Exercise~9]{BGHL14}}] \label{GME}
    If $\cC$ is a $\kk$-linear category, then the inclusion functor $\cC \to \Add(\cC)$ induces a linear isomorphism $\Tr(\cC) \cong \Tr(\Add(\cC))$.  If $\cC$ is monoidal, then this is an isomorphism of associative $\kk$-algebras.
\end{lem}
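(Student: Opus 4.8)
The plan is to construct an explicit inverse to the inclusion-induced map $\iota_* \colon \Tr(\cC) \to \Tr(\Add(\cC))$ by means of a ``matrix trace.'' Recall that objects of $\Add(\cC)$ are formal finite direct sums $X = \bigoplus_i X_i$ of objects of $\cC$, with morphisms given by matrices of morphisms in $\cC$; write $\iota_i \colon X_i \to X$ and $\pi_i \colon X \to X_i$ for the structure inclusions and projections, so that $\pi_i \iota_j = \delta_{ij}\id_{X_i}$ and $\sum_i \iota_i \pi_i = \id_X$. For an endomorphism $f = (f_{ij})$ of $X$ in $\Add(\cC)$, whose diagonal entries $f_{ii}$ lie in $\End_\cC(X_i)$, I would set $\htr(f) := \sum_i [f_{ii}] \in \Tr(\cC)$, the sum of the classes of the diagonal blocks.

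First I would check that $\htr$ descends to a well-defined map $\Tr(\Add(\cC)) \to \Tr(\cC)$. Given $f \colon X \to Y$ with blocks $f_{ij} \colon X_j \to Y_i$ and $g \colon Y \to X$ with blocks $g_{ji} \colon Y_i \to X_j$, a direct computation of the diagonal blocks of the composites yields $\htr(fg) = \sum_{i,j}[f_{ij} g_{ji}]$ and $\htr(gf) = \sum_{i,j}[g_{ji} f_{ij}]$. These agree term by term by the trace relation \cref{dizzy} applied to the composable pair $f_{ij}, g_{ji}$ in $\cC$, so $\htr$ respects the defining relations of $\Tr(\Add(\cC))$.

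Next I would verify that $\htr$ is a two-sided inverse of $\iota_*$. The composite $\htr \circ \iota_*$ is clearly the identity, since an object $X \in \cC$ sits in $\Add(\cC)$ as a single $1 \times 1$ summand and $\htr([f]) = [f]$. For the other composite, given $f = (f_{ij}) \in \End_{\Add(\cC)}(X)$ I would expand $f = \sum_{i,j} \iota_i f_{ij} \pi_j$ and apply the trace relation in $\Add(\cC)$ to each term: $[\iota_i f_{ij} \pi_j] = [\pi_j \iota_i f_{ij}] = \delta_{ij} [f_{ii}]$, so the off-diagonal blocks vanish in $\Tr(\Add(\cC))$ and $[f] = \sum_i [f_{ii}] = \iota_*(\htr(f))$. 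Hence $\iota_* \circ \htr = \id$, and $\iota_*$ is a linear isomorphism.

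Finally, for the monoidal statement I would observe that when $\cC$ is monoidal the inclusion $\cC \to \Add(\cC)$ is a (strict) monoidal functor, as the monoidal structure on $\Add(\cC)$ is obtained by distributing $\otimes$ over the formal direct sums. By the fact noted after \cref{crayon} that a monoidal functor induces an algebra homomorphism on traces, $\iota_*$ is a homomorphism of associative $\kk$-algebras, and being bijective it is an algebra isomorphism. The only steps demanding genuine care are the well-definedness of $\htr$ and the vanishing of the off-diagonal blocks; both reduce to bookkeeping with the structure maps $\iota_i, \pi_i$ and repeated use of the trace relation, so I expect the main (though modest) obstacle to be keeping the matrix indices straight rather than any conceptual difficulty.
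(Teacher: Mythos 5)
Your proposal is correct and follows essentially the same route as the paper: the paper proves surjectivity via the expansion $(f_{ij}) = \sum_{i,j}\imath f_{ij}\pi \equiv \sum_i f_{ii}$ and injectivity via the computation $fg - gf \equiv \sum_{i,j}(f_{ij}g_{ji} - g_{ji}f_{ij})$, which are exactly your two key calculations, merely repackaged as the construction of an explicit inverse $\htr$ rather than as a surjectivity/injectivity argument. The monoidal statement is handled identically in both.
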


\begin{proof}
    Objects of $\Add(\cC)$ are formal direct sums $\bigoplus_{i=1}^n X_i$, $X_i \in \cC$.  An endomorphism of such an object is a matrix $(f_{ij})_{i,j=1}^n$ with $f_{ij} \colon X_i \to X_j$.  For $j=1,\dotsc,n$, we have canonical inclusion and projection maps
    \[
        X_j \xhookrightarrow{\imath_j} \bigoplus_{i=1}^n X_i \xrightarrowdbl{\pi_j} X_j.
    \]
    Then we have
    \[
        (f_{ij})_{i,j=1}^n
        = \sum_{i,j=1}^n \imath_j f_{ij} \pi_i
        \equiv \sum_{i,j=1}^n f_{ij} \pi_i \imath_j
        \equiv \sum_{i=1}^n f_{ii}.
    \]
    Thus the map $\Tr(\cC) \to \Tr(\Add(\cC))$ induced by the inclusion functor is surjective.

    Similarly, for morphisms
    \[
        \bigoplus_{i=1}^n X_i \xrightarrow{f=(f_{ij})} \bigoplus_{j=1}^m Y_j \xrightarrow{g=(g_{ji})} \bigoplus_{i=1}^n X_i
    \]
    in $\Add(\cC)$, we have
    \[
        fg - gf
        = \left( \sum_{i=1}^n f_{il} g_{ji} \right)_{j,l=1}^m - \left( \sum_{j=1}^m g_{jl} f_{ij} \right)_{i,l=1}^n
        \equiv \sum_{i=1}^n \sum_{j=1}^m (f_{ij} g_{ji} - g_{ji} f_{ij}).
    \]
    Hence the map $\Tr(\cC) \to \Tr(\Add(\cC))$ induced by the inclusion functor is also injective.  As noted above, this map is a homomorphism of associative $\kk$-algebras when $\cC$ is monoidal.
\end{proof}

If $S \subseteq \Ob(\cC)$ is a subset of the set of objects of a small category $\cC$, let $\cC|_S$ denote the full subcategory of $\cC$ with $\Ob(\cC|_S) = S$.

\begin{lem}[{\cite[Lem.~2.1]{BHLW17}}] \label{AMC}
    Suppose $\cC$ is a $\kk$-linear additive category.  Let $S \subseteq \Ob(\cC)$ be a subset such that every object of $\cC$ is isomorphic to a direct sum of finitely many copies of objects in $S$.  Then the inclusion functor $\cC|_S \to \cC$ induces a linear isomorphism $\Tr(\cC|_S) \cong \Tr(\cC)$.
\end{lem}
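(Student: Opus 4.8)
The plan is to show that the inclusion functor $\cC|_S \to \cC$ induces a map on traces that is both surjective and injective, mirroring the structure of the proof of \cref{GME}. The key observation is that the hypothesis gives, for each object $X \in \cC$, a choice of isomorphism $X \cong \bigoplus_{i=1}^n S_i$ with each $S_i \in S$, together with the associated inclusion and projection morphisms $\imath_i \colon S_i \to X$ and $\pi_i \colon X \to S_i$ satisfying $\pi_i \imath_i = \id_{S_i}$ and $\sum_i \imath_i \pi_i = \id_X$. These morphisms are the engine of the whole argument, playing exactly the role of the canonical inclusions and projections in \cref{GME}; the only difference is that here they come from an abstractly chosen isomorphism rather than a formal direct sum.

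For \textbf{surjectivity}, I would take an arbitrary endomorphism $f \in \End_\cC(X)$ and use the chosen decomposition to write
\[
    f = \id_X \, f \, \id_X
    = \sum_{i,j} \imath_i \pi_i f \imath_j \pi_j
    = \sum_{i,j} \imath_i (\pi_i f \imath_j) \pi_j.
\]
Each summand $\imath_i (\pi_i f \imath_j) \pi_j$ is a composite $X \to S_j \to S_i \to X$, so applying the trace relation \cref{dizzy} to cycle the factors, it is equivalent to $(\pi_i f \imath_j)(\pi_j \imath_i)$, which vanishes unless $i=j$ (since $\pi_j \imath_i = \delta_{ij}\id$) and equals $\pi_i f \imath_i \in \End_\cC(S_i)$ when $i=j$. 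Hence $f \equiv \sum_i \pi_i f \imath_i$, and the right-hand side lies in the image of $\Tr(\cC|_S)$. This shows every class in $\Tr(\cC)$ is represented by an endomorphism of an object of $S$, giving surjectivity.

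For \textbf{injectivity}, I would suppose that an endomorphism $h \in \End_\cC(S)$ with $S \in S$ becomes zero in $\Tr(\cC)$, meaning $h = \sum_k (f_k g_k - g_k f_k)$ for morphisms $f_k \colon X_k \to Y_k$ and $g_k \colon Y_k \to X_k$ in $\cC$. Using the chosen decompositions of each $X_k$ and $Y_k$ into objects of $S$ via the inclusion/projection maps, I would rewrite each commutator $f_k g_k - g_k f_k$ in terms of morphisms between the summands lying in $S$, exactly as in the second half of the proof of \cref{GME} where $fg - gf$ is expanded into a sum of commutators $f_{ij}g_{ji} - g_{ji}f_{ij}$ of morphisms between summands. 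This exhibits $h$ as an element of the span of commutators of morphisms in $\cC|_S$, so $[h] = 0$ already in $\Tr(\cC|_S)$.

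The main obstacle I anticipate is purely bookkeeping rather than conceptual: one must be careful that the chosen isomorphisms and the resulting inclusion/projection systems are handled consistently, and that the reduction of an arbitrary commutator in $\cC$ to commutators in $\cC|_S$ genuinely stays within the span of commutators (as opposed to merely within the additive span of endomorphisms of objects in $S$). The cleanest route is to reuse \cref{GME} as a black box: the only reason $\Add(\cC)$ appears there is to have formal direct sums available, but here the hypothesis supplies \emph{actual} direct-sum decompositions inside $\cC$, so the identical manipulations go through verbatim with $\imath_i, \pi_i$ replaced by the components of the chosen isomorphisms. Once the inclusion/projection calculus is set up, both surjectivity and injectivity follow from the same two displayed computations as in \cref{GME}, and the monoidal statement is immediate since no tensor structure enters the argument.
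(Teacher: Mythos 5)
The paper does not actually prove \cref{AMC}; it cites \cite[Lem.~2.1]{BHLW17} and moves on, so there is no in-house argument to compare against. Your overall strategy---transporting the inclusion/projection calculus from the proof of \cref{GME} along the chosen isomorphisms $X \cong \bigoplus_{i} S_i$---is the standard proof of this fact and is essentially what the cited reference does. Your surjectivity computation is correct as written: $[f] = \sum_i [\pi_i f \imath_i]$ follows from cycling and $\pi_j \imath_i = \delta_{ij}\id$, and each $\pi_i f \imath_i$ is an endomorphism of an object of $S$.

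The step that needs tightening is injectivity. As phrased, you ``rewrite each commutator $f_k g_k - g_k f_k$ in terms of morphisms between the summands,'' but if that rewriting is carried out using the trace relation \cref{dizzy} in the ambient category $\cC$, it only produces an equivalence modulo commutators of $\cC$---which is circular, since membership in that span is exactly what you assumed about $h$. (You flag this worry yourself, but the fix is not quite ``the identical manipulations go through verbatim''; it is a reorganization.) The clean argument is to build a retraction: define the $\kk$-linear map $P \colon \bigoplus_{X \in \cC} \End_\cC(X) \to \Tr(\cC|_S)$ by $P(f) = \sum_i [\pi_i f \imath_i]$. For $f \colon X \to Y$ and $g \colon Y \to X$, inserting $\id_X = \sum_i \imath^X_i \pi^X_i$ gives the \emph{literal} identity $\pi^Y_j f g\, \imath^Y_j = \sum_i f_{ij} g_{ji}$ with $f_{ij} := \pi^Y_j f \imath^X_i$ and $g_{ji} := \pi^X_i g\, \imath^Y_j$ morphisms between objects of $S$, whence $P(fg - gf) = \sum_{i,j}\bigl([f_{ij}g_{ji}] - [g_{ji}f_{ij}]\bigr) = 0$ in $\Tr(\cC|_S)$; no trace relation of $\cC$ is used. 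Thus $P$ descends to $\Tr(\cC) \to \Tr(\cC|_S)$, and for $h \in \End_\cC(Z)$ with $Z \in S$ one has $\sum_i [\pi_i h \imath_i] = \bigl[h \sum_i \imath_i \pi_i\bigr] = [h]$ using only trace relations inside $\cC|_S$, so $P$ is a left inverse of the map induced by inclusion. Combined with your surjectivity argument this completes the proof; the rest of your write-up is fine.
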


\begin{cor} \label{WSB}
    Suppose $\cC$ is a $\kk$-linear (not necessarily additive) category.  Let $S \subseteq \Ob(\cC)$ be a subset such that every object of $\cC$ is isomorphic in $\Add(\cC)$ to a direct sum of finitely many copies of objects in $S$.  Then the inclusion functor $\cC|_S \to \cC$ induces a linear isomorphism $\Tr(\cC|_S) \cong \Tr(\cC)$.
\end{cor}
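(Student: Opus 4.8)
The plan is to reduce the statement to \cref{AMC} by passing to the additive envelope and exploiting the functoriality of $\Tr$ together with \cref{GME}. The key observation is that the inclusion $\cC \to \Add(\cC)$ is fully faithful: an object $X \in \cC$ becomes the one-term direct sum in $\Add(\cC)$, and for $X, Y \in \cC$ the morphism space $\Hom_{\Add(\cC)}(X,Y)$ is by construction equal to $\Hom_\cC(X,Y)$. Consequently, for the subset $S \subseteq \Ob(\cC) \subseteq \Ob(\Add(\cC))$, the full subcategories $\cC|_S$ and $\Add(\cC)|_S$ literally coincide.

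First I would verify that the spanning hypothesis lifts to $\Add(\cC)$. Every object of $\Add(\cC)$ is a finite direct sum $\bigoplus_i X_i$ of objects $X_i \in \cC$, and by assumption each $X_i$ is isomorphic in $\Add(\cC)$ to a finite direct sum of copies of objects in $S$; concatenating these, every object of $\Add(\cC)$ is isomorphic in $\Add(\cC)$ to a finite direct sum of copies of objects in $S$. Since $\Add(\cC)$ is additive, \cref{AMC} applies with the subset $S$, showing that the inclusion $\Add(\cC)|_S \to \Add(\cC)$ induces an isomorphism $\Tr(\Add(\cC)|_S) \xrightarrow{\cong} \Tr(\Add(\cC))$.

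Finally I would assemble the triangle of inclusion functors
\[
    \cC|_S \xrightarrow{\iota} \cC \xrightarrow{j} \Add(\cC),
\]
noting that $j \circ \iota$ equals the inclusion $\Add(\cC)|_S = \cC|_S \to \Add(\cC)$ under the identification above. Applying the functor $\Tr$ and using $\Tr(j \circ \iota) = \Tr(j) \circ \Tr(\iota)$, the composite $\Tr(j)\circ\Tr(\iota)$ is an isomorphism by the previous paragraph, while $\Tr(j)$ is an isomorphism by \cref{GME}. Hence $\Tr(\iota) = \Tr(j)^{-1} \circ \Tr(j \circ \iota)$ is an isomorphism, as required. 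I expect the only real subtlety, rather than a genuine obstacle, to be the careful bookkeeping needed to confirm that $\cC|_S$ and $\Add(\cC)|_S$ agree and that the triangle of inclusions commutes on the nose, so that functoriality of $\Tr$ can be applied cleanly.
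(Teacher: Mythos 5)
Your proof is correct and follows essentially the same route as the paper: both factor the inclusion as $\cC|_S \to \cC \to \Add(\cC)$, deduce that the composite induces an isomorphism on traces via \cref{AMC} (after identifying $\cC|_S$ with $\Add(\cC)|_S$ and lifting the spanning hypothesis to the additive envelope), invoke \cref{GME} for the second map, and conclude by two-out-of-three. Your write-up simply makes explicit the bookkeeping that the paper leaves implicit.
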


\begin{proof}
    The inclusion functors $\cC|_S \xrightarrow{F} \cC \xrightarrow{G} \Add(\cC)$ induce linear maps
    \[
        \Tr(\cC|_S) \xrightarrow{\Tr(F)} \Tr(\cC) \xrightarrow{\Tr(G)} \Tr(\Add(\cC)).
    \]
    The maps $\Tr(G) \circ \Tr(F) = \Tr(G \circ F)$ and $\Tr(G)$ are linear isomorphisms by \cref{AMC} and \cref{GME}, respectively.  It follows that $\Tr(F)$ is also a linear isomorphism.
\end{proof}

For the remainder of this section, we assume that $\cC$ is a small $\kk$-linear additive category with
\[
    \Ob(\cC) = \{X_n : n \in \N\},
\]
where $\N$ denotes the set of nonnegative integers, and we assume that $X_n \ne X_m$ for $n \ne m$.  Furthermore, suppose that we have subsets $D_{m,n} \subseteq \Hom_\cC(X_n,X_m)$, $m,n \in \N$, and $D_n \subseteq \Hom_\cC(X_n,X_n)$, $n \in \N$, with the following properties:
\begin{description}
    \item[B1\label{B1}] We have $D_{n,n} = \{1_{X_n}\}$ for all $n \in \N$, and $\bB_{m,n} := \bigsqcup_{l=0}^{\min(m,n)} D_{m,l} D_l D_{l,n}$ is a basis of $\Hom_\cC(X_n,X_m)$ for each $m,n \in \N$.  (Part of our assumption here is that the sets $D_{m,l} D_l D_{l,n}$ are disjoint.)
    \item[B2\label{B2}] For all $n \in \N$, $R_n := \Span_\kk D_n$ is a subalgebra of $\End_\cC(X_n)$.
\end{description}
For $n \in \N$, let $\cC_n$ denote $\kk$-linear subcategory of $\cC$ with one object $X_n$ and $\End_{\cC_n}(X_n) = R_n$.

\begin{prop} \label{saber}
    Under the above assumptions on $\cC$, the inclusion $\bigoplus_{n \in \N} \cC_n \to \cC$ induces a linear isomorphism
    \[
        \bigoplus_{n \in \N} \Tr(\cC_n) \xrightarrow{\cong} \Tr(\cC).
    \]
\end{prop}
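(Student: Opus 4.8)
The plan is to reduce the statement to the bijectivity of a single map and then prove it by comparing filtrations on the two sides. First I would identify the source concretely: each $\cC_n$ has a single object with endomorphism algebra $R_n$, so $\Tr(\cC_n) = R_n/[R_n,R_n]$ is the cocenter of $R_n$, and since there are no morphisms between distinct summands of $\bigoplus_{n\in\N}\cC_n$, the disjoint-union formula for traces gives $\Tr\big(\bigoplus_{n}\cC_n\big)\cong\bigoplus_n R_n/[R_n,R_n]$. By \eqref{crayon} the map $\iota$ of the proposition sends the class of $r\in R_n$ to $[r]\in\Tr(\cC)$, so it remains to show that $\iota$ is bijective.

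The key structural input, which I would establish from \ref{B1}, is a filtration by categorical ideals. For $j\in\N$ let $\mathcal{J}_j$ be the $\kk$-span of all basis elements of \ref{B1} with through-object index $l\le j$ (across every Hom-space). I claim $\mathcal{J}_j$ equals the span of those morphisms factoring through some $X_i$ with $i\le j$: the inclusion $\subseteq$ is clear, and $\supseteq$ follows by induction on the factoring index, expanding a factorization in the basis \ref{B1}, repeatedly re-expanding the inner composites of connecting morphisms via \ref{B1}, and using $D_{i,i}=\{1_{X_i}\}$ together with \ref{B2} to absorb the sandwiched $R$-factors. In this description $\mathcal{J}_j$ is visibly closed under arbitrary pre- and post-composition, so composition is filtered: the through-index of a composite never exceeds that of either factor.

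Surjectivity of $\iota$ is then routine. Expanding $f\in\End_\cC(X_n)$ in $\bB_{n,n}$, the top layer $l=n$ is exactly $R_n$ (since $D_{n,n}=\{1\}$), whose classes lie in the image, while a lower basis element $adb$ with $a\in D_{n,l}$, $d\in D_l$, $b\in D_{l,n}$, $l<n$, satisfies $[adb]=[dba]$ by \eqref{dizzy} with $dba\in\End_\cC(X_l)$; inducting on $n$ places $[f]$ in the image of $\iota$. For injectivity I would equip $\Tr(\cC)$ with the exhaustive increasing filtration $\Tr(\cC)_{\le j}$ by classes of through-index-$\le j$ endomorphisms (well defined because $\mathcal{J}_j$ is an ideal), matched against the filtration of $\bigoplus_n R_n/[R_n,R_n]$ by partial sums.

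The pushdown above shows that $d\mapsto[d]$ gives a surjection $R_j\twoheadrightarrow\Tr(\cC)_{\le j}/\Tr(\cC)_{\le j-1}$ whose kernel contains $[R_j,R_j]$; the crux is that the kernel is no larger. Writing $\rho_j\colon\End_\cC(X_j)\to R_j$ for the projection onto the top layer, I would analyze a defining commutator $fg-gf$ with $f\colon X_n\to X_m$ and $g\colon X_m\to X_n$: the filtration property forces every cross-layer term into $\mathcal{J}_{j-1}$, so modulo $\Tr(\cC)_{\le j-1}$ only the products of the genuine layer-$j$ parts of $f$ and $g$ survive, and collapsing these onto $X_j$ via \eqref{dizzy} and applying $\rho_j$ (legitimate by \ref{B2}) shows that the relation it induces in the $j$-th graded piece lies in $[R_j,R_j]$; since the commutators within $R_j$ are themselves among the defining relations, the induced relations are exactly $[R_j,R_j]$. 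Hence $\Tr(\cC)_{\le j}/\Tr(\cC)_{\le j-1}\cong R_j/[R_j,R_j]$, so $\iota$ is an isomorphism on associated graded and therefore an isomorphism. I expect this last associated-graded computation — equivalently, the verification that the pushdown reduction kills every commutator and creates no relation beyond $[R_j,R_j]$ — to be the main obstacle, and it is precisely where the ideal property of $\mathcal{J}_j$ is indispensable.
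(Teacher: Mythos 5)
Your setup is sound: the identification $\Tr(\cC_n)=R_n/[R_n,R_n]$, the ideal $\mathcal{J}_j$ of morphisms of through-index at most $j$ (your induction showing it coincides with the span of morphisms factoring through some $X_i$, $i\le j$, does work), and the surjectivity argument via $[adb]=[dba]$ are all correct. (For what it is worth, the paper gives no argument at all here --- it simply cites \cite[Prop.~2.11]{RS20} --- so your proof has to stand on its own.) The injectivity half, however, has a genuine gap, and the one concrete assertion you make toward closing it is false. Writing $f=\sum_l f_l$ and $g=\sum_{l'}g_{l'}$ by through-index, the product $f_lg_{l'}$ lies in $\mathcal{J}_{\min(l,l')}$, which is contained in $\mathcal{J}_{j-1}$ only when $\min(l,l')\le j-1$. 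A term with $l,l'\ge j+1$ is not killed modulo $\Tr(\cC)_{\le j-1}$, and since composition only bounds the through-index of a composite from \emph{above}, such a term can have a nonzero component in the layer-$j$ part of the basis (in $\Heis_k$ itself, composing two morphisms that factor through $\uparrow^{\otimes(m+l)}\otimes\downarrow^{\otimes l}$ routinely produces summands factoring through much smaller objects). So it is not true that ``only the products of the genuine layer-$j$ parts survive,'' and the relation a commutator imposes on your graded piece is not $\rho_j(f_jg_j)-\rho_j(g_jf_j)$.

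There is also a structural problem with the associated-graded framing: the relations presenting $\Tr(\cC)_{\le j}/\Tr(\cC)_{\le j-1}$ as a quotient of the layer-$j$ span come from those elements of the commutator span that \emph{happen to lie in} $\mathcal{J}_j$, i.e.\ from linear combinations $\sum_i c_i(f_ig_i-g_if_i)$ landing in $\mathcal{J}_j$ even though no individual summand does; analyzing one commutator at a time therefore cannot close the argument. The standard repair is to drop the graded computation and instead build a global retraction $\pi\colon\bigoplus_n\End_\cC(X_n)\to\bigoplus_nR_n/[R_n,R_n]$ by the \emph{recursive} pushdown ($adb\mapsto\pi(dba)$, iterated until one lands in some $R_l$; your one-step projection $\rho_j$ is insufficient for exactly the reason above), and then prove the key lemma $\pi(f\circ g)=\pi(g\circ f)$ for \emph{all} composable pairs $f\colon X_n\to X_m$, $g\colon X_m\to X_n$. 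This is true, and provable by a double induction on $\max(m,n)$ and then $\min(m,n)$, using the identity $(dbg)\circ a=(db)\circ(ga)$ to trade the pair $(adb,\,g)$ for pairs between smaller objects and \eqref{B2} to handle the top layer; but that lemma is precisely the content your sketch omits. Once it is established, $\pi$ descends to a left inverse of $\iota$ and injectivity follows with no filtration needed.
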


\begin{proof}
    This is proved in \cite[Prop.~2.11]{RS20} using the equivalent language of locally unital algebras.
\end{proof}

\section{Quantum Heisenberg category\label{sec:qheis}}

In this section, we recall the definition of the quantum Heisenberg category introduced in \cite{BSW-qheis} and state some important relations that will be used in our computations to follow.  Throughout this section we fix an integral domain $\kk$ containing $\Q$, and $q \in \kk^\times$ satisfying \cref{trevor}.  Let $z=q-q^{-1}=\{1\}$ and choose $t \in \kk^\times$.  The most generic choice of ground ring is thus $\kk = \Q[q^{\pm 1}, t^{\pm 1}, \{d\}^{-1} : d \ge 1]$.  Another valid choice is a field $\kk$ of characteristic zero, with $q,t \in \kk^\times$ such that $q$ is not a root of unity.  We also fix a \emph{central charge} $k \in \Z$.

\subsection{Definition}

\begin{defin}[{\cite[Def.~4.1]{BSW-qheis}}]
    The \emph{quantum Heisenberg category} $\Heis_k$ is the strict $\kk$-linear monoidal category generated by objects $\uparrow$, $\downarrow$ and morphisms
    \begin{gather*}
        \posupcross,\ \negupcross \colon \uparrow \otimes \uparrow\ \to\ \uparrow \otimes \uparrow,\qquad
        \updot \colon \uparrow\ \to\ \uparrow,
        \\
        \rightcup \colon \one \to\ \downarrow \otimes \uparrow,\qquad
        \rightcap \colon \uparrow \otimes \downarrow\ \to \one,\qquad
        \leftcup \colon \one \to\ \uparrow \otimes \downarrow,\qquad
        \leftcap \colon \downarrow \otimes \uparrow\ \to \one,
    \end{gather*}
    subject to relations that we now describe.  First, we require $\updot$, which we call a \emph{dot}, to be invertible.  For $r \in \Z$, we let $\multupdot{r}$ denote the composition of $r$ dots if $r \ge 0$ and the composition of $|r|$ inverse dots if $r < 0$.  We then impose the following additional relations:
    \begin{gather} \label{braid}
        \begin{tikzpicture}[centerzero]
            \draw[->] (-0.2,-0.5) \braidup (0.2,0) \braidup (-0.2,0.5);
            \draw[wipe] (0.2,-0.5) \braidup (-0.2,0) \braidup (0.2,0.5);
            \draw[->] (0.2,-0.5) \braidup (-0.2,0) \braidup (0.2,0.5);
        \end{tikzpicture}
        =
        \begin{tikzpicture}[centerzero]
            \draw[->] (-0.2,-0.5) -- (-0.2,0.5);
            \draw[->] (0.2,-0.5) -- (0.2,0.5);
        \end{tikzpicture}
        =
        \begin{tikzpicture}[centerzero]
            \draw[->] (0.2,-0.5) \braidup (-0.2,0) \braidup (0.2,0.5);
            \draw[wipe] (-0.2,-0.5) \braidup (0.2,0) \braidup (-0.2,0.5);
            \draw[->] (-0.2,-0.5) \braidup (0.2,0) \braidup (-0.2,0.5);
        \end{tikzpicture}
        \ ,\qquad
        \begin{tikzpicture}[centerzero]
            \draw[->] (0.3,-0.5) -- (-0.3,0.5);
            \draw[wipe] (0,-0.5) \braidup (-0.3,0) \braidup (0,0.5);
            \draw[->] (0,-0.5) \braidup (-0.3,0) \braidup (0,0.5);
            \draw[wipe] (-0.3,-0.5) -- (0.3,0.5);
            \draw[->] (-0.3,-0.5) -- (0.3,0.5);
        \end{tikzpicture}
        =
        \begin{tikzpicture}[centerzero]
            \draw[->] (0.3,-0.5) -- (-0.3,0.5);
            \draw[wipe] (0,-0.5) \braidup (0.3,0)\braidup (0,0.5);
            \draw[->] (0,-0.5) \braidup (0.3,0)\braidup (0,0.5);
            \draw[wipe] (-0.3,-0.5) -- (0.3,0.5);
            \draw[->] (-0.3,-0.5) -- (0.3,0.5);
        \end{tikzpicture}
        \ ,
        \\ \label{skein}
        \posupcross - \negupcross = z\
        \begin{tikzpicture}[centerzero]
            \draw[->] (-0.15,-0.2) -- (-0.15,0.2);
            \draw[->] (0.15,-0.2) -- (0.15,0.2);
        \end{tikzpicture}
        \ ,
        \\ \label{dotslide}
        \begin{tikzpicture}[centerzero]
            \draw[->] (0.3,-0.3) -- (-0.3,0.3);
            \draw[wipe] (-0.3,-0.3) -- (0.3,0.3);
            \draw[->] (-0.3,-0.3) -- (0.3,0.3);
            \singdot{0.17,-0.17};
        \end{tikzpicture}
        =
        \begin{tikzpicture}[centerzero]
            \draw[->] (-0.3,-0.3) -- (0.3,0.3);
            \draw[wipe] (0.3,-0.3) -- (-0.3,0.3);
            \draw[->] (0.3,-0.3) -- (-0.3,0.3);
            \singdot{-0.15,0.15};
        \end{tikzpicture}
        \ ,\qquad
        \begin{tikzpicture}[centerzero]
            \draw[->] (-0.3,-0.3) -- (0.3,0.3);
            \draw[wipe] (0.3,-0.3) -- (-0.3,0.3);
            \draw[->] (0.3,-0.3) -- (-0.3,0.3);
            \singdot{-0.17,-0.17};
        \end{tikzpicture}
        =
        \begin{tikzpicture}[centerzero]
            \draw[->] (0.3,-0.3) -- (-0.3,0.3);
            \draw[wipe] (-0.3,-0.3) -- (0.3,0.3);
            \draw[->] (-0.3,-0.3) -- (0.3,0.3);
            \singdot{0.15,0.15};
        \end{tikzpicture}
        \ ,
        \\ \label{rightadj}
        \begin{tikzpicture}[centerzero]
            \draw[->] (-0.3,-0.4) -- (-0.3,0) arc(180:0:0.15) arc(180:360:0.15) -- (0.3,0.4);
        \end{tikzpicture}
        =
        \begin{tikzpicture}[centerzero]
            \draw[->] (0,-0.4) -- (0,0.4);
        \end{tikzpicture}
        \ ,\qquad
        \begin{tikzpicture}[centerzero]
            \draw[->] (-0.3,0.4) -- (-0.3,0) arc(180:360:0.15) arc(180:0:0.15) -- (0.3,-0.4);
        \end{tikzpicture}
        =
        \begin{tikzpicture}[centerzero]
            \draw[<-] (0,-0.4) -- (0,0.4);
        \end{tikzpicture}
        \ ,
        \\ \label{pos}
        \begin{tikzpicture}[centerzero]
            \draw[->] (0.2,0) \braidup (-0.2,0.5);
            \draw[<-] (0.2,-0.5) \braidup (-0.2,0);
            \draw[wipe] (-0.2,-0.5) \braidup (0.2,0);
            \draw (-0.2,-0.5) \braidup (0.2,0);
            \draw[wipe] (-0.2,0) \braidup (0.2,0.5);
            \draw (-0.2,0) \braidup (0.2,0.5);
        \end{tikzpicture}
        \ =\
        \begin{tikzpicture}[centerzero]
            \draw[->] (-0.2,-0.5) to (-0.2,0.5);
            \draw[<-] (0.2,-0.5) to (0.2,0.5);
        \end{tikzpicture}
        \ - t^{-1}z\
        \begin{tikzpicture}[centerzero]
            \draw[<-] (-0.2,0.5) -- (-0.2,0.3) arc(180:360:0.2) -- (0.2,0.5);
            \draw[->] (-0.2,-0.5) -- (-0.2,-0.3) arc(180:0:0.2) -- (0.2,-0.5);
        \end{tikzpicture}
        \ + z^2 \sum_{r,s > 0}\
        \begin{tikzpicture}[anchorbase]
            \draw[<-] (-0.2,0.6) to (-0.2,0.35) arc(180:360:0.2) to (0.2,0.6);
            \draw[->] (-0.2,-0.6) to (-0.2,-0.35) arc(180:0:0.2) to (0.2,-0.6);
            \plusleftside{-0.6,0}{-r-s};
            \multdot{-0.2,0.42}{east}{r};
            \multdot{-0.2,-0.42}{east}{s};
        \end{tikzpicture}
        \ ,
        \\ \label{neg}
        \begin{tikzpicture}[centerzero]
            \draw[->] (-0.2,0) \braidup (0.2,0.5);
            \draw[wipe] (-0.2,-0.5) \braidup (0.2,0) \braidup (-0.2,0.5);
            \draw[<-] (-0.2,-0.5) \braidup (0.2,0) \braidup (-0.2,0.5);
            \draw[wipe] (0.2,-0.5) \braidup (-0.2,0);
            \draw (0.2,-0.5) \braidup (-0.2,0);
        \end{tikzpicture}
        \ =\
        \begin{tikzpicture}[centerzero]
            \draw[<-] (-0.2,-0.5) to (-0.2,0.5);
            \draw[->] (0.2,-0.5) to (0.2,0.5);
        \end{tikzpicture}
        \ + tz\
        \begin{tikzpicture}[centerzero]
            \draw[->] (-0.2,0.5) -- (-0.2,0.3) arc(180:360:0.2) -- (0.2,0.5);
            \draw[<-] (-0.2,-0.5) -- (-0.2,-0.3) arc(180:0:0.2) -- (0.2,-0.5);
        \end{tikzpicture}
        \ + z^2 \sum_{r,s > 0}\
        \begin{tikzpicture}[anchorbase]
            \draw[->] (-0.2,0.6) to (-0.2,0.35) arc(180:360:0.2) to (0.2,0.6);
            \draw[<-] (-0.2,-0.6) to (-0.2,-0.35) arc(180:0:0.2) to (0.2,-0.6);
            \plusrightside{0.6,0}{-r-s};
            \multdot{0.2,0.42}{west}{r};
            \multdot{0.2,-0.42}{west}{s};
        \end{tikzpicture}
        \ ,
        \\ \label{curls1}
        \begin{tikzpicture}[centerzero]
            \draw (-0.2,-0.5) -- (-0.2,-0.35) to[out=up,in=west] (0.05,0.2) to[out=right,in=up] (0.2,0);
            \draw[wipe] (0.2,0) to[out=down,in=east] (0.05,-0.2) to[out=left,in=down] (-0.2,0.35) -- (-0.2,0.5);
            \draw[->] (0.2,0) to[out=down,in=east] (0.05,-0.2) to[out=left,in=down] (-0.2,0.35) -- (-0.2,0.5);
        \end{tikzpicture}
        \ = \delta_{k,0} t^{-1}\
        \begin{tikzpicture}[centerzero]
            \draw[->] (0,-0.5) to (0,0.5);
        \end{tikzpicture}
        \quad \text{if } k \ge 0,
        \qquad \qquad
        \rightbubside{r}
        = \frac{\delta_{r,0} t - \delta_{r,k} t^{-1}}{z} 1_\one \quad \text{if } 0 \le r \le k,
        \\ \label{curls2}
        \begin{tikzpicture}[centerzero]
            \draw (0.2,-0.5) -- (0.2,-0.35) to[out=up,in=east] (-0.05,0.2) to[out=left,in=up] (-0.2,0);
            \draw[wipe] (-0.2,0) to[out=down,in=west] (-0.05,-0.2) to[out=right,in=down] (0.2,0.35) -- (0.2,0.5);
            \draw[->] (-0.2,0) to[out=down,in=west] (-0.05,-0.2) to[out=right,in=down] (0.2,0.35) -- (0.2,0.5);
        \end{tikzpicture}
        \ = \delta_{k,0} t\
        \begin{tikzpicture}[centerzero]
            \draw[->] (0,-0.5) to (0,0.5);
        \end{tikzpicture}
        \quad \text{if } k \le 0,
        \qquad \qquad
        \leftbubside{r}
        = \frac{\delta_{r,-k} t - \delta_{r,0} t^{-1}}{z} 1_\one \quad \text{if } 0 \le r \le -k.
    \end{gather}
    (In fact, the second relation in \cref{dotslide} is redundant, since it follows from the first relation in \cref{dotslide} and the first two equalities in \cref{braid}.)  In the above relations we have used right and left crossings defined by
    \begin{equation} \label{windmill}
        \posrightcross
        :=
        \begin{tikzpicture}[centerzero]
            \draw[->] (0.2,-0.3) \braidup (-0.2,0.3);
            \draw[wipe] (-0.4,0.3) -- (-0.4,0.1) to[out=down,in=left] (-0.2,-0.2) to[out=right,in=left] (0.2,0.2) to[out=right,in=up] (0.4,-0.1) -- (0.4,-0.3);
            \draw[->] (-0.4,0.3) -- (-0.4,0.1) to[out=down,in=left] (-0.2,-0.2) to[out=right,in=left] (0.2,0.2) to[out=right,in=up] (0.4,-0.1) -- (0.4,-0.3);
        \end{tikzpicture}
        \ ,\qquad
        \negrightcross
        :=
        \begin{tikzpicture}[centerzero]
            \draw[->] (-0.4,0.3) -- (-0.4,0.1) to[out=down,in=left] (-0.2,-0.2) to[out=right,in=left] (0.2,0.2) to[out=right,in=up] (0.4,-0.1) -- (0.4,-0.3);
            \draw[wipe] (0.2,-0.3) \braidup (-0.2,0.3);
            \draw[->] (0.2,-0.3) \braidup (-0.2,0.3);
        \end{tikzpicture}
        \ ,\qquad
        \posleftcross
        \ :=\
        \begin{tikzpicture}[centerzero]
            \draw[->] (0.4,0.3) -- (0.4,0.1) to[out=down,in=right] (0.2,-0.2) to[out=left,in=right] (-0.2,0.2) to[out=left,in=up] (-0.4,-0.1) -- (-0.4,-0.3);
            \draw[wipe] (-0.2,-0.3) \braidup (0.2,0.3);
            \draw[->] (-0.2,-0.3) \braidup (0.2,0.3);
        \end{tikzpicture}
        \ ,\qquad
        \negleftcross
        \ :=\
        \begin{tikzpicture}[centerzero]
            \draw[->] (-0.2,-0.3) \braidup (0.2,0.3);
            \draw[wipe] (0.4,0.3) -- (0.4,0.1) to[out=down,in=right] (0.2,-0.2) to[out=left,in=right] (-0.2,0.2) to[out=left,in=up] (-0.4,-0.1) -- (-0.4,-0.3);
            \draw[->] (0.4,0.3) -- (0.4,0.1) to[out=down,in=right] (0.2,-0.2) to[out=left,in=right] (-0.2,0.2) to[out=left,in=up] (-0.4,-0.1) -- (-0.4,-0.3);
        \end{tikzpicture}
        \ ,
    \end{equation}
    and $(+)$-bubbles defined by
    \begin{equation} \label{clear}
        \leftplusside{r} := \leftbubside{r},\qquad
        \rightplusside{r} := \rightbubside{r},\qquad
        r > 0,
    \end{equation}
    and
    \begin{align}
        \leftplusside{r-k} &:=
        t^{r+1} z^{r-1} \det \left( \rightbubside{k+i-j+1} \right)_{i,j=1,\dotsc,r},& r \le k,
        \\
        \rightplusside{r+k} &:=
        -t^{-r-1} z^{r-1} \det \left( - \leftbubside{-k+i-j+1} \right)_{i,j=1,\dotsc,r},& r \le -k,
    \end{align}
    where we interpret the determinants as $\delta_{r,0}$ when $r \le 0$.  In particular, note that the sums appearing in \cref{pos,neg} are finite.  When we wish to make the parameters $z$ and $t$ explicit, we will write $\Heis_k(z,t)$ for $\Heis_k$.  This completes the definition of $\Heis_k$.
\end{defin}

As explained in the proof of \cite[Th.~4.2]{BSW-qheis}, the defining relations of $\Heis_k$ imply that we have the isomorphisms
\begin{equation} \label{invrel}
    \begin{aligned}
        \begin{pmatrix}
            \posrightcross
            &
            \begin{tikzpicture}[centerzero]
                \draw[->] (-0.15,-0.2) -- (-0.15,0.05) arc(180:0:0.15) -- (0.15,-0.2);
            \end{tikzpicture}
            &
            \begin{tikzpicture}[centerzero]
                \draw[->] (-0.15,-0.2) -- (-0.15,0.05) arc(180:0:0.15) -- (0.15,-0.2);
                \singdot{-0.15,0};
            \end{tikzpicture}
            &
            \cdots
            &
            \begin{tikzpicture}[centerzero]
                \draw[->] (-0.15,-0.2) -- (-0.15,0.05) arc(180:0:0.15) -- (0.15,-0.2);
                \multdot{-0.15,0}{east}{k-1};
            \end{tikzpicture}
        \end{pmatrix}^T
        &\colon \uparrow \otimes \downarrow\ \to\ \downarrow \otimes \uparrow \otimes \one^{\oplus k} & \text{if } k \ge 0,
        \\
        \begin{pmatrix}
            \posrightcross
            &
            \begin{tikzpicture}[centerzero]
                \draw[->] (-0.15,0.2) -- (-0.15,-0.05) arc(180:360:0.15) -- (0.15,0.2);
            \end{tikzpicture}
            &
            \begin{tikzpicture}[centerzero]
                \draw[->] (-0.15,0.2) -- (-0.15,-0.05) arc(180:360:0.15) -- (0.15,0.2);
                \singdot{0.15,0};
            \end{tikzpicture}
            &
            \cdots
            &
            \begin{tikzpicture}[centerzero]
                \draw[->] (-0.15,0.2) -- (-0.15,-0.05) arc(180:360:0.15) -- (0.15,0.2);
                \multdot{0.15,0}{west}{-k-1};
            \end{tikzpicture}
        \end{pmatrix}
        &\colon \uparrow \otimes \downarrow \oplus \one^{\oplus (-k)} \to\ \downarrow \otimes \uparrow & \text{if } k \le 0,
    \end{aligned}
\end{equation}
in $\Add(\Heis_k)$, where $\Add$ denotes the additive envelope.

\subsection{Additional relations}

We now recall some additional relations that hold in $\Heis_k$.  It follows from the defining relations that
\begin{equation} \label{silence}
    \begin{tikzpicture}[centerzero]
        \draw (-0.2,-0.5) \braidup (0.2,0);
        \draw (-0.2,0) \braidup (0.2,0.5);
        \draw[wipe] (0.2,0) \braidup (-0.2,0.5);
        \draw[->] (0.2,0) \braidup (-0.2,0.5);
        \draw[wipe] (0.2,-0.5) \braidup (-0.2,0);
        \draw[<-] (0.2,-0.5) \braidup (-0.2,0);
    \end{tikzpicture}
    \ =\
    \begin{tikzpicture}[centerzero]
        \draw[->] (-0.2,-0.5) to (-0.2,0.5);
        \draw[<-] (0.2,-0.5) to (0.2,0.5);
    \end{tikzpicture}
    \text{ if } k < 0,
    \quad
    \begin{tikzpicture}[centerzero]
        \draw (0.2,0) \braidup (-0.2,0.5);
        \draw (0.2,-0.5) \braidup (-0.2,0);
        \draw[wipe] (-0.2,-0.5) \braidup (0.2,0);
        \draw[<-] (-0.2,-0.5) \braidup (0.2,0);
        \draw[wipe] (-0.2,0) \braidup (0.2,0.5);
        \draw[->] (-0.2,0) \braidup (0.2,0.5);
    \end{tikzpicture}
    \ =\
    \begin{tikzpicture}[centerzero]
        \draw[<-] (-0.2,-0.5) to (-0.2,0.5);
        \draw[->] (0.2,-0.5) to (0.2,0.5);
    \end{tikzpicture}
    \text{ if } k > 0,
    \quad
    \begin{tikzpicture}[centerzero]
        \draw[->] (0.2,-0.5) \braidup (-0.2,0) \braidup (0.2,0.5);
        \draw[wipe] (-0.2,-0.5) \braidup (0.2,0) \braidup (-0.2,0.5);
        \draw[<-] (-0.2,-0.5) \braidup (0.2,0) \braidup (-0.2,0.5);
    \end{tikzpicture}
    =
    \begin{tikzpicture}[centerzero]
        \draw[<-] (-0.2,-0.5) -- (-0.2,0.5);
        \draw[->] (0.2,-0.5) -- (0.2,0.5);
    \end{tikzpicture}
    \text{ if } k=0,
    \quad
    \begin{tikzpicture}[centerzero]
        \draw[->] (-0.2,-0.5) \braidup (0.2,0) \braidup (-0.2,0.5);
        \draw[wipe] (0.2,-0.5) \braidup (-0.2,0) \braidup (0.2,0.5);
        \draw[<-] (0.2,-0.5) \braidup (-0.2,0) \braidup (0.2,0.5);
    \end{tikzpicture}
    =
    \begin{tikzpicture}[centerzero]
        \draw[->] (-0.2,-0.5) -- (-0.2,0.5);
        \draw[<-] (0.2,-0.5) -- (0.2,0.5);
    \end{tikzpicture}
    \text{ if } k=0.
\end{equation}
(See \cite[(4.14)--(4.16)]{BSW-qheis}.)

Note that \cref{rightadj} implies that $\uparrow$ is right dual to $\downarrow$.  We also have (see \cite[Lem.~3.7]{BSW-qheis})
\begin{equation} \label{leftadj}
    \begin{tikzpicture}[centerzero]
        \draw[<-] (-0.3,-0.4) -- (-0.3,0) arc(180:0:0.15) arc(180:360:0.15) -- (0.3,0.4);
    \end{tikzpicture}
    =
    \begin{tikzpicture}[centerzero]
        \draw[<-] (0,-0.4) -- (0,0.4);
    \end{tikzpicture}
    \ ,\qquad
    \begin{tikzpicture}[centerzero]
        \draw[<-] (-0.3,0.4) -- (-0.3,0) arc(180:360:0.15) arc(180:0:0.15) -- (0.3,-0.4);
    \end{tikzpicture}
    =
    \begin{tikzpicture}[centerzero]
        \draw[->] (0,-0.4) -- (0,0.4);
    \end{tikzpicture}
    \ ,
\end{equation}
so that $\uparrow$ is also left dual to $\downarrow$.  In fact $\Heis_k$ is strictly pivotal, with duality functor defined on morphisms by rotating diagrams through $180\degree$; see \cite[(3.2.1)]{BSW-qheis}.  Thus, for example, we can define downward crossings and dots by
\[
    \posdowncross
    :=
    \begin{tikzpicture}[centerzero]
        \draw[->] (-0.4,0.3) -- (-0.4,0.1) to[out=down,in=left] (-0.2,-0.2) to[out=right,in=left] (0.2,0.2) to[out=right,in=up] (0.4,-0.1) -- (0.4,-0.3);
        \draw[wipe] (0.2,-0.3) \braidup (-0.2,0.3);
        \draw[<-] (0.2,-0.3) \braidup (-0.2,0.3);
    \end{tikzpicture}
    =
    \begin{tikzpicture}[centerzero]
        \draw[<-] (-0.2,-0.3) \braidup (0.2,0.3);
        \draw[wipe] (0.4,0.3) -- (0.4,0.1) to[out=down,in=right] (0.2,-0.2) to[out=left,in=right] (-0.2,0.2) to[out=left,in=up] (-0.4,-0.1) -- (-0.4,-0.3);
        \draw[->] (0.4,0.3) -- (0.4,0.1) to[out=down,in=right] (0.2,-0.2) to[out=left,in=right] (-0.2,0.2) to[out=left,in=up] (-0.4,-0.1) -- (-0.4,-0.3);
    \end{tikzpicture}
    \, ,\qquad
    \negdowncross
    :=
    \begin{tikzpicture}[centerzero]
        \draw[<-] (0.2,-0.3) \braidup (-0.2,0.3);
        \draw[wipe] (-0.4,0.3) -- (-0.4,0.1) to[out=down,in=left] (-0.2,-0.2) to[out=right,in=left] (0.2,0.2) to[out=right,in=up] (0.4,-0.1) -- (0.4,-0.3);
        \draw[->] (-0.4,0.3) -- (-0.4,0.1) to[out=down,in=left] (-0.2,-0.2) to[out=right,in=left] (0.2,0.2) to[out=right,in=up] (0.4,-0.1) -- (0.4,-0.3);
    \end{tikzpicture}
    =
    \begin{tikzpicture}[centerzero]
        \draw[->] (0.4,0.3) -- (0.4,0.1) to[out=down,in=right] (0.2,-0.2) to[out=left,in=right] (-0.2,0.2) to[out=left,in=up] (-0.4,-0.1) -- (-0.4,-0.3);
        \draw[wipe] (-0.2,-0.3) \braidup (0.2,0.3);
        \draw[<-] (-0.2,-0.3) \braidup (0.2,0.3);
    \end{tikzpicture}
    \, ,\qquad
    \downdot
    :=
    \begin{tikzpicture}[centerzero]
        \draw[<-] (-0.3,-0.4) -- (-0.3,0) arc(180:0:0.15) arc(180:360:0.15) -- (0.3,0.4);
        \singdot{0,0};
    \end{tikzpicture}
    =
    \begin{tikzpicture}[centerzero]
        \draw[->] (-0.3,0.4) -- (-0.3,0) arc(180:360:0.15) arc(180:0:0.15) -- (0.3,-0.4);
        \singdot{0,0};
    \end{tikzpicture}
    \, ,
\]
and we have right, left, and downwards skein relations,
\begin{equation} \label{skeinrot}
    \posdowncross - \negdowncross
    = z\
    \begin{tikzpicture}[centerzero]
        \draw[<-] (-0.15,-0.2) -- (-0.15,0.2);
        \draw[<-] (0.15,-0.2) -- (0.15,0.2);
    \end{tikzpicture}
    \ ,\quad
    \posrightcross - \negrightcross
    = z\
    \begin{tikzpicture}[centerzero]
        \draw[->] (-0.15,0.3) -- (-0.15,0.25) arc(180:360:0.15) -- (0.15,0.3);
        \draw[->] (-0.15,-0.3) -- (-0.15,-0.25) arc(180:0:0.15) -- (0.15,-0.3);
    \end{tikzpicture}
    \ ,\quad
    \posleftcross - \negleftcross
    = z\
    \begin{tikzpicture}[centerzero]
        \draw[<-] (-0.15,0.3) -- (-0.15,0.25) arc(180:360:0.15) -- (0.15,0.3);
        \draw[<-] (-0.15,-0.3) -- (-0.15,-0.25) arc(180:0:0.15) -- (0.15,-0.3);
    \end{tikzpicture}
    \ .
\end{equation}
as well as right, left, and downward versions of \cref{dotslide,dotslide}.  In what follows, we will freely use the pivotal structure, referring to a relation by equation number even when we use a rotated version of it.  In addition, it follows from the pivotal structure on $\Heis_k$ that dots slide over cups and caps.  Therefore, we will sometimes draw dots at the critical points of cups or caps, since this causes no ambiguity.

It follows from repeated use of \cref{dotslide,skein} that the following relations hold for $r \in \Z$:
\begin{align} \label{teapos}
    \begin{tikzpicture}[centerzero]
        \draw[->] (0.3,-0.4) -- (-0.3,0.4);
        \draw[wipe] (-0.3,-0.4) -- (0.3,0.4);
        \draw[->] (-0.3,-0.4) -- (0.3,0.4);
        \multdot{0.15,-0.2}{west}{r};
    \end{tikzpicture}
    &=
    \begin{cases}
        \begin{tikzpicture}[centerzero]
            \draw[->] (-0.3,-0.4) -- (0.3,0.4);
            \draw[wipe] (0.3,-0.4) -- (-0.3,0.4);
            \draw[->] (0.3,-0.4) -- (-0.3,0.4);
            \multdot{-0.15,0.2}{east}{r};
        \end{tikzpicture}
        - z\displaystyle\sum_{\substack{a+b=r \\ a,b > 0}}
        \begin{tikzpicture}[centerzero]
            \draw[->] (-0.2,-0.4) -- (-0.2,0.4);
            \draw[->] (0.2,-0.4) -- (0.2,0.4);
            \multdot{-0.2,0}{east}{a};
            \multdot{0.2,0}{west}{b};
        \end{tikzpicture}
        & \text{if } r > 0,
        \\
        \begin{tikzpicture}[anchorbase]
            \draw[->] (-0.3,-0.4) -- (0.3,0.4);
            \draw[wipe] (0.3,-0.4) -- (-0.3,0.4);
            \draw[->] (0.3,-0.4) -- (-0.3,0.4);
            \multdot{-0.15,0.2}{east}{r};
        \end{tikzpicture}
        + z\displaystyle\sum_{\substack{a+b=r \\ a,b \le 0}}
        \begin{tikzpicture}[centerzero]
            \draw[->] (-0.2,-0.4) -- (-0.2,0.4);
            \draw[->] (0.2,-0.4) -- (0.2,0.4);
            \multdot{-0.2,0}{east}{a};
            \multdot{0.2,0}{west}{b};
        \end{tikzpicture}
        & \text{if } r \le 0;
    \end{cases}
    &
    \begin{tikzpicture}[anchorbase]
        \draw[->] (0.3,-0.4) -- (-0.3,0.4);
        \draw[wipe] (-0.3,-0.4) -- (0.3,0.4);
        \draw[->] (-0.3,-0.4) -- (0.3,0.4);
        \multdot{-0.15,-0.2}{east}{r};
    \end{tikzpicture}
    &=
    \begin{cases}
        \begin{tikzpicture}[anchorbase]
            \draw[->] (-0.3,-0.4) -- (0.3,0.4);
            \draw[wipe] (0.3,-0.4) -- (-0.3,0.4);
            \draw[->] (0.3,-0.4) -- (-0.3,0.4);
            \multdot{0.15,0.2}{west}{r};
        \end{tikzpicture}
        + z\displaystyle\sum_{\substack{a+b=r \\ a,b \ge 0}}
        \begin{tikzpicture}[centerzero]
            \draw[->] (-0.2,-0.4) -- (-0.2,0.4);
            \draw[->] (0.2,-0.4) -- (0.2,0.4);
            \multdot{-0.2,0}{east}{a};
            \multdot{0.2,0}{west}{b};
        \end{tikzpicture}
        & \text{if } r \ge 0,
        \\
        \begin{tikzpicture}[anchorbase]
            \draw[->] (-0.3,-0.4) -- (0.3,0.4);
            \draw[wipe] (0.3,-0.4) -- (-0.3,0.4);
            \draw[->] (0.3,-0.4) -- (-0.3,0.4);
            \multdot{0.15,0.2}{west}{r};
        \end{tikzpicture}
        - z\displaystyle\sum_{\substack{a+b=r \\ a,b < 0}}
        \begin{tikzpicture}[centerzero]
            \draw[->] (-0.2,-0.4) -- (-0.2,0.4);
            \draw[->] (0.2,-0.4) -- (0.2,0.4);
            \multdot{-0.2,0}{east}{a};
            \multdot{0.2,0}{west}{b};
        \end{tikzpicture}
        & \text{if } r < 0;
    \end{cases}
    \\ \label{teaneg}
    \begin{tikzpicture}[anchorbase]
        \draw[->] (-0.3,-0.4) -- (0.3,0.4);
        \draw[wipe] (0.3,-0.4) -- (-0.3,0.4);
        \draw[->] (0.3,-0.4) -- (-0.3,0.4);
        \multdot{-0.15,-0.2}{east}{r};
    \end{tikzpicture}
    &=
    \begin{cases}
        \begin{tikzpicture}[anchorbase]
            \draw[->] (0.3,-0.4) -- (-0.3,0.4);
            \draw[wipe] (-0.3,-0.4) -- (0.3,0.4);
            \draw[->] (-0.3,-0.4) -- (0.3,0.4);
            \multdot{0.15,0.2}{west}{r};
        \end{tikzpicture}
        + z\displaystyle\sum_{\substack{a+b=r \\ a,b > 0}}
        \begin{tikzpicture}[centerzero]
            \draw[->] (-0.2,-0.4) -- (-0.2,0.4);
            \draw[->] (0.2,-0.4) -- (0.2,0.4);
            \multdot{-0.2,0}{east}{a};
            \multdot{0.2,0}{west}{b};
        \end{tikzpicture}
        & \text{if } r > 0,
        \\
        \begin{tikzpicture}[anchorbase]
            \draw[->] (0.3,-0.4) -- (-0.3,0.4);
            \draw[wipe] (-0.3,-0.4) -- (0.3,0.4);
            \draw[->] (-0.3,-0.4) -- (0.3,0.4);
            \multdot{0.15,0.2}{west}{r};
        \end{tikzpicture}
        - z\displaystyle\sum_{\substack{a+b=r \\ a,b \le 0}}
        \begin{tikzpicture}[centerzero]
            \draw[->] (-0.2,-0.4) -- (-0.2,0.4);
            \draw[->] (0.2,-0.4) -- (0.2,0.4);
            \multdot{-0.2,0}{east}{a};
            \multdot{0.2,0}{west}{b};
        \end{tikzpicture}
        & \text{if } r \le 0;
    \end{cases}
    &
    \begin{tikzpicture}[anchorbase]
        \draw[->] (-0.3,-0.4) -- (0.3,0.4);
        \draw[wipe] (0.3,-0.4) -- (-0.3,0.4);
        \draw[->] (0.3,-0.4) -- (-0.3,0.4);
        \multdot{0.15,-0.2}{west}{r};
    \end{tikzpicture}
    &=
    \begin{cases}
        \begin{tikzpicture}[anchorbase]
            \draw[->] (0.3,-0.4) -- (-0.3,0.4);
            \draw[wipe] (-0.3,-0.4) -- (0.3,0.4);
            \draw[->] (-0.3,-0.4) -- (0.3,0.4);
            \multdot{-0.15,0.2}{east}{r};
        \end{tikzpicture}
        - z\displaystyle\sum_{\substack{a+b=r \\ a,b \ge 0}}
        \begin{tikzpicture}[centerzero]
            \draw[->] (-0.2,-0.4) -- (-0.2,0.4);
            \draw[->] (0.2,-0.4) -- (0.2,0.4);
            \multdot{-0.2,0}{east}{a};
            \multdot{0.2,0}{west}{b};
        \end{tikzpicture}
        & \text{if } r \ge 0,
        \\
        \begin{tikzpicture}[anchorbase]
            \draw[->] (0.3,-0.4) -- (-0.3,0.4);
            \draw[wipe] (-0.3,-0.4) -- (0.3,0.4);
            \draw[->] (-0.3,-0.4) -- (0.3,0.4);
            \multdot{-0.15,0.2}{east}{r};
        \end{tikzpicture}
        + z\displaystyle\sum_{\substack{a+b=r \\ a,b < 0}}
        \begin{tikzpicture}[centerzero]
            \draw[->] (-0.2,-0.4) -- (-0.2,0.4);
            \draw[->] (0.2,-0.4) -- (0.2,0.4);
            \multdot{-0.2,0}{east}{a};
            \multdot{0.2,0}{west}{b};
        \end{tikzpicture}
        & \text{if } r < 0.
    \end{cases}
\end{align}

We define $(-)$-bubbles (see \cite[(2.18)]{BSW-qheis}) by
\begin{equation} \label{poppy}
    \leftminusside{r} := \leftbubside{r} - \leftplusside{r},\quad
    \rightminusside{r} := \rightbubside{r} - \rightplusside{r},\quad
    r \in \Z.
\end{equation}
We then have the \emph{infinite grassmannian relations} (\cite[Lem.~3.4]{BSW-qheis})
\begin{equation} \label{infgrass}
    \sum_{r+s=n} \leftplus{r} \rightplus{s}
    = \sum_{r+s=n} \leftminus{r} \rightminus{s}
    = - \delta_{n,0} z^{-2} 1_\one,\quad n \in \Z,
\end{equation}
and the relations
\begin{align} \label{weeds+}
    \leftplus{r} &= \delta_{r,-k} tz^{-1} 1_\one,\quad r \le -k,&
    \rightplus{r} &= - \delta_{r,k} t^{-1}z^{-1} 1_\one,\quad r \le k,
    \\ \label{weeds-}
    \rightminus{r} &= \delta_{r,0} tz^{-1} 1_\one,\quad r \ge 0,&
    \leftminus{r} &= -\delta_{r,0} t^{-1}z^{-1} 1_\one.\quad r \ge 0.
\end{align}

It will useful to express some our relations in terms of generating functions in an indeterminate $u$.  Define
\begin{align} \label{bg1}
    \leftplusgen &:= t^{-1}z \sum_{r \in \Z} \leftplus{r} u^{-r} \in u^k 1_\one + u^{k-1} \End_{\Heis_k}(\one) \llbracket u^{-1} \rrbracket,
    \\ \label{bg2}
    \rightplusgen &:= -tz \sum_{r \in \Z} \rightplus{r} u^{-r} \in u^{-k} 1_\one + u^{-k-1} \End_{\Heis_k}(\one) \llbracket u^{-1} \rrbracket,
    \\ \label{bg3}
    \leftminusgen &:= -tz \sum_{r \in \Z} \leftminus{r} u^{-r} \in 1_\one + u \End_{\Heis_k}(\one) \llbracket u \rrbracket,
    \\ \label{bg4}
    \rightminusgen &:= t^{-1}z \sum_{r \in \Z} \rightminus{r} u^{-r} \in 1_\one + u \End_{\Heis_k}(\one) \llbracket u \rrbracket.
\end{align}
Then \cref{infgrass} can be restated as
\begin{equation} \label{eyes}
    \leftplusgen \rightplusgen
    = \leftminusgen \rightminusgen
    = 1_\one.
\end{equation}

The following \emph{curl relations} hold for all $r \in \Z$ (\cite[Lem.~4.4]{BSW-qheis}):
\begin{align} \label{leftcurl}
    \begin{tikzpicture}[centerzero]
        \draw (0.2,-0.5) -- (0.2,-0.35) to[out=up,in=east] (-0.05,0.2) to[out=left,in=up] (-0.2,0);
        \draw[wipe] (-0.2,0) to[out=down,in=west] (-0.05,-0.2) to[out=right,in=down] (0.2,0.35) -- (0.2,0.5);
        \draw[->] (-0.2,0) to[out=down,in=west] (-0.05,-0.2) to[out=right,in=down] (0.2,0.35) -- (0.2,0.5);
        \multdot{-0.2,0}{east}{r};
    \end{tikzpicture}
    &= z\sum_{a \ge 0}
    \begin{tikzpicture}[centerzero]
        \draw[->] (0,-0.5) -- (0,0.5);
        \multdot{0,0}{west}{a};
        \plusleft{-0.5,0}{r-a};
    \end{tikzpicture}
    - z\sum_{a > 0}
    \begin{tikzpicture}[centerzero]
        \draw[->] (0,-0.5) -- (0,0.5);
        \multdot{0,0}{west}{-a};
        \minusleft{-0.5,0}{r+a};
    \end{tikzpicture}
    \ ,&
    \begin{tikzpicture}[centerzero]
        \draw[->] (-0.2,0) to[out=down,in=west] (-0.05,-0.2) to[out=right,in=down] (0.2,0.35) -- (0.2,0.5);
        \draw[wipe] (0.2,-0.5) -- (0.2,-0.35) to[out=up,in=east] (-0.05,0.2) to[out=left,in=up] (-0.2,0);
        \draw (0.2,-0.5) -- (0.2,-0.35) to[out=up,in=east] (-0.05,0.2) to[out=left,in=up] (-0.2,0);
        \multdot{-0.2,0}{east}{r};
    \end{tikzpicture}
    &= z\sum_{a > 0}
    \begin{tikzpicture}[centerzero]
        \draw[->] (0,-0.5) -- (0,0.5);
        \multdot{0,0}{west}{a};
        \plusleft{-0.5,0}{r-a};
    \end{tikzpicture}
    - z\sum_{a \ge 0}
    \begin{tikzpicture}[centerzero]
        \draw[->] (0,-0.5) -- (0,0.5);
        \multdot{0,0}{west}{-a};
        \minusleft{-0.5,0}{r+a};
    \end{tikzpicture}
    \ ,
    \\ \label{rightcurl}
    \begin{tikzpicture}[centerzero]
        \draw[->] (0.2,0) to[out=down,in=east] (0.05,-0.2) to[out=left,in=down] (-0.2,0.35) -- (-0.2,0.5);
        \draw[wipe] (-0.2,-0.5) -- (-0.2,-0.35) to[out=up,in=west] (0.05,0.2) to[out=right,in=up] (0.2,0);
        \draw (-0.2,-0.5) -- (-0.2,-0.35) to[out=up,in=west] (0.05,0.2) to[out=right,in=up] (0.2,0);
        \multdot{0.2,0}{west}{r};
    \end{tikzpicture}
    &= z\sum_{a \ge 0}
    \begin{tikzpicture}[centerzero]
        \draw[->] (0,-0.5) -- (0,0.5);
        \multdot{0,0}{east}{-a};
        \minusright{0.5,0}{r+a};
    \end{tikzpicture}
    - z\sum_{a > 0}
    \begin{tikzpicture}[centerzero]
        \draw[->] (0,-0.5) -- (0,0.5);
        \multdot{0,0}{east}{a};
        \plusright{0.5,0}{r-a};
    \end{tikzpicture}
    \ ,&
    \begin{tikzpicture}[centerzero]
        \draw (-0.2,-0.5) -- (-0.2,-0.35) to[out=up,in=west] (0.05,0.2) to[out=right,in=up] (0.2,0);
        \draw[wipe] (0.2,0) to[out=down,in=east] (0.05,-0.2) to[out=left,in=down] (-0.2,0.35) -- (-0.2,0.5);
        \draw[->] (0.2,0) to[out=down,in=east] (0.05,-0.2) to[out=left,in=down] (-0.2,0.35) -- (-0.2,0.5);
        \multdot{0.2,0}{west}{r};
    \end{tikzpicture}
    &= z\sum_{a > 0}
    \begin{tikzpicture}[centerzero]
        \draw[->] (0,-0.5) -- (0,0.5);
        \multdot{0,0}{east}{-a};
        \minusright{0.5,0}{r+a};
    \end{tikzpicture}
    - z\sum_{a \ge 0}
    \begin{tikzpicture}[centerzero]
        \draw[->] (0,-0.5) -- (0,0.5);
        \multdot{0,0}{east}{a};
        \plusright{0.5,0}{r-a};
    \end{tikzpicture}
    \ .
\end{align}

By \cite[Lem.~4.5]{BSW-qheis} (see \cite[Lem.~5.7]{BSW-qFrobHeis} for a proof, taking $A=\kk$ there) we have the following braid relation for alternating crossings:
\begin{align} \label{altbraid1}
    \begin{tikzpicture}[anchorbase]
        \draw[->] (-0.4,-0.5) -- (0.4,0.5);
        \draw[wipe] (0,-0.5) to[out=up, in=down] (-0.4,0) to[out=up,in=down] (0,0.5);
        \draw[<-] (0,-0.5) to[out=up, in=down] (-0.4,0) to[out=up,in=down] (0,0.5);
        \draw[wipe] (0.4,-0.5) -- (-0.4,0.5);
        \draw[->] (0.4,-0.5) -- (-0.4,0.5);
    \end{tikzpicture}
    \ -\
    \begin{tikzpicture}[anchorbase]
        \draw[->] (-0.4,-0.5) -- (0.4,0.5);
        \draw[wipe] (0,-0.5) to[out=up, in=down] (0.4,0) to[out=up,in=down] (0,0.5);
        \draw[<-] (0,-0.5) to[out=up, in=down] (0.4,0) to[out=up,in=down] (0,0.5);
        \draw[wipe] (0.4,-0.5) -- (-0.4,0.5);
        \draw[->] (0.4,-0.5) -- (-0.4,0.5);
    \end{tikzpicture}
    \ &= \sum_{\substack{r,s \ge 0 \\ a > 0}}\
    \begin{tikzpicture}[anchorbase]
        \draw[<-] (-0.2,0.5) to (-0.2,0.3) arc(180:360:0.2) to (0.2,0.5);
        \draw[->] (-0.2,-0.5) to (-0.2,-0.3) arc(180:0:0.2) to (0.2,-0.5);
        \draw[->] (-0.6,-0.2) arc(-90:270:0.2);
        \node at (-0.6,0) {\dotlabel{+}};
        \draw[->] (0.7,-0.5) to (0.7,0.5);
        \multdot{-0.8,0}{east}{-r-s-a};
        \multdot{0.2,0.3}{west}{r};
        \multdot{0.2,-0.3}{west}{s};
        \multdot{0.7,0}{west}{a};
    \end{tikzpicture}
    \qquad \text{if } k \ge 0,
    \\ \label{altbraid2}
    \begin{tikzpicture}[anchorbase]
        \draw[->] (0.4,-0.5) -- (-0.4,0.5);
        \draw[wipe] (0,-0.5) to[out=up, in=down] (-0.4,0) to[out=up,in=down] (0,0.5);
        \draw[<-] (0,-0.5) to[out=up, in=down] (-0.4,0) to[out=up,in=down] (0,0.5);
        \draw[wipe] (-0.4,-0.5) -- (0.4,0.5);
        \draw[->] (-0.4,-0.5) -- (0.4,0.5);
    \end{tikzpicture}
    \ -\
    \begin{tikzpicture}[anchorbase]
        \draw[->] (0.4,-0.5) -- (-0.4,0.5);
        \draw[wipe] (0,-0.5) to[out=up, in=down] (0.4,0) to[out=up,in=down] (0,0.5);
        \draw[<-] (0,-0.5) to[out=up, in=down] (0.4,0) to[out=up,in=down] (0,0.5);
        \draw[wipe] (-0.4,-0.5) -- (0.4,0.5);
        \draw[->] (-0.4,-0.5) -- (0.4,0.5);
    \end{tikzpicture}
    \ &= \sum_{\substack{r,s \ge 0 \\ a > 0}}\
    \begin{tikzpicture}[anchorbase]
        \draw[<-] (0.2,0.5) to (0.2,0.3) arc(360:180:0.2) to (-0.2,0.5);
        \draw[->] (0.2,-0.5) to (0.2,-0.3) arc(0:180:0.2) to (-0.2,-0.5);
        \draw[->] (0.6,-0.2) arc(-90:-450:0.2);
        \node at (0.6,0) {\dotlabel{+}};
        \draw[->] (-0.7,-0.5) to (-0.7,0.5);
        \multdot{0.8,0}{west}{-r-s-a};
        \multdot{-0.2,0.3}{east}{r};
        \multdot{-0.2,-0.3}{east}{s};
        \multdot{-0.7,0}{east}{a};
    \end{tikzpicture}
    \qquad \text{if } k \le 0.
\end{align}
For all other orientations of the strands, the usual braid relation holds.

\subsection{The center}

Recall that the \emph{center} of a monoidal category $\cC$ is the endomorphism algebra $\End_\cC(\one)$ of the unit object.  In this subsection, we describe the center of the quantum Heisenberg category and how elements of the center slide past strands.

Let $\Sym$ denote the ring of symmetric functions with coefficients in $\kk$.  For $r \in \N$, let $h_r$, $e_r$, and $p_r$ denote the $r$-th complete homogeneous, elementary, and power sum symmetric functions, respectively.  For $f \in \Sym$, define the following elements of $\Sym \otimes \Sym$:
\begin{equation} \label{electric}
    f^+ := f \otimes 1,\quad
    f^- := 1 \otimes f.
\end{equation}

\begin{prop} \label{cannon}
    We have an isomorphism
    \begin{equation}
        \beta \colon \Sym \otimes \Sym \to \End_{\Heis_k}(\one),
    \end{equation}
    given by, for $r \ge 1$,
    \begin{align} \label{hcannon}
        h_r^+ &\mapsto -tz \rightplus{r+k},&
        h_r^- &\mapsto t^{-1}z \rightminus{-r},
        \\ \label{ecannon}
        e_r^+ &\mapsto (-1)^r t^{-1}z\ \leftplus{r-k},&
        e_r^- &\mapsto (-1)^{r-1} tz\ \leftminus{-r},
        \\ \label{pcannon}
        p_r^+ &\mapsto z^2 \sum_{s \in \Z} s\ \leftplus{s} \rightplus{r-s},&
        p_r^- &\mapsto z^2 \sum_{s \in \Z} s\ \leftminus{-s} \rightminus{s-r}.
    \end{align}
\end{prop}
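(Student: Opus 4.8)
The plan is to exploit the fact that, in any monoidal category, the center $\End_{\Heis_k}(\one)$ is a commutative $\kk$-algebra (Eckmann--Hilton), so that an algebra homomorphism out of a free commutative algebra is determined by arbitrary values on free generators. Since $\Sym = \kk[h_1,h_2,\dotsc]$ is the free commutative $\kk$-algebra on the complete homogeneous symmetric functions, the tensor product $\Sym\otimes\Sym$ is free on $\{h_r^+,h_r^- : r\ge 1\}$. I would therefore \emph{define} $\beta$ to be the unique algebra homomorphism determined by \cref{hcannon}, and then \emph{derive} \cref{ecannon,pcannon} as consequences rather than checking them against relations directly. The structural input I would quote from \cite{BSW-qheis} is that $\End_{\Heis_k}(\one)$ is itself a polynomial algebra, freely generated by the non-scalar right bubbles $\rightplus{s}$ $(s>k)$ and $\rightminus{s}$ $(s<0)$: here \cref{weeds+,weeds-} identify all remaining bubbles with scalars, while the infinite grassmannian relation \cref{infgrass} expresses the left bubbles $\leftplus{r},\leftminus{r}$ as polynomials in the right ones.

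To obtain \cref{ecannon}, I would repackage \cref{hcannon} as an identity of generating functions. By the definitions \cref{bg1,bg2,bg3,bg4} and the normalizations in \cref{weeds+,weeds-} (which give $-tz\rightplus{k}=1_\one=t^{-1}z\rightminus{0}$), applying $\beta$ coefficientwise to $H(v):=\sum_{r\ge0}h_r v^r$ yields $\rightplusgen = u^{-k}\beta(H(u^{-1}))$ and $\rightminusgen = \beta(H(u))$. The classical identity $H(v)E(-v)=1$ in $\Sym$, where $E(v):=\sum_{r\ge0}e_r v^r$, combined with \cref{eyes}, then forces $\leftplusgen = (\rightplusgen)^{-1} = u^{k}\beta(E(-u^{-1}))$ and $\leftminusgen = \beta(E(-u))$. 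Comparing coefficients of $u^{k-r}$, respectively $u^{r}$, against \cref{bg1,bg3} reproduces exactly the two formulas of \cref{ecannon}.

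For \cref{pcannon} I would use the logarithmic derivative. Differentiating $\leftplusgen\rightplusgen = 1_\one$ from \cref{eyes} gives $(u\tfrac{d}{du}\leftplusgen)\rightplusgen = -u\tfrac{d}{du}\log\rightplusgen$. On one hand, expanding the left-hand side via \cref{bg1,bg2} shows its coefficient of $u^{-r}$ equals $z^2\sum_{s}s\,\leftplus{s}\rightplus{r-s}$. On the other hand, the classical identity $\tfrac{d}{dv}\log H(v)=\sum_{r\ge1}p_r v^{r-1}$, pushed through $\beta$ and combined with $\rightplusgen = u^{-k}\beta(H(u^{-1}))$, shows this same coefficient equals $\beta(p_r^+)$. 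This establishes the first formula in \cref{pcannon}; the second follows by the symmetric computation with \cref{bg3,bg4}, after the routine sign bookkeeping.

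Finally, bijectivity is immediate from the structural description. The map $\beta$ carries the free polynomial generators $h_r^+,h_r^-$ $(r\ge1)$ of $\Sym\otimes\Sym$ to $-tz\,\rightplus{r+k}$ and $t^{-1}z\,\rightminus{-r}$, which, as $r$ ranges over $r\ge 1$, run bijectively — up to the units $-tz$ and $t^{-1}z$ — over the free polynomial generators $\rightplus{s}$ $(s>k)$ and $\rightminus{s}$ $(s<0)$ of $\End_{\Heis_k}(\one)$. Hence $\beta$ is an isomorphism of $\kk$-algebras. The main obstacle is precisely the quoted structural result that $\End_{\Heis_k}(\one)$ is freely polynomial on these bubbles: this is where the genuine content of the basis theorem for $\Heis_k$ enters, and everything else reduces to the generating-function bookkeeping above.
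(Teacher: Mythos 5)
Your proposal is correct, but it takes a genuinely different route from the paper's proof. The paper simply cites \cite[Cor.~10.2]{BSW-qheis} for the fact that \cref{hcannon,ecannon} together define an isomorphism, and then deduces \cref{pcannon} from the Newton identity $p_r=\sum_{s=0}^r(-1)^{s-1}se_sh_{r-s}$, extending the resulting finite sum over all of $\Z$ (the extra terms vanish by \cref{weeds+}) and discarding a term via \cref{infgrass}. You instead take \cref{hcannon} as the \emph{definition} of $\beta$ (legitimate, since $\Sym\otimes\Sym$ is free commutative on the $h_r^\pm$ and $\End_{\Heis_k}(\one)$ is commutative), recover \cref{ecannon} by inverting $\rightplusgen=u^{-k}\beta(H(u^{-1}))$ and $\rightminusgen=\beta(H(u))$ against the left-bubble generating functions using \cref{eyes} and $H(v)E(-v)=1$, and obtain \cref{pcannon} by logarithmic differentiation of the same relation --- essentially the trick the paper itself deploys later in the proof of \cref{goatee}. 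Your coefficient bookkeeping checks out, including the stray constant term $k$ at $u^0$, which lies harmlessly outside the range $r\ge 1$. Both arguments ultimately rest on the same nontrivial external input: your assertion that $\End_{\Heis_k}(\one)$ is freely polynomial on the non-scalar bubbles is exactly the content of the corollary the paper cites, so neither route is more self-contained where the real work lies. What yours buys is that \cref{ecannon} is exhibited as a formal consequence of \cref{hcannon} and the infinite grassmannian relation rather than being quoted; what the paper's buys is brevity.
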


\begin{proof}
    The fact that we have an isomorphism $\beta$ given by \cref{hcannon,ecannon} was first shown in \cite[Cor.~10.2]{BSW-qheis}, although we use here the sign conventions of \cite[Cor.~9.3]{BSW-qFrobHeis} (where the Frobenius algebra $A$ there is $\kk$).  For the power sums, recall that $p_r = \sum_{s=0}^r (-1)^{s-1} s e_s h_{r-s}$; see \cite[p.~33]{Mac95}.  Also note that the maps \cref{hcannon,ecannon,pcannon} are valid for $r=0$ when we adopt the usual conventions that $h_0=e_0=1$ and $p_0=0$.  The image of $p_r^-$ given in \cref{pcannon} follows immediately.  For the image of $p_r^+$, we have
    \[
        p_r^+
        \mapsto z^2 \sum_{s \in \Z} s\ \leftplus{s-k} \rightplus{r-s+k}
        = z^2 \sum_{s \in \Z} (s+k)\ \leftplus{s} \rightplus{r-s}
        \overset{\cref{infgrass}}{=} z^2 \sum_{s \in \Z} s\ \leftplus{s} \rightplus{r-s}.
        \qedhere
    \]
\end{proof}

Next we recall how bubbles slide past strings.  The precise relation is easiest to state using the generating functions \cref{bg1,bg2,bg3,bg4} and dots labelled by formal power series.  We define
\[
    \begin{tikzpicture}[anchorbase]
        \draw[->] (0,-0.3) to (0,0.3);
        \multdot{0,0}{east}{x^r};
    \end{tikzpicture}
    :=
    \begin{tikzpicture}[anchorbase]
        \draw[->] (0,-0.3) to (0,0.3);
        \multdot{0,0}{east}{r};
    \end{tikzpicture}
    \ ,\quad r \in \Z.
\]
Then, expanding linearly, we can also label dots by polynomials $a_n x^n + \dotsb + a_1 x + a_0 \in \kk[x]$, or even by Laurent series in $\kk[x]\Laurent{u^{-1}}$ or $\kk[x]\Laurent{u}$.  For example, expanding in $\kk[x] \Laurent{u^{-1}}$, we have
\[
    \begin{tikzpicture}[anchorbase]
        \draw[->] (0,-0.3) to (0,0.3);
        \multdot{0,0}{east}{xu(u-x)^{-2}};
    \end{tikzpicture}
    = u^{-1}\,
    \begin{tikzpicture}[anchorbase]
        \draw[->] (0,-0.3) to (0,0.3);
        \singdot{0,0};
    \end{tikzpicture}
    + 2u^{-2}\,
    \begin{tikzpicture}[anchorbase]
        \draw[->] (0,-0.3) to (0,0.3);
        \multdot{0,0}{west}{2};
    \end{tikzpicture}
    + 3u^{-3}\,
    \begin{tikzpicture}[anchorbase]
        \draw[->] (0,-0.3) to (0,0.3);
        \multdot{0,0}{west}{3};
    \end{tikzpicture}
    + 4u^{-4}\,
    \begin{tikzpicture}[anchorbase]
        \draw[->] (0,-0.3) to (0,0.3);
        \multdot{0,0}{west}{4};
    \end{tikzpicture}
    + \dotsb.
\]
We adopt the convention that, in any equation involving the generating functions \cref{bg3,bg4}, we expand all rational functions as Laurent series in $\kk[x]\Laurent{u}$.  In all other equations, we expand rational functions as Laurent series in $\kk[x]\Laurent{u^{-1}}$.  With these conventions, we have the following \emph{bubble slides}:
\begin{equation} \label{bs}
    \begin{tikzpicture}[anchorbase]
        \draw[->] (0,-0.5) to (0,0.5);
        \pmrightgen{-1.1,0};
    \end{tikzpicture}
    \ =
    \begin{tikzpicture}[centerzero]
        \draw[->] (0,-0.5) to (0,0.5);
        \pmrightgen{0.5,0};
        \multdot{0,0}{east}{1-z^2xu(u-x)^{-2}};
    \end{tikzpicture}
    \ ,\qquad
    \begin{tikzpicture}[anchorbase]
        \draw[->] (0,-0.5) to (0,0.5);
        \pmleftgen{0.5,0};
    \end{tikzpicture}
    =\
    \begin{tikzpicture}[centerzero]
        \draw[->] (0,-0.5) to (0,0.5);
        \pmleftgen{-1.1,0};
        \multdot{0,0}{west}{1-z^2xu(u-x)^2};
    \end{tikzpicture}
    \ .
\end{equation}
(See \cite[Lem.~4.6]{BSW-qheis} for the statement and \cite[Lem.~5.6]{BSW-qFrobHeis} for a proof, taking $A = \kk$ there.)  In fact, as we see in the next result, the bubble slides are simpler when using the images under $\beta$ of the power sums.  For $r \in \Z$, we define the following element of $\End_{\Heis_k}(\one)$:
\begin{equation} \label{fort}
    \symbub{r} :=
    \begin{cases}
        -\{r\}^{-1} \beta(p_r^+) & \text{if } r > 0, \\
        0 & \text{if } r=0, \\
        -\{r\}^{-1} \beta(p_{-r}^-) & \text{if } r < 0.
    \end{cases}
\end{equation}
It follows that the center of $\Heis_k$ is a polynomial algebra in the $\symbub{r}$, $r \ne 0$:
\begin{equation} \label{renegade1}
    \End_{\Heis_k}(\one) = \kk \left[ \symbub{r} : r \in \Z,\ r \ne 0 \right],
\end{equation}
and we have
\begin{equation} \label{renegade2}
    \beta(\Sym \otimes 1) = \kk \left[ \symbub{r} : r>0 \right],\qquad
    \beta(1 \otimes \Sym) = \kk \left[ \symbub{r} : r<0 \right].
\end{equation}

\begin{prop} \label{goatee}
    For $r \in \Z$, we have
    \begin{equation} \label{goat}
        \symbub{r}\
        \begin{tikzpicture}[centerzero]
            \draw[->] (0,-0.5) -- (0,0.5);
        \end{tikzpicture}
        =
        \begin{tikzpicture}[centerzero]
            \draw[->] (0,-0.5) to (0,0.5);
        \end{tikzpicture}
        \ \symbub{r}
        + \{r\}\
        \begin{tikzpicture}[centerzero]
            \draw[->] (0,-0.5) to (0,0.5);
            \multdot{0,0}{west}{r};
        \end{tikzpicture}
        \ .
    \end{equation}
\end{prop}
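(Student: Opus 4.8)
The plan is to prove \cref{goat} by generating functions, turning the \emph{multiplicative} bubble slide \cref{bs} into the \emph{additive} form of \cref{goat} via a logarithmic derivative. Since $\symbub{0}=0$ and $\{0\}=0$, the case $r=0$ is trivial, so I would prove $r>0$ in detail and note that $r<0$ is entirely analogous, using the $(-)$-bubble generating functions \cref{bg3,bg4} together with the stipulated expansion of rational functions as Laurent series in $u$ (rather than $u^{-1}$).

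First I would package the power-sum bubbles into a generating function. Starting from $\beta(p_r^+)=z^2\sum_s s\,\leftplus{s}\rightplus{r-s}$ in \cref{pcannon} and summing against $u^{-r}$, the convolution factors:
\[
    \sum_{r \in \Z} \Big( z^2 \sum_{s \in \Z} s\, \leftplus{s} \rightplus{r-s} \Big) u^{-r}
    = z^2 \Big( \sum_s s\, \leftplus{s} u^{-s} \Big) \Big( \sum_m \rightplus{m} u^{-m} \Big).
\]
Using $\sum_s s\,\leftplus{s}u^{-s}=-u\tfrac{d}{du}\sum_s\leftplus{s}u^{-s}$ and rewriting both factors via \cref{bg1,bg2}, the scalar prefactors $t^{\pm1},z^{\pm1}$ cancel and the right-hand side collapses to $\big(u\tfrac{d}{du}\leftplusgen\big)\rightplusgen$. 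By \cref{eyes} we have $\rightplusgen=\leftplusgen^{-1}$, so this equals $u\tfrac{d}{du}\log\leftplusgen=-u\tfrac{d}{du}\log\rightplusgen$. Comparing coefficients of $u^{-r}$ for $r\ge1$ (where the left-hand coefficient is exactly $\beta(p_r^+)$) yields
\[
    \sum_{r \ge 1} \beta(p_r^+)\, u^{-r} = - u \tfrac{d}{du} \log \rightplusgen - k,
\]
the constant $k$ arising from $\rightplusgen \in u^{-k}+\dotsb$ as in \cref{bg2}; crucially $k\cdot 1_\one$ is central and will drop out of the slide.

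Next I would slide. The first identity in \cref{bs}, specialized to the $(+)$-bubble, says that $\rightplusgen$ inserted to the left of an upward strand equals $\rightplusgen$ inserted to its right, post-composed on the strand with the dot-valued series $g(u):=1-z^2xu(u-x)^{-2}$, where $x$ denotes a dot. Since a floating bubble to the right of a strand commutes with dots on it, applying $-u\tfrac{d}{du}\log$ to this identity and invoking the previous paragraph turns the product into a sum: the difference (bubble on the left) $-$ (bubble on the right) of $\sum_{r\ge1}\beta(p_r^+)u^{-r}$ equals $-u\tfrac{d}{du}\log g(u)$ placed on the strand, the scalar $k$ cancelling. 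The key computation is the factorization $(u-x)^2-z^2xu=(u-q^2x)(u-q^{-2}x)$, valid because $2+z^2=q^2+q^{-2}$, which gives
\[
    u \tfrac{d}{du} \log g(u) = \sum_{n \ge 0} \big( q^{2n} + q^{-2n} - 2 \big)\, x^n u^{-n} = \sum_{r \ge 1} \{r\}^2\, x^r u^{-r},
\]
where $x^r$ denotes the strand carrying $r$ dots. Extracting the coefficient of $u^{-r}$ shows that inserting $\beta(p_r^+)$ to the left of the strand differs from inserting it to the right by $-\{r\}^2$ times the strand with $r$ dots; multiplying by $-\{r\}^{-1}$ and recalling $\symbub{r}=-\{r\}^{-1}\beta(p_r^+)$ from \cref{fort} gives \cref{goat} for $r>0$.

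The main obstacle I anticipate is the bookkeeping around the generating functions: tracking the prefactors $t^{\pm1},z^{\pm1}$ and the leading power $u^{\mp k}$ so that the spurious constant $k$ cancels cleanly, and respecting the two different expansion conventions (Laurent series in $u^{-1}$ for the $(+)$-case versus in $u$ for the $(-)$-case) when adapting the argument to $r<0$. The genuinely diagrammatic input is minimal --- only the single relation \cref{bs} --- so once the logarithmic-derivative formalism is in place, the statement follows from the elementary factorization of $(u-x)^2-z^2xu$.
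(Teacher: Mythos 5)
Your proof is correct and takes essentially the same route as the paper: both arguments logarithmically differentiate the bubble slide \cref{bs} and reduce to the factorization $(u-x)^2-z^2xu=(u-q^2x)(u-q^{-2}x)$, which yields the series $\sum_{r\ge 1}\{r\}^2x^ru^{-r}$. The only difference is packaging --- the paper runs the computation through the symmetric-function identity $P_+(u)=-u^2H_+'(u)/H_+(u)$ using $\beta(H_+(u))$ and $\beta(E_+(-u))$, whereas you work directly with the bubble generating functions \cref{bg1,bg2} and \cref{eyes} and cancel the resulting additive constant $k$ by hand.
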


\begin{proof}
    The statement is trivial for $r=0$.  Now suppose $r>0$ and consider the generating functions
    \begin{equation} \label{mars}
        H_+(u) := \sum_{r \ge 0} h_r^+ u^{-r},\qquad
        E_+(u) := \sum_{r \ge 0} e_r^+ u^{-r},\qquad
        P_+(u) := \sum_{r \ge 1} p_r^+ u^{1-r}.
    \end{equation}
    Then we have
    \begin{equation} \label{blob}
        H_+(u) E_+(-u) = 1,\qquad
        P_+(u) = -u^2 H_+'(u)/H_+(u) = -u^2H_+'(u) E_+(-u).
    \end{equation}
    (See, for example, \cite[(I.2.6) and (I.2.10)]{Mac95} setting the $t$ there equal to $u^{-1}$.)  Furthermore,
    \begin{equation} \label{road}
        \beta(H_+(u)) = u^k \rightplusblank(u),\qquad
        \beta(E_+(-u)) = u^{-k} \leftplusblank(u),
    \end{equation}
    Let $f(u)= 1-z^2xu(u-x)^{-2}$ be the rational function appearing in \cref{bs}.  Then we have
    \[
        \beta(H_+(u))\
        \begin{tikzpicture}[anchorbase]
            \draw[->] (0,-0.5) to (0,0.5);
        \end{tikzpicture}
        \ \overset{\cref{bs}}{=}
        \begin{tikzpicture}[centerzero]
            \draw[->] (0,-0.5) to (0,0.5);
            \multdot{0,0}{east}{f(u)};
        \end{tikzpicture}
        \ \beta(H_+(u)).
    \]
    Differentiating with respect to $u$ gives
    \[
        \beta(H_+'(u))\
        \begin{tikzpicture}[centerzero]
            \draw[->] (0,-0.5) -- (0,0.5);
        \end{tikzpicture}
        =
        \begin{tikzpicture}[centerzero]
            \draw[->] (0,-0.5) to (0,0.5);
            \multdot{0,0}{east}{f(u)};
        \end{tikzpicture}
        \ \beta(H_+'(u))
        +
        \begin{tikzpicture}[centerzero]
            \draw[->] (0,-0.5) to (0,0.5);
            \multdot{0,0}{east}{f'(u)};
        \end{tikzpicture}
        \ \beta(H_+(u)).
    \]
    Multiplying on the left by $-u^2\beta(E_+(-u) \otimes 1)$ and using \cref{bs} again, we have
    \[
        \beta(P_+(u))\
        \begin{tikzpicture}[centerzero]
            \draw[->] (0,-0.5) -- (0,0.5);
        \end{tikzpicture}
        =
        \begin{tikzpicture}[centerzero]
            \draw[->] (0,-0.5) to (0,0.5);
        \end{tikzpicture}
        \ \beta(P_+(u))
        +
        \begin{tikzpicture}[centerzero]
            \draw[->] (0,-0.5) to (0,0.5);
            \multdot{0,0}{east}{-u^2f'(u)/f(u)};
        \end{tikzpicture}
        \ .
    \]
    Now, noting that $f(u) = \frac{(u-q^2x)(u-q^{-2}x)}{(u-x)^2}$, we have
    \begin{multline*}
        \frac{f'(u)}{f(u)}
        = \frac{\partial}{\partial u} \ln (f(u))
        = \left( \frac{1}{u-q^2x} + \frac{1}{u-q^{-2}x} - \frac{2}{u-x} \right)
        \\
        = u^{-1} \sum_{r \ge 0} \left( \left( \frac{q^2x}{u} \right)^r + \left( \frac{q^{-2}x}{u} \right)^r - 2 \left( \frac{x}{u} \right)^r \right)
        = \sum_{r \ge 1} \{r\}^2 x^r u^{-r-1}.
    \end{multline*}
    Thus we have
    \[
        \beta(P_+(u))\
        \begin{tikzpicture}[centerzero]
            \draw[->] (0,-0.5) -- (0,0.5);
        \end{tikzpicture}
        =
        \begin{tikzpicture}[centerzero]
            \draw[->] (0,-0.5) to (0,0.5);
        \end{tikzpicture}
        \ \beta(P_+(u))
        - \sum_{r \ge 1} \{r\}^2 u^{1-r}\
        \begin{tikzpicture}[centerzero]
            \draw[->] (0,-0.5) to (0,0.5);
            \multdot{0,0}{west}{r};
        \end{tikzpicture}
        \ ,
    \]
    and \cref{goat} follows after dividing both sides by $-\{r\}$ and equating coefficients of $u$.  The case $r<0$ is similar, except that we work with power series in $u$, as opposed to $u^{-1}$.
    \details{
        We use the generating functions
        \[
            H_-(u) := \sum_{r \ge 0} h_r^- u^r,\qquad
            E_-(u) := \sum_{r \ge 0} e_r^- u^r,\qquad
            P_-(u) := \sum_{r \ge 1} p_r^- u^{r-1}.
        \]
        Then we have $H_-(u) E_-(-u) = 1$ and $P_-(u) = H_-'(u)/H_-(u) = H_-'(u) E_-(-u)$. (See, for example, \cite[(I.2.6) and (I.2.10)]{Mac95} setting the $t$ there equal to $u$.)  Furthermore,
        \[
            \beta(H_-(u)) = \rightminusblank(u),\qquad
            \beta(E_-(-u)) = \leftminusblank(u).
        \]
        As above, we have
        \[
            \beta(H_-'(u))\
            \begin{tikzpicture}[centerzero]
                \draw[->] (0,-0.5) -- (0,0.5);
            \end{tikzpicture}
            =
            \begin{tikzpicture}[centerzero]
                \draw[->] (0,-0.5) to (0,0.5);
                \multdot{0,0}{east}{f(u)};
            \end{tikzpicture}
            \ \beta(H_-'(u))
            +
            \begin{tikzpicture}[centerzero]
                \draw[->] (0,-0.5) to (0,0.5);
                \multdot{0,0}{east}{f'(u)};
            \end{tikzpicture}
            \ \beta(H_-(u)).
        \]
        Multiplying on the left by $\beta(E_-(-u))$ and using \cref{bs} again, we have
        \[
            \beta(P_-(u))\
            \begin{tikzpicture}[centerzero]
                \draw[->] (0,-0.5) -- (0,0.5);
            \end{tikzpicture}
            =
            \begin{tikzpicture}[centerzero]
                \draw[->] (0,-0.5) to (0,0.5);
            \end{tikzpicture}
            \ \beta(P_-(u))
            +
            \begin{tikzpicture}[centerzero]
                \draw[->] (0,-0.5) to (0,0.5);
                \multdot{0,0}{east}{f'(u)/f(u)};
            \end{tikzpicture}
            \ .
        \]
        Now, expanding as a Laurent series in $u$, we have
        \begin{multline*}
            \frac{f'(u)}{f(u)}
            = \frac{\partial}{\partial u} \ln (f(u))
            = \left( \frac{1}{u-q^2x} + \frac{1}{u-q^{-2}x} - \frac{2}{u-x} \right)
            \\
            = -\sum_{r \ge 0} \left( q^{-2r-2} x^{-r-1} u^r + q^{2r+2} x^{-r-1} u^r - 2x^{-r-1} u^r \right)
            = -\sum_{r \ge 1} \{r\}^2 x^{-r} u^{r-1}.
        \end{multline*}
        Dividing by $-\{r\}$ and equating coefficients of $u^r$ then yields \cref{goat}.
    }
\end{proof}

\subsection{Basis theorem}

We now recall the important basis theorem for the morphism spaces of $\Heis_k$.  Let $X = X_r \otimes \dotsb \otimes X_1$ and $Y = Y_s \otimes \dotsb \otimes Y_1$ be objects of $\Heis_k$ for $X_i, Y_j \in \{\uparrow,\downarrow\}$.  An \emph{$(X,Y)$-matching} is a bijection between $\{i : X_i = \uparrow\} \sqcup \{j : Y_j = \downarrow\}$ and $\{i : X_i = \downarrow\} \sqcup \{j : Y_j = \uparrow\}$.  A \emph{reduced lift} of an $(X,Y)$-matching is a diagram representing a morphism $X \rightarrow Y$ such that
\begin{itemize}
    \item the endpoints of each string are points corresponding under the given matching;
    \item there are no floating bubbles and no dots on any string;
    \item there are no self-intersections of strings and no two strings cross each other more than once.
\end{itemize}
Fix a set $B(X,Y)$ consisting of a choice of reduced lift for each of the $(X,Y)$-matchings.  Let $B_{\circ}(X,Y)$ be the set of all morphisms that can be obtained from the elements of $B(X,Y)$ by adding dots labelled with integer multiplicities near to the terminus of each string.  Using the morphism $\beta$ of \cref{cannon}, we can make the morphism space $\Hom_{\Heis_k}(X,Y)$ into a right $\Sym \otimes \Sym$-module:
\[
    \phi \theta := \phi \otimes \beta(\theta),\quad
    \phi \in \Hom_{\Heis_k}(X,Y),\quad \theta \in \Sym \otimes \Sym.
\]

\begin{theo}[{\cite[Th.~10.1]{BSW-qheis}}] \label{basisthm}
    For any objects $X, Y \in \Heis_k$, the morphism space\newline $\Hom_{\Heis_k}(X,Y)$ is a free right $\Sym \otimes \Sym$-module with basis $B_{\circ}(X,Y)$.
\end{theo}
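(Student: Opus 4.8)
The plan is to prove the two halves of the basis theorem separately: first that $B_\circ(X,Y)$ \emph{spans} $\Hom_{\Heis_k}(X,Y)$ as a right $\Sym \otimes \Sym$-module (the action being through the isomorphism $\beta$ of \cref{cannon}), and second that the elements of $B_\circ(X,Y)$ are \emph{linearly independent}. The spanning is a diagrammatic straightening argument using only the defining relations; the independence is the substantive point, and is established by producing a sufficiently faithful action of $\Heis_k$ so that the images of the reduced lifts are visibly independent.

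For spanning, I would start from an arbitrary diagram representing a morphism $X \to Y$ and reduce it to normal form. The skein relations \cref{skein} and their rotations \cref{skeinrot}, together with the braid relations \cref{braid} and the mixed-orientation braid relations \cref{altbraid1,altbraid2}, allow one to trade any crossing for lower-crossing terms, and hence to assume inductively that no string crosses itself and no two strings cross more than once. The relations \cref{pos,neg} eliminate the bigons formed by oppositely oriented strands, and the curl relations \cref{leftcurl,rightcurl} remove curls, each at the cost of introducing floating bubbles and diagrams with fewer crossings. The dot-slide relations \cref{dotslide} and the teardrop relations \cref{teapos,teaneg} push all dots to the termini of the strings. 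Finally the bubble slides \cref{bs} move every interior bubble to the far left or right, where, using the infinite Grassmannian relations \cref{infgrass} and the fact that the center is the polynomial algebra generated by the bubbles (\cref{renegade1,renegade2}), it is rewritten as an element of $\Sym \otimes \Sym$ acting on the right through $\beta$. This exhibits the original morphism as a right-$\Sym \otimes \Sym$-combination of reduced lifts carrying only terminal dots, i.e.\ of elements of $B_\circ(X,Y)$.

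For linear independence I would first reduce the number of cases using the pivotal structure: the adjunction isomorphisms \cref{rightadj,leftadj} identify $\Hom_{\Heis_k}(X,Y)$, compatibly with the $\Sym \otimes \Sym$-action and carrying $B_\circ$ bijectively onto $B_\circ$, with a hom-space of the form $\Hom_{\Heis_k}(\one, Z)$; the inversion isomorphisms \cref{invrel} in $\Add(\Heis_k)$ then let one further reduce the combinatorics to the upward endomorphism algebras $\End_{\Heis_k}(\uparrow^{\otimes n})$ together with the center $\End_{\Heis_k}(\one)$. The key lower bound comes from the affine Hecke algebra: the upward crossings (read through \cref{skein} as Hecke generators) and the invertible dots (read as the Laurent-polynomial generators) furnish a homomorphism $\AH_n \to \End_{\Heis_k}(\uparrow^{\otimes n})$, and $\AH_n$ acts faithfully through its standard polynomial representation, which separates the words $x_1^{a_1}\cdots x_n^{a_n} T_w$ indexing the reduced lifts with terminal dots. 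Combining the faithful $\AH_n$-action on the upward part with the polynomial-algebra structure of the center, detected independently, produces a representation whose image has the same size, in each multidegree, as the spanning set from the first step. Matching this lower bound against the spanning upper bound forces the reduced lifts to be a basis.

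The hard part is this independence, and concretely the construction and control of the faithful action. Straightening alone gives only that $B_\circ(X,Y)$ spans, so everything rests on a matching lower bound, and the delicate point is that the representation must simultaneously detect the combinatorial type of the underlying matching (via the $\AH_n$ / permutation data), the dot multiplicities at each terminus, and the central $\Sym \otimes \Sym$ decoration, so that the ``triangular'' decomposition into crossing-part, dot-part, and bubble-part is genuinely direct. Verifying faithfulness of the upward part by comparison with the known polynomial basis of $\AH_n$, while independently pinning down the central factor as $\Sym \otimes \Sym$, is where the real work lies; for $k=0$ the same scheme applies with the HOMFLYPT skein or $U_q(\mathfrak{gl}_n)$ action playing the role of the cyclotomic Hecke action used when $k \neq 0$.
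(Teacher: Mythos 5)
First, note that the paper does not prove this statement at all: it is quoted verbatim from \cite[Th.~10.1]{BSW-qheis} and used as a black box, so there is no internal proof to compare yours against. Your outline does match the general architecture of the proof in that reference (a diagrammatic straightening argument for spanning, followed by a lower bound coming from actions on module categories), but as written the linear-independence half has a genuine circularity. You propose to ``independently pin down the central factor as $\Sym\otimes\Sym$'' and to use the faithfulness of $\AH_n\to\End_{\Heis_k}(\uparrow^{\otimes n})$; but in this paper both of those facts (\cref{cannon}, and the injectivity of \cref{rabbit}) are explicitly stated as \emph{consequences} of \cref{basisthm}, and in \cite{BSW-qheis} they are likewise corollaries of the basis theorem. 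A priori one only knows that the bubbles generate some quotient of $\Sym\otimes\Sym$ inside $\End_{\Heis_k}(\one)$ and that $\AH_n$ maps onto a quotient of the upward part; the entire content of the theorem is that neither map has a kernel, so neither may be assumed.

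Second, the faithful representation you need cannot be a single polynomial representation of $\AH_n$: any one cyclotomic quotient $\rH_n^f$ kills $f(x_1)$ and forces the bubble generating functions to act by the fixed scalars of \cref{mtb}, so it detects neither arbitrarily large dot multiplicities nor the $\Sym\otimes\Sym$ decoration. The argument in \cite{BSW-qheis} instead lets the defining polynomials vary over all degrees (using generalized cyclotomic quotients $\cH(f|g)$ with $k=\deg g-\deg f$, so that both tensor factors of the center act nontrivially) and combines these actions with a categorical comultiplication $\Heis_{l+m}\to\Add(\Heis_l\boxtimes\Heis_m)$; only the totality of these representations separates the elements of $B_{\circ}(X,Y)$. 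Your sketch gestures at the cyclotomic Hecke action but does not supply this mechanism, and without it the ``matching lower bound'' against the spanning set is not established.
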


By \cite[Th.~3.2]{BSW-qheis}, there is a unique isomorphism of $\kk$-linear monoidal categories
\begin{equation} \label{earth}
    \Omega_k \colon \Heis_k(z,t) \xrightarrow{\cong} \Heis_{-k}(z,t^{-1})^\op
\end{equation}
given on the generating morphisms by
\begin{equation} \label{solaris}
    \posupcross \mapsto -\negdowncross,\quad
    \negupcross \mapsto -\posdowncross,\quad
    \updot \mapsto \downdot,\quad
    \rightcup \mapsto \rightcap,\quad
    \rightcap \mapsto \rightcup,\quad
    \leftcup \mapsto -\leftcap,\quad
    \leftcap \mapsto -\leftcup.
\end{equation}
The isomorphism $\Omega_k$ acts on bubbles as
\begin{equation} \label{aurora}
    \leftpm{r} \mapsto - \rightpm{r}
    \ ,\qquad
    \rightpm{r} \mapsto - \leftpm{r}
    \ ,\qquad
    \symbub{s} \mapsto - \symbub{s},
\end{equation}
for $r,s \in \Z$, $s \ne 0$.
\details{
    Since the image of the power sums is not stated in \cite[Th.~3.2]{BSW-qheis}, we include the details here.  For $s \ge 1$, we have
    \begin{multline*}
        \symbub{s}
        \overset{\cref{pcannon}}{=} z^2 \sum_{a \in \Z} a\ \leftplus{a} \rightplus{s-a}
        \mapsto z^2 \sum_{a \in \Z} a \leftplus{s-a} \rightplus{a}
        = z^2 \sum_{b \in \Z} (s-b)\ \leftplus{b} \rightplus{s-b}
        \\
        \overset{\cref{infgrass}}{=} - z^2 \sum_{b \in \Z} b\ \leftplus{b} \rightplus{s-b}
        = - \symbub{s}
    \end{multline*}
    and for $s \le -1$, we have
    \begin{multline*}
        \symbub{s}
        \overset{\cref{pcannon}}{=} z^2 \sum_{a \in \Z} a\ \leftminus{-a} \rightminus{a+s}
        \mapsto z^2 \sum_{a \in \Z} a \leftminus{a+s} \rightminus{-a}
        = z^2 \sum_{b \in \Z} (-s-b)\ \leftminus{-b} \rightminus{b+s}
        \\
        \overset{\cref{infgrass}}{=} - z^2 \sum_{b \in \Z} b \leftminus{-b} \rightminus{b+s}
        = - \symbub{s}.
    \end{multline*}
}

\section{Partial quantum Heisenberg categories\label{sec:partial}}

In this short section we define certain subcategories of $\Heis_k$, which we call \emph{partial quantum Heisenberg categories}.  Roughly speaking, our aim is to split $\Heis_k$ in half in such a way that, when we later identify its trace with the elliptic Hall algebra $\EH_k$, the two halves correspond to $\EH^+$ and $\EH^-$.  Throughout this section we continue with the assumptions on $\kk$, $q$, $z$, $t$, and $k$ made at the beginning of \cref{sec:qheis}.

\begin{defin}
    Define $\Heis^+$ to be the strict $\kk$-linear monoidal category generated by the object $\uparrow$ and morphisms
    \[
        \posupcross,\ \negupcross \colon \uparrow \otimes \uparrow\ \to\ \uparrow \otimes \uparrow,\qquad
        \updot \colon \uparrow\ \to\ \uparrow,\qquad
        \symbub{r} \colon \one \to \one,\ r > 0,
    \]
    subject to the relations \cref{braid,skein,dotslide}, relation \cref{goat} for $r>0$, and the relation that the dot is invertible.  Define $\Heis^-$ to be the strict $\kk$-linear monoidal category generated by the object $\downarrow$ and morphisms
    \[
        \posdowncross,\ \negdowncross \colon \downarrow \otimes \downarrow\ \to\ \downarrow \otimes \downarrow,\qquad
        \downdot \colon \downarrow\ \to\ \downarrow,\qquad
        \symbub{r} \colon \one \to \one,\ r < 0,
    \]
    subject to the $180\degree$ rotation of relations \cref{braid,skein,dotslide} and \cref{goat} for $r<0$, and the relation that the dot is invertible.
\end{defin}
Note that the definition of $\Heis^\pm$ does not involve $k$.

\begin{prop} \label{yurt}
    For $k \in \Z$, we have faithful $\kk$-linear monoidal functors
    \[
        \Psi_k^\pm \colon \Heis^\pm \to \Heis_k,
    \]
    mapping the generating objects and morphisms of $\Heis^\pm$ to the objects and morphisms in $\Heis_k$ denoted by the same symbols.
\end{prop}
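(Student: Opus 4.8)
The plan is to treat existence and faithfulness separately, and to deduce the statement for $\Heis^-$ from that for $\Heis^+$.

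\emph{Existence.} Since $\Heis^+$ is defined by generators and relations, the functor $\Psi_k^+$ exists as soon as the images in $\Heis_k$ of the generating morphisms of $\Heis^+$ satisfy the defining relations of $\Heis^+$. But every such relation holds in $\Heis_k$: the relations \cref{braid,skein,dotslide} and the invertibility of the dot are among the defining relations of $\Heis_k$, while relation \cref{goat} for $r>0$ holds in $\Heis_k$ by \cref{goatee}, with $\symbub{r}$ defined as in \cref{fort}. Thus $\Psi_k^+$ is well defined, and $\Psi_k^-$ is well defined by the same reasoning applied to the $180\degree$ rotations of these relations together with the pivotal structure of $\Heis_k$. (Alternatively, one could deduce the existence of $\Psi_k^-$ from that of $\Psi_{-k}^+$ via the isomorphism $\Omega_k$ of \cref{earth}.)

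\emph{A spanning set.} Because every generating morphism of $\Heis^+$ preserves the number of strands, $\Hom_{\Heis^+}(\uparrow^{\otimes m}, \uparrow^{\otimes n}) = 0$ unless $m = n$, so it suffices to analyze $\End_{\Heis^+}(\uparrow^{\otimes n})$. I would show that this algebra is spanned by the morphisms $b \cdot m$, where $b$ runs over the reduced lifts $B_\circ(\uparrow^{\otimes n}, \uparrow^{\otimes n})$ (diagrams of permutations decorated by dots) and $m$ runs over the monomials in the bubbles $\symbub{r}$, $r > 0$. This is a normal-form argument in two steps. First, using relation \cref{goat} for $r>0$, one pushes every bubble to the left of all strands; each time a bubble slides past a strand it either remains a bubble or is replaced by a dotted strand carrying one fewer bubble, so an induction on the number of bubbles together with the number of bubble--strand crossings reduces every diagram to terms of the form (monomial in positive bubbles) $\cdot$ (bubble-free diagram). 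Second, the bubble-free diagrams are reduced to elements of $B_\circ(\uparrow^{\otimes n},\uparrow^{\otimes n})$ by the standard procedure: \cref{skein} removes a double crossing at the cost of terms with fewer crossings, \cref{braid} rearranges triple crossings, and \cref{dotslide} slides dots to the termini of strands, with an induction on the number of crossings. The correction terms produced at each stage are strictly simpler, so the induction terminates.

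\emph{Faithfulness.} It remains to check that $\Psi_k^+$ sends this spanning set to a $\kk$-linearly independent subset of $\End_{\Heis_k}(\uparrow^{\otimes n})$; injectivity on each morphism space, and hence faithfulness, then follows, since a linear combination $\sum c_i (b_i \cdot m_i)$ that maps to zero must have all $c_i=0$. Now $\Psi_k^+(b \cdot m) = b \cdot \Psi_k^+(m)$, where $b \in B_\circ(\uparrow^{\otimes n},\uparrow^{\otimes n})$ is the corresponding basis diagram in $\Heis_k$ and, by \cref{fort} and \cref{renegade2}, $\Psi_k^+(m)$ is a nonzero scalar multiple of $\beta(p_\lambda^+)$ for a partition $\lambda$, with $p_\lambda^+ = \prod_i p_{\lambda_i}^+$ and scalar $\prod_i(-\{\lambda_i\}^{-1}) \in \kk^\times$ by \cref{trevor}. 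Since $\kk \supseteq \Q$, the products $p_\lambda^+$ form a $\kk$-basis of $\Sym \otimes 1$, so the $\Psi_k^+(m)$ form a $\kk$-basis of $\beta(\Sym \otimes 1)$. By the basis theorem \cref{basisthm}, $\Hom_{\Heis_k}(\uparrow^{\otimes n},\uparrow^{\otimes n})$ is a free right $\Sym \otimes \Sym$-module with basis $B_\circ(\uparrow^{\otimes n},\uparrow^{\otimes n})$; hence the elements $b \cdot \beta(p_\lambda^+)$ form a subset of a $\kk$-basis, and in particular are linearly independent. This proves $\Psi_k^+$ faithful, and shows en passant that the spanning set above is a basis. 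The functor $\Psi_k^-$ is handled by the symmetric argument using $\beta(1 \otimes \Sym) = \kk[\symbub{r} : r<0]$, or by transporting the result for $\Psi_{-k}^+$ through $\Omega_k$. The main obstacle is the normal-form reduction of the preceding paragraph; once the spanning set is in hand, the linear independence of its image is immediate from \cref{basisthm}.
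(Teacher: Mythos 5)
Your proposal is correct and follows essentially the same route as the paper's proof: well-definedness from the defining relations of $\Heis^\pm$ holding in $\Heis_k$, a straightening/normal-form argument producing the spanning set $\{\phi \otimes \beta(p_\lambda \otimes 1)\}$ of $\End_{\Heis^+}(\uparrow^{\otimes n})$, and linear independence of its image via the basis theorem (\cref{basisthm}). The only difference is that you spell out the straightening induction that the paper delegates to the proof of \cite[Th.~10.1]{BSW-qheis}, and you make explicit the (correct) passage between monomials in the $\symbub{r}$ and the power-sum basis via \cref{fort}.
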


\begin{proof}
    We give the proof for $\Psi_k^+$, since the proof for $\Psi_k^-$ is analogous.  Because all generating morphisms of $\Heis^+$ are endomorphisms, we have $\Hom_{\Heis^+}(\uparrow^{\otimes n}, \uparrow^{\otimes m}) = 0$ for $n \ne m$.  Since the defining relations of $\Heis^+$ hold in $\Heis_k$, the functor is well defined.

    Fix $n \in \N$ define $B_0(\uparrow^{\otimes n}, \uparrow^{\otimes n})$ as before \cref{basisthm}.  Let $\bB_{\Sym}$ be the basis of $\Sym$ consisting of the power sum symmetric functions $p_\lambda = p_{\lambda_1} \dotsb p_{\lambda_l}$, where $\lambda = (\lambda_1,\dotsc,\lambda_l)$ is a partition.  A standard straightening argument shows that the morphisms
    \[
        \phi \otimes \beta(f \otimes 1),\quad
        \phi \in B_\circ(\uparrow^{\otimes n}, \uparrow^{\otimes n}),\quad
        f \in \bB_{\Sym},
    \]
    span $\Hom_{\Heis^+}(\uparrow^{\otimes n}, \uparrow^{\otimes n})$.  (See for example, the proof of \cite[Th.~10.1]{BSW-qheis}.)  By \cref{basisthm}, the images of these morphisms under $\Psi_k^+$ are linearly independent in $\Heis_k$.
\end{proof}

For $k \in \Z$, define
\[
    \Heis_k^\pm := \Psi_k^\pm(\Heis^\pm).
\]
It follows that elements of $\beta(\Sym \otimes 1)$ are morphisms in $\Heis_k^+$, while elements of $\beta(1 \otimes \Sym)$ are morphisms in $\Heis_k^-$.  It also follows from \cref{yurt} that we have isomorphisms of $\kk$-linear monoidal categories
\begin{equation} \label{eagle}
    \Heis_k^\pm \cong \Heis_l^\pm,\quad k,l \in \Z.
\end{equation}

\section{Skein algebra of the torus}

In this section we recall the definition of the skein algebra of the torus and identify it with the trace of the quantum Heisenberg category of central charge zero.  Throughout this section we work over an arbitrary commutative ground ring $\kk$ and $z,t \in \kk^\times$.  (Although we introduced $\Heis_k$ in \cref{sec:qheis} under more restrictive assumptions on $\kk$, all the results used in the current section hold more generally, as shown in \cite{BSW-qheis}.)

Consider the annulus
\begin{equation}
    A = [0,1]^2/\sim,
\end{equation}
where $\sim$ is the relation given by $(0,b) \sim (1,b)$ for all $b \in (0,1)$.  We will denote points in $A$ by representatives of the equivalence classes under $\sim$.  In order to make the categories we are about to describe strict, we fix a countable number of points in $A$, which will be the possible endpoints of tangles.  We choose the points
\begin{equation} \label{endpoints}
    P_n = \left( 1 - \tfrac{1}{2^n}, \tfrac{1}{2} \right) \in A,\quad n \in \Z_{>0}.
\end{equation}
Up to isomorphism, our categories will not depend on the particular choice of points.  We will typically draw them as equally spaced, or adjust the spacing to the particular tangle we draw.

We let $\FOT(A)$ be the category of framed oriented tangles over $A$.  Its objects are finite sequences $(\varepsilon_1,\dotsc,\varepsilon_n)$ of elements of $\{\uparrow,\downarrow\}$.  The unit object $\one$ is the empty sequence.  Morphisms in $\FOT(A)$ from $(\varepsilon_1,\dotsc,\varepsilon_m)$ to $(\varepsilon'_1,\dotsc,\varepsilon'_n)$ are framed oriented tangles in $A \times [0,1]$, up to ambient isotopy, with endpoints
\[
    \left( \{P_1,\dotsc,P_m\} \times \{0\} \right) \cup \left( \{P_1,\dotsc,P_n\} \times \{1\} \right)
\]
such that the orientation of the tangle at each $P_i \times \{0\}$ agrees with $\varepsilon_i$, the orientation at each $P_i' \times \{1\}$ agrees with $\varepsilon'_i$, and the framing at the point $P_i \times \{0\}$ (respectively, $P_i \times \{1\}$) points towards $P_{i+1} \times \{0\}$ (respectively, $P_{i+1} \times \{1\}$).  We allow tangles to have closed components.  For example,
\begin{equation} \label{pizza}
    \begin{tikzpicture}[anchorbase]
        \draw[->] (0.5,-0.6) to[out=up,in=225] (1.8,0.2);
        \draw[->] (-1.3,0.2) to[out=45,in=down] (-1,1);
        \draw[wipe] (1,-0.6) -- (1,-0.3) to[out=up,in=down] (-0.5,1);
        \draw[<-] (1,-0.6) -- (1,-0.3) to[out=up,in=down] (-0.5,1);
    	\draw[wipe] (0,1) to (0,0.3);
    	\draw[<-] (0,1) to (0,0.3);
    	\draw (0.3,-0.2) to[out=0,in=-90](.5,0);
    	\draw (0.5,0) to[out=90,in=0](0.3,0.2);
    	\draw (0,-0.3) to (0,-0.6);
    	\draw (0,0.3) to[out=-90,in=180] (0.3,-0.2);
    	\draw[wipe] (0.3,0.2) to[out=180,in=90](0,-0.3);
    	\draw (0.3,0.2) to[out=180,in=90](0,-0.3);
        \draw (1.2,0.6) to[out=up,in=up,looseness=1.5] (1.6,0.6);
        \draw[wipe] (0.5,1) to[out=down,in=down,looseness=2] (1.5,1);
        \draw[->] (0.5,1) to[out=down,in=down,looseness=2] (1.5,1);
        \draw[wipe] (1.2,0.6) to[out=down,in=down,looseness=1.5] (1.6,0.6);
        \draw[->] (1.2,0.6) to[out=down,in=down,looseness=1.5] (1.6,0.6);
        \identify{-1.3}{-0.6}{1.8}{1};
    \end{tikzpicture}
    \in \Hom_{\FOT(A)}(\uparrow \otimes \uparrow \otimes \downarrow, \uparrow \otimes \downarrow \otimes \uparrow \otimes \downarrow \otimes \uparrow),
\end{equation}
where we adopt the convention of blackboard framing (i.e.\ the framing is parallel to the page) and we identify the dashed vertical edges.  We always isotope tangles so that they intersect the cut transversely.  The composite $f \circ g$ is given by placing $f$ above $g$ and rescaling the vertical coordinate.  The category $\FOT(A)$ is a strict monoidal category.  Viewing $A \times [0,1]$ as the cylinder, the tensor product $f \otimes g$ is given by placing the cylinder for $g$ inside the cylinder for $f$, then rescaling and isotoping the endpoints of the tangles so that the endpoints of $g$ are to the right of those of $f$ (preserving the relative order of the endpoints in $f$ and the endpoints in $g$).  In terms of diagrams as in \cref{pizza}, this corresponds to placing the diagram of $g$ to the right of the diagram of $f$, and then passing all strands of $f$ exiting the right side of its diagram over the diagram for $g$ and all strands of $g$ exiting the left side of its diagram under the diagram for $f$.  For example,
\[
    \begin{tikzpicture}[centerzero]
        \identify{-0.7}{-0.5}{0.7}{0.5};
        \draw[->] (-0.3,-0.5) \braidup (0.3,0.5);
        \draw[->] (0.3,-0.5) to[out=up,in=200] (0.7,0);
        \draw[->] (-0.7,0) to[out=20,in=down] (-0.3,0.5);
    \end{tikzpicture}
    \ \otimes\
    \begin{tikzpicture}[anchorbase]
        \identify{-0.7}{-0.5}{0.7}{0.5};
        \draw[->] (0,-0.5) \braidup (-0.3,0.5);
        \draw[<-] (0.3,-0.5) \braidup (0,0.5);
        \draw[->] (-0.3,-0.5) to[out=up,in=-20] (-0.7,0);
        \draw[->] (0.7,0) to[out=160,in=down] (0.3,0.5);
    \end{tikzpicture}
    \ =\
    \begin{tikzpicture}[centerzero]
        \identify{-0.9}{-0.5}{0.9}{0.5};
        \draw[->] (0,-0.5) to[out=up,in=-45] (-0.9,-0.15);
        \draw[->] (0.3,-0.5) \braidup (0,0.5);
        \draw[<-] (0.6,-0.5) \braidup (0.3,0.5);
        \draw[->] (0.9,-0.15) to[out=135,in=down] (0.6,0.5);
        \draw[wipe] (-0.3,-0.5) to[out=up,in=225] (0.9,0.15);
        \draw[->] (-0.3,-0.5) to[out=up,in=225] (0.9,0.15);
        \draw[->] (-0.9,0.15) to[out=45,in=down] (-0.6,0.5);
        \draw[wipe] (-0.6,-0.5) \braidup (-0.3,0.5);
        \draw[->] (-0.6,-0.5) \braidup (-0.3,0.5);
    \end{tikzpicture}
    \ .
\]

Let $\FOT(A)_\kk$ denote the $\kk$-linearization of $\FOT(A)$.  Thus, the morphisms in $\FOT(A)_\kk$ are formal $\kk$-linear combinations of morphisms in $\FOT(A)$, with composition and tensor product extended by linearity.  The \emph{framed HOMFLYPT skein category} $\OS(A;z,t)$ over the annulus is the category obtained from $\FOT(A)_\kk$ by imposing the relations
\begin{equation} \label{metalox}
    \posupcross - \negupcross = z\
    \begin{tikzpicture}[centerzero]
        \draw[->] (-0.15,-0.2) -- (-0.15,0.2);
        \draw[->] (0.15,-0.2) -- (0.15,0.2);
    \end{tikzpicture}
    \ ,\qquad
    \begin{tikzpicture}[centerzero]
        \draw (-0.2,-0.5) -- (-0.2,-0.35) to[out=up,in=west] (0.05,0.2) to[out=right,in=up] (0.2,0);
        \draw[wipe] (0.2,0) to[out=down,in=east] (0.05,-0.2) to[out=left,in=down] (-0.2,0.35) -- (-0.2,0.5);
        \draw[->] (0.2,0) to[out=down,in=east] (0.05,-0.2) to[out=left,in=down] (-0.2,0.35) -- (-0.2,0.5);
    \end{tikzpicture}
    \ = t^{-1}\
    \begin{tikzpicture}[centerzero]
        \draw[->] (0,-0.5) to (0,0.5);
    \end{tikzpicture}
    \ ,\qquad
    \begin{tikzpicture}[centerzero]
        \draw[->] (0.2,0) arc(360:0:0.2);
    \end{tikzpicture}
    = \frac{t - t^{-1}}{z} 1_\one.
\end{equation}
Note that these are precisely the relations \cref{skein,curls1} with $k=0$.  In fact, we have the following result, which states that the framed HOMFLYPT skein category over the annulus is the quantum Heisenberg category at central charge zero.

\begin{prop}[{\cite[Cor.~7.7]{MS20}}] \label{Polly}
    We have an isomorphism of monoidal categories
    \begin{equation}
        \Heis_0 \xrightarrow{\cong} \OS(A;z,t).
    \end{equation}
    This isomorphism sends the generators $\posupcross$, $\negupcross$, $\rightcup$, $\rightcap$, $\leftcup$, and $\leftcap$ to the tangles with the same diagrams, and the image of the dots are
    \[
        \updot \mapsto
        \begin{tikzpicture}[centerzero]
            \identify{-0.3}{-0.3}{0.3}{0.3};
            \draw[->] (0,-0.3) to[out=up,in=200] (0.3,0);
            \draw[->] (-0.3,0) to[out=20,in=south] (0,0.3);
        \end{tikzpicture}
        \ ,\qquad
        \downdot \mapsto
        \begin{tikzpicture}[centerzero]
            \identify{-0.3}{-0.3}{0.3}{0.3};
            \draw[<-] (0,-0.3) to[out=up,in=-20] (-0.3,0);
            \draw[<-] (0.3,0) to[out=160,in=south] (0,0.3);
        \end{tikzpicture}
        \ .
    \]
\end{prop}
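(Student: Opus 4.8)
The plan is to construct a $\kk$-linear monoidal functor $F \colon \Heis_0 \to \OS(A;z,t)$ by the assignment on generators given in the statement, verify that it is well defined, and then show it is bijective on morphism spaces. First I would check that $F$ respects the defining relations of $\Heis_0$. The image of the dot is a single strand winding once around the annulus; it is invertible, with inverse the strand winding the opposite way, so $F$ extends to the inverse dots. The skein relation \cref{skein} and the curl/bubble relations \cref{curls1} (specialized to $k=0$) are precisely the imposed relations \cref{metalox}, so these hold on the nose. The braid relation \cref{braid}, the dot-slides \cref{dotslide}, and the adjunction \cref{rightadj} all hold in $\OS(A;z,t)$ by isotopy invariance of framed tangles in $A \times [0,1]$ (Reidemeister II and III, together with the framed zig-zag straightening). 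The only genuinely nontrivial relations are the inversion relations \cref{pos,neg}: here I would unfold the definition \cref{windmill} of the sideways crossings in terms of cups, caps, and the up-up crossing, and then reduce both sides using \cref{metalox} together with the curl relations, observing that at $k=0$ the infinite sums over decorated bubbles collapse.

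Next I would show $F$ is full. By a standard Morse-theoretic argument, every framed oriented tangle in $A \times [0,1]$ can be sliced horizontally into pieces, each containing a single elementary event: a crossing, a cup, a cap, or a strand crossing the identification curve of the annulus. Each such piece lies in the image of $F$: crossings in every orientation are images of the generators of \cref{braid} via the pivotal structure used in \cref{windmill}, cups and caps are images of the cup/cap generators, and a strand crossing the cut is the image of a dot. Hence $F$ is surjective on all morphism spaces.

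Finally I would prove $F$ is faithful. The center $\End_{\OS(A;z,t)}(\one)$ is the framed HOMFLYPT skein module of the solid torus $A \times [0,1]$, which by the classical computation (Turaev; Morton) is a polynomial algebra on the winding curves; $F$ carries $\End_{\Heis_0}(\one)$ isomorphically onto it, matching the description \cref{renegade1,renegade2}. More generally, the relative skein module $\Hom_{\OS(A;z,t)}(X,Y)$ is a free module over this center, with a basis of reduced tangles (no closed components, no two strings crossing more than once, and minimal winding). These reduced tangles are exactly the images under $F$ of the reduced lifts forming the basis $B_\circ(X,Y)$ of \cref{basisthm}. Since $F$ is an isomorphism on centers and identifies the $\Sym \otimes \Sym$-basis $B_\circ(X,Y)$ with a basis of the corresponding free module on the skein side, it is injective on each $\Hom$-space; together with fullness this gives the isomorphism.

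The main obstacle is the faithfulness step: one must invoke (or reprove) the HOMFLYPT skein basis for tangles in the thickened annulus and verify that $F$ sends reduced lifts to \emph{reduced} tangles, so that $B_\circ(X,Y)$ maps to a genuine basis rather than merely a spanning set. By contrast, the verification of \cref{pos,neg} is the most computation-heavy part of checking that $F$ is well defined, but it is routine once the sideways crossings are expanded via \cref{windmill}.
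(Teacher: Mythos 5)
The paper offers no proof of this proposition at all: it is quoted verbatim from \cite[Cor.~7.7]{MS20}, so there is nothing internal to compare your argument against. Your reconstruction is, however, essentially the standard proof and matches what is done in that reference: define the functor on generators, observe that at $k=0$ the relations \cref{skein,curls1} literally become the imposed skein relations \cref{metalox}, that \cref{braid,dotslide,rightadj} are isotopies of framed tangles, and that the sums in \cref{pos,neg} die because $\leftplusside{-r-s}$ and $\rightplusside{-r-s}$ vanish for $r,s>0$ when $k=0$ by \cref{weeds+}; then get fullness from a Morse-theoretic slicing and faithfulness from a basis comparison. Two points deserve attention. First, a small omission: you should also verify \cref{curls2} and the bubble evaluations at $r=0$ (only the left curl and the unknot value are \emph{imposed} in \cref{metalox}; the right curl $=t$ and the left bubble value must be derived from them via \cref{skein} and the adjunctions, which is easy but not automatic). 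Second, and more substantively, the faithfulness step as you state it rests on the claim that each relative skein module $\Hom_{\OS(A;z,t)}(X,Y)$ is free over the skein of the annulus on reduced tangles. This is strictly stronger than Turaev's computation of $\End_{\OS(A;z,t)}(\one)$, and it is essentially equivalent to the statement being proved; as written your argument would be circular unless you either reprove that freeness independently (e.g.\ by evaluating on module categories coming from $U_q(\mathfrak{gl}_n)$, which is how the basis theorem \cref{basisthm} is established in \cite{BSW-qheis} and how linear independence is obtained in \cite{MS20}) or import it from the literature. You flag this honestly as the main obstacle, which is the right assessment; with that input supplied, the proof is complete.
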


Now consider the torus
\begin{equation}
    T^2 = [0,1]^2 / \approx
\end{equation}
where $\approx$ is the relation given by $(0,b) \approx (1,b)$ for all $b \in [0,1]$, and $(a,0) \approx (a,1)$ for all $a \in [0,1]$.  We let $\Sk(T^2;z,t)$ be the skein algebra of the torus.  As a $\kk$-module, this is space of $\kk$-linear combinations of framed oriented links in $T^2 \times [0,1]$, up to isotopy, modulo the relations \cref{metalox}.  The product in $\Sk(T^2;z,t)$ is defined as follows.  Consider the two embeddings
\begin{align*}
    \imath_1 &\colon T^2 \otimes [0,1] \hookrightarrow T^2 \otimes [0,1],&
    (a,b) &\mapsto (a,(b+2)/3),
    \\
    \imath_2 &\colon T^2 \otimes [0,1] \hookrightarrow T^2 \otimes [0,1],&
    (a,b) &\mapsto (a,b/3).
\end{align*}
Then, for $x,y \in \Sk(T^2;z,t)$, we define $xy := \imath_1(x) \sqcup \imath_2(x)$.  Intuitively, the product $xy$ is given by stacking $x$ above $y$.

\begin{prop} \label{Digory}
    We have an isomorphism of algebras
    \[
        \Tr(\OS(A;z,t)) \cong \Sk(T^2;z,t).
    \]
\end{prop}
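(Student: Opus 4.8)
The plan is to construct an explicit algebra map $\Phi \colon \Tr(\OS(A;z,t)) \to \Sk(T^2;z,t)$ by a geometric ``trace closure'' and to prove it is an isomorphism. Recall that a morphism of $\OS(A;z,t)$ is a $\kk$-combination of framed oriented tangles in $A \times [0,1]$, where the last factor is the time direction along which morphisms compose. Writing $A = S^1 \times [0,1]_b$, an endomorphism lives in $S^1 \times [0,1]_b \times [0,1]_t$, and identifying the two ends of the time interval to form a circle $S^1_t$ turns this ambient space into $S^1 \times S^1_t \times [0,1]_b = T^2 \times [0,1]$. For an endomorphism $h$ of an object $X = (\varepsilon_1,\dots,\varepsilon_n)$, I set $\Phi([h])$ to be the framed oriented link obtained by joining each top endpoint $P_i \times \{1\}$ to the corresponding bottom endpoint $P_i \times \{0\}$ through the glued seam. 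The matching orientations $\varepsilon_i$ and the blackboard framings (which point toward $P_{i+1}$ at both top and bottom) glue consistently, so $\Phi([h]) \in \Sk(T^2;z,t)$ is well defined on each endomorphism.

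First I would check that $\Phi$ descends to the trace, respects the defining relations, and is multiplicative. For $f \colon X \to Y$ and $g \colon Y \to X$, the closures of $g \circ f \in \End(X)$ and $f \circ g \in \End(Y)$ agree up to an isotopy rotating the time circle $S^1_t$ (which only relocates the seam), so $\Phi$ respects \cref{dizzy} and is defined on $\Tr(\OS(A;z,t))$. The skein relations \cref{metalox} are imposed in both categories and are supported in a ball that may be isotoped off the seam, so $\Phi$ is compatible with them. For the product, the trace multiplication is $[f][g] = [f \otimes g]$, and the tensor product nests the annular cylinder for $g$ inside that for $f$, placing $g$ at smaller radius $b$ than $f$; under $\Phi$ this becomes exactly the radial stacking in the $[0,1]_b$ factor defining the product of $\Sk(T^2;z,t)$ via $\imath_1,\imath_2$, with $f$ outermost matching the first factor. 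Hence $\Phi$ is an algebra homomorphism.

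Next I would prove surjectivity: given a framed oriented link $L$ in $T^2 \times [0,1]$, isotope it to meet the cutting annulus $\Sigma := S^1 \times \{c_0\} \times [0,1]_b$ transversally, cut along $\Sigma$ to obtain a tangle in $A \times [0,1]_t$, and isotope its finitely many endpoints to the standard positions $P_i$. The orientations at the intersection points determine an object $X$, and the result is an endomorphism whose closure is $L$, so $\Phi$ is onto.

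The main obstacle is injectivity, where I must show that the only identifications forced by passing from tangles in $A \times [0,1]$ to links in $T^2 \times [0,1]$ are generated by the trace relation together with the skein relations already present. Given two endomorphisms with isotopic closures, I would take an isotopy $L_\tau$ in $T^2 \times [0,1]$ in general position with respect to $\Sigma$, so that $L_\tau$ is transverse to $\Sigma$ for all but finitely many $\tau$. Between critical times the cut tangles are isotopic rel boundary in $A \times [0,1]_t$, hence equal as morphisms. At a time when an arc sweeps through the seam, the cut tangle changes by moving a sub-tangle from the top of the cylinder to the bottom, which is precisely a trace move $\alpha \circ \beta \equiv \beta \circ \alpha$. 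At a Morse tangency that creates or cancels a pair of intersection points with $\Sigma$, the cut tangle gains or loses a cup--cap pair on a new through-strand; here I would use \cref{dizzy} to slide the resulting cap cyclically around the diagram and then the adjunction (snake) relations \cref{rightadj,leftadj} to annihilate it against the cup, showing the trace class is unchanged. Since every elementary move preserves the class in $\Tr(\OS(A;z,t))$, the two tangles have equal trace. The delicate part is this Morse-theoretic bookkeeping of the seam-crossing and birth/death moves, in particular verifying that the birth/death move reduces to the snake relations under cyclicity; the remaining steps are routine general position.
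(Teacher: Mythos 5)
Your proposal is correct and follows essentially the same route as the paper: both construct the closure map $A \times [0,1] \to T^2 \times [0,1]$ identifying the two ends of the composition interval, observe that the trace relation \cref{dizzy} accounts for moving the cutting seam, and invert by cutting a link transverse to that seam. The only difference is that you spell out the general-position and Morse-theoretic details (seam crossings as trace moves, births/deaths cancelled via the snake relations) that the paper compresses into ``the trace condition ensures that this map is well-defined'' and ``it is straightforward to verify that $f$ and $g$ are mutually inverse.''
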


\begin{proof}
    Consider the natural surjection $A \twoheadrightarrow T^2$ sending the equivalence class of $(a,b)$ under $\sim$ to the equivalence class of $(a,b)$ under $\approx$.  Under this surjection, any endomorphism in $\OS(A;z,t)$ can be viewed as an element of $\Sk(T^2;z,t)$ by identifying the top and bottom of the string diagrams in $\OS(A;z,t)$.  This clearly descends to an algebra homomorphism $f \colon \Tr(\OS(A;z,t)) \to \Sk(T^2;z,t)$.  Conversely, we can isotope framed oriented tangles in $T^2$ so that they intersect the circle $\{(a,0) : a \in [0,1]\} = \{(a,1) : b \in [0,1]\} \subseteq T^2$ transversely.  Then, cutting along this circle gives a map $g \colon \Sk(T^2;z,t) \to \Tr(\OS(A;z,t))$; the trace condition \cref{dizzy} ensures that this map is well-defined.  It is straightforward to verify that $f$ and $g$ are mutually inverse.
\end{proof}

\begin{cor} \label{chipmunk}
    We have an isomorphism of algebras
    \[
        \Sk(T^2;z,t) \xrightarrow{\cong} \Tr(\Heis_0).
    \]
\end{cor}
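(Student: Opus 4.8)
The plan is to obtain the isomorphism by composing the two isomorphisms already established in the preceding results, so the argument is essentially formal. First I would apply the trace construction to the monoidal isomorphism $\Heis_0 \xrightarrow{\cong} \OS(A;z,t)$ of \cref{Polly}. As recorded in \cref{sec:trace} (following \cref{crayon}), a monoidal functor induces a homomorphism of associative $\kk$-algebras on traces; since the isomorphism of \cref{Polly} is an isomorphism of monoidal categories, it has a monoidal inverse, and hence the induced map
\[
    \Tr(\Heis_0) \xrightarrow{\cong} \Tr(\OS(A;z,t))
\]
is an isomorphism of $\kk$-algebras.

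Next I would invoke \cref{Digory}, which supplies an algebra isomorphism $\Tr(\OS(A;z,t)) \cong \Sk(T^2;z,t)$. Composing the inverse of the map just obtained with the inverse of the isomorphism from \cref{Digory} gives the chain
\[
    \Sk(T^2;z,t) \xrightarrow{\cong} \Tr(\OS(A;z,t)) \xrightarrow{\cong} \Tr(\Heis_0),
\]
which is the desired algebra isomorphism.

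I do not expect any genuine obstacle here: all of the substantive content has already been carried out in \cref{Polly,Digory}, and this corollary merely packages them together. The only point that warrants a moment of care is to confirm that the map induced on traces by \cref{Polly} respects the algebra structure, rather than being only a $\kk$-linear isomorphism; this is immediate from the fact that the functor in \cref{Polly} is monoidal together with the multiplicativity of $\Tr$ on monoidal functors noted in \cref{sec:trace}.
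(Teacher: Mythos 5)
Your proposal is correct and follows exactly the paper's argument: the corollary is obtained by applying $\Tr$ to the monoidal isomorphism of \cref{Polly} and composing with the isomorphism of \cref{Digory}. The extra remark that the induced map on traces is an algebra (not merely linear) isomorphism because the functor is monoidal is a valid and worthwhile point of care, consistent with what is recorded in \cref{sec:trace}.
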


\begin{proof}
    This follows immediately from \cref{Polly,Digory}.
\end{proof}

\section{Trace of the quantum Heisenberg category\label{sec:main}}

In this section, we prove our main result (\cref{mainthm}).  Namely, we describe an algebra isomorphism from $\EH_k$ to the trace of the quantum Heisenberg category.  Throughout this section we continue with the assumptions on $\kk$, $q$, $z$, $t$, and $k$ made at the beginning of \cref{sec:qheis}.

Let $\Heis_k$ be the \emph{quantum Heisenberg category} introduced in \cref{sec:qheis}.   Before stating our main result, we introduce some notation.  For $i,j \in \N$, define
\begin{align}
    \sigma_{i,j} &:=
    \begin{tikzpicture}[centerzero]
        \draw[->] (0.5,-0.6) -- (0,0.6);
        \node at (0.65,0.2) {$\cdots$};
        \draw[->] (1.5,-0.6) -- (1,0.6);
        \draw[wipe] (0,-0.6) to[out=45,in=225] (3,0.6);
        \draw[->] (0,-0.6) to[out=45,in=225] (3,0.6);
        \draw[wipe] (2,-0.6) -- (1.5,0.6);
        \draw[->] (2,-0.6) -- (1.5,0.6);
        \node at (2.35,-0.2) {$\cdots$};
        \draw[wipe] (3,-0.6) -- (2.5,0.6);
        \draw[->] (3,-0.6) -- (2.5,0.6);
    \end{tikzpicture}
    \in \End_{\Heis_k} (\uparrow^{\otimes (i+j+1)}),
    \\
    \sigma_{-i,-j} &:=
    \begin{tikzpicture}[centerzero,rotate=180]
        \draw[->] (0.5,-0.6) -- (0,0.6);
        \node at (0.65,0.2) {$\cdots$};
        \draw[->] (1.5,-0.6) -- (1,0.6);
        \draw[wipe] (0,-0.6) to[out=45,in=225] (3,0.6);
        \draw[->] (0,-0.6) to[out=45,in=225] (3,0.6);
        \draw[wipe] (2,-0.6) -- (1.5,0.6);
        \draw[->] (2,-0.6) -- (1.5,0.6);
        \node at (2.35,-0.2) {$\cdots$};
        \draw[wipe] (3,-0.6) -- (2.5,0.6);
        \draw[->] (3,-0.6) -- (2.5,0.6);
    \end{tikzpicture}
    \in \End_{\Heis_k} (\downarrow^{\otimes (i+j+1)}),
\end{align}
where the strand crossing all the others passes over $i$ strands and under $j$ strands.  Then define
\begin{equation} \label{Freud}
    \sigma_n := \frac{z}{\{n\}} \sum_{i=0}^{n-1} \sigma_{i,n-i-1},\qquad
    \sigma_{-n} := \frac{z}{\{n\}} \sum_{i=0}^{n-1} \sigma_{-i,i-n+1}.
\end{equation}
Note that $\sigma_1 = 1_\uparrow$ and $\sigma_{-1} = 1_\downarrow$.

For $r \in \Z$, $n \in \Z_{>0}$, define
\begin{align}
    \chi_{r,n} &:=
    \frac{\{r\}}{\{rn\}}
    \left(
        \begin{tikzpicture}[centerzero]
            \draw[->] (0,-0.2) -- (0,0.2);
            \draw[->] (0.4,-0.2) -- (0.4,0.2);
            \node at (0.83,0) {$\cdots$};
            \draw[->] (1.2,-0.2) -- (1.2,0.2);
            \multdot{0,0}{east}{r};
        \end{tikzpicture}
       \ +\
        \begin{tikzpicture}[centerzero]
            \draw[->] (0,-0.2) -- (0,0.2);
            \draw[->] (0.4,-0.2) -- (0.4,0.2);
            \node at (0.83,0) {$\cdots$};
            \draw[->] (1.2,-0.2) -- (1.2,0.2);
            \multdot{0.4,0}{east}{r};
        \end{tikzpicture}
        \ + \dotsb +\
        \begin{tikzpicture}[centerzero]
            \draw[->] (0,-0.2) -- (0,0.2);
            \draw[->] (0.4,-0.2) -- (0.4,0.2);
            \node at (0.83,0) {$\cdots$};
            \draw[->] (1.2,-0.2) -- (1.2,0.2);
            \multdot{1.2,0}{west}{r};
        \end{tikzpicture}
    \right)
    \in \End_{\Heis_k}(\uparrow^{\otimes n}),
    \\
    \chi_{r,-n} &:=
    \frac{\{r\}}{\{rn\}}
    \left(
        \begin{tikzpicture}[centerzero]
            \draw[<-] (0,-0.2) -- (0,0.2);
            \draw[<-] (0.4,-0.2) -- (0.4,0.2);
            \node at (0.83,0) {$\cdots$};
            \draw[<-] (1.2,-0.2) -- (1.2,0.2);
            \multdot{0,0}{east}{r};
        \end{tikzpicture}
       \ +\
        \begin{tikzpicture}[centerzero]
            \draw[<-] (0,-0.2) -- (0,0.2);
            \draw[<-] (0.4,-0.2) -- (0.4,0.2);
            \node at (0.83,0) {$\cdots$};
            \draw[<-] (1.2,-0.2) -- (1.2,0.2);
            \multdot{0.4,0}{east}{r};
        \end{tikzpicture}
        \ + \dotsb +\
        \begin{tikzpicture}[centerzero]
            \draw[<-] (0,-0.2) -- (0,0.2);
            \draw[<-] (0.4,-0.2) -- (0.4,0.2);
            \node at (0.83,0) {$\cdots$};
            \draw[<-] (1.2,-0.2) -- (1.2,0.2);
            \multdot{1.2,0}{west}{r};
        \end{tikzpicture}
    \right)
    \in \End_{\Heis_k}(\downarrow^{\otimes n}),
\end{align}
where we adopt the convention that $\frac{\{r\}}{\{rn\}} = \frac{1}{n}$ when $r=0$, so that $\chi_{0,n} = 1_\uparrow^{\otimes n}$ and $\chi_{0,-n} = 1_\downarrow^{\otimes n}$

\begin{theo} \label{mainthm}
    For $k \in \Z$, there is a unique isomorphism of algebras
    \[
        \varphi_k \colon \EH_k \xrightarrow{\cong} \Tr(\Heis_k)
    \]
    such that
    \begin{equation} \label{sky}
        w_{r,1} \mapsto \left[ \multupdot{r} \right],\quad
        w_{r,-1} \mapsto \left[ \multdowndot{r} \right],\quad
        r \in \Z.
    \end{equation}
    Under $\varphi_k$, we also have
    \begin{align} \label{clouds}
        w_{r,0} &\mapsto \left[ \symbub{r} \right],& r \in \Z,\ r \ne 0,
        \\ \label{sun}
        w_{r,n} &\mapsto [ \chi_{r,n} \sigma_n ],& r,n \in \Z,\ n \ne 0.
    \end{align}
\end{theo}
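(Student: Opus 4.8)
The plan is to assemble $\varphi_k$ from its positive and negative ``halves'' and then glue them using the presentation of \cref{crossbow}, deducing bijectivity from the basis theorem. Uniqueness is immediate: by \cref{slime} the algebra $\EH_k$ is generated by the $w_{r,\pm 1}$, $r\in\Z$, so any homomorphism satisfying \cref{sky} is determined by its values there.

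For the construction I would first produce algebra homomorphisms $\varphi^{\pm}\colon\EH^{\pm}\to\Tr(\Heis_k)$ sending $w_{r,1}$ and $w_{r,0}$ ($r>0$) as prescribed by \cref{sky,clouds}, with the downward generators treated analogously. The natural device is the partial category $\Heis^{\pm}$: the faithful functor $\Psi_k^{\pm}$ of \cref{yurt} induces an algebra map $\Tr(\Heis^{\pm})\to\Tr(\Heis_k)$ whose image is spanned by the classes of up- (resp.\ down-) strand diagrams decorated with dots and positive (resp.\ negative) bubbles. To identify $\Tr(\Heis^{\pm})$ with $\EH^{\pm}$ I would exploit the $k$-independence \cref{eagle} to reduce to central charge $0$, where \cref{chipmunk} together with the Morton--Samuelson isomorphism $\Sk(T^2)\cong\EH_0$ \cite{MS17} realizes the positively oriented part of $\Tr(\Heis_0)$ as $\EH^+$; alternatively one computes $\Tr(\Heis^{+})$ directly from \cref{saber} and \cref{basisthm} as a cocenter of affine Hecke algebras. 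Granting $\varphi^{\pm}$, the closed formula \cref{sun} for $n>0$ then follows by induction on $n$ from the recursion $w_{r,n+1}=\{r\}^{-1}[w_{r,n},w_{0,1}]$ (and its $r=0$ variant used in \cref{bulls}): one verifies diagrammatically that bracketing $[\chi_{r,n}\sigma_n]$ with an up-strand carrying a dot closes up to $[\chi_{r,n+1}\sigma_{n+1}]$, the symmetrized full cycle $\sigma_n$ and the dot distribution $\chi_{r,n}$ being designed precisely so that the skein \cref{skein} and dot-slide corrections reassemble correctly; the case $n<0$ is symmetric.

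By \cref{crossbow}, $\EH_k$ is the free product $\EH^+\star\EH^-$ modulo \cref{bolt1,bolt2,bolt3}, so $\varphi^+$ and $\varphi^-$ combine into a single homomorphism $\varphi_k$ as soon as these three relations are verified among the images. Relation \cref{bolt3} is trivial, since both bubbles are central in $\Tr(\Heis_k)$ by \cref{renegade1}. Relation \cref{bolt2} records how a bubble slides past a strand, and is a direct consequence of the bubble-slide \cref{goat} of \cref{goatee}, which turns the image of $w_{\mp r,0}$ pushed past a dotted strand into the image of $w_{s\mp r,\pm 1}$ with coefficient $\{r\}$. Relation \cref{bolt1} is the genuinely $k$-dependent computation and the one I expect to be the main obstacle: one evaluates the commutator of a down-strand-with-dots against an up-strand-with-dot in the trace, where the two strands interact through a cap and a cup to create a curl, and the curl relations \cref{curls1,curls2} — in which the central charge $k$ enters explicitly through the $\delta_{k,0}$ term and the bubble values — produce exactly the $\{s+1\}w_{s+1,0}-\delta_{s,-1}k$ on the right-hand side of \cref{bolt1}. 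This is the step where the central-charge deformation becomes visible.

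Finally, for bijectivity, surjectivity follows from \cref{slime} once the images are known to generate: writing $\Tr(\Heis_k)=\bigoplus_{n}\Tr(\cC_n)$ via \cref{saber} and using \cref{basisthm} for the morphism spaces, every class is a $\kk$-combination of closures of reduced diagrams decorated by dots and bubbles, and each such closure is a product of the $[\chi_{r,n}\sigma_n]$ and of bubbles, hence in the image. For injectivity I would compare bases: the PBW isomorphism \cref{lantern} exhibits an ordered-monomial basis of $\EH_k$, and one checks that $\varphi_k$ sends it to a linearly independent family in $\Tr(\Heis_k)$, using the injectivity of $\varphi^{\pm}$ established in the half computation together with the direct-sum decomposition of \cref{saber}. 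Combining surjectivity and injectivity yields the isomorphism, while \cref{clouds} holds by construction and \cref{sun} was proved along the way.
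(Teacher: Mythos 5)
Your proposal follows essentially the same route as the paper: reduce the identification of the two halves to central charge zero via the Morton--Samuelson isomorphism and the $k$-independence \cref{eagle} of $\Heis^\pm$, glue them through the presentation of \cref{crossbow} with \cref{bolt3} trivial, \cref{bolt2} coming from the bubble slide \cref{goat}, and \cref{bolt1} isolated as the one genuinely $k$-dependent curl computation, and deduce bijectivity from the trace decomposition (\cref{saber}, \cref{squirrel}) together with \cref{basisthm} and the PBW basis \cref{lantern}. The only slip is the remark that the bubble classes are ``central in $\Tr(\Heis_k)$'' --- they are not (otherwise the image of \cref{bolt2} would force $\varphi_k(w_{s\mp r,\pm 1})=0$); what \cref{renegade1} gives, and all that \cref{bolt3} requires, is that they commute with one another.
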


The proof of~\cref{mainthm} is given at the end of this section, after some preparatory results.  Note that the definition of $\EH_k$ and the isomorphism of \cref{mainthm} are independent of $t$, even though the definition of quantum Heisenberg category $\Heis_k$ involves $t$.

Recall the notation $\equiv$ from \cref{equivdef}, which we will use frequently in this section.

\begin{prop} \label{moon}
    The $k=0$ case of \cref{mainthm} holds.
\end{prop}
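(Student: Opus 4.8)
The plan is to realize $\varphi_0$ as a composite of two isomorphisms already at our disposal and then to compute its effect on generators. By \cref{chipmunk} we have an algebra isomorphism $\Sk(T^2;z,t) \xrightarrow{\cong} \Tr(\Heis_0)$, obtained by cutting a framed oriented link in the torus along a fixed circle to produce a tangle in the annulus $A$ and applying the identification $\Heis_0 \cong \OS(A;z,t)$ of \cref{Polly}. On the other hand, by \cite[Th.~2, Th.~3]{MS17} (together with the remark following \cref{violet}) there is an algebra isomorphism $\EH_0 \xrightarrow{\cong} \Sk(T^2;z,t)$ sending $w_{r,n}$ to the suitably normalized connected torus curve of homology class $(r,n)$. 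Composing these gives an algebra isomorphism $\varphi_0 \colon \EH_0 \xrightarrow{\cong} \Tr(\Heis_0)$, and it remains only to evaluate it on the relevant elements.

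First I would verify \cref{sky}. Under \cref{Polly} the dot $\updot$ is sent to the annulus tangle that winds once around the core of $A$ while travelling upward, so that $\multupdot{r}$ corresponds to a single upward strand of winding number $r$ about the core. Cutting the torus open, the connected $(r,1)$-curve therefore maps to $\left[ \multupdot{r} \right]$ under \cref{chipmunk}, whence $\varphi_0(w_{r,1}) = \left[ \multupdot{r} \right]$; the case $\varphi_0(w_{r,-1}) = \left[ \multdowndot{r} \right]$ follows identically with orientations reversed. Uniqueness is then automatic: by \cref{slime} the algebra $\EH_0$ is generated by the $w_{r,\pm 1}$, $r \in \Z$, so an algebra homomorphism out of $\EH_0$ is pinned down by \cref{sky}.

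It then remains to verify the auxiliary formulas \cref{clouds,sun}. The bubble $\symbub{r}$ is already a closed diagram, and closing up through \cref{chipmunk} identifies it with the $(r,0)$-curve; matching the normalization of \cref{fort} against the Morton--Samuelson description of the $(r,0)$ generator gives $\varphi_0(w_{r,0}) = \left[ \symbub{r} \right]$, which is \cref{clouds}. For \cref{sun}, I would show that the element $\sigma_n$ of \cref{Freud} closes up to the connected $(0,n)$-curve and that the dot-insertion operator $\chi_{r,n}$ contributes total winding number $r$ about the core, so that $[\chi_{r,n}\sigma_n]$ closes up to the $(r,n)$-curve, which is the image of $w_{r,n}$.

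The hard part will be precisely this last bookkeeping. One must confirm that the \emph{connected} multiply-wound curve coming from $\sigma_n$ (rather than $n$ parallel copies of the $(0,1)$-curve) is the element corresponding to the non-primitive generator $w_{r,n}$, and must match the normalizations $\tfrac{z}{\{n\}}$ and $\tfrac{\{r\}}{\{rn\}}$, along with the HOMFLYPT parameter conventions and the specialization $\bar{\sigma} = q^2 = \sigma^{-1}$, against the explicit curve formulas of \cite{MS17}. Everything else is a straightforward transport of structure through the two cited isomorphisms.
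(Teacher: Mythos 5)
Your skeleton for the first half --- composing \cref{chipmunk} with the Morton--Samuelson isomorphism, identifying $\varphi_0(w_{r,\pm 1})$ with the once-around curves of winding number $r$, and deducing uniqueness from \cref{slime} --- is exactly the paper's argument, and that part is fine. The gap is in your treatment of \cref{clouds,sun}, which you defer to ``matching normalizations'' and ``bookkeeping''. This is not a normalization issue. By \cref{fort,pcannon}, $\symbub{r}$ is $-\{r\}^{-1}\beta(p_r^+) = -\{r\}^{-1}z^2\sum_{s} s\, \leftplus{s}\ \rightplus{r-s}$, a linear combination of \emph{products of two bubbles}; it is not a single connected closed curve, and its closure in $T^2$ is not visibly the connected $(r,0)$-curve of \cite{MS17}. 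Likewise $[\chi_{r,n}\sigma_n]$ is a sum of decorated diagrams rather than one $(r,n)$-curve. The paper's \cref{decorated} warns about precisely this: the diagrams in \cref{clouds,sun} differ from the decorated curves of \cite[Def.~2.5]{MS17} as framed tangles and agree only modulo the skein relations \cref{metalox}, so the identification you want is a theorem, not a convention to be matched.

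The paper closes this gap algebraically rather than topologically: it proves the identity \cref{cirrus} by an explicit diagrammatic computation in $\Heis_0$ (using \cref{silence,teapos,rightcurl} together with the infinite Grassmannian relations \cref{infgrass} and \cref{weeds+,weeds-}), and then \cref{clouds} follows by applying $\varphi_0$ to $[w_{r,1},w_{0,-1}] = -\{r\}w_{r,0}$ and using the already-established \cref{sky}. With \cref{clouds} in hand, \cref{sun} follows from $\varphi_0(w_{0,n})=[\sigma_n]$ (which \emph{is} read off directly from \cite[Def.~2.5]{MS17}), the commutator $\{rn\}w_{r,n}=[w_{r,0},w_{0,n}]$, and the bubble slide \cref{goat}. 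If you insist on the topological route, you would need to import the identification of power sums in the skein of the annulus with connected $r$-fold decorated curves (essentially \cite[Th.~4.2]{Mor02}) and check its compatibility with the algebraically defined isomorphism $\beta$ of \cref{cannon}; either way, the step you label as bookkeeping is where the substance of the proposition lives, and your proposal does not carry it out.
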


\begin{proof}
    It follows from \cref{chipmunk} and \cite[Th.~2]{MS17} that we have an isomorphism
    \begin{equation} \label{canal}
        \varphi_0 \colon \EH_0 \xrightarrow{\cong} \Tr(\Heis_0)
    \end{equation}
    satisfying \cref{sky}.  (Recall that our $q$ and $t$ are the $s$ and $v^{-1}$ of \cite{MS17}, respectively.  While \cite{MS17} works over the ground ring $\kk = \C[q^{\pm 1}, t^{\pm}, \{d\}^{-1} : d \ge 1]$, the result we use here holds more generally; see \cref{ground}.)  More precisely, under the isomorphism of \cref{chipmunk}, we have
    \[
        \begin{tikzpicture}[centerzero]
            \draw[red,dashed] (-0.5,-0.5) rectangle (0.5,0.5);
            \draw[->] (0,-0.5) to[out=up,in=194] (0.5,-0.25);
            \draw[->] (-0.5,-0.25) -- (0.5,0);
            \draw[->] (-0.5,0) -- (0.5,0.25);
            \draw[->] (-0.5,0.25) to[out=14,in=down] (0,0.5);
        \end{tikzpicture}
        \ \mapsto
        \left[ \multupdot{r} \right]
        ,\qquad
        \begin{tikzpicture}[centerzero]
            \draw[red,dashed] (-0.5,-0.5) rectangle (0.5,0.5);
            \draw[<-] (0,-0.5) to[out=up,in=-14] (-0.5,-0.25);
            \draw[<-] (0.5,-0.25) -- (-0.5,0);
            \draw[<-] (0.5,0) -- (-0.5,0.25);
            \draw[<-] (0.5,0.25) to[out=166,in=down] (0,0.5);
        \end{tikzpicture}
        \ \mapsto
        \left[ \multdowndot{r} \right].
    \]
    where we identify the vertical dashed edges with each other and the horizontal dashed edges with each other, and the curves wrap $r$ times in the horizontal direction (we have drawn the case $r=3$).  Since $\pm 1$ is coprime to $r$ (see \cite[p.~810]{MS17}), under the isomorphism of \cite[Th.~2]{MS17} we have
    \[
        w_{r,1} \mapsto
        \begin{tikzpicture}[centerzero]
            \draw[red,dashed] (-0.5,-0.5) rectangle (0.5,0.5);
            \draw[->] (0,-0.5) to[out=up,in=194] (0.5,-0.25);
            \draw[->] (-0.5,-0.25) -- (0.5,0);
            \draw[->] (-0.5,0) -- (0.5,0.25);
            \draw[->] (-0.5,0.25) to[out=14,in=down] (0,0.5);
        \end{tikzpicture}
        \ ,\qquad
        w_{r,-1} \mapsto
        \begin{tikzpicture}[centerzero]
            \draw[red,dashed] (-0.5,-0.5) rectangle (0.5,0.5);
            \draw[<-] (0,-0.5) to[out=up,in=-14] (-0.5,-0.25);
            \draw[<-] (0.5,-0.25) -- (-0.5,0);
            \draw[<-] (0.5,0) -- (-0.5,0.25);
            \draw[<-] (0.5,0.25) to[out=166,in=down] (0,0.5);
        \end{tikzpicture}
        \ ,
    \]
    where again the curves wrap $r$ times in the horizontal direction.  (Note that the skein of the torus $\Sk(T^2;z,t)$ is denoted $H(T^2)$ in \cite{MS17}.)   On the other hand, by \cref{slime}, the isomorphism $\varphi_0$ is uniquely determined by where it maps $w_{r,\pm 1}$, $r \in \Z$.

    To show \cref{clouds}, it suffices to prove that, for $r \in \Z$, $r \ne 0$,
    \begin{equation} \label{cirrus}
        -\{r\} \left[ \symbub{r} \right]
        =
        \left[
            \begin{tikzpicture}[centerzero]
                \draw[->] (-0.15,-0.2) -- (-0.15,0.2);
                \draw[<-] (0.15,-0.2) -- (0.15,0.2);
                \multdot{-0.15,0}{east}{r};
            \end{tikzpicture}
        \right]
        -
        \left[
            \begin{tikzpicture}[centerzero]
                \draw[<-] (-0.15,-0.2) -- (-0.15,0.2);
                \draw[->] (0.15,-0.2) -- (0.15,0.2);
                \multdot{0.15,0}{west}{r};
            \end{tikzpicture}
        \right]
        ,
    \end{equation}
    since then the result follows after applying $\varphi_0$ and using \cref{sky,sloth}.  We prove the equality in \cref{cirrus} for $r>0$, since the proof for $r<0$ is analogous.  For $r>0$, we have
    \[
        \begin{tikzpicture}[centerzero]
            \draw[->] (-0.2,-0.5) -- (-0.2,0.5);
            \draw[<-] (0.2,-0.5) -- (0.2,0.5);
            \multdot{-0.2,0}{east}{r};
        \end{tikzpicture}
        \overset{\cref{silence}}{=}
        \begin{tikzpicture}[anchorbase]
            \draw[->] (-0.2,-0.6) -- (-0.2,-0.4) \braidup (0.2,0) \braidup (-0.2,0.4);
            \draw[wipe] (0.2,-0.6) -- (0.2,-0.4) \braidup (-0.2,0) \braidup (0.2,0.4);
            \draw[<-] (0.2,-0.6) -- (0.2,-0.4) \braidup (-0.2,0) \braidup (0.2,0.4);
            \multdot{-0.2,-0.4}{east}{r};
        \end{tikzpicture}
        \overset{\cref{teapos}}{\equiv}
        \begin{tikzpicture}[centerzero]
            \draw[->] (0.2,0) \braidup (-0.2,0.5);
            \draw[<-] (0.2,-0.5) \braidup (-0.2,0);
            \draw[wipe] (-0.2,-0.5) \braidup (0.2,0);
            \draw (-0.2,-0.5) \braidup (0.2,0);
            \draw[wipe] (-0.2,0) \braidup (0.2,0.5);
            \draw (-0.2,0) \braidup (0.2,0.5);
            \multdot{0.2,0}{west}{r};
        \end{tikzpicture}
        - z \sum_{\substack{a+b=r \\ a,b > 0}}
        \begin{tikzpicture}[centerzero]
            \draw[->] (-0.2,-0.3) arc(180:360:0.2) \braidup (-0.2,0.3);;
            \draw[wipe] (-0.2,0.3) arc(180:0:0.2) (0.2,0.3) \braiddown (-0.2,-0.3);
            \draw (-0.2,0.3) arc(180:0:0.2) (0.2,0.3) \braiddown (-0.2,-0.3);
            \multdot{0.2,0.3}{west}{a};
            \multdot{0.2,-0.3}{west}{b};
        \end{tikzpicture}
        \overset{\cref{skein}}{\underset{\cref{silence}}{\equiv}}
        \begin{tikzpicture}[centerzero]
            \draw[<-] (-0.2,-0.5) -- (-0.2,0.5);
            \draw[->] (0.2,-0.5) -- (0.2,0.5);
            \multdot{0.2,0}{west}{r};
        \end{tikzpicture}
        - z \sum_{\substack{a+b=r \\ a \ge 0,\, b > 0}}
        \begin{tikzpicture}[centerzero]
            \draw[->] (-0.2,-0.3) arc(180:360:0.2) \braidup (-0.2,0.3);;
            \draw[wipe] (-0.2,0.3) arc(180:0:0.2) (0.2,0.3) \braiddown (-0.2,-0.3);
            \draw (-0.2,0.3) arc(180:0:0.2) (0.2,0.3) \braiddown (-0.2,-0.3);
            \multdot{0.2,0.3}{west}{a};
            \multdot{0.2,-0.3}{west}{b};
        \end{tikzpicture}
        \ .
    \]
    Hence
    \begin{align*}
        \begin{tikzpicture}[centerzero]
            \draw[->] (-0.2,-0.5) -- (-0.2,0.5);
            \draw[<-] (0.2,-0.5) -- (0.2,0.5);
            \multdot{-0.2,0}{east}{r};
        \end{tikzpicture}
        -
        \begin{tikzpicture}[centerzero]
            \draw[<-] (-0.2,-0.5) -- (-0.2,0.5);
            \draw[->] (0.2,-0.5) -- (0.2,0.5);
            \multdot{0.2,0}{west}{r};
        \end{tikzpicture}
        &\overset{\mathclap{\cref{rightcurl}}}{\equiv}\
        - z^2 \sum_{\substack{a+b=r \\ a \ge 0,\ b>0}}
        \left(
            \sum_{c>0} \leftbub{b-c} \rightminus{a+c}
            - \sum_{c \ge 0} \leftbub{b+c} \rightplus{a-c}
        \right)
        \overset{\cref{clear}}{\underset{\substack{\cref{weeds+} \\ \cref{weeds-}}}{=}}
        z^2 \sum_{\substack{a+b=r \\ a \ge 0,\, b>0}} \sum_{c=0}^a
        \leftplus{b+c} \rightplus{a-c}
        \\
        &= z^2 \sum_{s \in \Z} s\ \leftplus{s} \rightplus{r-s}
        \overset{\cref{pcannon}}{=} \beta(p_r^+) = -\{r\}\ \symbub{r},
    \end{align*}
    as desired.
    \details{
        We give here the details of the omitted case of \cref{cirrus}.  For $r>0$, we have
        \[
            \begin{tikzpicture}[centerzero]
                \draw[->] (-0.2,-0.5) -- (-0.2,0.5);
                \draw[<-] (0.2,-0.5) -- (0.2,0.5);
                \multdot{-0.2,0}{east}{-r};
            \end{tikzpicture}
            \overset{\cref{silence}}{=}
            \begin{tikzpicture}[anchorbase]
                \draw[->] (-0.2,-0.6) -- (-0.2,-0.4) \braidup (0.2,0) \braidup (-0.2,0.4);
                \draw[wipe] (0.2,-0.6) -- (0.2,-0.4) \braidup (-0.2,0) \braidup (0.2,0.4);
                \draw[<-] (0.2,-0.6) -- (0.2,-0.4) \braidup (-0.2,0) \braidup (0.2,0.4);
                \multdot{-0.2,-0.4}{east}{-r};
            \end{tikzpicture}
            \overset{\cref{teapos}}{\equiv}
            \begin{tikzpicture}[centerzero]
                \draw[->] (0.2,0) \braidup (-0.2,0.5);
                \draw[<-] (0.2,-0.5) \braidup (-0.2,0);
                \draw[wipe] (-0.2,-0.5) \braidup (0.2,0);
                \draw (-0.2,-0.5) \braidup (0.2,0);
                \draw[wipe] (-0.2,0) \braidup (0.2,0.5);
                \draw (-0.2,0) \braidup (0.2,0.5);
                \multdot{0.2,0}{west}{-r};
            \end{tikzpicture}
            + z \sum_{\substack{a+b=r \\ a,b \ge 0}}
            \begin{tikzpicture}[centerzero]
                \draw[->] (-0.2,-0.3) arc(180:360:0.2) \braidup (-0.2,0.3);;
                \draw[wipe] (-0.2,0.3) arc(180:0:0.2) (0.2,0.3) \braiddown (-0.2,-0.3);
                \draw (-0.2,0.3) arc(180:0:0.2) (0.2,0.3) \braiddown (-0.2,-0.3);
                \multdot{0.2,0.3}{west}{-a};
                \multdot{0.2,-0.3}{west}{-b};
            \end{tikzpicture}
            \overset{\cref{skein}}{\underset{\cref{silence}}{\equiv}}
            \begin{tikzpicture}[centerzero]
                \draw[<-] (-0.2,-0.5) -- (-0.2,0.5);
                \draw[->] (0.2,-0.5) -- (0.2,0.5);
                \multdot{0.2,0}{west}{r};
            \end{tikzpicture}
            + z \sum_{\substack{a+b=r \\ a > 0,\, b \ge 0}}
            \begin{tikzpicture}[centerzero]
                \draw[->] (-0.2,-0.3) arc(180:360:0.2) \braidup (-0.2,0.3);;
                \draw[wipe] (-0.2,0.3) arc(180:0:0.2) (0.2,0.3) \braiddown (-0.2,-0.3);
                \draw (-0.2,0.3) arc(180:0:0.2) (0.2,0.3) \braiddown (-0.2,-0.3);
                \multdot{0.2,0.3}{west}{-a};
                \multdot{0.2,-0.3}{west}{-b};
            \end{tikzpicture}
            \ .
        \]
        Hence
        \begin{align*}
            \begin{tikzpicture}[centerzero]
                \draw[->] (-0.2,-0.5) -- (-0.2,0.5);
                \draw[<-] (0.2,-0.5) -- (0.2,0.5);
                \multdot{-0.2,0}{east}{-r};
            \end{tikzpicture}
            -
            \begin{tikzpicture}[centerzero]
                \draw[<-] (-0.2,-0.5) -- (-0.2,0.5);
                \draw[->] (0.2,-0.5) -- (0.2,0.5);
                \multdot{0.2,0}{west}{-r};
            \end{tikzpicture}
            &\overset{\mathclap{\cref{rightcurl}}}{\equiv}\
            z^2 \sum_{\substack{a+b=r \\ a > 0,\ b \ge 0}}
            \left(
                \sum_{c>0} \leftbub{-b-c} \rightminus{-a+c}
                - \sum_{c \ge 0} \leftbub{-b+c} \rightplus{-a-c}
            \right)
            \overset{\cref{clear}}{\underset{\substack{\cref{weeds+} \\ \cref{weeds-}}}{=}}
            z^2 \sum_{\substack{a+b=r \\ a > 0,\, b \ge 0}} \sum_{c=1}^a
            \leftminus{-b-c} \rightminus{-a+c}
            \\
            &= z^2 \sum_{s \in \Z} s\ \leftminus{-s} \rightminus{s-r}
            \overset{\cref{pcannon}}{=} \beta(p_r^-) = \{r\} \symbub{-r},
        \end{align*}
        as desired.
    }

    It remains to prove \cref{sun}.  By \cite[Def.~2.5]{MS17}, we have
    \[
        \varphi_0(w_{0,n}) = [\sigma_n], \quad n \ne 0.
    \]
    Thus, for $r,n \in \Z \setminus \{0\}$, we have
    \[
        \varphi_0(w_{r,n})
        = \{rn\}^{-1} \varphi_0( [w_{r,0}, w_{n,0}] )
        \overset{\cref{clouds}}{=} \{rn\}^{-1} \left[ \symbub{r} \otimes \sigma_n - \sigma_n \otimes \symbub{r} \right]
        \overset{\cref{goat}}{=} [ \chi_{r,n} \sigma_n ].
        \qedhere
    \]
\end{proof}

\begin{rem} \label{MSrem}
    As we see from the proof of \cref{moon}, we use the results of \cite{MS17} to prove the $k=0$ case of \cref{mainthm}.  The proof of \cite[Th.~1]{MS17} involves induction on $\det \begin{pmatrix} \bx & \by \end{pmatrix}$, starting with the base cases
    \begin{equation} \label{MSbase}
        \left[ \left[ \symbub{r} \right], \left[ \upstrand \right] \right] = \{r\} \left[ \multupdot{r} \right],\quad
        \left[ \left[ \downstrand \right], \multupdot{r} \right] = \{r\} \left[ \symbub{r} \right],\qquad
        r \in \Z \setminus \{0\}.
    \end{equation}
    In \cite{MS17}, the proof of the first equation in \cref{MSbase} relies on \cite[Th.~4.2]{Mor02}, while the proof of the second involves direct skein manipulation.  Note that the first equation in \cref{MSbase} is precisely the bubble slide relation \cref{goat}, while the second equation in \cref{MSbase} is \cref{cirrus}.  Thus, in order to make the arguments of the current paper independent of the results of \cite{MS17}, one would only need to include the inductive argument of \cite[\S 3.2]{MS17}; this is a purely algebraic argument, involving no skein theory.
\end{rem}

\begin{rem} \label{ground}
    \Cref{MSrem} allows us to see that \cite[Th.~1 \& 2]{MS17} hold over the more arbitrary ground ring $\kk$ considered in this section.  The assumption that $\kk$ contains $\Q$ is needed in the proof of \cite[Lem.~3.1]{MS17}, since this proof uses \cite[Th.~1]{Tur88}, which involves the $\sigma_{n,0}$ (denoted $A_{n,0}$ in \cite{MS17}), whereas \cite[Lem.~3.1]{MS17} involves the $\sigma_n$ (denoted $P_n$ in \cite{MS17}); solving for the $\sigma_{n,0}$ in terms of the $\sigma_n$ requires division by $n$ (see \cite[Rem.~2.4]{MS17}).  Essentially, at issue is the fact that the power sums generate $\Sym$ over $\Q$, but not over $\Z$; see \cref{fort}.  The inductive argument of \cite[\S3.2]{MS17} only requires division by $\{d\}$, $d \ge 1$.
\end{rem}

\begin{rem} \label{decorated}
    In the case $k=0$, our explicit description \cref{clouds,sun} of the image of the isomorphism $\varphi_k$ differs from that given in \cite[Def.~2.5]{MS17}, which involves decorated framed oriented curves.  There is no contradiction here, as the presence of the skein relations \cref{metalox} means that different linear combinations of classes of framed oriented tangles can be equal in the framed HOMFLYPT skein algebra of the torus.
\end{rem}

\begin{prop} \label{rocinante}
    If \cref{mainthm} holds for central charge $k$, then it holds for central charge $-k$.
\end{prop}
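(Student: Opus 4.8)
The plan is to transport the isomorphism for central charge $k$ across the monoidal isomorphism $\Omega_k$ of \cref{earth} and the algebra isomorphism $\omega_k$ of \cref{omegak}. Replacing $t$ by $t^{-1}$ in \cref{earth} gives a $\kk$-linear monoidal isomorphism $\Omega_k \colon \Heis_k(z,t^{-1}) \xrightarrow{\cong} \Heis_{-k}(z,t)^\op$. First I would show that $\Omega_k$ induces an algebra isomorphism $\bar\Omega_k \colon \Tr(\Heis_k(z,t^{-1})) \to \Tr(\Heis_{-k}(z,t))$ via $[f] \mapsto [\Omega_k(f)]$. This is well defined because $\Omega_k$ reverses composition, so a commutator $fg-gf$ is sent to $\Omega_k(g)\Omega_k(f)-\Omega_k(f)\Omega_k(g)$, again a commutator; it is multiplicative because the product on $\Tr$ is induced by $\otimes$ (not $\circ$), and the target $(-)^\op$ (as opposed to $(-)^\rev$) leaves the order of tensor products unchanged, so $[\Omega_k(f \otimes g)] = [\Omega_k(f) \otimes \Omega_k(g)] = \bar\Omega_k[f]\,\bar\Omega_k[g]$; and it is bijective with inverse induced by $\Omega_k^{-1}$.

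Next I would set $\varphi_{-k} := \bar\Omega_k \circ \varphi_k \circ \omega_k^{-1}$, a composite $\EH_{-k} \to \EH_k \to \Tr(\Heis_k(z,t^{-1})) \to \Tr(\Heis_{-k}(z,t))$ of algebra isomorphisms (using the hypothesis that \cref{mainthm} holds for $k$, applied with parameter $t^{-1}$). Since $\omega_k^{-1} = \omega_{-k}$ sends $w_{r,\pm 1} \mapsto w_{r,\mp 1}$ (the sign $(-1)^{n+1}$ being $1$ for $n=\pm 1$), and $\Omega_k$ sends $\updot \mapsto \downdot$ and $\downdot \mapsto \updot$ without signs (from \cref{solaris} and the pivotal structure), I obtain $\varphi_{-k}(w_{r,1}) = \bar\Omega_k(\varphi_k(w_{r,-1})) = \bar\Omega_k\left[\multdowndot{r}\right] = \left[\multupdot{r}\right]$ and likewise $\varphi_{-k}(w_{r,-1}) = \left[\multdowndot{r}\right]$. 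Thus $\varphi_{-k}$ is an algebra isomorphism satisfying \cref{sky}, and by \cref{slime} (which shows $w_{r,\pm 1}$ generate $\EH_{-k}$) it is the unique such map. As $t^{-1}$ ranges over $\kk^\times$ together with $t$, this proves \cref{mainthm} for $\Heis_{-k}(z,t)$ for all admissible $t$.

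Finally I would verify \cref{clouds,sun} for $\varphi_{-k}$ by transport. For \cref{clouds} the signs cancel cleanly: $\varphi_{-k}(w_{r,0}) = \bar\Omega_k(\varphi_k(-w_{r,0})) = -\bar\Omega_k\left[\symbub{r}\right] = -\left[\Omega_k(\symbub{r})\right] = -\left[-\symbub{r}\right] = \left[\symbub{r}\right]$, using $\omega_k^{-1}(w_{r,0}) = -w_{r,0}$ and the action $\symbub{r} \mapsto -\symbub{r}$ of $\Omega_k$ from \cref{aurora}. The same mechanism---the sign $(-1)^{n+1}$ from $\omega_k^{-1}$ cancelling the signs produced by $\Omega_k$ on crossings and bubbles---gives \cref{sun}. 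The main obstacle is precisely this bookkeeping: one must be sure that $\bar\Omega_k$ is an algebra homomorphism rather than an anti-homomorphism (this is exactly the point that the codomain of $\Omega_k$ is $(-)^\op$, preserving the tensor order that defines multiplication on the trace), and then track the interacting sign conventions of $\Omega_k$ (\cref{solaris,aurora}) and $\omega_k$ (\cref{omegak}) to confirm that \cref{clouds,sun} survive transport.
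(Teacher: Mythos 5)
Your proposal is correct and follows essentially the same route as the paper: define $\varphi_{-k} = \Tr(\Omega_k) \circ \varphi_k \circ \omega_{-k}$ (with the $k$ case applied at parameter $t^{-1}$) and chase the generators through, using \cref{solaris,aurora,omegak} to check that the signs cancel. Your added remarks --- that $\Tr(\Omega_k)$ is a genuine algebra homomorphism because $(-)^\op$ reverses composition but not the tensor product, and the explicit sign cancellation for \cref{clouds} --- are exactly the points the paper leaves implicit.
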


\begin{proof}
    Suppose \cref{mainthm} holds for some $k \in \Z$.  Recall the isomorphisms $\Omega_k$ and $\omega_k$ from \cref{earth} and \cref{omegak}, respectively.  Consider the following diagram:
    \[
        \begin{tikzcd}
            \EH_k \arrow[r, "\varphi_k"] & \Tr(\Heis_k(z,t^{-1})) \arrow[d, "\Tr(\Omega_k)"] \\
            \EH_{-k} \arrow[u, "\omega_{-k}"] \arrow[r, "\varphi_{-k}"] & \Tr(\Heis_{-k}(z,t))
        \end{tikzcd}
    \]
    By assumption, $\varphi_k$ is an isomorphism of algebras.  Since $\Tr(\Omega_k)$ and $\omega_{-k}$ are also isomorphisms, there is an algebra isomorphism $\varphi_{-k} \colon \EH_{-k} \xrightarrow{\cong} \Tr(\Heis_{-k}(z,t))$ making the above diagram commute.  For $r \in \Z$, we have
    \begin{gather*}
        \varphi_{-k} \left( w_{r,1} \right)
        = \Tr(\Omega_k) \circ \varphi_k \circ \omega_{-k} ( w_{r,1} )
        = \left[ \multupdot{r} \right]
        \quad \text{and}
        \\
        \varphi_{-k} \left(  w_{r,-1} \right)
        = \Tr(\Omega_k) \circ \varphi_k \circ \omega_{-k} ( w_{r,-1} )
        = \left[ \multdowndot{r} \right].
    \end{gather*}
    Thus the first statement in \cref{mainthm} holds for central charge $-k$.  To prove that \cref{clouds} also holds for central charge $-k$, we compute, for $r \ne 0$,
    \[
        \varphi_{-k} ( w_{r,0} )
        = \Tr(\Omega_k) \circ \varphi_k \circ \omega_{-k} ( w_{r,0} )
        = \left[ \symbub{r} \right].
        \qedhere
    \]
\end{proof}

In light of \cref{moon,rocinante}, it suffices to prove \cref{mainthm} for central charge $k < 0$.  Thus,
\begin{center}
    \emph{for the remainder of this section we assume $k < 0$}.
\end{center}

The proof of following proposition is inspired by that of \cite[Prop.~6.2]{RS20}.

\begin{prop} \label{squirrel}
    The tensor product
    \[
        \Heis_k^+ \times \Heis_k^- \xrightarrow{\otimes} \Heis_k
    \]
    induces a linear isomorphism
    \[
        \Tr(\Heis_k^+) \otimes \Tr(\Heis_k^-)
        \xrightarrow{\cong} \Tr(\Heis_k).
    \]
\end{prop}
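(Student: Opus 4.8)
The plan is to deduce the statement from the general splitting result \cref{saber}, after first cutting $\Heis_k$ down to a convenient set of objects and grading it. First I would use the invertibility isomorphisms \cref{invrel} to observe that in $\Add(\Heis_k)$ every object is isomorphic to a direct sum of objects of the form $\uparrow^{\otimes a} \otimes \downarrow^{\otimes b}$ with $a,b \in \N$ (repeatedly using $\downarrow \otimes \uparrow \cong \uparrow \otimes \downarrow \oplus \one^{\oplus(-k)}$ to move each $\uparrow$ to the left of each $\downarrow$). By \cref{WSB}, the inclusion of the full subcategory $\cC := \Heis_k|_S$ on $S = \{\uparrow^{\otimes a} \otimes \downarrow^{\otimes b} : a,b \in \N\}$ then induces an isomorphism $\Tr(\cC) \cong \Tr(\Heis_k)$. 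Next I would note that an $(X,Y)$-matching between two objects of $S$ can only exist when the difference $\delta := \#\{\uparrow\} - \#\{\downarrow\}$ agrees for $X$ and $Y$; hence $\Hom_\cC$ vanishes between objects with different $\delta$, so $\cC = \bigsqcup_{\delta \in \Z} \cC^{(\delta)}$ and $\Tr(\cC) = \bigoplus_{\delta} \Tr(\cC^{(\delta)})$.

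Within a fixed block $\cC^{(\delta)}$ (say $\delta \ge 0$; the case $\delta<0$ is symmetric after exchanging the roles of $\uparrow$ and $\downarrow$) the objects $X_l := \uparrow^{\otimes(l+\delta)} \otimes \downarrow^{\otimes l}$ are indexed by a single parameter $l \in \N$, so \cref{saber} is available (its additivity hypothesis is inessential, as the conclusion depends only on the morphism spaces, cf.\ its proof via locally unital algebras in \cite{RS20}). I would take $R_l := \End_{\Heis_k^+}(\uparrow^{\otimes(l+\delta)}) \otimes \End_{\Heis_k^-}(\downarrow^{\otimes l})$, viewed inside $\End_\cC(X_l)$ via the tensor product functor, and let $D_l$ be the basis of $R_l$ provided by the basis theorem: permutation diagrams of the up-strands and of the down-strands, carrying dots, together with the bubble monomials $\beta(p_\lambda \otimes p_\nu)$. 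For $l \le n$ I would let $D_{l,n}$ consist of the dotted ``pure cap'' reduced lifts $X_n \to X_l$ (the matchings with $n-l$ turnbacks at the bottom and none at the top), and dually let $D_{m,l}$ be the dotted ``pure cup'' lifts $X_l \to X_m$. The bottleneck index $l$ is exactly the number of through down-strands, which runs from $0$ to $\min(m,n)$, matching \cref{saber}; and $D_{n,n} = \{1_{X_n}\}$.

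The main work is to verify property B1, that $\bB_{m,n} = \bigsqcup_l D_{m,l} D_l D_{l,n}$ is a basis of $\Hom_\cC(X_n,X_m)$, and this is the step I expect to be the principal obstacle. Combinatorially, each $(X_n,X_m)$-matching factors uniquely as (a cup-matching)$\,\circ\,$(a permutation of the through-strands)$\,\circ\,$(a cap-matching) through the waist object $X_l$, and the dots on the $n+m+\delta$ strands distribute uniquely among the caps, the through-strands, and the cups; this produces a bijection between the formal composites in $\bB_{m,n}$ and the basis $B_\circ(X_n,X_m)$ of reduced-lifts-with-dots, tensored with bubble monomials. Since genuinely composing a cup, a permutation, and a cap in $\Heis_k$ may create correction terms, I would then run the standard straightening argument (as in the proof of \cref{yurt} and of \cite[Th.~10.1]{BSW-qheis}): using \cref{dotslide,skein,pos,neg} one shows each composite equals the corresponding element of $B_\circ(X_n,X_m)$, times a bubble, plus terms supported on strictly fewer crossings or a lower waist, so the transition matrix to the basis of \cref{basisthm} is unitriangular and $\bB_{m,n}$ is again a basis. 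Property B2 is immediate: $R_l$ is a tensor product of subalgebras, and it is a genuine subalgebra of $\End_\cC(X_l)$ because $\Psi_k^\pm$ are faithful by \cref{yurt} and $\Heis^\pm$ contain no cups, caps, or mixed up-down crossings (the only possible source of cup-cap corrections under composition).

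Finally I would assemble the pieces. By \cref{saber}, $\Tr(\cC^{(\delta)}) \cong \bigoplus_{l} \Tr(\cC^{(\delta)}_l)$, where $\cC^{(\delta)}_l$ is the one-object category with endomorphism algebra $R_l$, so $\Tr(\cC^{(\delta)}_l) = R_l/\Span_\kk\{fg-gf\} = R_l/[R_l,R_l]$. Using the standard identification $(A \otimes B)/[A\otimes B,\, A\otimes B] \cong (A/[A,A]) \otimes (B/[B,B])$ of the cocenter of a tensor product, this becomes $\Tr(\End_{\Heis_k^+}(\uparrow^{\otimes(l+\delta)})) \otimes \Tr(\End_{\Heis_k^-}(\downarrow^{\otimes l}))$. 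Summing over all $\delta \in \Z$ and all valid $l$, the pairs $(l+\delta,l)$ range over all of $\N \times \N$; and since $\Heis_k^\pm$ have no morphisms between distinct objects, $\Tr(\Heis_k^+) = \bigoplus_a \Tr(\End_{\Heis_k^+}(\uparrow^{\otimes a}))$ and likewise for $\Heis_k^-$. Hence the total sum is exactly $\Tr(\Heis_k^+) \otimes \Tr(\Heis_k^-)$. Tracking the construction shows the resulting isomorphism $\Tr(\Heis_k^+) \otimes \Tr(\Heis_k^-) \to \Tr(\Heis_k)$ sends $[f^+] \otimes [f^-]$ to $[f^+ \otimes f^-]$, so it is precisely the map induced by the tensor product functor, completing the proof.
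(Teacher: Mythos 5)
Your proposal is correct and follows essentially the same route as the paper: reduce to the full subcategory on the objects $\uparrow^{\otimes a}\otimes\downarrow^{\otimes b}$ via \cref{invrel} and \cref{WSB}, split into blocks indexed by $\#\{\uparrow\}-\#\{\downarrow\}$, apply \cref{saber} with $R_n=\End_{\Heis_k^+}(\uparrow^{\otimes(m+n)})\otimes\End_{\Heis_k^-}(\downarrow^{\otimes n})$ and cup/cap/through-strand sets $D_{n_2,n_1}$, $D_n$, verifying \eqref{B1} by the same triangularity argument with dot and bubble slides. The only cosmetic differences are that you make explicit the final identification of $\bigoplus_l \Tr(\cC^{(\delta)}_l)$ with the tensor product of cocenters, which the paper leaves implicit.
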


\begin{proof}
    Since $k<0$, it follows from \cref{invrel} that every object of $\Add(\Heis_k)$ is isomorphic to a direct sum of objects of the form $\uparrow^{\otimes m} \otimes \downarrow^{\otimes n}$, $m,n \in \N$.  Let $\cC$ be the full subcategory of $\Heis_k$ whose objects are $\uparrow^{\otimes m} \otimes \downarrow^{\otimes n}$, $m,n \in \N$.  Then, by \cref{WSB}, the inclusion functor $\cC \to \Heis_k$ induces a linear isomorphism $\Tr(\cC) \cong \Tr(\Heis_k)$.

    For $m \in \Z$, let $\cC^{(m)}$ be the full subcategory of $\Heis_k$ whose objects are $\uparrow^{\oplus (m+n)} \otimes \downarrow^{\otimes n}$ for $n \ge \max(0,-m)$.  Then there are no morphisms between objects of $\cC^{(m)}$ and $\cC^{(n)}$ for $m \ne n$.  Thus $\cC = \bigsqcup_{m \in \Z} \cC^{(m)}$, and so $\Tr(\cC) = \bigoplus_{m \in \Z} \Tr(\cC^{(m)})$.

    Fix $m \in \Z$ for the remainder of the proof.  For $n \in \Z$ with $n \ge \max(0,-m)$, let
    \[
        X_n := \uparrow^{\otimes (m+n)} \otimes \downarrow^{\otimes n}.
    \]
    Recall the definition of a \emph{reduced lift} given above \cref{basisthm}, and let $\bB_{\Sym}$ be a basis of $\Sym$ (e.g.\ we can take $\bB_{\Sym}$ to be the set of Schur functions).  Fix a set $D(n_2,n_1)$ consisting of a choice of reduced lift each $(X_{n_1},X_{n_2})$-matching.  Then, for $n,n_1,n_2 \ge \max(0,-m)$, define the following:
    \begin{itemize}
        \item If $n_1 > n_2$, let $D_{n_2,n_1}$ denote the set of all morphisms obtained from elements of $D(n_2,n_1)$ \emph{containing no cups} by adding to each string \emph{involved in a cap} an integer number of dots near the terminus of the string.
        \item If $n_1 < n_2$, let $D_{n_2,n_1}$ denote the set of all morphisms obtained from elements of $D(n_2,n_1)$ \emph{containing no caps} by adding to each string \emph{involved in a cup} an integer number of dots near the terminus of the string.
        \item Let $D_{n,n} = \{1_{X_n}\}$.
        \item Let $D_n$ denote the set of all morphisms that can be obtained from the elements of $D(n,n)$ \emph{containing no cups or caps} by adding to each string an integer number of dots near the terminus of the string, and then placing an element of $\beta(\bB_{\Sym} \otimes 1)$ in between the downward and upward strings (i.e.\ to the right of all downward strings and to the left of all upward strings), and then placing an element of $\beta(1 \otimes \bB_{\Sym})$ to the right of all strings.
    \end{itemize}
    We claim that the sets $D_{n_2,n_1}$, $D_n$ satisfy the conditions \eqref{B1} and \eqref{B2} of \cref{sec:trace}, where
    \[
        R_n = \End_{\Heis_k^+}(\uparrow^{\otimes (m+n)}) \otimes \End_{\Heis_k^-}(\downarrow^{\otimes n}).
    \]
    Given the claim, the current proposition then follows from \cref{saber}.

    It remains to prove the claim.  Condition \eqref{B2} is clear.  To see that \eqref{B1} is satisfied, we need to verify that, for each $m,n \in \N$, the set $\bB_{n_2,n_1} = \bigsqcup_{l=0}^{\min(n_1,n_2)} D_{n_2,l} D_l D_{l,n_1}$ is a basis of $\Hom_\cC(X_{n_1},X_{n_2})$.  The difference between the elements of $\bB_{n_2,n_1}$ and the elements described in \cref{basisthm} is that
    \begin{itemize}
        \item for strings connecting the top and bottom of the diagram, the basis elements in \cref{basisthm} have dots near the termini of the strands, whereas the dots on such strands in the elements of $\bB_{n_2,n_1}$ are in the middle of the diagram;
        \item the basis elements in \cref{basisthm} have all bubbles on the right side of the diagram, whereas the $(+)$-bubbles appearing in elements of $\bB_{n_2,n_1}$ are in the middle of the diagram.
    \end{itemize}
    Using \cref{dotslide,skein}, dots slide through crossings modulo diagrams with fewer total crossings.  Similarly, by \cref{bs}, bubbles slide through strands modulo diagrams with fewer total dots.  For example, in \cref{anthill}, the left-hand diagram is a typical element of $D_{4,2} D_2 D_{2,3}$, while the right-hand diagram is a typical element of the basis from \cref{basisthm}.
    \begin{figure}
      \centering
      \begin{tikzpicture}[centerzero]
        \draw[<-] (-1.5,1.2) -- (-1.5,1) to[out=down,in=down] (1,1) -- (1,1.2);
        \draw[wipe] (1,-1.2) \braidup (0.5,1.2);
        \draw[<-] (1,-1.2) \braidup (0.5,1.2);
        \draw[wipe] (0,-1.2) \braidup (1.5,1.2);
        \draw[<-] (0,-1.2) \braidup (1.5,1.2);
        \draw[wipe] (-1,-1.2) -- (-1,-1) to[out=up,in=up] (0.5,-1) -- (0.5,-1.2);
        \draw[->] (-1,-1.2) -- (-1,-1) to[out=up,in=up] (0.5,-1) -- (0.5,-1.2);
        \draw[wipe] (-0.5,-1.2) \braidup (-1,1.2);
        \draw[->] (-0.5,-1.2) \braidup (-1,1.2);
        \draw[<-] (-0.5,1.2) -- (-0.5,1) arc(180:360:0.25) -- (0,1.2);
        \plusrightside{0.1,-0.1}{2};
        \minusrightside{1.6,-0.1}{3};
        \multdot{-0.75,0}{east}{-5};
        \singdot{0.67,0.23};
        \singdot{-0.5,1};
        \multdot{-1.5,1}{east}{-4};
        \multdot{0.5,-1}{west}{2};
      \end{tikzpicture}
      \hspace{2cm}
      \begin{tikzpicture}[centerzero]
        \draw[<-] (-1.5,1.2) -- (-1.5,1) to[out=down,in=down] (1,1) -- (1,1.2);
        \draw[wipe] (1,-1.2) \braidup (0.5,1.2);
        \draw[<-] (1,-1.2) \braidup (0.5,1.2);
        \draw[wipe] (0,-1.2) \braidup (1.5,1.2);
        \draw[<-] (0,-1.2) \braidup (1.5,1.2);
        \draw[wipe] (-1,-1.2) -- (-1,-1) to[out=up,in=up] (0.5,-1) -- (0.5,-1.2);
        \draw[->] (-1,-1.2) -- (-1,-1) to[out=up,in=up] (0.5,-1) -- (0.5,-1.2);
        \draw[wipe] (-0.5,-1.2) \braidup (-1,1.2);
        \draw[->] (-0.5,-1.2) \braidup (-1,1.2);
        \draw[<-] (-0.5,1.2) -- (-0.5,1) arc(180:360:0.25) -- (0,1.2);
        \plusrightside{1.8,-0.6}{2};
        \minusrightside{1.8,0.2}{3};
        \multdot{-0.94,0.63}{west}{-5};
        \singdot{0.97,-0.8};
        \singdot{-0.5,1};
        \multdot{-1.5,1}{east}{-4};
        \multdot{0.5,-1}{west}{2};
      \end{tikzpicture}
      \caption{Typical basis elements for $m=-1$, $n_1=3$, $n_2=4$.}\label{anthill}
    \end{figure}
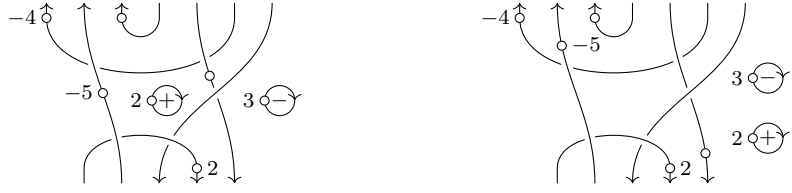
    The left-hand diagram is equal to the right-hand diagram modulo diagrams with fewer total crossings.  Thus the claim follows by a standard triangularity argument.
\end{proof}

\begin{prop} \label{grapefruit}
    We have a linear isomorphism
    \[
        \varphi_k \colon \EH_k \xrightarrow{\cong} \Tr(\Heis_k)
    \]
    satisfying \cref{sky,clouds}, and the composites
    \[
        \EH_k^\pm \hookrightarrow \EH_k
        \xrightarrow{\varphi_k} \Tr(\Heis_k)
        \twoheadrightarrow \Tr(\Heis_k^\pm)
    \]
    are isomorphisms of algebras.
\end{prop}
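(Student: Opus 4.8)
The plan is to assemble $\varphi_k$ from two ``halves,'' using the linear decomposition $\EH^+\otimes\EH^-\xrightarrow{\cong}\EH_k$ of \cref{lantern} together with the trace decomposition $\Tr(\Heis_k^+)\otimes\Tr(\Heis_k^-)\xrightarrow{\cong}\Tr(\Heis_k)$ of \cref{squirrel}, bootstrapping from the central charge zero isomorphism $\varphi_0$ of \cref{moon}. First I would restrict $\varphi_0$ to the two halves. The formulas \cref{sky,clouds,sun} show that $\varphi_0$ sends each generator $w_\bx$ with $\bx\in\bZ^+$ to the class of a diagram built only from upward strands, crossings, dots, and positive bubbles, i.e.\ into $\Tr(\Heis_0^+)$, and symmetrically sends $w_\bx$ with $\bx\in\bZ^-$ into $\Tr(\Heis_0^-)$. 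Since $\varphi_0$ is an algebra homomorphism and the $\Tr(\Heis_0^\pm)$ are subalgebras, this produces injective algebra homomorphisms $\varphi_0^\pm\colon\EH^\pm\to\Tr(\Heis_0^\pm)$. (Here I use that \cref{squirrel} also holds for $k=0$: its proof applies verbatim, the required isomorphism $\uparrow\otimes\downarrow\cong\downarrow\otimes\uparrow$ coming from the $k=0$ cases of \cref{silence}, so that $\Tr(\Heis_0^\pm)\to\Tr(\Heis_0)$ is injective and $\Tr(\Heis_0^\pm)$ may be regarded as subalgebras of $\Tr(\Heis_0)$.)

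To see that $\varphi_0^\pm$ are surjective, I would compare the two decompositions. The $\kk$-linear map $\EH^+\otimes\EH^-\to\Tr(\Heis_0)$, $a\otimes b\mapsto\varphi_0(a)\varphi_0(b)=\varphi_0(ab)$, is the composite of \cref{lantern} with $\varphi_0$, hence a linear isomorphism; because \cref{squirrel} is induced by $\otimes$, under $\Tr(\Heis_0^+)\otimes\Tr(\Heis_0^-)\cong\Tr(\Heis_0)$ this composite is precisely $\varphi_0^+\otimes\varphi_0^-$. Applying $\id\otimes\epsilon^-$, where $\epsilon^-\colon\Tr(\Heis_0^-)\to\kk$ extracts the coefficient of the unit class (a $\kk$-module direct summand) and $\varphi_0^-(1)=1$, shows that every element of $\Tr(\Heis_0^+)$ lies in $\im\varphi_0^+$; the symmetric argument handles $\varphi_0^-$. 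Thus $\varphi_0^\pm$ are isomorphisms of algebras.

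Next I would transport these to central charge $k$. By \cref{yurt}, $\Psi_0^\pm$ and $\Psi_k^\pm$ are isomorphisms of monoidal categories onto $\Heis_0^\pm$ and $\Heis_k^\pm$, and $\Psi_k^\pm\circ(\Psi_0^\pm)^{-1}$ carries each generating morphism to the one of the same name; it therefore induces an algebra isomorphism $\Tr(\Heis_0^\pm)\xrightarrow{\cong}\Tr(\Heis_k^\pm)$ fixing the classes $\left[\multupdot{r}\right]$, $\left[\multdowndot{r}\right]$, and $\left[\symbub{r}\right]$. Composing with $\varphi_0^\pm$ gives algebra isomorphisms $\varphi_k^\pm\colon\EH^\pm\xrightarrow{\cong}\Tr(\Heis_k^\pm)$ satisfying the evident analogues of \cref{sky,clouds}. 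I then define $\varphi_k$ as the composite of linear isomorphisms
\[
    \EH_k \xleftarrow{\ \cong\ } \EH^+\otimes\EH^-
    \xrightarrow{\varphi_k^+\otimes\varphi_k^-} \Tr(\Heis_k^+)\otimes\Tr(\Heis_k^-)
    \xrightarrow{\ \cong\ } \Tr(\Heis_k),
\]
inverting \cref{lantern} on the left and using \cref{squirrel} on the right, so that $\varphi_k$ is automatically a linear isomorphism.

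It remains to verify the asserted properties. Since \cref{squirrel} is induced by $\otimes$, the element $w_{r,1}\in\EH^+$ is sent to $\left[\multupdot{r}\right]\otimes[1_\one]$ and thence to $\left[\multupdot{r}\right]$ in $\Tr(\Heis_k)$; likewise $w_{r,-1}\mapsto\left[\multdowndot{r}\right]$, and $w_{r,0}\mapsto\left[\symbub{r}\right]$ for $r\neq0$ (according as $r>0$ or $r<0$), giving \cref{sky,clouds}. Moreover, by construction the composite $\EH_k^\pm\hookrightarrow\EH_k\xrightarrow{\varphi_k}\Tr(\Heis_k)\twoheadrightarrow\Tr(\Heis_k^\pm)$ sends $a$ to $\varphi_k^\pm(a)$, so it coincides with the algebra isomorphism $\varphi_k^\pm$. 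The main obstacle is the first step: one must have the $k=0$ trace decomposition available in order to treat the $\Tr(\Heis_0^\pm)$ as subalgebras of $\Tr(\Heis_0)$, and one must deduce surjectivity of each half-map from that of $\varphi_0^+\otimes\varphi_0^-$ via the counit-type projection above. Everything after that is a formal chase through the two compatible product decompositions.
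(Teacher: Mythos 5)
Your proposal is correct and follows essentially the same route as the paper: the paper defines $\varphi_k$ as the composite $\EH_k \cong \EH^+\otimes\EH^- \cong \EH_0 \xrightarrow{\varphi_0} \Tr(\Heis_0) \cong \Tr(\Heis_0^+)\otimes\Tr(\Heis_0^-) \cong \Tr(\Heis_k^+)\otimes\Tr(\Heis_k^-) \cong \Tr(\Heis_k)$, using exactly \cref{lantern}, \cref{moon}, \cref{squirrel} (implicitly at $k=0$ as well), and \cref{eagle}. You merely spell out details the paper leaves as ``immediate'' — in particular the identification of $\varphi_0^\pm$ as isomorphisms onto $\Tr(\Heis_0^\pm)$ via the counit-type projection — and these details check out.
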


\begin{proof}
    The map $\varphi_k$ is the composite
    \begin{align*}
        \EH_k &\to \EH^+ \otimes \EH^- & \text{(see \cref{lantern})} \\
        &\to \EH_0 & \text{(see \cref{lantern})} \\
        &\to \Tr(\Heis_0) & \text{(\cref{moon})} \\
        &\to \Tr(\Heis_0^+) \otimes \Tr(\Heis_0^-) & \text{(\cref{squirrel})} \\
        &\to \Tr(\Heis_k^+) \otimes \Tr(\Heis_k^-) & \text{(see \cref{eagle})}\\
        &\to \Tr(\Heis_k).
    \end{align*}
    The proposition then follows immediately from the properties of these individual maps.
\end{proof}

We are now ready to prove \cref{mainthm}.

\begin{proof}[Proof of \cref{mainthm}]
    By \cref{moon,rocinante}, it suffices to give the proof for the case $k<0$, which we assume for the remainder of the proof.  By \cref{grapefruit}, we have a homomorphism of algebras
    \[
        \EH_k^+ \star \EH_k^- \to \Tr(\Heis_k)
    \]
    (recall that $\star$ denotes the free product of algebras) satisfying
    \[
        w_{r,1} \mapsto \left[ \multupdot{r} \right],\quad
        w_{r,-1} \mapsto \left[ \multdowndot{r} \right],\quad
        w_{s,0} \mapsto \left[ \symbub{s} \right],\quad
        r,s \in \Z,\ s \ne 0.
    \]
    By \cref{crossbow}, if we verify that the images of the relations \cref{bolt1,bolt2,bolt3} hold in $\Tr(\Heis_k)$, it follows that we have an induced algebra homomorphism $\EH_k \to \Tr(\Heis_k)$, which is equal to the linear isomorphism $\varphi_k$ of \cref{grapefruit}.

    It is clear the image of \cref{bolt3} holds in $\Tr(\Heis_k)$ since $\Tr(\beta(\Sym \otimes \Sym))$ is a commutative subalgebra of $\Tr(\Heis_k)$.  To verify that the image of \cref{bolt2} holds, we compose all morphisms in \cref{goat} with $\multupdot{s}$ to see that
    \[
        \symbub{r}\
        \begin{tikzpicture}[centerzero]
            \draw[->] (0,-0.5) -- (0,0.5);
            \multdot{0,0}{west}{s};
        \end{tikzpicture}
        =
        \begin{tikzpicture}[centerzero]
            \draw[->] (0,-0.5) to (0,0.5);
            \multdot{0,0}{east}{s};
        \end{tikzpicture}
        \ \symbub{r}
        + \{r\}\
        \begin{tikzpicture}[centerzero]
            \draw[->] (0,-0.5) to (0,0.5);
            \multdot{0,0}{west}{s + r};
        \end{tikzpicture}
        .
    \]
    Passing to $\Tr(\Heis_k)$ gives the desired relations.

    It remains to verify that the image of \cref{bolt1} is satisfied.  More precisely, we must show that
    \begin{equation} \label{tree}
        \left[ \updot\ \multdowndot{s} \right]
        =
        \left[ \multdowndot[east]{s}\ \updot \right]
        - \{s+1\}
        \left[
            \begin{tikzpicture}[centerzero]
                \draw (0,0) circle(0.3);
                \node at (0,0) {\dotlabel{s+1}};
            \end{tikzpicture}
        \right]
        + \delta_{s,-1} k.
    \end{equation}
    (Recall our convention that $\symbub{0}=0$; see \cref{fort}.)

    We first prove \cref{tree} for $s \ge 0$.  In this case we have
    \begin{align*}
        \begin{tikzpicture}[centerzero]
            \draw[->] (-0.2,-0.5) -- (-0.2,0.5);
            \draw[<-] (0.2,-0.5) -- (0.2,0.5);
            \singdot{-0.2,0};
            \multdot{0.2,0}{west}{s};
        \end{tikzpicture}
        \ &\overset{\mathclap{\cref{silence}}}{=}\ \
        \begin{tikzpicture}[anchorbase]
            \draw (-0.2,-0.6) -- (-0.2,-0.4) \braidup (0.2,0);
            \draw[wipe] (0.2,-0.6) -- (0.2,-0.4) \braidup (-0.2,0) \braidup (0.2,0.4);
            \draw[<-] (0.2,-0.6) -- (0.2,-0.4) \braidup (-0.2,0) \braidup (0.2,0.4);
            \draw[wipe] (0.2,0) \braidup (-0.2,0.4);
            \draw[->] (0.2,0) \braidup (-0.2,0.4);
            \singdot{-0.2,-0.4};
            \multdot{0.2,-0.4}{west}{s};
        \end{tikzpicture}
        \overset{\cref{dotslide}}{=}
        \begin{tikzpicture}[anchorbase]
            \draw[<-] (0.2,-0.6) -- (0.2,-0.4) \braidup (-0.2,0) \braidup (0.2,0.4);
            \draw[wipe] (-0.2,-0.6) -- (-0.2,-0.4) \braidup (0.2,0);
            \draw (-0.2,-0.6) -- (-0.2,-0.4) \braidup (0.2,0);
            \draw[wipe] (0.2,0) \braidup (-0.2,0.4);
            \draw[->] (0.2,0) \braidup (-0.2,0.4);
            \singdot{0.2,0};
            \multdot{0.2,-0.4}{west}{s};
        \end{tikzpicture}
        \overset{\cref{teapos}}{\equiv}
        \begin{tikzpicture}[centerzero]
            \draw (-0.2,0) \braidup (0.2,0.5);
            \draw[wipe] (-0.2,-0.5) \braidup (0.2,0) \braidup (-0.2,0.5);
            \draw[->] (-0.2,-0.5) \braidup (0.2,0) \braidup (-0.2,0.5);
            \draw[wipe] (0.2,-0.5) \braidup (-0.2,0);
            \draw[<-] (0.2,-0.5) \braidup (-0.2,0);
            \singdot{0.2,0};
            \multdot{-0.2,0}{east}{s};
        \end{tikzpicture}
        - z \sum_{\substack{a+b=s \\ a,b \ge 0}}
        \begin{tikzpicture}[centerzero]
            \draw (-0.2,0.3) arc(180:0:0.2) (0.2,0.3) \braiddown (-0.2,-0.3);
            \draw[wipe] (-0.2,-0.3) arc(180:360:0.2) \braidup (-0.2,0.3);;
            \draw[->] (-0.2,-0.3) arc(180:360:0.2) \braidup (-0.2,0.3);;
            \multdot{0.2,0.3}{west}{a};
            \multdot{0.2,-0.3}{west}{b+1};
        \end{tikzpicture}
        \equiv
        \begin{tikzpicture}[anchorbase]
            \draw[<-] (-0.2,-0.6) -- (-0.2,-0.4) \braidup (0.2,0);
            \draw[wipe] (0.2,-0.6) -- (0.2,-0.4) \braidup (-0.2,0) \braidup (0.2,0.4);
            \draw[->] (0.2,-0.6) -- (0.2,-0.4) \braidup (-0.2,0) \braidup (0.2,0.4);
            \draw[wipe] (0.2,0) \braidup (-0.2,0.4);
            \draw (0.2,0) \braidup (-0.2,0.4);
            \multdot{-0.2,-0.4}{east}{s};
            \singdot{0.2,-0.4};
        \end{tikzpicture}
        - z \sum_{\substack{a+b=s \\ a,b \ge 0}}
        \begin{tikzpicture}[centerzero]
            \draw (-0.2,0.3) arc(180:0:0.2) (0.2,0.3) \braiddown (-0.2,-0.3);
            \draw[wipe] (-0.2,-0.3) arc(180:360:0.2) \braidup (-0.2,0.3);;
            \draw[->] (-0.2,-0.3) arc(180:360:0.2) \braidup (-0.2,0.3);;
            \multdot{0.2,0.3}{west}{a};
            \multdot{0.2,-0.3}{west}{b+1};
        \end{tikzpicture}
        \\
        &\overset{\mathclap{\cref{neg}}}{\underset{\mathclap{\cref{rightcurl}}}{=}}\
        \begin{tikzpicture}[centerzero]
            \draw[<-] (-0.2,-0.5) -- (-0.2,0.5);
            \draw[->] (0.2,-0.5) -- (0.2,0.5);
            \singdot{0.2,0};
            \multdot{-0.2,0}{east}{s};
        \end{tikzpicture}
        + tz \leftbub{s+1}
        + z^2 \sum_{a>0} a \leftbub{s+a+1} \rightplus{-a}
        - z^2 \sum_{\substack{a+b=s \\ a,b \ge 0}}
        \left(
            \sum_{c \ge 0} \leftbub{b-c+1} \rightminus{a+c}
            - \sum_{c>0} \leftbub{b+c+1} \rightplus{a-c}
        \right)
        \\
        &\overset{\mathclap{\substack{\cref{poppy} \\ \cref{weeds+}}}}{\underset{\mathclap{\cref{weeds-}}}{=}}\
        \begin{tikzpicture}[centerzero]
            \draw[<-] (-0.2,-0.5) -- (-0.2,0.5);
            \draw[->] (0.2,-0.5) -- (0.2,0.5);
            \singdot{0.2,0};
            \multdot{-0.2,0}{east}{s};
        \end{tikzpicture}
        + z^2 \sum_{a>0} a \leftplus{s+a+1} \rightplus{-a}
        + z^2 \sum_{\substack{a+b=s \\ a,b \ge 0}} \sum_{c>0} \leftplus{b+c+1} \rightplus{a-c}
        \\
        &=
        \begin{tikzpicture}[centerzero]
            \draw[<-] (-0.2,-0.5) -- (-0.2,0.5);
            \draw[->] (0.2,-0.5) -- (0.2,0.5);
            \singdot{0.2,0};
            \multdot{-0.2,0}{east}{s};
        \end{tikzpicture}
        + z^2 \sum_{r>s} (r-s) \leftplus{r+1} \rightplus{s-r}
        + z^2 \sum_{r=1}^s r \leftplus{r+1} \rightplus{s-r}
        + z^2 \sum_{r>s} s \leftplus{r+1} \rightplus{s-r}
        \\
        &=
        \begin{tikzpicture}[centerzero]
            \draw[<-] (-0.2,-0.5) -- (-0.2,0.5);
            \draw[->] (0.2,-0.5) -- (0.2,0.5);
            \singdot{0.2,0};
            \multdot{-0.2,0}{east}{s};
        \end{tikzpicture}
        + z^2 \sum_{r \ge 1} r\ \leftplus{r+1} \rightplus{s-r}
        \ \overset{\mathclap{\cref{infgrass}}}{\underset{\mathclap{\cref{weeds+}}}{=}}\
        \begin{tikzpicture}[centerzero]
            \draw[<-] (-0.2,-0.5) -- (-0.2,0.5);
            \draw[->] (0.2,-0.5) -- (0.2,0.5);
            \singdot{0.2,0};
            \multdot{-0.2,0}{east}{s};
        \end{tikzpicture}
        + z^2 \sum_{r \in \Z} r\ \leftplus{r} \rightplus{s+1-r}
        \overset{\cref{pcannon}}{=}
        \begin{tikzpicture}[centerzero]
            \draw[<-] (-0.2,-0.5) -- (-0.2,0.5);
            \draw[->] (0.2,-0.5) -- (0.2,0.5);
            \singdot{0.2,0};
            \multdot{-0.2,0}{east}{s};
        \end{tikzpicture}
        - \{s+1\}
        \begin{tikzpicture}[centerzero]
            \draw (0,0) circle(0.3);
            \node at (0,0) {\dotlabel{s+1}};
        \end{tikzpicture}
        \ .
    \end{align*}
    Thus \cref{tree} holds.

    Finally, we prove \cref{tree} for $s<0$.  In this case we have
    \begin{align*}
        \begin{tikzpicture}[centerzero]
            \draw[->] (-0.2,-0.5) -- (-0.2,0.5);
            \draw[<-] (0.2,-0.5) -- (0.2,0.5);
            \singdot{-0.2,0};
            \multdot{0.2,0}{west}{s};
        \end{tikzpicture}
        \ &\overset{\mathclap{\cref{silence}}}{=}\ \
        \begin{tikzpicture}[anchorbase]
            \draw (-0.2,-0.6) -- (-0.2,-0.4) \braidup (0.2,0);
            \draw[wipe] (0.2,-0.6) -- (0.2,-0.4) \braidup (-0.2,0) \braidup (0.2,0.4);
            \draw[<-] (0.2,-0.6) -- (0.2,-0.4) \braidup (-0.2,0) \braidup (0.2,0.4);
            \draw[wipe] (0.2,0) \braidup (-0.2,0.4);
            \draw[->] (0.2,0) \braidup (-0.2,0.4);
            \singdot{-0.2,-0.4};
            \multdot{0.2,-0.4}{west}{s};
        \end{tikzpicture}
        \overset{\cref{dotslide}}{=}
        \begin{tikzpicture}[anchorbase]
            \draw[<-] (0.2,-0.6) -- (0.2,-0.4) \braidup (-0.2,0) \braidup (0.2,0.4);
            \draw[wipe] (-0.2,-0.6) -- (-0.2,-0.4) \braidup (0.2,0);
            \draw (-0.2,-0.6) -- (-0.2,-0.4) \braidup (0.2,0);
            \draw[wipe] (0.2,0) \braidup (-0.2,0.4);
            \draw[->] (0.2,0) \braidup (-0.2,0.4);
            \singdot{0.2,0};
            \multdot{0.2,-0.4}{west}{s};
        \end{tikzpicture}
        \overset{\cref{teapos}}{\equiv}
        \begin{tikzpicture}[centerzero]
            \draw (-0.2,0) \braidup (0.2,0.5);
            \draw[wipe] (-0.2,-0.5) \braidup (0.2,0) \braidup (-0.2,0.5);
            \draw[->] (-0.2,-0.5) \braidup (0.2,0) \braidup (-0.2,0.5);
            \draw[wipe] (0.2,-0.5) \braidup (-0.2,0);
            \draw[<-] (0.2,-0.5) \braidup (-0.2,0);
            \singdot{0.2,0};
            \multdot{-0.2,0}{east}{s};
        \end{tikzpicture}
        + z \sum_{\substack{a+b=s \\ a,b < 0}}
        \begin{tikzpicture}[centerzero]
            \draw (-0.2,0.3) arc(180:0:0.2) (0.2,0.3) \braiddown (-0.2,-0.3);
            \draw[wipe] (-0.2,-0.3) arc(180:360:0.2) \braidup (-0.2,0.3);;
            \draw[->] (-0.2,-0.3) arc(180:360:0.2) \braidup (-0.2,0.3);;
            \multdot{0.2,0.3}{west}{a};
            \multdot{0.2,-0.3}{west}{b+1};
        \end{tikzpicture}
        \equiv
        \begin{tikzpicture}[anchorbase]
            \draw[<-] (-0.2,-0.6) -- (-0.2,-0.4) \braidup (0.2,0);
            \draw[wipe] (0.2,-0.6) -- (0.2,-0.4) \braidup (-0.2,0) \braidup (0.2,0.4);
            \draw[->] (0.2,-0.6) -- (0.2,-0.4) \braidup (-0.2,0) \braidup (0.2,0.4);
            \draw[wipe] (0.2,0) \braidup (-0.2,0.4);
            \draw (0.2,0) \braidup (-0.2,0.4);
            \multdot{-0.2,-0.4}{east}{s};
            \singdot{0.2,-0.4};
        \end{tikzpicture}
        + z \sum_{\substack{a+b=s \\ a,b < 0}}
        \begin{tikzpicture}[centerzero]
            \draw (-0.2,0.3) arc(180:0:0.2) (0.2,0.3) \braiddown (-0.2,-0.3);
            \draw[wipe] (-0.2,-0.3) arc(180:360:0.2) \braidup (-0.2,0.3);;
            \draw[->] (-0.2,-0.3) arc(180:360:0.2) \braidup (-0.2,0.3);;
            \multdot{0.2,0.3}{west}{a};
            \multdot{0.2,-0.3}{west}{b+1};
        \end{tikzpicture}
        \\
        &\overset{\mathclap{\cref{neg}}}{\underset{\mathclap{\cref{rightcurl}}}{=}}\
        \begin{tikzpicture}[centerzero]
            \draw[<-] (-0.2,-0.5) -- (-0.2,0.5);
            \draw[->] (0.2,-0.5) -- (0.2,0.5);
            \singdot{0.2,0};
            \multdot{-0.2,0}{east}{s};
        \end{tikzpicture}
        + tz \leftbub{s+1}
        + z^2 \sum_{a,b>0} \leftbub{s+a+b+1} \rightplus{-a-b}
        + z^2 \sum_{\substack{a+b=s \\ a,b < 0}}
        \left(
            \sum_{c \ge 0} \leftbub{b-c+1} \rightminus{a+c}
            - \sum_{c>0} \leftbub{b+c+1} \rightplus{a-c}
        \right)
        \\
        &\overset{\mathclap{\cref{poppy}}}{\underset{\mathclap{\cref{weeds+}}}{=}}\
        \begin{tikzpicture}[centerzero]
            \draw[<-] (-0.2,-0.5) -- (-0.2,0.5);
            \draw[->] (0.2,-0.5) -- (0.2,0.5);
            \singdot{0.2,0};
            \multdot{-0.2,0}{east}{s};
        \end{tikzpicture}
        + tz \leftbub{s+1}
        + z^2 \sum_{r>0} (r-1) \leftbub{s+r+1} \rightplus{-r}
        + z^2 \sum_{\substack{a+b=s \\ a,b < 0}}
        \left(
            \sum_{c \ge 0} \leftminus{b-c+1} \rightminus{a+c}
            - \sum_{c>0} \leftminus{b+c+1} \rightplus{a-c}
        \right),
    \end{align*}
    where, in the final sum above, we used the fact that $\leftplus{m} \rightplus{n} = 0$ whenever $m+n<0$, by \cref{weeds+}.  Now,
    \begin{align*}
        \sum_{r>0} (r-1) \leftbub{s+r+1} \rightplus{-r}
        &\overset{\mathclap{\cref{poppy}}}{=}\
        \sum_{r>0} (r-1) \leftplus{s+r+1} \rightplus{-r}
        + \sum_{r>0} (r-1) \leftminus{s+r+1} \rightplus{-r}
        \\
        &\overset{\mathclap{\cref{infgrass}}}{\underset{\mathclap{\cref{weeds+}}}{=}}\ \
        \delta_{s,-1} (k+1) z^{-2} 1_\one + \sum_{r>0} (r-1) \leftminus{s+r+1} \rightplus{-r}
    \end{align*}
    and
    \[
        \sum_{\substack{a+b=s \\ a,b<0}} \sum_{c>0} \leftminus{b+c+1} \rightplus{a-c}
        = \sum_{a=1}^{-s-1} \sum_{c>0} \leftminus{s+a+c+1} \rightplus{-a-c}
        = \sum_{a,c>0} \leftminus{s+a+c+1} \rightplus{-a-c}
        = \sum_{r>0} (r-1) \leftminus{s+r+1} \rightplus{-r},
    \]
    where the second equality follows from the fact that, when $a \ge -s$ and $c>0$, we have $s+a+c+1 > 0$.   Thus
    \begin{align*}
        \begin{tikzpicture}[centerzero]
            \draw[->] (-0.2,-0.5) -- (-0.2,0.5);
            \draw[<-] (0.2,-0.5) -- (0.2,0.5);
            \singdot{-0.2,0};
            \multdot{0.2,0}{west}{s};
        \end{tikzpicture}
        &-
        \begin{tikzpicture}[centerzero]
            \draw[<-] (-0.2,-0.5) -- (-0.2,0.5);
            \draw[->] (0.2,-0.5) -- (0.2,0.5);
            \singdot{0.2,0};
            \multdot{-0.2,0}{east}{s};
        \end{tikzpicture}
        =
        \delta_{s,-1} (k+1) 1_\one + tz \leftbub{s+1}
        + z^2 \sum_{\substack{a+b=s \\ a,b < 0}} \sum_{c \ge 0} \leftminus{b-c+1} \rightminus{a+c}
        \\
        &\overset{\mathclap{\cref{poppy}}}{\underset{\mathclap{\substack{\cref{weeds+} \\ \cref{weeds-}}}}{=}}\ \
        \delta_{s,-1} (k+1) 1_\one + z^2 \leftminus{s+1} \rightminus{0}
        + z^2 \sum_{b=s+2}^{0} \sum_{c \le 0} \leftminus{b+c} \rightminus{s+1-b-c}
        \\
        &\overset{\mathclap{\cref{weeds-}}}{=}\ \
        \delta_{s,-1} (k+1) 1_\one
        + z^2 \sum_{b=s+1}^{0} \sum_{c \le 0} \leftminus{b+c} \rightminus{s+1-b-c}
        \overset{\cref{weeds-}}{=}
        \delta_{s,-1} (k+1) 1_\one
        + z^2 \sum_{r \in \Z} (r+1) \leftminus{-r} \rightminus{r+s+1}
        \\
        &\overset{\mathclap{\cref{infgrass}}}{=}\
        \delta_{s,-1} k 1_\one
        + z^2 \sum_{r \in \Z} r\ \leftminus{-r} \rightminus{r+s+1}
        \overset{\cref{pcannon}}{=}
        \delta_{s,-1} k
        -\{s+1\}
        \begin{tikzpicture}[centerzero]
            \draw (0,0) circle(0.3);
            \node at (0,0) {\dotlabel{s+1}};
        \end{tikzpicture}
        \ .
    \end{align*}
    This completes the proof of \cref{tree}.
\end{proof}

\begin{rem} \label{chess}
    When $k=-1$, \cref{mainthm} can be seen as an extension of \cite[Th.~6.3]{CLLSS18}, which gives an isomorphism between ``half'' of $\EH_{-1}$ and the trace of the $q$-deformed Heisenberg category of \cite{LS13}, which is isomorphic to the monoidal subcategory of $\Heis_{-1}(z,-z^{-1})$ consisting of all objects and all morphisms \emph{not involving negative dots} (which thus can be viewed as ``half'' of $\Heis_{-1}$).  Note that, even in the case $k=-1$, the approach of the current paper has significant advantages.  In particular, the extension of the isomorphism to the full $\EH_k$ allows one to work with the simpler presentation given in \cref{crossbow}.  This allows one, for example, to avoid many of the lengthy and technical arguments of \cite[\S4]{CLLSS18}, such as \cite[Prop.~4.10]{CLLSS18}.
\end{rem}

\section{Action on symmetric functions\label{sec:centeract}}

There is a natural action of the trace $\Tr(\Heis_k)$ on the center $\End_{\Heis_k}(\one)$, which we now explore.  Throughout this section we continue with the assumptions on $\kk$, $q$, $z$, $t$, and $k$ made at the beginning of \cref{sec:qheis}.  Let us depict an endomorphism $f \in \End_{\Heis_k}(X)$ by
\[
    \begin{tikzpicture}[centerzero]
        \draw (-0.3,-0.2) rectangle (0.3,0.2);
        \node at (0,0) {\dotlabel{f}};
        \draw[very thick] (0,0.2) -- (0,0.4);
        \draw[very thick] (0,-0.2) -- (0,-0.4);
    \end{tikzpicture}
    \ ,
\]
where the thick vertical strand is a horizontal juxtaposition of upward and downward strands corresponding to $1_X$.  Then we define the action of $\Tr(\Heis_k)$ on $\End_{\Heis_k}(\one)$ by
\begin{equation} \label{hazy}
    \left[
        \begin{tikzpicture}[centerzero]
            \draw (-0.3,-0.2) rectangle (0.3,0.2);
            \node at (0,0) {\dotlabel{f}};
            \draw[very thick] (0,0.2) -- (0,0.4);
            \draw[very thick] (0,-0.2) -- (0,-0.4);
        \end{tikzpicture}
    \right]
    \cdot g
    =
    \begin{tikzpicture}[centerzero]
        \draw (-0.3,-0.2) rectangle (0.3,0.2);
        \node at (0,0) {\dotlabel{f}};
        \draw[very thick] (0,0.2) to[out=up,in=up] (1,0.2) -- (1,-0.2) to[out=down,in=down] (0,-0.2);
        \node at (0.6,0) {$g$};
    \end{tikzpicture}
    \ ,\quad
    f \in \End_{\Heis_k}(X),\
    X \in \Heis_k,\
    g \in \End_{\Heis_k}(\one),
\end{equation}
and extend by linearity.

There is a unique map $\rho \colon \EH_k \otimes \Sym^{\otimes 2} \to \Sym^{\otimes 2}$ making the diagram
\begin{equation} \label{encircle}
    \begin{tikzcd}
        \EH_k \otimes \Sym^{\otimes 2} \arrow[d, "\varphi_k \otimes \beta", "\cong"'] \arrow[r, "\rho"] & \Sym^{\otimes 2} \arrow[d, "\beta", "\cong"'] \\
        \Tr(\Heis_k) \otimes \End_{\Heis_k}(\one) \arrow[r] & \End_{\Heis_k}(\one)
    \end{tikzcd}
\end{equation}
commute, where the bottom horizontal map is given by the action \cref{hazy}.  The map $\rho$ gives an action of $\EH_k$ on $\Sym^{\otimes 2}$ and our goal is to give an explicit description of this action.  We will use the notation $a \cdot \theta$ for $\rho(a \otimes \theta)$, $a \in \EH_k$, $\theta \in \Sym^{\otimes 2}$.

Recall that, for $r \ge 1$, $h_r$, $e_r$, and $p_r$ denote the degree $r$ complete homogeneous symmetric function, elementary symmetric function, and power sum, respectively.  We also adopt the conventions
\[
    h_0=e_0=1,\ p_0=0
    \quad \text{and} \quad
    h_r = e_r = p_r = 0 \text{ for } r < 0.
\]
Recall also the notation $f^\pm$ for $f \in \Sym$ given in \cref{electric}.  Then \cref{hcannon,ecannon,pcannon} are valid for all $r \in \Z$.

\begin{lem}
    We have
    \begin{align} \label{circ1}
        w_{\pm r,0} \cdot \theta
        &=
        \mp \{r\}^{-1} p_r^\pm \theta,&
        r \ge 1,\ \theta \in \Sym^{\otimes 2},
        \\ \label{circ2}
        w_{r,1} \cdot 1
        &= -t^{-1} z^{-1} h_{r-k}^+ + tz^{-1} h_{-r}^-,&
        r \in \Z,
        \\ \label{circ3}
        w_{r,-1} \cdot 1
        &= (-1)^{r+k} t z^{-1} e_{r+k}^+ + (-1)^{r-1} t^{-1} z^{-1} e_{-r}^-,&
        r \in \Z.
    \end{align}
\end{lem}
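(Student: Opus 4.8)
The plan is to unwind the definition of $\rho$ through the commuting square \cref{encircle}, which says exactly that $\beta(a \cdot \theta) = \varphi_k(a) \cdot \beta(\theta)$, where the dot on the right is the action \cref{hazy} of $\Tr(\Heis_k)$ on the center $\End_{\Heis_k}(\one)$. Since $\beta$ is an algebra isomorphism (\cref{cannon}), each of the three identities then reduces to computing a closed diagram in $\End_{\Heis_k}(\one)$ and pulling the result back through $\beta^{-1}$.

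First I would dispatch \cref{circ1}. By \cref{clouds} we have $\varphi_k(w_{\pm r,0}) = [\symbub{\pm r}]$, and since $\symbub{\pm r}$ is already an endomorphism of $\one$, the thick strand in \cref{hazy} is empty, so the action is simply multiplication in the commutative algebra $\End_{\Heis_k}(\one)$. Thus $\beta(w_{\pm r,0}\cdot\theta) = \symbub{\pm r}\,\beta(\theta)$, and applying $\beta^{-1}$ together with the definition \cref{fort} of $\symbub{\pm r}$ (and $\{-r\}=-\{r\}$) gives $w_{\pm r,0}\cdot\theta = \mp\{r\}^{-1}p_r^{\pm}\theta$ immediately.

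The substance is in \cref{circ2,circ3}, where $\theta = 1$ and so $\beta(1)=1_\one$. By \cref{sky}, $\varphi_k(w_{r,1}) = [\multupdot{r}]$ and $\varphi_k(w_{r,-1}) = [\multdowndot{r}]$. Feeding $g = 1_\one$ into \cref{hazy} closes the single strand off to the right, forming the right partial trace of the dotted strand. Here I would carry out the orientation bookkeeping: for the upward strand the dotted strand runs up on the left while the return strand runs down on the right, giving a clockwise loop, hence $\rightbub{r}$; for the downward strand one gets the counterclockwise loop $\leftbub{r}$. (Dots slide freely over the cup and cap by the pivotal structure, so their placement is immaterial.) With these identifications in hand I would expand the closed bubbles via \cref{poppy}, namely $\rightbub{r} = \rightplus{r} + \rightminus{r}$ and $\leftbub{r} = \leftplus{r} + \leftminus{r}$, then solve \cref{hcannon} for $\rightplus{r},\rightminus{r}$ and \cref{ecannon} for $\leftplus{r},\leftminus{r}$ so as to rewrite each summand as $\beta$ of an explicit element of $\Sym^{\otimes 2}$. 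Applying $\beta^{-1}$ then yields \cref{circ2,circ3}.

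The main obstacle is bookkeeping rather than conceptual: correctly matching the right-hand closure of \cref{hazy} with the clockwise bubble $\rightbub{}$ (respectively the counterclockwise $\leftbub{}$), and then tracking signs and powers of $t$ and $z$ through the inversions of \cref{hcannon,ecannon}. In particular one must set the index so that the central-charge shift already built into \cref{hcannon,ecannon} reproduces exactly the subscripts $h_{r-k}^+$ and $e_{r+k}^+$ in the statement (i.e.\ solve $\beta(h_{r-k}^+)=-tz\,\rightplus{r}$ and $\beta(e_{r+k}^+)=(-1)^{r+k}t^{-1}z\,\leftplus{r}$, and similarly for the minus-bubbles); getting this shift together with the accompanying $(-1)$ factors right is where care is required.
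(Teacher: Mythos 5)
Your proposal is correct and follows essentially the same route as the paper: \cref{circ1} from \cref{clouds,fort}, and \cref{circ2,circ3} by recognizing the right closure of the dotted upward (resp.\ downward) strand as the clockwise bubble $\rightbubside{r}$ (resp.\ counterclockwise $\leftbubside{r}$), splitting via \cref{poppy}, and inverting \cref{hcannon,ecannon}. Your orientation bookkeeping and the resulting index shifts and signs all check out against the paper's computation.
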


\begin{proof}
    Equation \cref{circ1} follows from \cref{clouds,fort}.  To see \cref{circ2,circ3}, we compute
    \begin{gather*}
        w_{r,1} \cdot 1
        = \beta^{-1} \left( \rightbubside{r} \right)
        \overset{\cref{poppy}}{=} \beta^{-1} \left( \rightplusside{r} + \rightminusside{r} \right)
        \overset{\cref{hcannon}}{=} -t^{-1}z^{-1} h_{r-k}^+ + tz^{-1} h_{-r}^-,
        \\
        w_{r,-1} \cdot 1
        = \beta^{-1} \left( \leftbubside{r} \right)
        \overset{\cref{poppy}}{=} \beta^{-1} \left( \leftplusside{r} + \leftminusside{r} \right)
        \overset{\cref{ecannon}}{=} (-1)^{r+k} t z^{-1} e_{r+k}^+ + (-1)^{r-1} t^{-1} z^{-1} e_{-r}^-.
    \end{gather*}
\end{proof}

Let $\cP$ be the set of all $(\lambda_{-\ell_-}, \dotsc, \lambda_{-2}, \lambda_{-1},\lambda_1, \lambda_2, \dotsc, \lambda_{\ell_+}) \in \Z^{\ell_-+\ell_+}$, $\ell_-,\ell_+ \in \N$, satisfying
\begin{equation}
    \lambda_{-\ell_-} \le \dotsb \le \lambda_{-1} < 0 < \lambda_1 \le \dotsb \le \lambda_{\ell_+}.
\end{equation}
For such an element $\lambda \in \cP$, we define
\begin{gather}
    \ell_\pm(\lambda) := \ell_\pm,\quad
    \ell(\lambda) := \ell_+ + \ell_-,\quad
    I_\lambda := \{-\ell_-,\dotsc,-1,1,\dotsc,\ell_+\},\quad
    \\
    |\lambda| := \sum_{i \in I_\lambda} \lambda_i,\quad
    \{ \lambda \} := \prod_{i \in I_\lambda} \{\lambda_i\},\quad
    \symbub{\lambda} := \prod_{i \in I_\lambda} \symbub{\lambda_i},\quad
    P_\lambda :=  \frac{1}{\{\lambda\}^2} \left( \prod_{i=1}^{\ell_+} p_{\lambda_i}^+ \right) \left( \prod_{i=1}^{\ell_-} p_{-\lambda_i}^- \right).
\end{gather}
We think of elements of $\cP$ as partitions whose parts can be either positive or negative.  We allow $\ell_-$ or $\ell_+$ (or both) to be zero.  In particular, the empty partition $\varnothing$ is an element of $\cP$.  It follows from \cref{cannon,fort} that
\[
    \{P_\lambda : \lambda \in \cP\}
    \text{ is a basis for } \Sym \otimes \Sym
    \quad \text{and} \quad
    \{\symbub{\lambda} : \lambda \in \cP\}
    \text{ is a basis for } \End_{\Heis_k}(\one).
\]
Furthermore, it follows from \cref{fort} that
\begin{equation}
    \beta(P_\lambda) = \tfrac{(-1)^{\ell(\lambda)}}{\{\lambda\}}\, \symbub{\lambda},\quad
    \lambda \in \cP.
\end{equation}

For $\lambda,\mu \in \cP$ we define
\begin{multline}
    \mu \trianglelefteq \lambda
    \iff
    \mu = (\lambda_{i_a},\dotsc,\lambda_{i_1},\lambda_{j_1},\dotsc,\lambda_{j_b})
    \\
    \text{ for some }
    -\ell_-(\lambda) \le i_a < \dotsb < i_1 < 0 < j_1 < \dotsc < j_b \le \ell_+(\lambda).
\end{multline}
In other words, $\mu \trianglelefteq \lambda$ if $\mu$ is obtained from $\lambda$ by deleting some of its parts (allowing also $\mu = \lambda$).

\begin{lem}
    For $\lambda \in \cP$, we have
    \begin{align} \label{yak}
        \symbub{\lambda}\
        \begin{tikzpicture}[centerzero]
            \draw[->] (0,-0.5) -- (0,0.5);
        \end{tikzpicture}
        &=
        \{\lambda\} \sum_{\mu \trianglelefteq \lambda} \frac{1}{\{\mu\}}
        \begin{tikzpicture}[centerzero]
            \draw[->] (0,-0.5) -- (0,0.5);
            \multdot{0,0}{east}{|\lambda|-|\mu|};
        \end{tikzpicture}
        \ \symbub{\mu}
        \ ,
        &
        \symbub{\lambda}\
        \begin{tikzpicture}[centerzero]
            \draw[<-] (0,-0.5) -- (0,0.5);
        \end{tikzpicture}
        &=
        \{\lambda\} \sum_{\mu \trianglelefteq \lambda} \frac{(-1)^{\ell(\lambda) - \ell(\mu)}}{\{\mu\}}
        \begin{tikzpicture}[centerzero]
            \draw[<-] (0,-0.5) -- (0,0.5);
            \multdot{0,0}{east}{|\lambda|-|\mu|};
        \end{tikzpicture}
        \ \symbub{\mu}\ .
    \end{align}
\end{lem}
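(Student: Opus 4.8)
The plan is to reduce both formulas to the single-bubble slide \cref{goat} and then iterate by induction on the number of parts $\ell(\lambda)$. Since the center $\End_{\Heis_k}(\one)$ is the commutative polynomial algebra \cref{renegade1}, the product $\symbub{\lambda}$ is independent of the order of its factors, so at each stage I am free to peel off any single part; the same remark applies to $\symbub{\mu}$, which makes the partition-deletion indexing $\mu \trianglelefteq \lambda$ manageable. I would prove both formulas simultaneously, using the upward single slide for the first and the downward single slide for the second.

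First I would record the two single-bubble slides past a strand already carrying $m$ dots, for $m \in \Z$ and $r \ne 0$:
\[
    \symbub{r}\ \multupdot{m}
    = \multupdot{m}\ \symbub{r} + \{r\}\ \multupdot{m+r},
    \qquad
    \symbub{r}\ \multdowndot{m}
    = \multdowndot{m}\ \symbub{r} - \{r\}\ \multdowndot{m+r}.
\]
The upward identity is \cref{goat} composed with $m$ further dots, exactly the computation used to verify \cref{bolt2} in the proof of \cref{mainthm}. For the downward identity I would apply the isomorphism $\Omega_k$ of \cref{earth} to the upward identity: each side is a tensor product of the bubble $\symbub{r}\colon \one \to \one$ with the decorated strand, so the monoidal $\Omega_k$ sends upward strands to downward strands, fixes the number of dots, and by \cref{solaris,aurora} sends $\symbub{r} \mapsto -\symbub{r}$. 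The two bubble-bearing terms each acquire a factor $-1$ while the pure-dot term $\{r\}(\cdots)$ does not; multiplying through by $-1$ then yields the downward slide with the crucial relative sign $-\{r\}$. This sign is precisely the seed of the factor $(-1)^{\ell(\lambda)-\ell(\mu)}$ in the second formula.

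With the single slides in hand I would prove, by induction on $\ell(\lambda)$, the slightly more general statement in which the strand already carries $m$ dots (the stated formulas with dot exponent $|\lambda|-|\mu|$ replaced by $m+|\lambda|-|\mu|$); setting $m=0$ recovers the lemma. The base case $\lambda=\varnothing$ is trivial. For the inductive step, write $\lambda = \lambda' \sqcup \{r\}$ with $r$ an extremal part (the largest part if $\lambda$ has a positive part, otherwise the smallest), so that $\symbub{\lambda} = \symbub{r}\,\symbub{\lambda'}$ and $\nu \sqcup \{r\} \in \cP$ for every $\nu \trianglelefteq \lambda'$. Applying the inductive hypothesis to $\symbub{\lambda'}$ and then moving $\symbub{r}$ past each resulting dotted strand via the appropriate single slide splits every term into a \emph{pass-through} term, in which $r$ survives and $\symbub{r}\,\symbub{\nu} = \symbub{\mu}$ with $\mu = \nu \sqcup \{r\} \ni r$, and an \emph{absorbed} term, in which $r$ becomes a dot and $\symbub{\mu} = \symbub{\nu}$ with $\mu = \nu \not\ni r$. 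As $\nu$ ranges over $\{\nu \trianglelefteq \lambda'\}$, these two families biject with $\{\mu \trianglelefteq \lambda\}$ according to whether $\mu$ contains $r$.

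The remainder is bookkeeping. Using $\{\lambda\} = \{r\}\{\lambda'\}$ together with $\{\mu\} = \{r\}\{\nu\}$ (pass-through) or $\{\mu\} = \{\nu\}$ (absorbed), one checks that both families contribute the common coefficient $\{\lambda\}/\{\mu\}$, and that in each case the accumulated dot multiplicity is $m + |\lambda| - |\mu|$, using $|\lambda| = |\lambda'| + r$ and $|\mu| = |\nu| + r$ or $|\mu| = |\nu|$. In the downward case the absorbed term carries the extra sign from $-\{r\}$, and since $\ell(\lambda)-\ell(\mu)$ equals $\ell(\lambda')-\ell(\nu)$ for pass-through terms and $\ell(\lambda')-\ell(\nu)+1$ for absorbed terms, the total sign collapses to $(-1)^{\ell(\lambda)-\ell(\mu)}$ in both cases; summing over $\mu \trianglelefteq \lambda$ gives the claimed formulas. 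I expect the main obstacle to be organizational rather than conceptual: correctly matching the two-term output of each single slide against the deletion indexing and confirming that the coefficients and signs telescope to the clean $\{\lambda\}/\{\mu\}$ and $(-1)^{\ell(\lambda)-\ell(\mu)}$. The one genuinely subtle point is the relative minus sign in the downward slide, which the $\Omega_k$ argument above pins down without a separate skein computation.
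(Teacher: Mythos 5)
Your proposal is correct and is essentially the paper's argument: the paper's entire proof reads ``This follows from \cref{goatee} by induction on $\ell(\lambda)$,'' and your write-up is exactly that induction carried out in full, with the coefficient and sign bookkeeping checking out (including the position-based bijection between $\{\nu \trianglelefteq \lambda'\}\times\{\text{keep},\text{delete}\}$ and $\{\mu \trianglelefteq \lambda\}$, which handles repeated parts correctly). Your $\Omega_k$ derivation of the downward single slide, with its relative sign $-\{r\}$, is a valid way to supply the step the paper leaves implicit, and is consistent with the $[w_{s,-1},w_{r,0}]$ relation verified in the proof of \cref{mainthm}.
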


\begin{proof}
    This follows from \cref{goatee} by induction on $\ell(\lambda)$.
\end{proof}

\begin{theo} \label{Trantor}
    The action $\rho$ in \cref{encircle} is the unique action of $\EH_k$ on $\Sym \otimes \Sym$ given by
    \begin{align*}
        w_{r,1} \cdot P_\lambda
        &= z^{-1} \sum_{\mu \trianglelefteq \lambda}
        \left( -t^{-1} h_{r+|\lambda|-|\mu|-k}^+ + t h_{-r-|\lambda|+|\mu|}^- \right)
        P_\mu,
        \\
        w_{r,-1} \cdot P_\lambda
        &= z^{-1} \sum_{\mu \trianglelefteq \lambda}
        (-1)^{r + \ell(\lambda) - \ell(\mu) + |\lambda| + |\mu|}
        \left( (-1)^k t e_{r+|\lambda|-|\mu|+k}^+ - t^{-1} e_{-r-|\lambda|+|\mu|}^- \right)
        P_\mu,
    \end{align*}
    for $r \in \Z$, $\lambda \in \cP$.  Furthermore, $\Sym \otimes \Sym$ is a cyclic $\EH_k$-module generated by $1 \in \Sym \otimes \Sym$.
\end{theo}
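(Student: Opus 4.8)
The plan is to prove the two displayed formulas by computing the action of the algebra generators $w_{r,\pm 1}$ on the basis $\{P_\lambda : \lambda \in \cP\}$, and then to read off uniqueness and cyclicity from the structure of $\EH_k$. By \cref{slime}, $\EH_k$ is generated by the $w_{r,\pm 1}$, $r \in \Z$, so any action of $\EH_k$ is determined by its values on these generators; since $\rho$ is a genuine module action by construction (the diagram \cref{encircle} transports the action \cref{hazy} of $\Tr(\Heis_k)$ on $\End_{\Heis_k}(\one)$ along the isomorphisms $\varphi_k$ and $\beta$), it suffices to verify that $\rho$ acts by the stated formulas; uniqueness is then automatic.

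For the main computation I would unwind $\rho$ through \cref{encircle} and the closure operation \cref{hazy}. By \cref{sky}, $\varphi_k(w_{r,1})$ is represented by the dotted strand $\multupdot{r}$, so $w_{r,1}\cdot P_\lambda = \beta^{-1}\big([\multupdot{r}]\cdot \beta(P_\lambda)\big)$, and likewise for $w_{r,-1}$ with $\multdowndot{r}$. Writing $\beta(P_\lambda) = \tfrac{(-1)^{\ell(\lambda)}}{\{\lambda\}}\symbub{\lambda}$, the action \cref{hazy} closes the dotted strand into a bubble with the floating product $\symbub{\lambda}$ trapped inside the resulting loop. I would then pull $\symbub{\lambda}$ out of the loop across the oriented portion of the loop it meets (the downward portion for $w_{r,1}$, the upward portion for $w_{r,-1}$, as dictated by the closure geometry), using the corresponding line of the slide relation \cref{yak}. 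This replaces $\symbub{\lambda}$ by $\{\lambda\}\sum_{\mu \trianglelefteq \lambda}(\text{sign})/\{\mu\}$ times a bubble carrying $r+|\lambda|-|\mu|$ dots, multiplied by the now-external product $\symbub{\mu}$. Each such dotted bubble is exactly the closure evaluated in \cref{circ2,circ3}, namely $\beta\big(w_{r+|\lambda|-|\mu|,\pm 1}\cdot 1\big)$, expressed through the $h^\pm$ (resp.\ $e^\pm$) with the index shifts appearing in the theorem. Converting $\symbub{\mu}$ back via the identity $\symbub{\mu} = (-1)^{\ell(\mu)}\{\mu\}\beta(P_\mu)$ recorded above and applying $\beta^{-1}$, the factors $\{\lambda\}/\{\mu\}$ cancel and the powers of $-1$ collapse, yielding precisely the two displayed formulas.

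Cyclicity I would deduce from \cref{circ1}: for $r \ge 1$, the elements $w_{r,0}, w_{-r,0} \in \EH_k$ act on $\Sym\otimes\Sym$ as multiplication by the invertible scalars $-\{r\}^{-1}$ and $\{r\}^{-1}$ times $p_r^+$ and $p_r^-$, respectively. Since $P_\lambda = \tfrac{1}{\{\lambda\}^2}\big(\prod_i p_{\lambda_i}^+\big)\big(\prod_j p_{-\lambda_j}^-\big)$ is an invertible scalar multiple of a product of such power sums, repeatedly applying the appropriate $w_{\lambda_i,0}$ to $1 = P_\varnothing$ produces a nonzero multiple of $P_\lambda$. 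Thus every basis vector lies in $\EH_k\cdot 1$, so $\Sym\otimes\Sym$ is cyclic, generated by $1$.

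The main obstacle will be the sign and dot bookkeeping in the downward case. One must correctly identify which oriented portion of each loop the floating bubble crosses, select the matching line of \cref{yak} (the line with the $(-1)^{\ell(\lambda)-\ell(\mu)}$ factor for $w_{r,1}$, the sign-free line for $w_{r,-1}$), and reconcile the parity $(-1)^{r+\ell(\lambda)-\ell(\mu)+|\lambda|+|\mu|}$ in the $w_{r,-1}$ formula against the signs $(-1)^{\ell(\lambda)}$ and $(-1)^{\ell(\mu)}$ coming from $\beta(P_\bullet)$ together with the sign $(-1)^{d-1}$ hidden in \cref{circ3} (using $(-1)^{-|\mu|}=(-1)^{|\mu|}$ to rewrite $(-1)^{d}$ with $d=r+|\lambda|-|\mu|$). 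Once these parities are tracked carefully, the remaining steps are routine applications of the already-established relations.
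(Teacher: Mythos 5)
Your proposal is correct and follows essentially the same route as the paper: the displayed formulas are obtained by closing the dotted strand around $\beta(P_\lambda)$, sliding $\symbub{\lambda}$ out across the return strand via \cref{yak}, and evaluating the resulting dotted bubbles with \cref{circ2,circ3}, while uniqueness comes from \cref{slime} and cyclicity from \cref{circ1}. Your sign bookkeeping (including the choice of the signed versus sign-free line of \cref{yak} according to the orientation of the return strand) checks out.
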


\begin{proof}
    The given expressions for $w_{r,\pm 1} \cdot P_\lambda$ follow from \cref{yak,circ2,circ3}.  The uniqueness follows from the fact that $\EH_k$ is generated by $w_{r,\pm 1}$, $r \in \Z$; see \cref{slime}.  The fact that the module is generated by $1$ follows from \cref{circ1}.
\end{proof}

\begin{rem} \label{MSaction}
    When $k=0$, the action described in \cref{Trantor} is essentially the action described in \cite[Th.~4.1]{MS17}.  The precise connection is as follows.  Let
    \[
        \gamma = \begin{pmatrix} 0 & -1 \\ 1 & 0 \end{pmatrix} \in \GL_2(\Z),\qquad
        \xi \colon \Sym^{\otimes 2} \to \Sym^{\otimes 2},\quad
        \xi(P_\lambda) = (-1)^{\ell(\lambda)} \{\lambda\} P_\lambda,\ \lambda \in \cP.
    \]
    Then we have a commutative diagram
    \[
        \begin{tikzcd}
            \EH_0 \otimes \Sym^{\otimes 2} \arrow[r, "\rho"] \arrow[d, "\gamma \otimes \xi"] & \Sym^{\otimes 2} \arrow[d, "\xi"] \\
            \EH_0 \otimes \Sym^{\otimes 2} \arrow[r] & \Sym^{\otimes 2}
        \end{tikzcd}
        \ ,
    \]
    where $\gamma \colon \EH_0 \to \EH_0$ is the isomorphism \cref{icy} and the bottom horizontal arrow is the action of \cite[Th.~4.1]{MS17}.
    \details{
        Let $\rho'$ denote the action of \cite[Th.~4.1]{MS17}.  Note that, in the notation of \cite{MS17}, $Q_{\lambda,\varnothing} = s_\lambda \otimes 1$, $Q_{\varnothing,\mu} = 1 \otimes s_\mu$; see \cite[p.~834]{MS17}.  Thus, comparing \cite[(4.2)]{MS17} to the Murnaghan--Nakayama rule, we see that, for $n \ge 1$, $\theta \in \Sym$,
        \[
            \rho'(w_{0,n} \otimes \theta \otimes 1)
            = p_n \theta \otimes 1,\qquad
            \rho'(w_{0,-n} \otimes 1 \otimes \theta)
            = 1 \otimes p_n \theta.
        \]
        Then, for $r \ge 1$, and $\lambda$ a usual partition (i.e.\ with only positive parts) we have
        \begin{gather*}
            \xi \circ \rho(w_{r,0} \otimes p_\lambda \otimes 1)
            \overset{\cref{circ1}}{=} -\{r\}^{-1} \xi( p_r p_\lambda \otimes 1 )
            = (-1)^{\ell(\lambda)} \{\lambda\} p_r p_\lambda \otimes 1 ,
            \\
            \rho' \circ (\gamma \otimes \xi) (w_{r,0} \otimes p_\lambda \otimes 1)
            = (-1)^{\ell(\lambda)} \{\lambda\} \rho'(w_{0,r} \otimes p_\lambda \otimes 1)
            = (-1)^{\ell(\lambda)} \{\lambda\} p_r p_\lambda \otimes 1,
        \end{gather*}
        and
        \begin{gather*}
            \xi \circ \rho(w_{-r,0} \otimes 1 \otimes p_\lambda)
            \overset{\cref{circ1}}{=} \{r\}^{-1} \xi(1 \otimes p_r p_\lambda)
            = \{\lambda\} \otimes p_r p_\lambda,
            \\
            \rho' \circ (\gamma \otimes \xi) (w_{-r,0} \otimes 1 \otimes p_\lambda)
            = \{\lambda\} \rho'(w_{0,-r} \otimes 1 \otimes p_\lambda)
            = \{\lambda\} \otimes p_r p_\lambda.
        \end{gather*}
        In addition, recalling that our $q$ and $t$ are the $s$ and $v^{-1}$ of \cite{MS17}, we have
        \begin{gather*}
            \xi \circ \rho(w_{0,{\pm n}} \otimes 1 \otimes 1)
            \overset{\cref{mouse}}{=} \frac{t^n-t^{-n}}{\{n\}} \xi (1 \otimes 1)
            = \frac{t^n-t^{-n}}{\{n\}} (1 \otimes 1),
            \\
            \rho' \circ (\gamma \otimes \xi) (w_{0,\pm n} \otimes 1 \otimes 1)
            = \rho'(w_{\mp n,0} \otimes 1 \otimes 1)
            = \frac{t^n-t^{-n}}{\{n\}} (1 \otimes 1),
        \end{gather*}
        where, in the last equality, we use the equation at the top of \cite[p.~826]{MS17}.
    }
\end{rem}

Note that, even though the definition of $\EH_k$ does not depend on $t$, its action on $\Sym \otimes \Sym$ does.  So we obtain a family of modules depending on the parameter $t$.  In addition, the following result shows that the cyclic vector $1 \in \Sym \otimes \Sym$ is an eigenvector for many of the $w_{0,n}$, with the eigenvalues depending on $t$.

\begin{prop}
    We have
    \begin{align} \label{mouse}
        w_{0,n} \cdot 1
        &= \frac{t^n - t^{-n}}{\{n\}}
        \quad \text{if } k=0,\ n \ne 0,
        &
        w_{0,n} \cdot 1
        &= \frac{t^n}{\{n\}}
        \quad \text{if } k,n > 0 \text{ or } k,n < 0.
    \end{align}
\end{prop}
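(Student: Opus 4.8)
The plan is to evaluate $w_{0,n}\cdot 1$ by pushing it through the defining square \cref{encircle}. Since $\beta(1)=1_\one$ and, by \cref{sun} together with $\chi_{0,n}=\id$, we have $\varphi_k(w_{0,n})=[\sigma_n]$, applying $\beta$ to $w_{0,n}\cdot 1$ and unwinding the action \cref{hazy} with $g=1_\one$ gives
\[
    \beta(w_{0,n}\cdot 1) = [\sigma_n]\cdot 1_\one = \widehat{\sigma_n},
\]
the closure of $\sigma_n$ obtained by joining the top and bottom of its strands around to the right. Thus it suffices to compute this closed diagram in $\End_{\Heis_k}(\one)$ and recognize it as a scalar. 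By \cref{Freud} we have $\widehat{\sigma_n}=\tfrac{z}{\{n\}}\sum_{i=0}^{n-1}\widehat{\sigma_{i,n-i-1}}$, so everything reduces to evaluating the closure of each $\sigma_{i,j}$.

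First I would treat $n>0$. The closure of $\sigma_{i,j}$ is a single closed strand carrying $i$ kinks of one handedness (coming from the $i$ over-crossings) and $j$ of the other (from the $j$ under-crossings), and these can be removed one at a time by the curl relations. By \cref{curls1} an over kink contributes the scalar $\delta_{k,0}t^{-1}$, and by \cref{curls2} an under kink contributes $\delta_{k,0}t$ in the simple orientations; for $k>0$ the under kink instead is evaluated by the general curl relation \cref{rightcurl} at $r=0$, which after using the lowest-bubble values \cref{weeds+,weeds-} collapses to the scalar $t$. The terminal diagram is a single undecorated loop, whose value is $\tfrac{t-\delta_{0,k}t^{-1}}{z}$ by \cref{curls1}.

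For $k=0$ every kink survives, with an over kink giving $t^{-1}$ and an under kink giving $t$, so $\widehat{\sigma_{i,n-i-1}}=t^{(n-1-i)-i}\tfrac{t-t^{-1}}{z}$ and
\[
    \widehat{\sigma_n}=\frac{t-t^{-1}}{\{n\}}\sum_{i=0}^{n-1}t^{\,n-1-2i}=\frac{t-t^{-1}}{\{n\}}\cdot\frac{t^n-t^{-n}}{t-t^{-1}}=\frac{t^n-t^{-n}}{\{n\}}.
\]
For $k>0$ the over kink now vanishes, so every term with $i\ge 1$ is zero and only $\sigma_{0,n-1}$ survives; removing its $n-1$ under kinks (each equal to $t$) and closing the remaining loop, whose value is now $t/z$, yields $\widehat{\sigma_{0,n-1}}=t^{n}/z$, whence $\widehat{\sigma_n}=\tfrac{z}{\{n\}}\cdot t^n/z=t^n/\{n\}$. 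The cases with $n<0$ are the mirror image, computed with the downward elements $\sigma_{-i,-j}$ and left bubbles; alternatively one deduces $k,n<0$ from $k,n>0$ by applying $\omega_k$ and $\Omega_k$ exactly as in \cref{rocinante}, using \cref{aurora} to track the effect on bubbles.

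The main obstacle is the bookkeeping in the second step: one must match each crossing of the braid closure to the correct curl relation (over versus under, left versus right) and justify that the kinks can be peeled off sequentially, so that the closure genuinely collapses to a product of scalars times a single undecorated loop. The structural point that makes the statement clean is that the sign condition on $(k,n)$ is exactly the condition under which the wrong-handed curls vanish via \cref{curls1,curls2}; for opposite signs those curls instead produce nontrivial dotted bubbles, and $w_{0,n}\cdot 1$ ceases to be a scalar, which is why the proposition is restricted to matching signs.
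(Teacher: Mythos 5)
Your proposal follows essentially the same route as the paper's proof: identify $w_{0,n}\cdot 1$ with the closure of $\sigma_n$ via \cref{sun} and \cref{hazy}, expand by \cref{Freud}, evaluate each $\widehat{\sigma_{i,n-i-1}}$ by peeling off curls using \cref{curls1,curls2} (supplemented by \cref{leftcurl,rightcurl,weeds+,weeds-} when $k>0$, which kills all terms with $i\ge 1$), and sum the resulting geometric series for $k=0$; the $n<0$ cases are handled by the same mirror/$\Omega_k$ argument the paper uses. This matches the paper's computation in both structure and detail, so the proposal is correct.
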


\begin{proof}
    First suppose $k=0$.  Then we have
    \begin{equation} \label{heyna}
        \begin{tikzpicture}[centerzero]
            \draw[->] (0.2,0) to[out=down,in=east] (0.05,-0.2) to[out=left,in=down] (-0.2,0.35) -- (-0.2,0.5);
            \draw[wipe] (-0.2,-0.5) -- (-0.2,-0.35) to[out=up,in=west] (0.05,0.2) to[out=right,in=up] (0.2,0);
            \draw (-0.2,-0.5) -- (-0.2,-0.35) to[out=up,in=west] (0.05,0.2) to[out=right,in=up] (0.2,0);
        \end{tikzpicture}
        \ = t\
        \begin{tikzpicture}[centerzero]
            \draw[->] (0,-0.5) to (0,0.5);
        \end{tikzpicture}
        \ ,\qquad
        \begin{tikzpicture}[centerzero]
            \draw (-0.2,-0.5) -- (-0.2,-0.35) to[out=up,in=west] (0.05,0.2) to[out=right,in=up] (0.2,0);
            \draw[wipe] (0.2,0) to[out=down,in=east] (0.05,-0.2) to[out=left,in=down] (-0.2,0.35) -- (-0.2,0.5);
            \draw[->] (0.2,0) to[out=down,in=east] (0.05,-0.2) to[out=left,in=down] (-0.2,0.35) -- (-0.2,0.5);
        \end{tikzpicture}
        \ = t^{-1}\
        \begin{tikzpicture}[centerzero]
            \draw[->] (0,-0.5) to (0,0.5);
        \end{tikzpicture}
        \ ,\qquad
        \begin{tikzpicture}[centerzero]
            \draw (0.2,0) to[out=down,in=east] (0.05,-0.2) to[out=left,in=down] (-0.2,0.35) -- (-0.2,0.5);
            \draw[wipe] (-0.2,-0.5) -- (-0.2,-0.35) to[out=up,in=west] (0.05,0.2) to[out=right,in=up] (0.2,0);
            \draw[<-] (-0.2,-0.5) -- (-0.2,-0.35) to[out=up,in=west] (0.05,0.2) to[out=right,in=up] (0.2,0);
        \end{tikzpicture}
        \ = t\
        \begin{tikzpicture}[centerzero]
            \draw[,-] (0,-0.5) to (0,0.5);
        \end{tikzpicture}
        \ ,\qquad
        \begin{tikzpicture}[centerzero]
            \draw[<-] (-0.2,-0.5) -- (-0.2,-0.35) to[out=up,in=west] (0.05,0.2) to[out=right,in=up] (0.2,0);
            \draw[wipe] (0.2,0) to[out=down,in=east] (0.05,-0.2) to[out=left,in=down] (-0.2,0.35) -- (-0.2,0.5);
            \draw (0.2,0) to[out=down,in=east] (0.05,-0.2) to[out=left,in=down] (-0.2,0.35) -- (-0.2,0.5);
        \end{tikzpicture}
        \ = t^{-1}\
        \begin{tikzpicture}[centerzero]
            \draw[<-] (0,-0.5) to (0,0.5);
        \end{tikzpicture}
        \ .
    \end{equation}
    This can be seen by using the fact that $\Heis_0$ is the framed HOMFLYPT skein category (\cref{Polly}) or by using \cref{weeds+,weeds-,leftcurl,rightcurl}.

    If $n > 0$, we have
    \[
        w_{0,n} \cdot 1
        \overset{\cref{sun}}{=} \frac{z}{\{n\}} \sum_{i=0}^{n-1} [\sigma_{i,n-i-1}] \cdot 1
        \overset{\cref{heyna}}{\underset{\cref{curls1}}{=}}\ \frac{z}{\{n\}} \sum_{i=0}^{n-1} t^{n-2i-1} \frac{t-t^{-1}}{z}
        = \frac{t^n-t^{-n}}{\{n\}}.
    \]
    The case $n < 0$ is analogous
    \details{
        For $n = -m < 0$, we have
        \[
            w_{0,-m} \cdot 1
            \overset{\cref{sun}}{=} \frac{z}{\{m\}} \sum_{i=0}^{m-1} [\sigma_{-i,i-m-i+1}] \cdot 1
            \overset{\cref{heyna}}{\underset{\cref{curls2}}{=}}\ \frac{z}{\{m\}} \sum_{i=0}^{m-1} t^{m-2i-1} \frac{t-t^{-1}}{z}
            = \frac{t^n-t^{-m}}{\{m\}}.
        \]
    }

    Now suppose $n,k>0$.  Then, by \cref{weeds+,weeds-,leftcurl,rightcurl}, we have
    \begin{equation} \label{heyna2}
        \begin{tikzpicture}[centerzero]
            \draw[->] (0.2,0) to[out=down,in=east] (0.05,-0.2) to[out=left,in=down] (-0.2,0.35) -- (-0.2,0.5);
            \draw[wipe] (-0.2,-0.5) -- (-0.2,-0.35) to[out=up,in=west] (0.05,0.2) to[out=right,in=up] (0.2,0);
            \draw (-0.2,-0.5) -- (-0.2,-0.35) to[out=up,in=west] (0.05,0.2) to[out=right,in=up] (0.2,0);
        \end{tikzpicture}
        \ = t\
        \begin{tikzpicture}[centerzero]
            \draw[->] (0,-0.5) to (0,0.5);
        \end{tikzpicture}
        \ ,\qquad
        \begin{tikzpicture}[centerzero]
            \draw (-0.2,-0.5) -- (-0.2,-0.35) to[out=up,in=west] (0.05,0.2) to[out=right,in=up] (0.2,0);
            \draw[wipe] (0.2,0) to[out=down,in=east] (0.05,-0.2) to[out=left,in=down] (-0.2,0.35) -- (-0.2,0.5);
            \draw[->] (0.2,0) to[out=down,in=east] (0.05,-0.2) to[out=left,in=down] (-0.2,0.35) -- (-0.2,0.5);
        \end{tikzpicture}
        \ = 0.
    \end{equation}
    Thus
    \[
        w_{0,n} \cdot 1
        \overset{\cref{sun}}{=} \frac{z}{\{n\}} \sum_{i=0}^{n-1} [\sigma_{i,n-i-1}] \cdot 1
        \overset{\cref{heyna2}}{\underset{\cref{curls1}}{=}}\ \frac{z}{\{n\}} t^{n-1} \frac{t}{z}
        = \frac{t^n}{\{n\}}.
    \]
    The case $n,k < 0$ can be proved directly in an analogous manner, or obtained from the $n,k>0$ case by applying the isomorphism $\Omega_k$ from \cref{earth}.
\end{proof}

\section{Action on cocenters of cyclotomic Hecke algebras}

In this final section, we describe the action of $\Tr(\Heis_k)$ on traces of cyclotomic quotients of $\Heis_k$ or, equivalently, on cocenters of cyclotomic Hecke algebras.  Throughout this section we assume that $\kk$ is a field of characteristic zero and $q,t \in \kk^\times$, with $q$ not a root of unity.

If $\cC$ and $\cD$ are $\kk$-linear categories, we let $\cC \boxtimes \cD$ be the $\kk$-linear category with objects that are pairs $(X,Y)$ of objects $X \in \cC$ and $Y \in \cD$, and morphisms given by
\[
    \Hom_{\cC \boxtimes \cD}((X_1,Y_1), (X_2,Y_2))
    := \Hom_\cC(X_1,X_2) \otimes_\kk \Hom_\cD(Y_1,Y_2).
\]
Composition of morphisms in $\cC \boxtimes \cD$ is given by $(e \otimes f) \circ (g \otimes h) := (e \circ g) \otimes (f \circ h)$, extended by linearity.  It is straightforward to verify that we have a linear isomorphism
\[
    \Tr(\cC) \otimes \Tr(\cD)
    \xrightarrow{\cong} \Tr(\cC \boxtimes \cD),\quad
    [f] \otimes [g] \mapsto [f \otimes g].
\]

A \emph{module category} over $\cC$ is a $\kk$-linear category $\cM$, together with a $\kk$-linear functor $\cC \boxtimes \cM \to \cM$ satisfying the usual associativity and unity axioms.  Equivalently, it is a $\kk$-linear category $\cM$ together with a strict $\kk$-linear monoidal functor $\cC \to \cEnd_\kk(\cM)$, where $\cEnd_\kk(\cM$) denotes the strict $\kk$-linear monoidal category whose objects are $\kk$-linear endofunctors of $\cM$ and whose morphisms are natural transformations.

If $\cM$ is a module category over $\cC$ with action functor $F \colon \cC \boxtimes \cM \to \cM$, then we have an induced action of $\Tr(\cC)$ on $\Tr(\cM)$ given by
\begin{equation}
    \Tr(\cC) \otimes \Tr(\cM)
    \xrightarrow{\cong} \Tr(\cC \boxtimes \cM)
    \xrightarrow{\Tr[F]} \Tr(\cM).
\end{equation}
The goal of this section is to use this fact to construct $\EH_k$-modules from certain module categories over the quantum Heisenberg category.

For $n \in \Z_{\ge 1}$, let $\rH_n$ denote the Iwahori--Hecke algebra of rank $n$.  This is the associative $\kk$-algebra with generators $\tau_1,\dotsc,\tau_{n-1}$ and relations
\begin{align*}
    \tau_i \tau_j &= \tau_j \tau_i,& 1 \le i,j \le n-1,\ |i-j| > 1,\\
    \tau_i \tau_{i+1} \tau_i &= \tau_{i+1} \tau_i \tau_{i+1},& 1 \le i \le n-2,\\
    \tau_i^2 &= z\tau_i + 1,& 1 \le i \le n-1.
\end{align*}
Let $\AH_n$ denote the affine Hecke algebra of rank $n$.  Thus $\AH_n = \rH_n \otimes_\kk \kk[x_1^{\pm 1},\dotsc,x_n^{\pm n}]$ as $\kk$-modules, with the two factors being subalgebras, and
\[
    \tau_i x_i = x_{i+1} \tau_i^{-1},\quad 1 \le i \le n-1.
\]
We adopt the convention that $\rH_0 = \AH_0 = \kk$.

It follows from \cref{braid,skein,dotslide} that we have an algebra homomorphism
\begin{equation} \label{rabbit}
    \AH_n \to \End_{\Heis_k}(\uparrow^{\otimes n}),
\end{equation}
sending $x_i$ to a dot on the $i$-th string and $\tau_i$ to a positive crossing $\posupcross$ of the $i$-th and $(i+1)$-st strings, where we number strings $1,2,\dotsc,n$ from right to left.  In fact, it follows from \cref{basisthm} that this map is injective.

Fix a nonnegative integer $l$ and a polynomial
\begin{equation}
    f(u) = f_0 u^l + f_1 u^{l-1} + \dotsb + f_l \in \kk[u],\quad
    f_0 = 1,\ f_l = t^2.
\end{equation}
(Note that this forces $t = \pm 1$ if $l=0$.)  The \emph{cyclotomic Hecke algebra} $\rH_n^f$ of level $l$ associated to the polynomial $f(u)$ is the quotient of $\AH_n$ by the two-sided ideal generated by $f(x_1)$.  By convention $\rH_0^f = \kk$.  The basis theorem proved in \cite[Th.~3.10]{AK94} states that
\[
    \{ x_1^{r_1} \dotsm x_n^{r_n} \tau_g : 0 \le r_1,\dotsc,r_n < l,\ g \in \fS_n \}
\]
is a basis for $\rH_n^f$ as a free $\kk$-module, where $\tau_g$ denotes the element of the finite Hecke algebra defined from a reduced expression for the element $g$ of the symmetric group $\fS_n$.

Define the \emph{tower of cyclotomic Hecke algebras} associated to $f$ to be the $\kk$-linear category $\cH^f$ with objects $X_n$, $n \in \N$, and
\[
    \Hom_{\cH^f}(\uparrow^{\otimes n}, \uparrow^{\otimes m})
    =
    \begin{cases}
        \rH_n^f & \text{if } m=n, \\
        0 & \text{if } m \ne n.
    \end{cases}
\]
Note that $\cH^f$ is \emph{not} naturally a monoidal category.  As we will now explain the tower $\cH^f$ can also be realized as a cyclotomic quotient of the quantum Heisenberg category.

Let $\cI(f)$ be the left tensor ideal generated by the morphism $f(\updot)$.  The \emph{cyclotomic quantum Heisenberg category} associated to the polynomial $f(u)$ is the quotient category
\begin{equation}
    \cH(f) := \Heis_{-l}/\cI(f).
\end{equation}
Note that $\cH(f)$ is a $\kk$-linear category, but it does not inherit the monoidal structure from $\Heis_{-l}$.  However, it is a left module category over $\Heis_{-l}$.

\begin{prop} \label{cyclotomic}
    The map \cref{rabbit} induces algebra isomorphisms
    \begin{equation} \label{lettuce}
        \rH_n^f \xrightarrow{\cong} \End_{\cH(f)}(\uparrow^{\otimes n}),\quad n \in \N.
    \end{equation}
    Furthermore, the functor $\cH^f \to \cH(f)$ given on objects by $X_n \mapsto\ \uparrow^{\otimes n}$ and on morphisms by \cref{lettuce} is an equivalence of $\kk$-linear categories.
\end{prop}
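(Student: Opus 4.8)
The plan is to show that the homomorphism \cref{rabbit} descends to the cyclotomic quotient and there becomes the isomorphism \cref{lettuce}, after which the equivalence follows almost formally. First I would check descent. Since $\cI(f)$ is the \emph{left} tensor ideal generated by $f(\updot)$, it contains $1_{\uparrow^{\otimes(n-1)}} \otimes f(\updot)$, which is precisely the image of $f(x_1)$ under \cref{rabbit} (recall strands are numbered from right to left). As $\rH_n^f = \AH_n/\langle f(x_1)\rangle$, the map \cref{rabbit} therefore induces algebra homomorphisms $\bar\theta_n \colon \rH_n^f \to \End_{\cH(f)}(\uparrow^{\otimes n})$, and these are the candidate maps \cref{lettuce}.

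For surjectivity I would use the basis theorem. By \cref{basisthm}, $\End_{\Heis_{-l}}(\uparrow^{\otimes n})$ is free over its center $\beta(\Sym \otimes \Sym)$ on the dotted reduced lifts; for endomorphisms of $\uparrow^{\otimes n}$ these lifts are exactly the dotted permutation diagrams, that is, the image of $\AH_n$ under \cref{rabbit}. Passing to $\cH(f)$ then requires two reductions. The first is the usual cyclotomic reduction: using $f(x_1)=0$ together with the dot-slide relations \cref{teapos}, every dotted permutation diagram is rewritten on the Ariki--Koike basis $\{x_1^{r_1}\cdots x_n^{r_n}\tau_g : 0 \le r_i < l,\ g \in \fS_n\}$ of \cite{AK94}. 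The second, and the crux of surjectivity, is to kill the bubbles, i.e.\ to prove $\End_{\cH(f)}(\one) = \kk$. The point is that closing $x_1^m f(\updot)$ into a circle produces, for each $m \ge 0$, an element of $\cI(f) \cap \End_{\Heis_{-l}}(\one)$ equal to a fixed linear combination of the bubbles carrying $m, m+1, \dots, m+l$ dots; combined with the boundary evaluations \cref{weeds+,weeds-} and the infinite Grassmannian relation \cref{infgrass}, these relations recursively express every central generator $\symbub{r}$ as a scalar. Hence each $\bar\theta_n$ is surjective.

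Injectivity I would obtain from the categorical action rather than from a direct dimension count. By \cite{BSW-qheis}, $\Heis_{-l}$ acts on the module category $\bigsqcup_m \rH_m^f\md$ with $\uparrow$ acting by induction and $\downarrow$ by restriction; since $f(\updot)$ acts as multiplication by $f$ of the new Jucys--Murphy dot, which vanishes on cyclotomic modules, this action factors through $\cH(f)$. Evaluating the functor $\uparrow^{\otimes n}$ at the trivial $\rH_0^f = \kk$-module yields the regular module $\rH_n^f$, and the resulting composite $\rH_n^f \xrightarrow{\bar\theta_n} \End_{\cH(f)}(\uparrow^{\otimes n}) \to \End_{\rH_n^f}(\rH_n^f)$ is right multiplication, which is faithful. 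Thus $\bar\theta_n$ is injective, and together with the previous paragraph it is an isomorphism, establishing \cref{lettuce}.

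The equivalence then reads off. The isomorphisms $\bar\theta_n$ give full faithfulness on the objects $\uparrow^{\otimes n}$, while $\Hom_{\cH(f)}(\uparrow^{\otimes m},\uparrow^{\otimes n}) = 0 = \Hom_{\cH^f}(X_m,X_n)$ for $m \ne n$ by conservation of the up/down charge, so the functor $\cH^f \to \cH(f)$ is fully faithful; essential surjectivity is the remaining object-level point, where one reduces a general object to an $\uparrow$-word using \cref{invrel}. I expect the genuine obstacle to be the bubble-killing step, namely proving $\End_{\cH(f)}(\one)=\kk$: this is exactly where the fine structure of $\cI(f)$ and the normalization $f_0=1$, $f_l = t^2$ (which forces the undecorated $f$-bubble to vanish, via \cref{weeds+,weeds-}) must be exploited, whereas the dot reduction is a standard adaptation of the affine-to-cyclotomic Hecke argument and injectivity is supplied cleanly by the faithful action.
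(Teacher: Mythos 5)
The paper itself gives no proof of this proposition; it is a pure citation to \cite[Lem.~5.13]{BSW-HKM} and \cite[Th.~9.5]{BSW-qheis}. Your sketch reconstructs the argument used in those references — an upper bound on $\End_{\cH(f)}(\uparrow^{\otimes n})$ by straightening modulo $\cI(f)$ (evaluating bubbles to scalars, then invoking the Ariki--Koike basis theorem), and a matching lower bound by evaluating a categorical action — so the architecture is the intended one, and your identification of the bubble-killing step with \cite[Lem.~9.2]{BSW-qheis} (the series \cref{mtb}) is on target.

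There is, however, one step that is false as written. You justify descent of the action to $\cH(f)$ by saying that ``$f(\updot)$ acts as multiplication by $f$ of the new Jucys--Murphy dot, which vanishes on cyclotomic modules.'' It does not: the cyclotomic relation kills $f(x_1)$ only, and $f(x_{m+1})\neq 0$ in $\rH_{m+1}^f$ for $m\geq 1$ (already for $l=1$ and $f(u)=u-c$ one has $x_2=c\tau_1^2\neq c$). Hence $f(\updot)$ is a \emph{nonzero} natural endomorphism of the induction functor $\rH_m^f\md\to\rH_{m+1}^f\md$, and the action of $\Heis_{-l}$ on $\bigoplus_m \rH_m^f\md$ does \emph{not} factor through $\cH(f)$. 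What is true, and what your injectivity argument actually needs, is that $\cI(f)$ dies after evaluating at the trivial $\rH_0^f$-module: every element of the \emph{left} tensor ideal is a sum of composites through morphisms of the form $1_X\otimes f(\updot)$, and evaluating such a composite at $\kk$ yields $X$ applied to multiplication by $f(x_1)=0$ on $\rH_1^f$. This is precisely where the left-handedness of $\cI(f)$ enters; with that correction your composite $\rH_n^f\to\End_{\cH(f)}(\uparrow^{\otimes n})\to\End_{\rH_n^f}(\rH_n^f)$ argument goes through. Separately, essential surjectivity of $\cH^f\to\cH(f)$ cannot hold literally: since $1_\downarrow\in\cI(f)$, the object $\downarrow$ is a zero object of $\cH(f)$ and is not isomorphic to any $\uparrow^{\otimes n}$, while $\downarrow\otimes\uparrow\cong\one^{\oplus l}$ only in $\Add(\cH(f))$. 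The equivalence must be read at the level of additive Karoubi envelopes, as in the remark following the proposition — which is all the paper subsequently uses when passing to traces via \cref{WSB}.
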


\begin{proof}
    This is shown in \cite[Lem.~5.13]{BSW-HKM} and \cite[Th.~9.5]{BSW-qheis}.
\end{proof}

\begin{rem}
    As shown in \cite[Th.~9.5]{BSW-qheis}, the equivalence of \cref{cyclotomic} induces an equivalence of $\kk$-linear categories
    \[
        \Kar(\cH(f)) \to \bigoplus_{n \ge 0} \rH_n^f\pmd,
    \]
    where $\Kar(\cH(f))$ denotes the additive Karoubi envelope of $\cH(f)$ and $\rH_n^f\pmd$ denotes the category of finitely-generated projective left $\rH_n^f$-modules.  Under this isomorphism, the natural action of $\Heis_k$ on $\cH(f)$ corresponds to an action of $\Heis_k$ on $\bigoplus_{n \ge 0} \rH_n^f\pmd$, with the objects $\uparrow$ and $\downarrow$ acting by induction and restriction, respectively.  We refer the reader to \cite[\S 6]{BSW-qheis} for details.
\end{rem}

For an associative $\kk$-algebra $A$, its \emph{cocenter} is
\[
    C(A) := A/\Span_\kk\{ab - ba : ab \in A\}.
\]
Note that this is the same as the trace of $A$ considered as a monoidal category with one object.  By \cref{cyclotomic,mainthm}, we have an action of $\EH_{-l} \cong \Tr(\Heis_{-l})$ on
\begin{equation}
    V_f := \Tr(\cH(f)) \cong \bigoplus_{n \ge 0} C(\rH_n^f).
\end{equation}
Denote this action by $\cdot$, and let $v_f$ denote unit element in $C(\rH_0^f) \cong \kk$.

\begin{prop} \label{hoops}
    The $\EH_{-l}$-module $V_f$ is cyclic, generated by $v_f$.  Furthermore, we have
    \begin{align} \label{hoop1}
        w_{r,n} \cdot v_f
        &= 0,& r \in \Z,\ n < 0,
        \\ \label{hoop2}
        w_{r,1} \cdot v_f
        &= - \sum_{i=1}^l f_i w_{r-i,1} \cdot v_f, & r \in \Z,
        \\ \label{hoop3}
        \sum_{r \ge 1} u^{1-r} \{r\} w_{r,0} \cdot v_f
        &= \left( u^2 f'(u)f(u)^{-1} - lu \right) v_f,
        \\ \label{hoop4}
        \sum_{r \ge 1} u^{r-1} \{r\} w_{-r,0} \cdot v_f
        &= \left( f'(u)f(u)^{-1} \right) v_f,
    \end{align}
    where \cref{hoop3} and \cref{hoop4} are equalities of Laurent series in $\kk \Laurent{u^{-1}}$ and $\kk \Laurent{u}$, respectively.
\end{prop}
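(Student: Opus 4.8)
The plan is to read off everything from two structural inputs already in place: the identification $V_f \cong \bigoplus_{n\ge 0} C(\rH_n^f)$ of \cref{cyclotomic}, under which $v_f$ is the class of $1_\one \in \End_{\cH(f)}(\one) \cong \rH_0^f = \kk$, and the description of the action coming from the module structure, namely $[g]\cdot[h] = [g\otimes h]$. Combined with \cref{mainthm}, this gives the dictionary $w_{r,1}\cdot v_f = [\multupdot{r}]$, $w_{r,-1}\cdot v_f = [\multdowndot{r}]$, and $w_{\pm r,0}\cdot v_f = [\symbub{\pm r}]$. The organizing principle is the $\Z$-grading on both $\EH_{-l}\cong\Tr(\Heis_{-l})$ and $V_f=\Tr(\cH(f))$ given by the second coordinate of $\bZ$, equivalently by (number of upward strands) minus (number of downward strands). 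Every generating morphism of $\Heis_{-l}$ preserves this net count, so the grading is well defined on both traces, the generator $w_{r,n}$ has degree $n$, and the module action is graded. Under \cref{cyclotomic} the summand $C(\rH_n^f)$ is exactly the degree-$n$ part, so $V_f$ is concentrated in degrees $\ge 0$ and $v_f$ spans its degree-zero part.

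Granting this, \cref{hoop1} is immediate: $w_{r,n}\cdot v_f$ lies in degree $n<0$, which is the zero subspace of $V_f$. For cyclicity I would argue that $\cH(f)$ is generated as a module category over $\Heis_{-l}$ by $\one$ (as $\uparrow^{\otimes n}=\uparrow^{\otimes n}\otimes\one$), so by \cref{WSB} every class in $V_f$ reduces to a class of some $\psi\in\End_{\cH(f)}(\uparrow^{\otimes n})\cong\rH_n^f$; a straightening argument in the $n$-grading, using that the raising operators $w_{r,1}$ act by induction-with-dots and that crossings are supplied by the higher generators $w_{r,m}$ ($m\ge 2$) through $\sigma_m$, then places each such class in $\EH_{-l}\cdot v_f$. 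Since $\EH_{-l}$ is generated by $w_{r,\pm 1}$ (\cref{slime}), this yields $V_f=\EH_{-l}\cdot v_f$.

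Equation \cref{hoop2} is the easy one: $w_{r,1}\cdot v_f = [\multupdot{r}] = [x_1^r]$ in $C(\rH_1^f)$, where $\rH_1^f = \kk[x_1^{\pm 1}]/(f(x_1))$. Multiplying the cyclotomic relation by $x_1^{r-l}$ gives $x_1^r = -\sum_{i=1}^l f_i x_1^{r-i}$ (using $f_0=1$ and the invertibility of $x_1$), valid for all $r\in\Z$; passing to cocenter classes gives \cref{hoop2}.

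The heart of the statement, and the main obstacle, is \cref{hoop3} and \cref{hoop4}. Here $w_{\pm r,0}\cdot v_f = [\symbub{\pm r}]$, and since $\End_{\cH(f)}(\one)\cong\kk$ each bubble acts as a scalar on $v_f$. Writing $\symbub{\pm r}=\mp\{r\}^{-1}\beta(p_r^{\pm})$ (\cref{fort}) and recalling $H_\pm(u),P_\pm(u)$ the generating functions of $h_r^\pm,p_r^\pm$, the logarithmic-derivative identities $P_+(u)=-u^2H_+'(u)/H_+(u)$ (\cref{blob}) and $P_-(u)=H_-'(u)/H_-(u)$ reduce both formulas to one computation: the evaluation of the bubble generating functions in the cyclotomic quotient, namely that the image of $\beta(H_+(u))$ in $\End_{\cH(f)}(\one)\cong\kk$ equals $u^{-l}f(u)$ and that of $\beta(H_-(u))$ is proportional to $f(u)^{-1}$. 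Indeed, the $u^{-l}$ normalization produces exactly the $-lu$ term of \cref{hoop3}, while the multiplicative constant is irrelevant for \cref{hoop4}. To establish these cyclotomic bubble values I would run, inside $\cH(f)$, the same kind of diagrammatic calculation used around \cref{tree} in the proof of \cref{mainthm}: resolve a dotted strand through a crossing and a curl via \cref{leftcurl}, \cref{rightcurl}, collapse the resulting sums using the infinite grassmannian relation \cref{infgrass} and the evaluations \cref{weeds+}, \cref{weeds-}, and finally impose $f(\updot)=0$ to pin the remaining genuine bubbles to the coefficients of $f$. The one subtlety to track carefully is the two opposite expansion conventions (Laurent series in $u^{-1}$ for the $(+)$-bubbles versus $u$ for the $(-)$-bubbles), which is what separates \cref{hoop3} from \cref{hoop4}; once the two bubble evaluations are in hand, differentiating the logarithms and equating coefficients gives both identities.
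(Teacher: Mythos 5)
Your overall route is the paper's: cyclicity because $V_f$ is a quotient of the regular module, \cref{hoop1,hoop2} from the cyclotomic relation together with the vanishing of $\downarrow$ in $\cH(f)$, and \cref{hoop3,hoop4} from the values of the bubble generating functions in the cyclotomic quotient combined with the logarithmic-derivative identities for $H_\pm$ and $P_\pm$. Two presentational differences: the paper obtains the bubble evaluations by citing \cite[Lem.~9.2]{BSW-qheis} rather than re-running the curl computation you sketch, and your cyclicity argument via \cref{WSB} and a straightening step is an unnecessary detour --- the trace of the quotient functor $\Heis_{-l} \to \cH(f)$ is already a surjective map of $\Tr(\Heis_{-l})$-modules sending $[1_\one]$ to $v_f$, so every class $[\psi]$ is $[\tilde\psi]\cdot v_f$ for any lift $\tilde\psi$.

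There is, however, one concrete error in your treatment of \cref{hoop4}. In $\cH(f)$ one has $\rightminusgen = t^{-2}f(u)$ and $\leftminusgen = t^{2}f(u)^{-1}$ as power series in $u$, and it is $\beta(H_-(u))$ that equals $\rightminusgen$, while $\beta(E_-(-u))$ equals $\leftminusgen$. So $\beta(H_-(u))$ is proportional to $f(u)$ (a polynomial with constant term $1$), not to $f(u)^{-1}$ as you assert. Since $\sum_{r\ge 1} u^{r-1}\{r\} w_{-r,0}\cdot v_f = +\beta(P_-(u))\cdot v_f$ by \cref{clouds,fort} (the two minus signs in $\symbub{-r} = \{r\}^{-1}\beta(p_r^-)$ cancel), feeding your claimed evaluation into $P_-(u) = H_-'(u)/H_-(u)$ yields $-f'(u)f(u)^{-1}\,v_f$, i.e.\ the wrong sign in \cref{hoop4}; the multiplicative constant is indeed irrelevant, but replacing $f$ by $f^{-1}$ is not. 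The fix is either to use the correct evaluation $\beta(H_-(u)) = t^{-2}f(u)$, or to keep $f(u)^{-1}$ but attach it to $E_-(-u)$ and use $P_-(u) = -\tfrac{d}{du}\ln E_-(-u)$. Your computation for \cref{hoop3}, including the $-lu$ term coming from the $u^{-l}$ normalization of $\beta(H_+(u)) = u^{-l}f(u)$, is correct.
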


\begin{proof}
    The fact that $V_f$ is cyclic, generated by $v_f$, follows from the fact that this module is a quotient of $\Tr(\Heis_{-l})$.  The equalities \cref{hoop1,hoop2} also follow immediately from the definition of $\cI(f)$.

    It is shown in \cite[Lem.~9.2]{BSW-qheis} that the ideal $\cI(f)$ contains the morphism $\downstrand$ and the coefficients of the series
    \begin{equation} \label{mtb}
        \leftplusgen - f(u)^{-1},\quad
        \leftminusgen - t^2 f(u)^{-1},\quad
        \rightplusgen - f(u),\quad
        \rightminusgen - t^{-2} f(u),
    \end{equation}
    where the first and second occurrences of $f(u)^{-1}$ are interpreted as Laurent series in $u^{-1}$ and $u$, respectively; cf.\ \cref{bg1,bg3}.  Thus, recalling the series defined in \cref{mars}, we have
    \begin{multline*}
        \sum_{r \ge 1} u^{1-r} \{r\} w_{r,0} \cdot v_f
        \overset{\cref{clouds}}{\underset{\cref{fort}}{=}}
        - \beta \left( P_+(u) \right) \cdot v_f
        \overset{\cref{blob}}{=}\ u^2 \beta(H_+'(u)/H_+(u)) \cdot v_f
        \\
        \overset{\cref{road}}{\underset{\cref{mtb}}{=}}
        \left( u^2 f'(u)f(u)^{-1} - lu \right) \cdot v_f.
    \end{multline*}
    This proves \cref{hoop3}; the proof of \cref{hoop4} is similar.
    \details{
        Using the notation in the details environment in the proof of \cref{goatee}, we have
        \[
            \sum_{r \ge 1} u^{r-1} \{r\} w_{-r,0} \cdot v_f
            \overset{\cref{clouds}}{\underset{\cref{fort}}{=}}
            \beta \left( P_-(u) \right) \cdot v_f
            =\ \beta(H_-'(u)/H_-(u)) \cdot v_f
            \overset{\cref{mtb}}{=} \left( f'(u)f(u)^{-1} \right) v_f.
        \]
    }
\end{proof}

It follows from \cref{hoops} that the cyclic vector $v_f$ generates a one-dimensional subspace under the action of the commutative subalgebra of $\EH_{-l}$ generated by the $w_{r,0}$, $r \in \Z \setminus \{0\}$, and that $v_f$ is annihilated by the elements $w_{r,n}$, $r \in \Z$, $n <0$.  In this way, $V_f$ is somewhat like a lowest weight module.

As for the action on the center described in \cref{sec:centeract}, the action of $\EH_{-l}$ on $V_f$ depends on $t$ (since $f$ can involve $t$) even though $\EH_{-l}$ does not.  \Cref{hoops} is not a complete algebraic description of the action of $\EH_{-l}$ on $V_f$ since it only describes the action of certain elements on the cyclic vector $v_f$.  To give a complete algebraic description of the action, one would need to give an explicit description of the images of the elements $x_i^r \in \AH_n$ in the cyclotomic quotients $\rH_n^f$.

\begin{rem} \label{glow}
    When $l=1$, we have $\rH_n^f \cong \rH_n$, with the $x_i$ being sent to the Jucys--Murphy elements.  In this case, explicit formulas are known, and the action on $V_f \cong \Sym$ (for $t=-z^{-1}$ and $f(u)=u+z^{-2}$) was computed in \cite[\S7]{CLLSS18} for ``half'' of $\EH_{-1}$; see \cref{chess}.  The action computed in \cite[\S7]{CLLSS18} is a twist of the polynomial representation defined in \cite[\S1]{SV13}, where it is also realized in terms of the $K$-theory of the Hilbert scheme of $\mathbb{A}^2$.  It is natural to expect that, for higher level $l$, the modules $V_f$ are related to the $K$-theory of the moduli space of framed torsion-free sheaves on $\mathbb{P}^2$, which can be viewed as higher rank analogues of the Hilbert scheme; see \cite[\S8]{SV13}.
\end{rem}

For the remainder of this subsection, we assume that $\kk$ is an algebraically closed field of characteristic zero.  Let $I$ be the union of the orbits of the roots of $f(u)$ under the maps $i \mapsto q^{\pm 2} i$, $i \in \kk$.  It follows from our assumption \cref{trevor} that the map $i \mapsto q^2 i$ defines oriented edges making the set $I$ into a quiver with connected components of type $A_\infty$.  Let $\fg$ denote the Kac--Moody Lie algebra associated to this quiver, and let $\cU(\fg)$ be the corresponding Kac--Moody 2-category, as defined in \cite{KL10,Rou08}.  (See also \cite{Bru16}, which unified the two approaches.)

For $i \in I$, let $\mu_i$ be the multiplicity of $i$ as a root of $f(u)$.  Then let $\mu := \sum_{i \in I} \mu_i \Lambda_i$ be the corresponding dominant integral weight of $\fg$, where $\Lambda_i$ denotes the fundamental weight corresponding to $i \in I$.  By \cite[Th.~B]{BSW-HKM}, $\cH(f)$ is isomorphic (as a locally unital algebra) to the cyclotomic quotient $\cH(\mu)$ of $\cU(\fg)$ corresponding to $\mu$.  As for the case of the quantum Heisenberg category discussed above, this implies that we have an action of $\Tr(\cU(\fg))$ on $\Tr(\cH(\mu)) \cong \Tr(\cH(f)) \cong V_f$.

To any symmetrizable Kac--Moody algebra $\fg$, one can associate a Lie algebra $C\fg$ (denoted $L\fg$ in \cite[Def.~3.24]{SVV17} and, when $\fg$ is simply laced, $\mathcal{C}\fg$ in \cite[\S3.2]{BHLW17}).  The Lie algebra $C\fg$ is isomorphic to the current algebra $\fg \otimes \kk[t]$ when $\fg$ is of finite type $ADE$, but is larger in general; see \cite[Rem.~3.26]{SVV17}.  If $\fg$ is of \emph{finite} type $A$, then it follows from \cite[Th.~1]{SVV17} and \cite[Th.~A, B]{BHLW17}, that $\Tr(\cU(\fg))$ is isomorphic to an idempotented form of the universal enveloping algebra $U(C\fg)$ and that the induced action on cyclotomic quotients realizes its Weyl modules.  While \cite{SVV17,BHLW17} do not treat type $A_\infty$, we expect that one should be able to take an appropriate limit to handle this case.  In this way one would identify $V_f$ as both a Weyl module for the current algebra and a module for the elliptic Hall algebra.

\begin{rem}
    In general, one associates a generalized cyclotomic quotient $\cH(f|g)$ to a pair $(f,g)$ of monic polynomials.  If $f$ and $g$ are of degrees $l$ and $m$, respectively, then $\cH(f|g)$ is a module category over $\Heis_{m-l}$.  We have restricted our attention here to the case where $g=1$, and hence to central charge $k = -l <0$.  As we saw in \cref{hoops}, this gives rise to a negative-central-charge module $V_f$ generated by an eigenvector $v_f$ for $\EH^-$.  If we instead considered the case where $f=1$, we would obtain positive-central-charge modules generated by an eigenvector for $\EH^+$.  Alternatively these positive-central-charge modules can be obtained from the $V_f$ by twisting by the automorphism $\Omega_k$ from \cref{earth}.  The general case, of arbitrary $f$ and $g$, would yield a tensor product of these positive- and negative-central-charge modules; see \cref{tensor}.  We refer the reader to \cite[\S9]{BSW-qheis} for further details on the more general $\cH(f|g)$.
\end{rem}

\appendix
\section{Universal central extension\label{sec:uce}}

In this section we prove \cref{uce} using an argument inspired by the one in \cite{LT05}.  We make the same assumptions on the ground field $\kk$ as given at the beginning of \cref{sec:EHA}, and we assume that $q$ is not integral over the canonical image of $\Z$ in $\kk$.

Let
\begin{equation} \label{archer}
    0 \to \mathfrak{Z} \to \widehat{\fEH} \xrightarrow{\hat{\pi}} \fEH \to 0
\end{equation}
be an arbitrary central extension of $\fEH$.  We must show that there exists a unique homomorphism of Lie algebras $\zeta \colon \fEHc \to \widehat{\fEH}$ such that $\hat{\pi} \zeta = \tilde{\pi}$, where
\[
    \tilde{\pi} \colon \fEHc \to \fEH,\qquad
    w_\bx \mapsto w_\bx,\quad z \mapsto 0,\quad \bx \in \bZ^*,\ z \in \bZ_\kk.
\]

Fix a linear map (not necessarily a homomorphism of Lie algebras) $\zeta_1 \colon \fEH \to \widehat{\fEH}$ such that $\hat{\pi} \zeta_1 = \id_{\fEH}$.  Then define
\begin{equation} \label{magpie}
    \vartheta \colon \fEH \times \fEH \to \mathfrak{Z},\quad
    \vartheta(x,y) = [\zeta_1(x), \zeta_1(y)] - \zeta_1([x,y]),\quad x,y \in \fEH.
\end{equation}
\details{
    Note that the image of $\vartheta$ in contained in $\mathfrak{Z}$ since
    \[
        \hat{\pi}(\varphi(x,y))
        = \hat{\pi} \left( [\zeta_1(x), \zeta_1(y)] \right) - \hat{\pi}\zeta_1([x,y])
        = [x,y] - [x,y]
        = 0,\quad
        x,y \in \fEH.
    \]
}
It follows immediately from the fact that the Lie bracket on $\fEH$ is alternating and satisfies the Jacobi identity that
\begin{gather} \label{alternate}
    \vartheta(x,y) = - \vartheta(y,x),\qquad
    \vartheta(x,x) = 0,
    \\ \label{bicycle}
    \vartheta([x,y],z) + \vartheta([y,z],x) + \vartheta([z,x],y) = 0,
\end{gather}
for all $x,y,z \in \fEH$.  (The two equations in \cref{alternate} are equivalent when $\operatorname{char}(\kk) \ne 2$.)

\begin{lem} \label{milk}
    The function
    \[
        \bZ \to \mathfrak{Z},\qquad
        \bx \mapsto
        \begin{cases}
            \vartheta(w_\bx,w_{-\bx}) & \text{if } \bx \ne 0,\\
            0 & \text{if } \bx = 0,
        \end{cases}
    \]
    is a homomorphism of additive groups.
\end{lem}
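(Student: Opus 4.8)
The plan is to prove directly that the map $f\colon \bZ \to \mathfrak{Z}$ defined by $f(\bx) = \vartheta(w_\bx, w_{-\bx})$ for $\bx \ne 0$ and $f(0) = 0$ is additive. Two preliminary observations set up the argument. First, $f(0)=0$ by definition. Second, the skew-symmetry in \cref{alternate} gives $f(-\bx) = \vartheta(w_{-\bx}, w_\bx) = -f(\bx)$ for all $\bx$. The engine of the proof is the cocycle identity \cref{bicycle}, and the normalization \cref{trevor} is what lets us cancel the factors $\qint{d}$.

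The main case is when $\bx, \by$ are linearly independent. I would apply \cref{bicycle} to the triple $x = w_\bx$, $y = w_\by$, $z = w_{-\bx-\by}$ (note $-\bx-\by \ne 0$ here). Using $[w_\bx, w_\by] = \qint{d} w_{\bx+\by}$ with $d = \det\begin{pmatrix}\bx & \by\end{pmatrix}$, together with the elementary determinant computations $\det\begin{pmatrix}\by & -\bx-\by\end{pmatrix} = \det\begin{pmatrix}-\bx-\by & \bx\end{pmatrix} = d$, each of the three terms of \cref{bicycle} becomes $\pm\qint{d}$ times a value of $f$ (invoking $f(-\bx)=-f(\bx)$). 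Collecting them yields $\qint{d}\bigl(f(\bx+\by) - f(\bx) - f(\by)\bigr) = 0$, and since $d \ne 0$ makes $\qint{d}$ invertible by \cref{trevor}, we conclude $f(\bx+\by) = f(\bx) + f(\by)$.

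It remains to treat $\det\begin{pmatrix}\bx & \by\end{pmatrix} = 0$. If $\bx = 0$ or $\by = 0$, additivity is immediate from $f(0)=0$; if $\by = -\bx$, it follows from $f(0)=0$ and $f(-\bx)=-f(\bx)$. Otherwise $\bx,\by$ are nonzero, parallel, and $\bx+\by\ne 0$, and here \cref{bicycle} applied directly only gives the vacuous $\qint{0}=0$. The idea is to introduce an auxiliary vector and reduce to the independent case: writing $\bx = a\bg$, $\by = b\bg$ for a primitive $\bg$ and nonzero integers $a,b$ with $a+b\ne 0$, I would choose $\bu$ with $m := \det\begin{pmatrix}\bg & \bu\end{pmatrix} \ne 0$. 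A short determinant check gives $\det\begin{pmatrix}\bx+\bu & \by-\bu\end{pmatrix} = -(a+b)m \ne 0$, $\det\begin{pmatrix}\bx & \bu\end{pmatrix} = am \ne 0$, and $\det\begin{pmatrix}\by & -\bu\end{pmatrix} = -bm \ne 0$, so the independent case applies to each splitting in
\[
f(\bx+\by) = f\bigl((\bx+\bu)+(\by-\bu)\bigr) = f(\bx+\bu) + f(\by-\bu) = \bigl(f(\bx)+f(\bu)\bigr) + \bigl(f(\by)-f(\bu)\bigr) = f(\bx)+f(\by),
\]
where the penultimate step again uses $f(-\bu) = -f(\bu)$.

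The main obstacle is exactly this degenerate (parallel) case: the cocycle identity detects it only through the factor $\qint{d}$, which vanishes when $d=0$, so it produces no information on its own. The auxiliary-vector argument circumvents this by splitting $\bx+\by$ into two linearly independent summands, at the cost of first establishing the independent case and the sign relation $f(-\bx)=-f(\bx)$. Beyond the routine bookkeeping in the determinant identities, I anticipate no further difficulty.
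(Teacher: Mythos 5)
Your proof is correct and follows essentially the same route as the paper's: the non-colinear case via the cocycle identity \cref{bicycle} applied to $w_\bx, w_\by, w_{-\bx-\by}$ with cancellation of the common factor $\qint{d}$, and the colinear case by splitting $\bx+\by$ using an auxiliary non-colinear vector. Your explicit treatment of the subcase $\by=-\bx$ is a minor extra care the paper leaves implicit, but the argument is the same.
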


\begin{proof}
    Suppose $\bx,\by \in \bZ^*$ satisfy $\det \begin{pmatrix} \bx & \by \end{pmatrix} \ne 0$, that is, $\bx$ and $\by$ are not colinear.  Taking $x=w_\bx$, $y=w_\by$, $z=w_{-\bx-\by}$ in \cref{bicycle}, and letting $\bz = -\bx-\by$, we have
    \begin{equation} \label{dets}
        \{\det \begin{pmatrix} \bx & \by \end{pmatrix}\} \vartheta(w_{\bx+\by},w_{-\bx-\by})
        + \{\det \begin{pmatrix} \by & \bz \end{pmatrix}\} \vartheta(w_{-\bx}, w_\bx)
        + \{\det \begin{pmatrix} \bz & \bx \end{pmatrix}\} \vartheta(w_{-\by}, w_\by).
    \end{equation}
    The determinants appearing in \cref{dets} are all equal, and so, also using \cref{alternate}, we have
    \[
        \vartheta(w_{\bx+\by}, w_{-\bx-\by}) = \vartheta(w_\bx,w_{-\bx}) + \vartheta(w_\by,w_{-\by}).
    \]
    On the other hand, if $\bx$ and $\by$ are colinear, choose $\bz$ that is not colinear with $\bx$ (equivalently, not colinear with $\by$).  Then, using the non-colinear case proved above, we have
    \begin{align*}
        \vartheta(w_{\bx+\by},w_{-\bx-\by})
        &= \vartheta(w_{\bx+\bz}, w_{-\bx-\bz}) + \vartheta(w_{\by-\bz},w_{-\by+\bz}) \\
        &= \vartheta(w_\bx,w_{-\bx}) + \vartheta(w_\bz,w_{-\bz}) + \vartheta(w_\by,w_{-\by}) + \vartheta(w_{-\bz}, w_\bz) \\
        &\overset{\mathclap{\cref{alternate}}}{=}\ \vartheta(w_\bx,w_{-\bx}) + \vartheta(w_\by,w_{-\by}),
    \end{align*}
    completing the proof.
\end{proof}

\begin{lem} \label{vanish}
    We have
    \[
        \vartheta(w_{r,0}, w_{s,0})
        = 0
        = \vartheta(w_{0,m}, w_{0,n}),\quad
        r,s,m,n \in \Z \setminus \{0\},\ r \ne -s,\ m \ne -n.
    \]
\end{lem}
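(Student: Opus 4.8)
The plan is to treat each $\vartheta(w_{r,0},w_{s,0})$ with $r+s\neq 0$ as a genuine invariant of the extension class. Since $(r,0)$ and $(s,0)$ are colinear, \cref{sloth} gives $[w_{r,0},w_{s,0}]=0$ in $\fEH$, so replacing the section $\zeta_1$ (which changes $\vartheta$ by a coboundary $(x,y)\mapsto \eta([x,y])$ for a linear map $\eta\colon \fEH\to\mathfrak{Z}$) leaves this value unchanged. Hence it is legitimate to compute it through any convenient chain of instances of the cocycle identity \cref{bicycle}, and the goal is to force the invariant to be $0$. The same remark applies on the vertical axis.

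First I would record a \emph{transfer relation}. Applying \cref{bicycle} to the triple $x=w_{r,1}$, $y=w_{s,0}$, $z=w_{0,-1}$, simplifying the three brackets via \cref{sloth}, and using \cref{alternate}, one gets
\[
    \qint{r}\,\vartheta(w_{r,0},w_{s,0})
    = \qint{s}\bigl(\vartheta(w_{r+s,1},w_{0,-1}) - \vartheta(w_{r,1},w_{s,-1})\bigr).
\]
This reduces everything to the auxiliary quantities $g(u):=\vartheta(w_{u,1},w_{n-u,-1})$ attached to the line $u+v=n:=r+s\neq 0$, since the right-hand side is exactly $\qint{s}\bigl(g(n)-g(r)\bigr)$.

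Next I would establish two structural properties of $g$, each by one further use of \cref{bicycle}. Feeding in the \emph{dual} pair $x=w_{a,1}$, $y=w_{-a,-1}$ (so $[x,y]=0$) together with $z=w_{n,0}$ yields $g(u+n)=g(u)$, i.e.\ $g$ is $n$-periodic; feeding in the unit horizontal step $y=w_{1,0}$ with $x=w_{u,1}$, $z=w_{n-u-1,-1}$ yields that the first difference $g(u)-g(u+1)$ equals a constant $c=-\qint{n-1}\qint{1}^{-1}\vartheta(w_{n-1,0},w_{1,0})$, independent of $u$. When $|n|=1$ the factor $\qint{n-1}=\qint{0}=0$ makes $c=0$, whence $g(n)-g(r)=0$ and the transfer relation gives $\vartheta(w_{r,0},w_{s,0})=0$ immediately. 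For $|n|\ge 2$, telescoping the first difference around one full period and invoking periodicity gives $n\,c=0$, and re-feeding $g(1)-g(n)=(n-1)c$ into a third instance of \cref{bicycle} (the triple $x=w_{1,1}$, $y=w_{0,-1}$, $z=w_{n-1,0}$) produces a scalar annihilation relation $\alpha_n\,\vartheta(w_{n-1,0},w_{1,0})=0$ with $\alpha_n=\qint{1}^2+(n-1)\qint{n-1}^2\in\kk$. The transfer relation also shows that every $\vartheta(w_{r,0},w_{s,0})$ with $r+s=n$ is a $\kk$-multiple of the single invariant $\vartheta(w_{n-1,0},w_{1,0})$, so it suffices to annihilate the latter.

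The hard part is the passage from such scalar relations to actual vanishing over an arbitrary integral domain $\kk$, where $\mathfrak{Z}$ may a priori carry torsion as a $\kk$-module: a relation $\alpha_n\,B=0$ with $\alpha_n\neq 0$ does not by itself give $B=0$. This is exactly where the hypothesis that $q$ is not integral over the image of $\Z$ is indispensable; it forces the $q$-number scalars produced by these loops (the $\alpha_n$, together with the integer multiples $n$ coming from periodicity and their analogues obtained by varying the auxiliary parameters) to be genuinely nonzero and to generate enough of $\kk$ that a suitable combination of the relations reads $1\cdot\vartheta(w_{n-1,0},w_{1,0})=0$. Granting this delicate step, all $\vartheta(w_{r,0},w_{s,0})$ with $r\neq -s$ vanish. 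Finally, the vertical statement $\vartheta(w_{0,m},w_{0,n})=0$ follows by the identical argument with the two lattice directions interchanged—using a dual \emph{horizontal} pair for periodicity and unit \emph{vertical} steps for the constant difference—so I would invoke this symmetry rather than repeat the computation.
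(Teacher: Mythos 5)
Your cocycle manipulations check out as far as they go: the transfer relation from the triple $(w_{r,1},w_{s,0},w_{0,-1})$, the $n$-periodicity of $g(u)=\vartheta(w_{u,1},w_{n-u,-1})$ from $(w_{a,1},w_{-a,-1},w_{n,0})$, and the constancy of the first difference $g(u)-g(u+1)$ are all correct consequences of \cref{bicycle,alternate}, and this global analysis of a whole line $r+s=n$ is a genuinely different route from the paper's. The paper instead fixes $(r,s)$ and uses three instances of \cref{bicycle} (with the triples $(w_{0,1},w_{0,-1},w_{r+s,0})$, $(w_{s,-1},w_{0,1},w_{r,0})$, $(w_{r,1},w_{0,-1},w_{s,0})$) to obtain directly
\[
    \left( \qint{s}^2 - \qint{r}^2 \right) \vartheta(w_{r,0},w_{s,0}) = 0,
\]
and then concludes because $\qint{s}^2-\qint{r}^2$ is, up to sign, a monic Laurent polynomial in $q$ with integer coefficients, hence nonzero when $q$ is not integral over the image of $\Z$.

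The genuine gap is the step you yourself flag and then ``grant''. You end with $nB=0$ and $\alpha_n B=0$ for $B=\vartheta(w_{n-1,0},w_{1,0})$ and assert that non-integrality of $q$ forces these scalars to be nonzero and to generate enough of $\kk$ to read off $1\cdot B=0$. Neither half is justified: $\alpha_n=\qint{1}^2+(n-1)\qint{n-1}^2$ has leading coefficient $n-1$ as a Laurent polynomial in $q$, so its nonvanishing does \emph{not} follow from non-integrality (a non-monic integer polynomial can vanish at a non-integral element), and even nonzero scalars need not generate the unit ideal of $\kk$, while $\mathfrak{Z}$ is an arbitrary $\kk$-module. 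What your relations actually yield is this: $nB=0$ gives $(n-1)B=-B$, so $\alpha_nB=0$ becomes $(\qint{1}^2-\qint{n-1}^2)B=0$ --- precisely the paper's relation for the pair $(r,s)=(n-1,1)$, whose scalar \emph{is} monic and hence nonzero for $n\ne 0,2$; the paper simply stops there. So you should replace the ``suitable combination'' claim by this explicit combination and invoke a single nonzero (monic) scalar, exactly as the paper does. Two smaller repairs: for $n=2$ the combined scalar vanishes, but then $B=\vartheta(w_{1,0},w_{1,0})=0$ by \cref{alternate}; and for $n=-1$ your reason $\qint{n-1}=\qint{0}$ is wrong (it only applies to $n=1$) --- you need the periodicity argument, which does give $c=0$ there. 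As written, the proposal does not establish the lemma.
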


\begin{proof}
    We prove the first equality, since the proof of the second is similar.  Since the case $r=s$ follows immediately from \cref{alternate}, suppose $r \ne s$.  Taking $x=w_{0,1}$, $y=w_{0,-1}$, $z \in w_{r+s,0}$ in \cref{bicycle}, and then dividing by $\{r+s\}$, we have
    \begin{equation} \label{can1}
        \vartheta(w_{r+s,-1}, w_{0,1}) + \vartheta(w_{r+s,1}, w_{0,-1}) = 0.
    \end{equation}
    Next, take $x=w_{s,-1}$, $y=w_{0,1}$, $z \in w_{r,0}$ in \cref{bicycle} to get
    \begin{equation} \label{can2}
        \{s\} \vartheta(w_{s,0}, w_{r,0}) - \{r\} \vartheta(w_{r,1}, w_{s,-1}) -\{r\} \vartheta(w_{r+s,-1}, w_{0,1}) = 0.
    \end{equation}
    Then take $x=w_{r,1}$, $y=w_{0,-1}$, $z=w_{s,0}$ in \cref{bicycle} to get
    \begin{equation} \label{can3}
        -\{r\} \vartheta(w_{r,0}, w_{s,0}) + \{s\} \vartheta(w_{s,-1}, w_{r,1}) + \{s\} \vartheta(w_{r+s,1}, w_{0,-1}) = 0.
    \end{equation}
    Subtracting $\{s\}$ times \cref{can2} from $\{r\}$ times \cref{can3}, then using \cref{can1,alternate}, we have
    \[
        (\{s\}^2 - \{r\}^2) \vartheta(w_{r,0}, w_{s,0}) = 0.
    \]
    Since $r \ne \pm s$ and $q$ is not integral over the canonical image of $\Z$ in $\kk$, we have $\{s\}^2 \ne \{r\}^2$, and so the result follows.
\end{proof}

Define
\begin{equation} \label{dino}
    \zeta_2 \colon \fEH \to \mathfrak{Z},\quad
    \zeta_2(w_{r,n}) =
    \begin{cases}
        \frac{1}{\{rn\}} \vartheta(w_{r,0}, w_{0,n}) & \text{if } r,n \ne 0, \\
        \frac{1}{\{n\}} \vartheta(w_{1,n}, w_{-1,0}) & \text{if } r=0,\ n \ne 0, \\
        \frac{1}{\{r\}} \vartheta(w_{0,-1}, w_{r,1}) & \text{if } r \ne 0, n=0.
    \end{cases}
\end{equation}

\begin{lem} \label{fairy}
    We have
    \begin{equation} \label{tooth}
        \vartheta(w_\bx, w_\by) = \zeta_2([w_\bx, w_\by]),\quad \bx,\by \in \bZ^*,\ \bx + \by \ne 0.
    \end{equation}
\end{lem}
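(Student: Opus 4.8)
The plan is to recast \cref{tooth} as the vanishing of a single modified cocycle. Define $\theta \colon \fEH \times \fEH \to \mathfrak{Z}$ by $\theta(x,y) := \vartheta(x,y) - \zeta_2([x,y])$. Since $\zeta_2$ is linear and the Lie bracket is alternating and satisfies the Jacobi identity, $\theta$ is again alternating and satisfies the cocycle identity \cref{bicycle}; indeed $\zeta_2\big([[x,y],z]+[[y,z],x]+[[z,x],y]\big) = \zeta_2(0) = 0$. Because $[w_\bx,w_\by] = \qint{d} w_{\bx+\by}$ with $d = \det\begin{pmatrix}\bx&\by\end{pmatrix}$ whenever $\bx+\by\ne 0$ (see \cref{sloth}), proving \cref{tooth} is the same as proving $\theta(w_\bx,w_\by)=0$ for all $\bx,\by\in\bZ^*$ with $\bx+\by\ne 0$.

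First I would record the base cases, all of which are immediate. Unwinding the three branches of \cref{dino} and using $[w_{a,0},w_{0,b}] = \qint{ab}w_{a,b}$, $[w_{1,b},w_{-1,0}] = \qint{b}w_{0,b}$, and $[w_{0,-1},w_{a,1}] = \qint{a}w_{a,0}$ shows that $\theta$ vanishes on the three defining families $(w_{a,0},w_{0,b})$, $(w_{1,b},w_{-1,0})$, $(w_{0,-1},w_{a,1})$. Moreover $[w_{r,0},w_{s,0}] = [w_{0,m},w_{0,n}] = 0$ (the relevant determinant is $0$, and $\qint{0}=0$), so $\theta$ agrees with $\vartheta$ on pairs of collinear axis vectors and therefore vanishes on $(w_{r,0},w_{s,0})$ and $(w_{0,m},w_{0,n})$ with nonzero sum by \cref{vanish}. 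Throughout, the quantum integers $\qint{d}$ we divide by are invertible by \cref{trevor}.

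Next I would propagate these vanishings to all off-antidiagonal pairs by feeding well-chosen triples into the cocycle identity \cref{bicycle}, always including one \emph{unit axis vector} $w_{0,\pm1}$ or $w_{\pm1,0}$ so that two of the three resulting terms are already known to vanish and the third is forced to zero. For example, taking $x=w_{0,-1}$, $y=w_{a,1}$, $z=w_{b,0}$ in \cref{bicycle} gives
\[
    \qint{a}\,\theta(w_{a,0},w_{b,0}) - \qint{b}\,\theta(w_{a+b,1},w_{0,-1}) - \qint{b}\,\theta(w_{b,-1},w_{a,1}) = 0,
\]
and when $a+b\ne 0$ the first two terms vanish by the base cases, yielding $\theta(w_{b,-1},w_{a,1}) = 0$. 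More systematically, I would first reduce a general pair $(w_{r,n},w_{s,m})$ with $r,n\ne 0$ to pairs having one axis argument by substituting $w_{r,n} = \qint{rn}^{-1}[w_{r,0},w_{0,n}]$ into $\theta$ and expanding via \cref{bicycle} (the two resulting brackets collapse to single basis vectors), and then reduce the ``height'' of the non-axis argument in a general--axis pair by bracketing against $w_{0,\pm1}$ or $w_{\pm1,0}$, inducting downward until a defining pair is reached.

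The main obstacle is purely organizational: setting up this induction so that it is genuinely non-circular and exhausts every index range. The naive reductions are dangerously self-referential---for instance, reducing $\theta(w_{a,b},w_{c,0})$ by expanding $w_{a,b}=\qint{ab}^{-1}[w_{a,0},w_{0,b}]$ leads straight back to $\theta(w_{a,b},w_{c,0})$ and produces only a tautology---so the complexity measure driving the induction (on the heights $|n|,|m|$, refined by how far the indices sit from the values $0,\pm1$ appearing in \cref{dino}) must be chosen so that each application of \cref{bicycle} strictly decreases it while landing on already-treated pairs, with the boundary values $0$ and $\pm1$ handled as separate base-case-adjacent instances. Once this bookkeeping is in place, \cref{tooth} follows.
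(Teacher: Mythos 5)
Your overall strategy is the right one and matches the paper's: the identity \cref{tooth} is propagated from the base pairs built into the definition \cref{dino} of $\zeta_2$ by repeatedly feeding well-chosen triples into the cocycle identity \cref{bicycle}. Your reformulation via $\theta := \vartheta - \zeta_2([\cdot,\cdot])$, which is again an alternating $2$-cocycle, is a clean packaging that the paper does not use explicitly (it tracks the $\zeta_2$-terms by hand), and your first reduction --- expanding $w_{r,n} = \qint{rn}^{-1}[w_{r,0},w_{0,n}]$ to reduce a fully generic pair to two pairs each having one argument on an axis --- is exactly the paper's final step, including the required quantum-integer identity $\frac{\qint{rm}}{\qint{rn}}\qint{(r+s)n} - \frac{\qint{sn}}{\qint{rn}}\qint{r(m+n)} = \qint{rm-sn}$ that you would still need to verify.

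However, there is a genuine gap: the treatment of the general--axis pairs, which you defer with ``once this bookkeeping is in place,'' is the actual content of the proof, and the mechanism you propose for it (an induction downward on the height $|n|$ of the non-axis argument, bracketing against $w_{0,\pm1}$ or $w_{\pm1,0}$) is neither carried out nor shown to be well-founded --- you yourself observe that naive choices are circular. The paper avoids any such induction: for a pair like $(w_{r,n},w_{s,0})$ with $r+s\ne 0$ it takes $x=w_{-s,n}$, $y=w_{s,0}$, $z=w_{r+s,0}$ in \cref{bicycle}, so that the bracket $[y,z]$ vanishes (both vectors lie on the horizontal axis) and only two terms survive, one of which is already the base pair $(w_{0,n},w_{r+s,0})$; the degenerate case $r+s=0$ needs a separate triple ($x=w_{-1,0}$, $y=w_{r+1,n}$, $z=w_{-r,0}$), and the cases with the axis argument on the other axis are handled symmetrically. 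Without exhibiting these (or equivalent) one-shot reductions --- including the boundary case $\bx+\by$ on an axis, where the reduction must still land on pairs with nonzero sum so that the inductive hypothesis applies --- the argument is not complete: the single worked example you give only covers pairs of levels $\pm 1$, and the assertion that a suitable complexity measure exists is precisely what needs to be demonstrated.
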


\begin{proof}
    Let $\bx = (r,n)$, $\by = (s,m)$.  If $n=m=0$ or $r=s=0$, then the result holds by \cref{vanish}.  If $s=n=0$, then we have
    \[
        \vartheta(w_{r,0}, w_{0,m})
        \overset{\cref{dino}}{=} \{rm\} \zeta_2(w_{r,m})
        \overset{\cref{sloth}}{=} \zeta_2([w_{r,0}, w_{0,m}]).
    \]
    The case $r=m=0$ then follows by using the fact that both sides of \cref{tooth} are antisymmetric in the arguments $w_\bx$ and $w_\by$.  We have now proved that \cref{tooth} holds when at least two of $r,s,m,n$ are zero.  Therefore, for the remainder of the proof, we assume that at most one of these is zero.

    Suppose that $m=0$ and $r+s \ne 0$.  Taking $x=w_{-s,n}$, $y=w_{s,0}$, $z=w_{r+s,0}$ in \cref{bicycle}, we have
    \[
        -\{sn\} \vartheta(w_{0,n}, w_{r+s,0}) + \{(r+s)n\} \vartheta(w_{r,n}, w_{s,0}) = 0.
    \]
    Thus, we have
    \[
        \vartheta(w_{r,n}, w_{s,0})
        = \frac{\{sn\}}{\{(r+s)n\}} \vartheta(w_{0,n}, w_{r+s,0})
        \overset{\cref{dino}}{=} -\{sn\} \zeta_2(w_{r+s,n})
        \overset{\cref{sloth}}{=} \zeta_2(w_{r,n}, w_{s,0}).
    \]
    On the other hand, if $m=0=r+s$, then, taking $x=w_{-1,0}$, $y=w_{r+1,n}$, $z=w_{-r,0}$ in \cref{bicycle}, we have
    \[
        -\{n\} \vartheta(w_{r,n},w_{-r,0}) +\{rn\} \vartheta(w_{1,n}, w_{-1,0}) = 0.
    \]
    Therefore
    \[
        \vartheta(w_{r,n}, w_{-r,0})
        = \frac{\{rn\}}{\{n\}} \vartheta(w_{1,n}, w_{-1,0})
        \overset{\cref{dino}}{=} \{rn\} \zeta_2(w_{0,n})
        \overset{\cref{sloth}}{=} \zeta_2([w_{r,n}, w_{-r,0}]).
    \]
    This completes the proof of \cref{tooth} when $m=0$.  The case $n=0$ then follows by using the fact that both sides of \cref{tooth} are antisymmetric in the arguments $w_\bx$ and $w_\by$.  The cases $r=0$ and $s=0$ are similar.
    \details{
        Suppose that $s=0$ and $m+n \ne 0$.  Taking $x=w_{r,-m}$, $y=w_{0,m}$, $z=w_{0,m+n}$ in \cref{bicycle}, we have
        \[
            \{rm\} \vartheta(w_{r,0}, w_{0,m+n}) - \{r(m+n)\} \vartheta(w_{r,n}, w_{0,m}) = 0.
        \]
        Thus,
        \[
            \vartheta(w_{r,n}, w_{0,m})
            = \frac{\{rm\}}{\{r(m+n)\}} \vartheta(w_{r,0}, w_{0,m+n})
            \overset{\cref{dino}}{=} \{rm\} \zeta_2(w_{r,n})
            \overset{\cref{sloth}}{=} \zeta_2([w_{r,n}, w_{0,m}]).
        \]
        On the other hand, if $s=0=m+n$, then, taking $x=w_{0,-1}$, $y=w_{r,n+1}$, $z=w_{0,-n}$ in \cref{bicycle}, we have
        \[
            \{r\} \vartheta(w_{r,n}, w_{0,-n}) -\{rn\} \vartheta(w_{r,1}, w_{0,-1}) = 0.
        \]
        Thus,
        \[
            \vartheta(w_{r,n}, w_{0,-n})
            = \frac{\{rn\}}{\{r\}} \vartheta(w_{r,1}, w_{0,-1})
            \overset{\cref{dino}}{=} - \{rn\} \zeta_2(w_{r,n})
            \overset{\cref{sloth}}{\underset{\cref{alternate}}{=}} \zeta_2([w_{r,n}, w_{0,-n}]).
        \]
        The case $r=0$ now follows by using the fact that both sides of \cref{tooth} are antisymmetric in the arguments $w_\bx$ and $w_\by$.
    }

    It remains to consider the case where $r,s,m,n$ are all nonzero.  In this case, taking $x=w_{s,m}$, $y=w_{r,0}$, $z=w_{0,n}$ in \cref{bicycle} gives
    \[
        -\{rm\} \vartheta(w_{r+s,m}, w_{0,n}) + \{rn\} \vartheta(w_{r,n}, w_{s,m}) - \{sn\} \vartheta(w_{s,m+n}, w_{r,0}) = 0.
    \]
    Thus
    \begin{align*}
        \vartheta(w_{r,n}, w_{s,m})
        &= \frac{\{rm\}}{\{rn\}} \vartheta(w_{r+s,m}, w_{0,n}) + \frac{\{sn\}}{\{rn\}} \vartheta(w_{s,m+n}, w_{r,0})
        \\
        &= \frac{\{rm\}}{\{rn\}} \zeta_2([w_{r+s,m}, w_{0,n}]) + \frac{\{sn\}}{\{rn\}} \zeta_2([w_{s,m+n}, w_{r,0}]) \\
        &= \left( \frac{\{rm\}}{\{rn\}} \{(r+s)n\} - \frac{\{sn\}}{\{rn\}} \{r(m+n)\} \right) \zeta_2(w_{r+s,m+n}) \\
        &= \{rm-sn\} \zeta_2(w_{r+s,m+n}) \\
        &\overset{\mathclap{\cref{sloth}} }{=}\ \zeta_2([w_{r,n}, w_{s,m}]),
    \end{align*}
    where, in the second equality, we used the previous cases.
\end{proof}

Define the linear map $\zeta \colon \fEHc \to \widehat{\fEH}$ by
\begin{equation} \label{horse}
    \begin{aligned}
        \zeta(x) &= \zeta_1(x) + \zeta_2(x),& x \in \fEH,\\
        \zeta((a,b)) &= a \vartheta(w_{1,0}, w_{-1,0}) + b \vartheta(w_{0,1}, w_{0,-1}),& (a,b) \in \bZ_\kk.
    \end{aligned}
\end{equation}

\begin{lem} \label{reef}
    The map $\zeta$ is a homomorphism of Lie algebras.
\end{lem}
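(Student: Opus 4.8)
The plan is to verify the homomorphism identity $\zeta([a,b]) = [\zeta(a),\zeta(b)]$ on a $\kk$-basis of $\fEHc$ and extend by bilinearity. Such a basis is given by the $w_\bx$, $\bx \in \bZ^*$, together with the two generators $(1,0)$ and $(0,1)$ of $\bZ_\kk$. First I would dispose of every pair in which at least one argument lies in $\bZ_\kk$: by \cref{phoenix} the bracket of such a pair in $\fEHc$ is zero, so the left-hand side is $\zeta(0)=0$; and since $\zeta$ sends $\bZ_\kk$ into $\mathfrak{Z}$, which is central in $\widehat{\fEH}$ by \cref{archer}, the right-hand side is a bracket involving a central element and hence also vanishes. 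This leaves only the pairs $(w_\bx, w_\by)$.

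For such a pair the key computation is on the right-hand side. Since $\zeta_2(w_\bx)\in\mathfrak{Z}$ is central in $\widehat{\fEH}$, we have $[\zeta(w_\bx),\zeta(w_\by)] = [\zeta_1(w_\bx),\zeta_1(w_\by)]$, which by the defining formula \cref{magpie} for $\vartheta$ equals $\vartheta(w_\bx,w_\by) + \zeta_1([w_\bx,w_\by])$, where the bracket is taken in $\fEH$ and thus, by \cref{sloth}, equals $\qint{d}\,\zeta_1(w_{\bx+\by})$ with $d=\det\begin{pmatrix}\bx & \by\end{pmatrix}$. I would then split according to whether $\bx+\by$ vanishes. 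If $\bx+\by \ne 0$, then by \cref{phoenix} the $\fEHc$-bracket has no central term, so $\zeta([w_\bx,w_\by]) = \qint{d}\,\zeta(w_{\bx+\by}) = \qint{d}\,\zeta_1(w_{\bx+\by}) + \qint{d}\,\zeta_2(w_{\bx+\by})$; matching this with the right-hand side reduces the claim to $\qint{d}\,\zeta_2(w_{\bx+\by}) = \vartheta(w_\bx,w_\by)$, which is precisely \cref{fairy} (using linearity of $\zeta_2$ and $\qint{d}\,w_{\bx+\by}=[w_\bx,w_\by]$ in $\fEH$). If instead $\by=-\bx$, then $d=0$, so $\qint{d}=0$ and the right-hand side collapses to $\vartheta(w_\bx,w_{-\bx})$, while the $\fEHc$-bracket is the central element $\bx$, giving $\zeta([w_\bx,w_{-\bx}])=\zeta(\bx)$. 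Writing $\bx=(r,n)$, the definition \cref{horse} gives $\zeta(\bx)=r\,\vartheta(w_{1,0},w_{-1,0})+n\,\vartheta(w_{0,1},w_{0,-1})$, and \cref{milk}---which asserts that $\bx\mapsto\vartheta(w_\bx,w_{-\bx})$ is additive---identifies this with $\vartheta(w_\bx,w_{-\bx})$, closing the case.

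The genuine content has already been extracted into \cref{milk,fairy}, so the work remaining for this lemma is organizational, and I do not expect a real obstacle. The point requiring the most care is bookkeeping: consistently distinguishing the bracket in $\fEH$ from the bracket in $\fEHc$ (they differ exactly by the central cocycle term of \cref{phoenix}), and confirming at each step that the images of $\zeta_2$ and of $\bZ_\kk$ lie in $\mathfrak{Z}$ so that they are annihilated by every bracket in $\widehat{\fEH}$. The conceptual crux is recognizing that the diagonal case $\by=-\bx$ is precisely where the central term of \cref{phoenix} is felt, and that \cref{milk} was tailored exactly to match it against the definition \cref{horse} of $\zeta$ on $\bZ_\kk$.
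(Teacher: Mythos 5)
Your proposal is correct and follows essentially the same route as the paper: the same reduction to pairs of basis elements, the same use of centrality of $\mathfrak{Z}$ to discard $\zeta_2$ from the right-hand side, and the same split into the case $\bx+\by\ne 0$ (handled by \cref{fairy}) and the case $\by=-\bx$ (handled by \cref{milk} against the definition \cref{horse}). No gaps.
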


\begin{proof}
    To avoid potential confusion with the Lie bracket on $\fEH$, we denote the Lie bracket on $\fEHc$ by $[\cdot,\cdot]'$ in this proof.  If $x,y \in \fEHc$ with $x \in \bZ_\kk$ or $y \in \bZ_\kk$, then
    \[
        \zeta([x,y]') = 0 = [\zeta(x), \zeta(y)].
    \]

    It remains to show that $\zeta([w_\bx,w_\by]) = [\zeta(w_\bx), \zeta(w_\by)]$ for $\bx,\by \in \bZ^*$.  We have
    \[
        \zeta([w_\bx,w_\by]')
        \overset{\cref{phoenix}}{=} \zeta([w_\bx,w_\by] + \delta_{\bx,-\by} \bx)
        = \zeta_1([w_\bx,w_\by]) + \zeta_2([w_\bx,w_\by]) + \delta_{\bx,-\by} \zeta(\bx).
    \]
    If $\bx + \by \ne 0$, then, by \cref{fairy}, we have
    \[
        \zeta([w_\bx,w_\by]')
        = \zeta_1([w_\bx,w_\by]) + \vartheta(w_\bx,w_\by)
        \overset{\cref{magpie}}{=} [\zeta_1(w_\bx), \zeta_1(w_\by)]
        = [\zeta(w_\bx), \zeta(w_\by)],
    \]
    where the last equality follows from the fact that the image of $\zeta_2$ is contained in $\mathfrak{Z}$, and hence in the center of $\widehat{\fEH}$.  On the other hand, if $\bx + \by = 0$, with $\bx =(r,n)$, then
    \begin{align*}
        \zeta([w_\bx, w_{-\bx}]')
        &= \zeta(\bx) \\
        &= r \vartheta(w_{1,0}, w_{-1,0}) + n \vartheta(w_{0,1}, w_{0,-1}) \\
        &= \vartheta(w_\bx, w_{-\bx}) & \text{(by \cref{milk})} \\
        &\overset{\mathclap{\cref{magpie}}}{=}\ [\zeta_1(w_\bx), \zeta_1(w_{-\bx})] \\
        &= [\zeta(w_\bx), \zeta(w_{-\bx})],
    \end{align*}
    where the last equality again follows from the fact that the image of $\zeta_2$ is contained in $\mathfrak{Z}$.
\end{proof}

We are now ready to prove \cref{uce}.

\begin{proof}[Proof of \cref{uce}]
    Suppose we have a central extension as in \cref{archer}.  By \cref{reef}, the map $\zeta$ defined by \cref{horse} is a homomorphism of Lie algebras.  For $\bx \in \bZ^*$ and $z \in \bZ_\kk$, we have
    \[
        (\hat{\pi}\zeta)(w_\bx)
        = \hat{\pi}(\zeta_1(w_\bx) + \zeta_2(w_\bx))
        = (\hat{\pi} \zeta_1)(w_\bx)
        = w_\bx
        = \tilde{\pi}(w_\bx),
        \qquad
        (\hat{\pi}\zeta)(z)
        = 0
        = \tilde{\pi}(z).
    \]
    Hence $\hat{\pi} \zeta = \tilde{\pi}$, and so $\zeta$ is a morphism of extensions, as desired.

    It remains to show uniqueness of $\zeta$.  Suppose we have another homomorphism of Lie algebra $\zeta' \colon \fEHc \to \widehat{\fEH}$ such that $\hat{\pi} \zeta' = \tilde{\pi}$.  Let $z \in \fEHc$.  Since $\fEHc$ is easily seen to be perfect (that is, $[\fEHc, \fEHc] = \fEHc$), there exist $x,y \in \fEHc$ such that $z=[x,y]$.  Then we have
    \[
        \zeta'(x) - \zeta(x),\ \zeta'(y) - \zeta(y) \in \ker(\hat{\pi}) = \mathfrak{Z}.
    \]
    Since $\mathfrak{Z}$ is contained in the center of $\widehat{\fEH}$, this implies that
    \[
        \zeta'(z)
        = [\zeta'(x), \zeta'(y)]
        = [\zeta(x), \zeta(y)]
        = \zeta(z).
    \]
    Thus $\zeta' = \zeta$, as desired.
\end{proof}

\section{Relation to the elliptic Hall algebra of Burban and Schiffmann\label{sec:BS}}

In this section we show (in \cref{hyperion}) how the central reductions defined in \cref{subsec:reduct} are specializations of a central extension of the elliptic Hall algebra of Burban and Schiffmann \cite{BS12}.  This relationship is not used elsewhere in the paper.  The fundamental ingredient here is the work of Morton and Samuelson \cite{MS17}, who described an isomorphism between the HOMFLYPT skein algebra of the torus and the elliptic Hall algebra.  This corresponds to the case of central charge $k=0$.  The case $k=-1$ was treated in \cite{CLLSS18}; see \cref{nova}.

We work here over the field $\C(v,q)$ of rational functions in two indeterminates.  We first recall from \cite[Def.~6.4]{BS12} the definition of the central extension $\BSc$ of the elliptic Hall algebra.  This central extension is denoted $\widetilde{\mathcal{E}}_\mathbf{K}$ in \cite{BS12}, where $\mathbf{K} = \C(v,q)$.  Our $v$ and $q$ are denoted $\sigma^{1/2}$ and $\bar{\sigma}^{1/2}$ in \cite{BS12}.  For $\bx,\by \in \bZ^*$, we define
\begin{align*}
    \epsilon_\bx =
    \begin{cases}
        1 & \text{if } \bx \in \bZ^+, \\
        -1 & \text{if } \bx \in \bZ^-,
    \end{cases}
    \quad \text{and} \quad
    \epsilon_{\bx,\by} = \operatorname{sign}(\det \begin{pmatrix} \bx & \by \end{pmatrix}) \in \{\pm 1\},\quad \text{if $\bx,\by$ are not colinear}.
\end{align*}
For $a \in \C(v,q)^\times$ and $d \in \Z$, define
\[
    \{d\}_a := a^d - a^{-d}
    \quad \text{and} \quad
    [d]_a := \frac{\{d\}_a}{\{1\}_a} = \frac{a^d-a^{-d}}{a-a^{-1}}.
\]
These are both elements of $\Z[a^{\pm 1}] \subseteq \C(v,q)$.  Note also that $[d]_1 = d$.  For $d \ge 1$, we define
\[
    \alpha_d := \frac{1}{d} (1-v^{2d}) (1-q^{2d}) (1 - (vq)^{-2d})
    = \frac{1}{d} \{d\}_v \{d\}_q \{d\}_{vq}.
\]

Define $\BSc$ to be the $\C(v,q)$-algebra with generators
\[
    \kappa_\bx,\ \bx \in \bZ,\quad
    u_\bx,\ \bx \in \bZ^*,
\]
modulo the following relations:
\begin{enumerate}
    \item The $\kappa_\bx$ are central, and we have
        \[
            \kappa_{(0,0)} = 1,\quad
            \kappa_\bx \kappa_\by = \kappa_{\bx + \by}.
        \]

    \item If $\bx, \by \in \bZ^*$ are collinear, then
        \[
            [u_\by, u_\bx]
            = \delta_{\bx, -\by} \frac{\kappa_\bx - \kappa_{\bx}^{-1}}{\alpha_{\gcd(\bx)}},
        \]
        where $\gcd(\bx)$ denotes the greatest common denominator of the components of $\bx$.

    \item If $\bx, \by \in \bZ^*$ are not colinear, $\gcd(\bx)=1$, and the triangle in $\bZ$ with vertices $\{(0,0), \bx, \bx + \by\}$ has no element of $\bZ$ in its interior, then
        \[
            [u_\by, u_\bx]
            = \epsilon_{\bx,\by} \kappa_{\alpha(\bx,\by)} \frac{\theta_{\bx+\by}}{\alpha_1},
        \]
        where
        \[
            \alpha(\bx,\by) =
            \begin{cases}
                \epsilon_\bx (\epsilon_\bx \bx + \epsilon_\by \by - \epsilon_{\bx + \by} (\bx + \by))/2 & \text{if } \epsilon_{\bx,\by} = 1,
                \\
                \epsilon_\by (\epsilon_\bx \bx + \epsilon_\by \by - \epsilon_{\bx + \by} (\bx + \by))/2 & \text{if } \epsilon_{\bx,\by} = -1,
            \end{cases}
        \]
        and where the elements $\theta_\bz$, $\bz \in \bZ^*$, are determined by
        \[
            \sum_{i \ge 1} \theta_{i \bx_0} w^i
            = \exp \left( \sum_{r \ge 1} \alpha_r u_{r \bx_0} w^r \right)
        \]
        for any $\bx_0 \in \bZ^*$ such that $\gcd(\bx_0)=1$.  Here $w$ is a formal variable.
\end{enumerate}
The relations imply that the $\C(v,q)$-subalgebra $K$ generated by the $\kappa_\bx$, $\bx \in \bZ$, is isomorphic to the group algebra, over $\C(v,q)$ of the abelian group $\bZ$, and that $\BSc$ is naturally a $K$-algebra.

Fix a $\Z$-linear map $\lambda \colon \bZ \to \Z$ and define $\BSc_\lambda$ to be the $\C(v,q)$-algebra obtained from $\BSc$ by imposing the additional relations
\[
    \kappa_\bx = (vq)^{\lambda(\bx)},\quad
    \bx \in \bZ.
\]
Now define the following elements of $\BSc_\lambda$:
\[
    w_\bx := \{\gcd(\bx)\}_v u_\bx,\quad \bx \in \bZ^*.
\]
Let $\BSc'_\lambda$ be the $\C[v,q,\{d\}_v^{-1},\{d\}_q^{-1} : d \ge 1]$-subalgebra of $\BSc_\lambda$ generated by the $w_\bx$, $\bx \in \bZ^*$.  Thus $\BSc'_\lambda$ is the $\C[v,q,\{d\}_v^{-1},\{d\}_q^{-1} : d \ge 1]$-algebra generated by $w_\bx$, $\bx \in \bZ^*$, subject to the following relations:
\begin{enumerate}
    \item If $\bx, \by \in \bZ^*$ are collinear, then
        \[
            [w_\by, w_\bx]
            = \delta_{\bx,-\by} d \frac{\{d\}_v}{\{d\}_q} \left[ \frac{\lambda(\bx)}{d} \right]_{(vq)^d},\quad
            \text{where } d = \gcd(\bx).
        \]

    \item If $\bx, \by \in \bZ^*$ are such that $\gcd(\bx)=1$ and the triangle in $\bZ$ with vertices $\{(0,0), \bx, \bx + \by\}$ has no interior lattice point, then
        \[
            [w_\by, w_\bx]
            = \epsilon_{\bx,\by} \kappa_{\alpha(\bx,\by)} \{1\}_v \{\gcd(\by)\}_v \frac{\theta_{\bx+\by}}{\alpha_1}.
        \]
\end{enumerate}

Now, by \cite[Lem.~5.4]{MS17}, we have, for $\bx \in \bZ^*$,
\[
    \frac{\theta_\bx}{\alpha_1}
    = \left( [\gcd(\bx)]_v \right)^2 u_\bx
    = -\frac{[\gcd(\bx)]_q}{\{1\}_q} w_\bx
    \quad \text{when } q=v^{-1}.
\]
(Note that our $v$ and $q$ are the $q^{1/2}$ and $t^{-1/2}$ of \cite{MS17}, respectively.)  Let $\BS_\lambda := \BSc_\lambda / (vq-1)$.  Thus, setting $\kk = \C[q^{\pm 1},\{d\}^{-1} : d \ge 1]$, we see that $\BS_\lambda$ is the $\kk$-algebra generated by $w_\bx$, $\bx \in \bZ^*$, subject to the following relations:
\begin{enumerate}
    \item If $\bx, \by \in \bZ^*$ are collinear, then
        \begin{equation} \label{BS1}
            [w_\bx, w_\by]
            = \delta_{\bx,-\by} \lambda(\bx).
        \end{equation}

    \item If $\bx, \by \in \bZ^*$ are such that $\gcd(\bx)=1$ and the triangle in $\bZ$ with vertices $\{(0,0), \bx, \bx + \by\}$ has no element of $\bZ$ in its interior, then
        \begin{equation} \label{BS2}
            [w_\bx, w_\by]
            = \epsilon_{\bx,\by} \{\gcd(\by)\}_q [\gcd(\bx+\by)]_q w_{\bx + \by}.
        \end{equation}
\end{enumerate}

\begin{prop} \label{hyperion}
    We have an isomorphism of $\kk$-algebras
    \[
        \EH_\lambda \xrightarrow{\cong} \BS_\lambda,\quad w_\bx \mapsto w_\bx,\ \bx \in \bZ^*.
    \]
\end{prop}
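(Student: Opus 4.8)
The plan is to produce a surjective $\kk$-algebra homomorphism $\Phi\colon\BS_\lambda\to\EH_\lambda$ sending $w_\bx\mapsto w_\bx$, and then to upgrade it to an isomorphism by comparing Poincar\'e--Birkhoff--Witt (PBW) bases. Surjectivity is immediate, since the elements $w_\bx$, $\bx\in\bZ^*$, generate $\EH_\lambda$; the two genuine tasks are (i) checking that the defining relations \eqref{BS1}--\eqref{BS2} of $\BS_\lambda$ hold in $\EH_\lambda$, so that $\Phi$ is well defined, and (ii) establishing injectivity.

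For (i), recall that in $\EH_\lambda$ we have $[w_\bx,w_\by]=\{d\}\,w_{\bx+\by}+\delta_{\bx,-\by}\lambda(\bx)$ with $d=\det\begin{pmatrix}\bx & \by\end{pmatrix}$. When $\bx,\by$ are collinear we have $d=0$, hence $\{d\}=0$, and this reduces to $[w_\bx,w_\by]=\delta_{\bx,-\by}\lambda(\bx)$, which is exactly \eqref{BS1}. For \eqref{BS2}, assume $\gcd(\bx)=1$, that $\bx,\by$ are not collinear, and that the triangle $T$ with vertices $(0,0),\bx,\bx+\by$ has no interior lattice point. Writing $e=\gcd(\by)$ and $g=\gcd(\bx+\by)$, I claim that $\{d\}=\epsilon_{\bx,\by}\{e\}_q[g]_q$; granting this, the $\EH_\lambda$-relation becomes precisely \eqref{BS2}. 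To prove the claim I would argue as follows. First, $e$ and $g$ are coprime, since a common prime divisor of $\by$ and $\bx+\by$ would divide $\bx$, contradicting $\gcd(\bx)=1$. Second, $e\mid d$ and $g\mid d$ (as $e\mid\by$ and $g\mid(\bx+\by)$, while $d=\det\begin{pmatrix}\bx & \by\end{pmatrix}=\det\begin{pmatrix}\bx & \bx+\by\end{pmatrix}$), so by coprimality $eg\mid d$. Third, Pick's theorem applied to $T$, whose three sides carry $\gcd(\bx)-1=0$, $\gcd(\by)-1=e-1$, and $\gcd(\bx+\by)-1=g-1$ interior lattice points and whose area is $\tfrac12|d|$, gives $|d|=e+g-1$. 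Combining $eg\mid d$ with $|d|=e+g-1>0$ forces $e+g-1\ge eg$, i.e.\ $(e-1)(g-1)\le 0$, so $e=1$ or $g=1$. In either case $\{|d|\}=\{e\}_q[g]_q$ (using $[1]_q=1$ and $\{1\}_q[g]_q=\{g\}_q$), and since $\epsilon_{\bx,\by}=\operatorname{sign}(d)$ we obtain $\{d\}=\epsilon_{\bx,\by}\{|d|\}=\epsilon_{\bx,\by}\{e\}_q[g]_q$, as claimed.

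For (ii), I would observe that both algebras carry a PBW basis consisting of the ordered monomials in the $w_\bx$ relative to a fixed total order on $\bZ^*$. On the $\EH_\lambda$ side this is the usual PBW theorem for $\EHc=U(\fEHc)$, followed by the central reduction \eqref{pendulum} that replaces the generators of $\bZ_\kk$ by the scalars $\lambda(\bx)$. On the $\BS_\lambda$ side it is the specialization of the structural basis theorem for the elliptic Hall algebra of \cite{BS12}; here the change of variables $w_\bx=\{\gcd(\bx)\}_v u_\bx$ is harmless, since $\{\gcd(\bx)\}_v$ is invertible in $\kk$ after setting $vq=1$. Because $\Phi$ carries the ordered-monomial basis of $\BS_\lambda$ to the ordered-monomial basis of $\EH_\lambda$ by the identity on indices, any linear relation among the former maps to a linear relation among the latter; as the latter are linearly independent, $\Phi$ is injective, and hence an isomorphism.

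I expect the main obstacle to be the verification of \eqref{BS2}, and specifically the elementary but slightly delicate number-theoretic reduction showing that the empty-triangle hypothesis forces $e=1$ or $g=1$; once that dichotomy is in hand, the identity $\{d\}=\epsilon_{\bx,\by}\{e\}_q[g]_q$ collapses to the two trivial cases. The remaining input, the PBW basis of $\BS_\lambda$, is not proved here but imported from \cite{BS12} (with \cite{MS17} governing the specialization to $vq=1$), consistent with the role of those works as the fundamental ingredients of this section.
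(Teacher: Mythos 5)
Your proposal is correct in outline but takes a genuinely different route from the paper, and it is worth being precise about where the real content sits in each version. The paper's proof is two lines: it quotes \cite[Th.~5.6]{MS17} for $\lambda=0$ and then notes that for general $\lambda$ the brackets on both sides differ from their $\lambda=0$ counterparts by the identical central correction $\delta_{\bx,-\by}\lambda(\bx)$ (equation \cref{kitty}), so nothing new need be checked. You instead build the map in the opposite direction, $\BS_\lambda\to\EH_\lambda$, by verifying that \cref{BS1,BS2} hold in $\EH_\lambda$. That verification is correct and is a nice self-contained addition: the empty-triangle hypothesis, Pick's theorem giving $|d|=e+g-1$, and the divisibility $eg\mid d$ do force $e=1$ or $g=1$, whence $\{d\}=\epsilon_{\bx,\by}\{e\}_q[g]_q$ and the ``special'' relations of $\BS_\lambda$ are literally instances of the uniform relation \cref{phoenix}. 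This yields a well-defined surjection $\BS_\lambda\twoheadrightarrow\EH_\lambda$ cheaply. The caveat concerns your injectivity step. Since $\BS_\lambda$ is presented by only a partial list of commutators, the claim that its ordered monomials span (equivalently, that the general commutation relations are derivable from \cref{BS1,BS2}) is exactly the hard direction, and it is not available off the shelf from \cite{BS12}, whose PBW-type statements concern the generic algebra over $\C(v,q)$ in the generators $u_\bx$; the passage to the integral form in the $w_\bx$ and the specialization $vq=1$ is precisely \cite[\S 5]{MS17}, whose output is \cite[Th.~5.6]{MS17} --- that is, the $\lambda=0$ case of the proposition itself. So your argument does not avoid the paper's citation; it relocates it into the basis claim for $\BS_\lambda$. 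What your approach buys is an explicit check of the easy direction of the comparison of relations; what the paper's buys is brevity and a cleaner handling of general $\lambda$ via the observation that the central term is the only difference.
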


\begin{proof}
    When $\lambda=0$, this is precisely \cite[Th.~5.6]{MS17} after recalling that the $s,v,q,t$ of \cite{MS17} are $q,t^{-1},v^2,q^{-2}$ in our notation.  To prove the result for general $\lambda$, we make the dependence on $\lambda$ explicit by letting $[\cdot,\cdot]_\lambda$ denote the bracket on $\BS_\lambda$ given by \cref{BS1,BS2}.  Then we have
    \begin{equation} \label{kitty}
        [w_\bx, w_\by]_\lambda = [w_\bx, w_\by]_0 + \delta_{\bx,-\by} \lambda(\bx).
    \end{equation}
    Comparing to \cref{phoenix,pendulum}, we see that this is precisely the relationship between the bracket in $\EH_0$ and the one in $\EH_\lambda$.
\end{proof}

\begin{rem} \label{nova}
    When $\lambda=0$, $\EH_0$ is the elliptic Hall algebra (no central extension) denoted $\mathcal{E}_{\sigma,\bar{\sigma}}$ in \cite{BS12}, specialized at $\sigma^{-1/2} = q = \bar{\sigma}^{1/2}$.  When $\lambda = \lambda_{-1}$, $\EH_{-1}$ is the algebra denoted $\mathbb{E}$ in \cite[Def.~4.4]{CLLSS18}.
\end{rem}


\bibliographystyle{alphaurl}
\bibliography{EHAcat}

\end{document}